\def\amsbb{\use@mathgroup \M@U \symAMSb}
\let\mbb\mathbb
\renewcommand{\mathbb}{\amsbb}
\numberwithin{equation}{section}
\newcommand{\nc}{\newcommand}
\nc{\renc}{\renewcommand}
\nc{\ssec}{\subsection}
\nc{\sssec}{\subsubsection}
\nc{\on}{\operatorname}
\newtheorem{cor}[subsubsection]{Corollary}
\newtheorem{lem}[subsubsection]{Lemma}
\newtheorem{prop}[subsubsection]{Proposition}
\newtheorem{thm}[subsubsection]{Theorem}
\theoremstyle{plain}
\newtheorem{Thm*}{Theorem}[section]
\newtheorem{Thm'}[Thm]{"Theorem"}
\newcommand{\thmref}[1]{Theorem~\ref{#1}}
\newcommand{\secref}[1]{Sect.~\ref{#1}}
\newcommand{\lemref}[1]{Lemma~\ref{#1}}
\newcommand{\propref}[1]{Proposition~\ref{#1}}
\newcommand{\corref}[1]{Corollary~\ref{#1}}
\theoremstyle{definition}
\newcommand{\wt}{\widetilde}
\newcommand{\Tr}{\operatorname{Tr}}
\newcommand{\Corr}{\operatorname{Corr}}
\theoremstyle{definition}
\theoremstyle{remark}
\newtheorem{rem}[subsubsection]{Remark}
\nc{\ZZ}{{\mathbb Z}}
\nc{\NN}{{\mathbb N}}
\nc{\OO}{{\mathbb O}}
\renc{\SS}{{\mathbb S}}
\nc{\DD}{{\mathbb D}}
\nc{\GG}{{\mathbb G}}
\nc{\Fq}{{\mathbb F}_q}
\nc{\Fqb}{\ol{{\mathbb F}_q}}
\nc{\Ql}{\ol{{\mathbb Q}_\ell}}
\nc{\id}{\text{id}}
\nc{\BA}{{\mathbb{A}}}
\nc{\BC}{{\mathbb{C}}}
\nc{\BE}{{\mathbb{E}}}
\nc{\BF}{{\mathbb{F}}}
\nc{\BG}{{\mathbb{G}}}
\nc{\BM}{{\mathbb{M}}}
\nc{\BO}{{\mathbb{O}}}
\nc{\BD}{{\mathbb{D}}}
\nc{\BL}{{\mathbb{L}}}
\nc{\BN}{{\mathbb{N}}}
\nc{\BP}{{\mathbb{P}}}
\nc{\BQ}{{\mathbb{Q}}}
\nc{\BR}{{\mathbb{R}}}
\nc{\BV}{{\mathbb{V}}}
\nc{\BW}{{\mathbb{W}}}
\nc{\BZ}{{\mathbb{Z}}}
\nc{\BS}{{\mathbb{S}}}
\nc{\CA}{{\mathcal{A}}}
\nc{\CB}{{\mathcal{B}}}
\nc{\CE}{{\mathcal{E}}}
\nc{\CF}{{\mathcal{F}}}
\nc{\CG}{{\mathcal{G}}}
\nc{\CH}{{\mathcal{H}}}
\nc{\CL}{{\mathcal{L}}}
\nc{\CC}{{\mathcal{C}}}
\nc{\CM}{{\mathcal{M}}}
\nc{\CN}{{\mathcal{N}}}
\nc{\CK}{{\mathcal{K}}}
\nc{\CO}{{\mathcal{O}}}
\nc{\CP}{{\mathcal{P}}}
\nc{\CQ}{{\mathcal{Q}}}
\nc{\CR}{{\mathcal{R}}}
\nc{\CS}{{\mathcal{S}}}
\nc{\CT}{{\mathcal{T}}}
\nc{\CU}{{\mathcal{U}}}
\nc{\CV}{{\mathcal{V}}}
\nc{\CW}{{\mathcal{W}}}
\nc{\CX}{{\mathcal{X}}}
\nc{\CY}{{\mathcal{Y}}}
\nc{\CZ}{{\mathcal{Z}}}
\nc{\CI}{{\mathcal{I}}}
\nc{\CJ}{{\mathcal{J}}}
\nc{\fa}{{\mathfrak{a}}}
\nc{\fb}{{\mathfrak{b}}}
\nc{\fd}{{\mathfrak{d}}}
\nc{\ff}{{\mathfrak{f}}}
\nc{\fg}{{\mathfrak{g}}}
\nc{\fgl}{{\mathfrak{gl}}}
\nc{\fh}{{\mathfrak{h}}}
\nc{\fj}{{\mathfrak{j}}}
\nc{\fl}{{\mathfrak{l}}}
\nc{\fm}{{\mathfrak{m}}}
\nc{\fn}{{\mathfrak{n}}}
\nc{\fu}{{\mathfrak{u}}}
\nc{\fp}{{\mathfrak{p}}}
\nc{\fr}{{\mathfrak{r}}}
\nc{\fs}{{\mathfrak{s}}}
\nc{\ft}{{\mathfrak{t}}}
\nc{\fz}{{\mathfrak{z}}}
\nc{\fsl}{{\mathfrak{sl}}}
\nc{\hsl}{{\widehat{\mathfrak{sl}}}}
\nc{\hgl}{{\widehat{\mathfrak{gl}}}}
\nc{\hg}{{\widehat{\mathfrak{g}}}}
\nc{\htt}{{\widehat{\mathfrak{t}}}}
\nc{\chg}{{\widehat{\mathfrak{g}}}{}^\vee}
\nc{\hn}{{\widehat{\mathfrak{n}}}}
\nc{\chn}{{\widehat{\mathfrak{n}}}{}^\vee}
\nc{\fA}{{\mathfrak{A}}}
\nc{\fB}{{\mathfrak{B}}}
\nc{\fD}{{\mathfrak{D}}}
\nc{\fE}{{\mathfrak{E}}}
\nc{\fF}{{\mathfrak{F}}}
\nc{\fG}{{\mathfrak{G}}}
\nc{\fK}{{\mathfrak{K}}}
\nc{\fL}{{\mathfrak{L}}}
\nc{\fM}{{\mathfrak{M}}}
\nc{\fN}{{\mathfrak{N}}}
\nc{\fP}{{\mathfrak{P}}}
\nc{\fU}{{\mathfrak{U}}}
\nc{\fV}{{\mathfrak{V}}}
\nc{\fX}{{\mathfrak{X}}}
\nc{\fY}{{\mathfrak{Y}}}
\nc{\fZ}{{\mathfrak{Z}}}
\nc{\ba}{{\mathbf{a}}}
\nc{\bc}{{\mathbf{c}}}
\nc{\bd}{{\mathbf{d}}}
\nc{\bbf}{{\mathbf{f}}}
\nc{\be}{{\mathbf{e}}}
\nc{\bi}{{\mathbf{i}}}
\nc{\bj}{{\mathbf{j}}}
\nc{\bn}{{\mathbf{n}}}
\nc{\bo}{{\mathbf{o}}}
\nc{\bp}{{\mathbf{p}}}
\nc{\bq}{{\mathbf{q}}}
\nc{\bu}{{\mathbf{u}}}
\nc{\bv}{{\mathbf{v}}}
\nc{\bx}{{\mathbf{x}}}
\nc{\bs}{{\mathbf{s}}}
\nc{\by}{{\mathbf{y}}}
\nc{\bw}{{\mathbf{w}}}
\nc{\bA}{{\mathbf{A}}}
\nc{\bK}{{\mathbf{K}}}
\nc{\bB}{{\mathbf{B}}}
\nc{\bF}{{\mathbf{F}}}
\nc{\bC}{{\mathbf{C}}}
\nc{\bG}{{\mathbf{G}}}
\nc{\bD}{{\mathbf{D}}}
\nc{\bE}{{\mathbf{E}}}
\nc{\bH}{{\mathbf{H}}}
\nc{\bI}{{\mathbf{I}}}
\nc{\bL}{{\mathbf{L}}}
\nc{\bM}{{\mathbf{M}}}
\nc{\bN}{{\mathbf{N}}}
\nc{\bO}{{\mathbf{O}}}
\nc{\bV}{{\mathbf{V}}}
\nc{\bW}{{\mathbf{W}}}
\nc{\bX}{{\mathbf{X}}}
\nc{\bZ}{{\mathbf{Z}}}
\nc{\bS}{{\mathbf{S}}}
\nc{\sA}{{\mathsf{A}}}
\nc{\sB}{{\mathsf{B}}}
\nc{\sC}{{\mathsf{C}}}
\nc{\sD}{{\mathsf{D}}}
\nc{\sF}{{\mathsf{F}}}
\nc{\sK}{{\mathsf{K}}}
\nc{\sM}{{\mathsf{M}}}
\nc{\sO}{{\mathsf{O}}}
\nc{\sW}{{\mathsf{W}}}
\nc{\sQ}{{\mathsf{Q}}}
\nc{\sP}{{\mathsf{P}}}
\nc{\sZ}{{\mathsf{Z}}}
\nc{\sr}{{\mathsf{r}}}
\nc{\bk}{{\mathsf{k}}}
\nc{\sg}{{\mathsf{g}}}
\nc{\sff}{{\mathsf{f}}}
\nc{\sfe}{{\mathsf{e}}}
\nc{\sfj}{{\mathsf{j}}}
\nc{\sfb}{{\mathsf{b}}}
\nc{\sfc}{{\mathsf{c}}}
\nc{\sd}{{\mathsf{d}}}
\nc{\sv}{{\mathsf{v}}}
\nc{\Coh}{{\mathsf{Coh}}}
\nc{\QCoh}{{\on{QCoh}}}
\nc{\IndCoh}{{\on{IndCoh}}}
\nc{\hattimes}{\wh\otimes}
\nc{\bh}{{\bar{h}}}
\nc{\comult}{{co\text{-}mult}}
\nc{\counit}{{co\text{-}unit}}
\nc{\dgSch}{\on{Sch}}
\nc{\Sch}{\on{Sch}}
\nc{\affdgSch}{\on{Sch}^{\on{aff}}}
\nc{\affSch}{\on{Sch}^{\on{aff}}}
\nc{\Groupoids}{\on{Grpd}}
\nc{\inftygroup}{\on{Spc}}
\nc{\inftyCat}{\infty\on{-Cat}}
\nc{\StinftyCat}{\inftyCat^{\on{St}}}
\nc{\MoninftyCat}{\infty\on{-Cat}^{\on{Mon}}}
\nc{\SymMoninftyCat}{\infty\on{-Cat}^{\on{SymMon}}}
\nc{\SymMonStinftyCat}{\on{DGCat}^{\on{SymMon}}}
\nc{\MonStinftyCat}{\on{DGCat}^{\on{Mon}}}
\nc{\inftystack}{\on{Stk}}
\nc{\inftystackalg}{Stk^{1\text{-}alg}}
\nc{\inftyprestack}{\on{PreStk}}
\nc{\inftydgnearstack}{\on{NearStk}}
\nc{\inftydgstack}{\on{Stk}}
\nc{\inftydgstackalg}{DGStk^{1\text{-}alg}}
\nc{\inftydgprestack}{\on{PreStk}}
\nc{\dgindSch}{\on{indSch}}
\nc{\indSch}{{}^{\on{cl}}\!\on{indSch}}
\nc{\infSch}{\on{infSch}}
\nc{\dr}{{\on{dR}}}
\nc{\mmod}{{\on{-}\!{\mathbf{mod}}}}
\nc{\starr}{\text{\dh}}
\nc{\Spectra}{\on{Spectra}}
\nc{\Crys}{\on{Crys}}
\nc{\oblv}{{\mathbf{oblv}}}
\nc{\ind}{{\mathbf{ind}}}
\nc{\coind}{{\mathbf{coind}}}
\nc{\inv}{{\mathbf{inv}}}
\nc{\triv}{{\mathbf{triv}}}
\nc{\CMaps}{{\mathcal Maps}}
\nc{\Maps}{\on{Maps}}
\nc{\bMaps}{\mathbf{Maps}}
\nc{\BMaps}{\ul{\on{Maps}}}
\nc{\Ran}{{\on{Ran}}}
\nc{\Vect}{\on{Vect}}
\nc{\Shv}{\on{Shv}}
\nc{\Spc}{{\on{Spc}}}
\nc{\ppart}{(\!(t)\!)}
\nc{\qqart}{[\![t]\!]}
\nc{\Dmod}{\on{D-mod}}
\nc{\DGCat}{\on{DGCat}}
\renc{\det}{\on{det}}
\nc{\Conf}{\on{Conf}}
\nc{\uno}{\mbb{1}}
\nc{\Cat}{\on{Cat}}
\nc{\uShv}{\underline{\on{Shv}}}
\nc{\uhom}{\underline{\on{hom}}}
\nc{\uHom}{\underline{\on{Hom}}}
\nc{\AGCat}{\on{AGCat}}
\nc{\ul}{\underline}
\nc{\enhOV}{\on{enh}(\bO,\CV)}
\nc{\EnhOV}{\on{Enh}(\bO,\CV)}
\nc{\wtF}{\ul{F}}
\nc{\ModtoEnr}{\on{Mod-to-Enr}}
\nc{\bP}{\mathbf{P}}
\nc{\Quiv}{\on{Quiv}}
\renc{\mod}{\on{-mod}}
\nc{\Hom}{\on{Hom}}
\nc{\CHom}{\CH\on{om}}
\nc{\Spec}{\on{Spec}}
\nc{\PreStk}{\on{PreStk}}
\nc{\ol}{\overline}
\nc{\sotimes}{\overset{!}\otimes}
\nc{\Frob}{{\on{Frob}}}
\nc{\sFunct}{\mathsf{Funct}}
\nc{\Funct}{\on{Funct}}
\nc{\sfunct}{\mathsf{funct}}
\nc{\cG}{{\check{G}}}
\nc{\LS}{\on{LS}}
\nc{\sft}{\mathsf{t}}
\begin{document}

\begin{abstract}
In this paper we record the formalism of \emph{algebro-geometric} DG categories
(in short $\AGCat$) following a suggestion of V.~Drinfeld.
This formalism will be applied to ``real-world" problems in papers sequel to this one,
\cite{GRV2} and \cite{GRV3}.
\end{abstract}

\date{\today}

%\title{Categorical harmonic analysis in the large}
\title[Applications of (higher) categorical trace I]{Applications of (higher) categorical trace I: \\
the definition of $\on{AGCat}$}
\author{Dennis Gaitsgory}
\author{Nick Rozenblyum}
\author{Yakov Varshavsky}

%\date{}

\maketitle

\tableofcontents

\section*{Introduction}

\ssec{What is this paper about?}

This paper does not contain deep \emph{original} results. Our goal has been to write down a certain construction,
proposed by V.~Drinfeld some 20 years ago, that provides an ``artificial" fix to the failure of the
\emph{categorical Künneth formula}.

\sssec{}

For a scheme $X$ of finite type over a ground field $k$, let $\Shv(X)$ denote the (ind-complete version of the) DG category
of $\ell$-adic sheaves on $X$, defined as in \cite[Sect. 1.1]{AGKRRV1}. For a pair of schemes $X_1$ and $X_2$,
the operation of \emph{external tensor product} gives rise to a functor
\begin{equation} \label{e:boxtimes intro}
\Shv(X_1)\otimes \Shv(X_2)\to \Shv(X_1\times X_2),
\end{equation}
where the $\otimes$ is Lurie's tensor product of DG categories.

\medskip

The point of departure of Drinfeld's proposal and of this paper is that the functor \eqref{e:boxtimes intro} is \emph{not}
an equivalence (unless one of the schemes $X_1$ or $X_2$ is a disjoint union of points). However, there are multiple
reasons (see \secref{ss:motivations}) for which one would want to ``turn" \eqref{e:boxtimes intro} into an equivalence.
as explained below.

\medskip

The device that turns \eqref{e:boxtimes intro} into an equivalence (see below) is basically a formal manipulation,
which may seem too naive to produce
anything really useful. But it happens to be deeper than it looks. At least, it helps one solve problems that exist
outside of this formalism (see Sects. \ref{sss:useful1} and \ref{sss:useful2}).

\sssec{}

The formal structure of turning \eqref{e:boxtimes intro} into an equivalence will be as follows: we will define a new
symmetric monoidal $(\infty,2)$-category, to be denoted $\AGCat$, equipped with the following data:

\begin{itemize}

\item A symmetric monoidal functor $\bi:\DGCat\to \AGCat$;

\medskip

\item A symmetric monoidal functor $\ul\Shv: \on{Corr}(\Sch)\to \AGCat$, where
$\on{Corr}(\Sch)$ is the \emph{category of correspondences} of schemes, see \secref{ss:corr};

\medskip

\item A symmetric monoidal transformation $$\alpha:\bi\circ \Shv(-)\to \ul\Shv,$$
where $\Shv(-): \on{Corr}(\Sch)\to \DGCat$
is the original functor $X\mapsto \Shv(X)$ equipped with the \emph{right-lax} symmetric monoidal structure
encoded by \eqref{e:boxtimes intro}.

\end{itemize}

\medskip

The category $\AGCat$ is constructed so that it is universal with respect to the possessing the structure of
$(\bi,\ul\Shv(-),\alpha)$ above.

\medskip

Moreover, it will follow from the construction that the natural transformation
$$\Shv(-)\to \bi^R\circ \ul\Shv(-),$$
arising from $\alpha$, is an isomorphism.

\sssec{}

The fact that $\ul\Shv(-)$ is \emph{strictly} symmetric monoidal means that we have \emph{isomorphisms}
\begin{equation} \label{e:boxtimes intro u}
\uShv(X_1)\otimes \uShv(X_2)\to \uShv(X_1\times X_2)
\end{equation}
in $\AGCat$, which replace the non-isomorphisms \eqref{e:boxtimes intro}.

\sssec{}

One can describe objects of $\AGCat$ explicitly. Namely, an object $\ul\bC\in \AGCat$
is an assignment
$$(X\in \Sch) \mapsto \ul\bC(X)\in \DGCat,$$
such that:

\medskip

\begin{itemize}

\item The category $\ul\bC(X)$ is a module over $\Shv(X)$ (where the latter is equipped with a monoidal structure
given by the $\sotimes$ tensor product);

\medskip

\item For $f:X_1\to X_2$ we are given functors $f_*:\ul\bC(X_1)\to \ul\bC(X_2)$ and $f^!:\ul\bC(X_2)\to \ul\bC(X_1)$,
both $\Shv(X_2)$-linear, where $\Shv(X_2)$ acts on $\ul\bC(X_1)$ via $f^!:\Shv(X_2)\to \Shv(X_1)$;

\medskip

\item For a cartesian square
$$
\CD
X_1 @>{f_X}>> X_2 \\
@V{g_1}VV @VV{g_2}V \\
Y_1 @>{f_Y}>> Y_2
\endCD
$$
we are given an isomorphism
$$f_Y^!\circ (g_2)_*\simeq (g_1)_*\circ f_X^!$$
as functors $\ul\bC(X_2)\to \ul\bC(Y_1)$;

\medskip

\item Natural compatibilities between the above data.

\end{itemize}

\medskip

From this point of view, the category $\AGCat$ is a natural substitute for DG categories in
algebraic geometry: a ``true" algebro-geometric DG category should be a family of such
for every scheme, which is exactly what our $\ul\bC$'s are. Hence, the name
AGCat: algebro-geometric categories\footnote{The notation $\AGCat$ was suggested by Sam Raskin; previously we
used $\DGCat_{\on{alg-geom}}$.}.

\sssec{}

In terms of the above description, the functor $\bi$ is a (fully faithful) embedding
$$\DGCat\to \AGCat$$
that sends $\bC\in \DGCat$ to $\bi(\bC)$ defined by
$$\bi(\bC)(X):=\bC\otimes \Shv(X).$$

\medskip

And the functor $\ul\Shv(-)$ sends $Y\in \Sch$ to $\ul\Shv(Y)$ with
$$\ul\Shv(Y)(X):=\Shv(Y\times X).$$

\medskip

The natural transformation $\alpha$ is given by the functors \eqref{e:boxtimes intro}.

\ssec{What is this good for?} \label{ss:motivations}

\sssec{}

Here is the general philosophy:

\medskip

In characteristic zero, the theory of D-modules on algebraic varieties (and stacks) gives a well-behaved
``geometric” sheaf theory in that geometric constructions are closely mirrored by categorical constructions on
the category of sheaves.  This, in particular, leads to a rich theory of categorical group representations in that setting.

\medskip

A substantial motivation for this work is to develop a parallel theory in the constructible setting that would
apply in the setting of $\ell$-adic sheaves in characteristic $p>0$.  The first issue one encounters in building
such a geometric theory of constuctible sheaf categories is the failure of the categorical Kunneth formula, i.e.,
the fact that \eqref{e:boxtimes intro} is not an equialence.

\medskip

The key idea in this work is that this is \emph{the only} issue. Once we
provide a ``formal fix",  everything else falls into place. We now discuss concrete applications for having such a fix.

\sssec{}  \label{sss:trace calc}

Here are two primary reasons to want a strictly monoidal functor $\ul\Shv(-)$. One has to do with
\emph{trace calculations}.

\medskip

Namely, let $\CF$ be an object of $\Shv(X\times X)$. We can regard it as a kernel defining a functor
$$[\CF]:\Shv(X)\to \Shv(X),$$
and one can consider its categorical trace
\begin{equation} \label{e:Trace Intro}
\Tr([\CF],\Shv(X))\in \Vect.
\end{equation}

The object \eqref{e:Trace Intro} is well-defined, but we do not know how to calculate (and may
not necessarily want to). However, if we calculate the trace of the corresponding endofunctor
$\ul{[\CF]}$ of $\ul\Shv(X)$ we obtain
$$\on{C}^\cdot(X,\Delta_X^!(\CF)).$$

\medskip

An important special case is when $X$ is a scheme over $\ol\BF_q$, but defined over $\BF_q$
and $\CF$ is given by the direct image (of the dualizing sheaf with respect to the graph) of the geometric Frobenius.
Then the above formalism provides a categorical framework for Grothendieck's function-sheaf correspondence,
see Sects. \ref{ss:less Frob} and \ref{ss:more Frob}.

\sssec{} \label{sss:useful1}

When instead of schemes, we allow to consider algebraic stacks, in this way we obtain cohomologies
of shtukas\footnote{But one does this for the Verdier dual version of $\AGCat$, see \secref{sss:left version}.}. Now, the trace formalism
allows one to organize cohomologies of shtukas into a quasi-coherent sheaf on the stack of Langlands
parameters; a toy model for this was worked out in \cite{GKRV}.

\medskip

We will address this in a sequel to this
paper, \cite{GRV3}.

\sssec{}

Another reason to want to have a functor $\ul\Shv(-)$ is in order to be able to define the notion of
\emph{categorical} representation of a group.

\medskip

Let us place ourselves temporarily in the context of D-modules, in which case, the categorical K\"unneth formula does hold,
i.e., the functor
$$\Dmod(X_1)\otimes \Dmod(X_2)\to \Dmod(X_1\times X_2)$$
is an equivalence.

\medskip

In this case, by an action of an algebraic group $G$ on a DG category $\bC$ we will mean a functor
$$\bC\overset{\on{co-act}}\to \bC\otimes \Dmod(G),$$
such that the diagram
$$
\CD
\bC @>{\on{co-act}}>> \bC\otimes \Dmod(G) \\
@V{\on{co-act}}VV  @VVV \\
\bC\otimes \Dmod(G) @>{\on{Id}\otimes \on{mult}^!}>> \bC\otimes \Dmod(G\times G)
\endCD
$$
commutes, where the left vertical arrow is
$$\bC\otimes \Dmod(G)  \overset{\on{co-act}\otimes \on{Id}}\longrightarrow
\bC\otimes \Dmod(G)\otimes \Dmod(G) \overset{\on{Id}\otimes (\boxtimes)}\longrightarrow \bC\otimes \Dmod(G\times G).$$

\medskip

Let us now return back to the context of $\ell$-adic sheaves. In this case, one can attempt to give a similar definition:
a DG category $\bC$ (over the field of coefficients, i.e., $\ol\BQ_\ell$) and a functor
$$\bC\overset{\on{co-act}}\to \bC\otimes \Shv(G),$$
such that the diagram
\begin{equation} \label{e:group action failed}
\CD
\bC @>{\on{co-act}}>> \bC\otimes \Shv(G) \\
@V{\on{co-act}}VV  @VV{(\on{Id}\otimes (\boxtimes))\circ (\on{co-act}\otimes \on{Id})}V \\
\bC\otimes \Shv(G) @>{\on{Id}\otimes \on{mult}^!}>> \bC\otimes \Shv(G\times G)
\endCD
\end{equation}
commutes.

\medskip

This is a valid definition, but in this way one produces only a small part of examples that
we want to consider. For example, we would not have the regular representation: the
pullback functor
$$\Shv(G)\overset{\on{mult}^!}\to \Shv(G\times G)$$
does not land in
\begin{equation} \label{e:sq of group}
\Shv(G)\otimes \Shv(G)\subset \Shv(G\times G).
\end{equation}

\begin{rem}
One could attempt to define the regular representation by setting
$$\on{co-act}:=\boxtimes^R\circ \on{mult}^!,$$
where $\boxtimes^R$ is the right adjoint of \eqref{e:sq of group}, i.e.,
$\on{co-act}$ is the \emph{dual} of the convolution functor
$$\Shv(G)\otimes \Shv(G)\overset{\boxtimes}\to  \Shv(G\times G)\overset{\on{mult}_*}\to \Shv(G).$$

However, this $\on{co-act}$ fails to make \eqref{e:group action failed} commute.

\end{rem}

\sssec{} \label{sss:useful2}

A way to fix this is to consider categorical representations of $G$ in $\AGCat$ instead of $\DGCat$.
It turns out that in this way one obtains a very sensible theory. In particular, it allows one to do the following
(when working over $\ol\BF_q$ but with our geometric objects defined over $\BF_q$):

\medskip

\noindent{(i)} One can compute the 2-categorical trace of Frobenius on $G\mmod(\AGCat)$, with the
result being $\ul\Shv(G/\on{Ad}^\Frob(G))$. For example, when $G$ is connected, the latter object is
(up to applying the fully faithful functor $\bi$) the category of representations of the finite group
$G(\BF_q).$

\medskip

\noindent{(ii)} If $G$ is reductive and $M$ its Levi subgroup, taking $\Tr(\Frob,-)$ on the
the 2-categorical induction functors $M\mmod(\AGCat)\to G\mmod(\AGCat)$, we recover
Deligne-Lusztig representations.

\medskip

\noindent{(iii)} One can compute characters of Deligne-Lusztig representations and relate them to
pointwise Frobenius traces of Springer sheaves.

\medskip

\noindent{(iv)} One can do all of the above for the loop group $\fL(G)$ instead of $G$, and recover
(an algebraic version of) the Fargues-Scholze theory.

\medskip

Points (i)-(iii) above will be developed in \cite{GRV2}, and point (iv) in future work.

\ssec{How is \texorpdfstring{$\AGCat$}{AGCat} actually constructed?}

\sssec{}

The construction of $\AGCat$ proceeds within the following general framework:

\medskip

Given a pair of (closed) symmetric monoidal categories $\bO$ and $\CV$ and a \emph{right-lax}
symmetric monoidal functor $F:\bO\to \CV$, we define a new symmetric monoidal
category $\ul\CV:=\on{Enh}(\bO,\CV)$, equipped with:

\begin{itemize}

\item A symmetric monoidal functor $\bi:\CV\to \ul\CV$;

\smallskip

\item A symmetric monoidal functor $\ul{F}:\bO\to \ul\CV$;

\smallskip

\item A symmetric monoidal transformation: $\alpha:\bi\circ F\to \ul{F}$,

\end{itemize}
so that $\ul\CV$ is universal with respect to admitting the above structure.

\medskip

The natural transformation $\alpha$ gives rise to a natural transformation
$$F \to \bi^R\circ \ul{F},$$
and an additional feature of the construction is that the latter map is an isomorphism.

\sssec{}

We construct $\AGCat$ by taking $\bO:=\on{Corr}(\Sch)$, $\CV:=\DGCat$ and $\CF:=\Shv(-)$.

\sssec{}

We construct $\on{Enh}(\bO,\CV)$ using the formalism of enriched categories in three steps:

\begin{enumerate}

\item We consider $\bO$ as enriched over itself (thanks to the condition that $\bO$ is closed);

\medskip

\item We define the $\CV$-enriched category $\on{enh}(\bO,\CV)$ by changing (=inducing)
the enrichment from $\bO$ to $\CV$ using $F$;

\smallskip

\item We define $\on{Enh}(\bO,\CV)$ as the category of $\CV$-enriched presheaves on $\on{enh}(\bO,\CV)$
(this is a general construction that starts with a $\CV$-enriched category and produces from it a $\CV$-module
category).

\end{enumerate}

\begin{rem}

In the particular case of $(\bO,\CV,F)=(\on{Corr}(\Sch),\DGCat,\Shv(-))$, the intermediate object $\on{enh}(\bO,\CV)$
is a familiar entity: it is the $\DGCat$-category, whose objects are schemes, and for $X_1,X_2$
$$\Hom^{\DGCat}(X_1,X_2)=\Shv(X_1\times X_2),$$
where $\Hom^{\DGCat}(-,-)$ is the Hom enriched in $\DGCat$.

\medskip

I.e., this is the category of schemes with ``kernels" as maps.

\end{rem}

\sssec{} \label{sss:Stefanich}

We should point out that the same procedure, namely,
$$(\bO,\CV,F) \mapsto \on{Enh}(\bO,\CV),$$
has been recently used by G.~Stefanich in another context in order to produce the 2-category $2\on{-IndCoh}(S)$,
which is an enlargement of the 2-category $2\on{-QCoh}(S):=\QCoh(S)\mmod$ for a base scheme/stack $S$.
%(This enlargement is a
%2-categorical counterpart of the enlargement $\QCoh(S)\rightsquigarrow \IndCoh(S)$.)

\medskip

Namely, Stefanich starts with $\bO:=\on{Corr}(\Sch_{/S})$, $\CV:=\QCoh(S)\mmod$ and
$$F(X):=\IndCoh(X).$$

Here the right-lax symmetric monoidal structure is expressed by the functor
$$\IndCoh(X_1)\underset{\QCoh(S)}\otimes \IndCoh(X_2)\to \IndCoh(X_1\underset{S}\times X_2).$$

Note that even if $S$, $X_1$ and $X_2$ are smooth, the above functor is \emph{not} an equivalence
as long as $X_1\underset{S}\times X_2$ is not smooth.

\begin{rem}

Stefanich's construction is an improvement and generalization of a construction that produces
the same 2-category $2\on{-IndCoh}(S)$ that had been proposed earlier by D.~Arinkin.

\medskip

This construction plays a key role in formulating the 2-categorical local geometric Langlands,
where we take $S$ to be the stack of local Langlands parameters $\on{LS}^{\on{loc}}_\cG$. The need for an enlargement
$$2\on{-QCoh}(\on{LS}^{\on{loc}}_\cG)\rightsquigarrow 2\on{-IndCoh}(\on{LS}^{\on{loc}}_\cG)$$
is due to the phenomenon of \emph{non-temperedness}, and is a local counterpart of the enlargement
$$\QCoh(\LS^{\on{glob}}_\cG)\rightsquigarrow \IndCoh(\LS^{\on{glob}}_\cG)$$
in global geometric Langlands.

\end{rem}

\ssec{What else is done in this paper?}

\sssec{} \label{sss:else}

As was mentioned in \secref{sss:trace calc}, the formalism of $\AGCat$ provides a nice answer for
$$\Tr(\ul{[\CF]},\ul\Shv(X)), \quad \CF\in \Shv(X\times X)$$
when $X$ is a scheme.

\medskip

However, for practical applications we need a similar formula when $X$ is allowed to be a more general
algebro-geometric object, i.e., a \emph{prestack} (assumed locally of finite type).
In order to do this, we need to address the following questions:

\medskip

\begin{enumerate}

\item What is $\ul\Shv(\CY)\in \AGCat$ for a prestack $\CY$?

\medskip

\item Under what conditions is it true that
$$\ul\Shv(\CY_1)\otimes \ul\Shv(\CY_2)\simeq \ul\Shv(\CY_1\times \CY_2)?$$

\item What guarantees that the above object $\ul\Shv(\CY)\in \AGCat$ is dualizable
(so that the trace operation is defined)?

\medskip

\item What guarantees that $\ul\Shv(\CY)$ is self-dual?

\medskip

\item If $\ul\Shv(\CY)$ is self-dual and $\ul\Shv(\CY)\otimes \ul\Shv(\CY)\simeq \ul\Shv(\CY\times \CY)$,
is it true that
$$\Tr(\ul{[\CF]},\ul\Shv(\CY))\simeq \on{C}^\cdot(\CY,\Delta_\CY^!(\CF)),, \quad \CF\in \Shv(\CY\times \CY)?$$

\end{enumerate}

Answering these questions is what occupies the bulk of this paper.

\sssec{} \label{sss:co Intro}

First, we define $\ul\Shv(\CY)$ by
$$\ul\Shv(\CY)(X):=\Shv(\CY\times X),$$
where the category of sheaves on a prestack $\CY'$ is defined by
$$\underset{X\in (\Sch_{/\CY'})^{\on{op}}}{\on{lim}}\, \Shv(X),$$
where the limit is formed using the !-pullback functors.

\medskip

With this definition, there is a naturally defined map
\begin{equation} \label{e:boxtimes prestack Intro}
\ul\Shv(\CY_1)\otimes \ul\Shv(\CY_2)\to \ul\Shv(\CY_1\times \CY_2),
\end{equation}
but it is not an equivalence in general.

\sssec{}

That said, one can associate to a prestack $\CY'$ a different category, namely
$$\Shv(\CY')_{\on{co}}:=\underset{X\in \Sch_{/\CY'}}{\on{colim}}\, \Shv(X),$$
where the colimit is formed using the *-pushforward functors.

\medskip

We define the object $\ul\Shv(\CY)_{\on{co}}\in \AGCat$ by
$$\ul\Shv(\CY)_{\on{co}}(X)=\Shv(\CY\times X)_{\on{co}}.$$

If $\CY=X$ is a scheme, then more or less by definition, we have
$$\Shv(X)_{\on{co}}=\Shv(X) \text{ and } \ul\Shv(X)_{\on{co}}=\ul\Shv(X),$$
but for a general prestack $\CY$, the two are different. However, if $\CY$
has a schematic diagonal, there are natural maps
$$\Omega_\CY:\Shv(\CY)_{\on{co}}\to \Shv(\CY) \text{ and } \ul\Omega_\CY:\ul\Shv(\CY)_{\on{co}}\to \ul\Shv(\CY).$$

\medskip

We call a prestack \emph{tame} (resp., universally tame) if $\Omega_\CY$ (resp., $\ul\Omega_\CY$) is an equivalence.
We shall say that $\CY$ is \emph{AG tame} if both $\CY$ and $\CY\times \CY$ are universally tame.

\sssec{}

Tame (resp.,  universally tame, AG tame) prestacks are quite abundant: any ind-scheme
(or, more generally, any pseudo-scheme) is AG tame.

\medskip

And any quasi-compact algebraic stack with an affine diagonal, which is locally a quotient
of a scheme by a group, is AG tame. (In fact, conjecturally, the latter condition is not even
necessary).

\sssec{} \label{sss:AG good Intro}

It turns out that AG tame prestacks are adapted for positive answers to the questions
posed in \secref{sss:else}:

\medskip

\begin{enumerate}

\item If $\CY$ is AG tame, then for any $\CY'$, the map
$$\ul\Shv(\CY)\otimes \ul\Shv(\CY')\to \ul\Shv(\CY\times \CY')$$
is an equivalence.

\medskip

\item If $\CY$ is AG tame, then the object $\ul\Shv(\CY)\in \AGCat$ is dualizable and self-dual;

\medskip

\item If $\CY$ is AG tame, then for $\CF\in \Shv(\CY\times \CY)$,
\begin{equation} \label{e:Tr calc}
\Tr(\ul{[\CF]},\ul\Shv(\CY))\simeq \on{C}^\cdot_\blacktriangle(\CY,\Delta_\CY^!(\CF)),
\end{equation}
where $\on{C}^\cdot_\blacktriangle(\CY,-)$ is the functor of \emph{renormalized} cochains,
see \secref{sss:ren cochain}.

\end{enumerate}

\ssec{Structure of the paper}

This paper is structured as follows:

\sssec{}

In \secref{s:enh} we explain the general construction
$$(\bO,\CV,F) \mapsto \EnhOV.$$

\sssec{}

In \secref{s:AGCat} we use the construction in \secref{s:enh} to produce $\AGCat$.

\sssec{}

In \secref{s:prestacks} we explain how to evaluate $\ul\Shv(-)$ on prestacks, and develop
the theory outlined in Secs. \ref{sss:co Intro}-\ref{sss:AG good Intro}.

\sssec{}

In \secref{s:duality} we explain how the notion of monoidal dual in $\AGCat$ interacts
with Verdier duality on schemes and algebraic stacks.

\sssec{}

Finally, in \secref{s:Trace} we perform the trace calculation \eqref{e:Tr calc}. We then specialize
to the case of case of the Frobenius map (for algebraic stacks over $\ol\BF_q$) and show that
$$\Tr(\Frob_*,\ul\Shv(\CY))\simeq \sFunct(\CY(\BF_q),\ol\BQ_\ell).$$

We also explain how the latter identification relates to Grothendieck's sheaves-functionss correspondence.

\sssec{}

In \secref{s:enriched cat} we review the theory of enriched categories (over a given monoidal category
$\CA$) and relate them to $\CA$-module categories.

\medskip

The key notions here are the \emph{enriched presheaves} construction
$$(\CC\in \Cat^\CA) \mapsto (\bP^\CA(\CC)\in \CA\mmod).$$
and the enriched Yoneda functor
$$\on{Yon}^\CA_\CC:\CC\to \bP^\CA(\CC).$$

\sssec{}

In \secref{s:weighted limits} we review \emph{weighted limits and colimits},
which are generalizations of usual limits and colimits for $\CA$-module
categories.

\medskip

The weights in question are given by objects of $\bP^\CA(\CC)$ and $\bP^{\CA^{\on{rev}}}(\CC^{\on{op}})$,
respectively.

\sssec{}

Finally, in \secref{s:adj} we review the notion of adjunction in an $(\infty,2)$-category.

\ssec{Conventions and notations}

\sssec{}

We will be working in the context of higher category theory. When we say ``category"
we always mean an $\infty$-category.

\medskip

We let $\Spc$ denote the category of \emph{spaces}, a.k.a. \emph{groupoids}.

\medskip

Given a category $\bC$, and $\bc_1,\bc_2\in \bC$, we let
$$\Maps_\bC(\bc_1,\bc_2)\in \Spc$$
denote the corresponding space of objects.

\medskip

For a category $\bC$, we let $\bC^{\on{grpd}}$ denote the space of its objects
(i.e., the groupoid obtained from $\bC$ by discarding non-invertible morphisms).

\sssec{}

In this paper we will also use $(\infty,2)$-categories. We will adopt the
definition developed in \cite[Chapter 10]{GR1}.

\sssec{}

Throughout this paper we will be working with the category of (classical, separated) schemes of finite type
over a ground field $k$, assumed algebraically closed. We denote this category by $\Sch$.
We will not need derived algebraic geometry over $k$ for this paper.

\medskip

By a prestack we shall mean a (classical) prestack locally of finite type over $k$
(see \cite[Chapter 2, Sect. 1.3.6]{GR1} for what this means). We denote the corresponding category
by $\on{PreStk}$.

\sssec{}

We will do \emph{higher algebra} in vector spaces over the field of coefficients, which
in this paper we take to be $\ol\BQ_\ell$.

\medskip

We let $\DGCat$ denote the $(\infty,2)$-category of $\ol\BQ_\ell$-linear DG categories,
defined as in \cite[Chapter 1, Sect. 10]{GR1}.

\medskip

We view $\DGCat$ as equipped with the symmetric monoidal structure, given by the
Lurie tensor product. The unit object for this structure is $\Vect$,
the category of (chain complexes) $\ol\BQ_\ell$-vector spaces.

\medskip

For a DG category $\bC$ and $\bc_1,\bc_2\in \bC$, we let
$$\CHom_\bC(\bc_1,\bc_2)\in \Vect$$
denote the corresponding object. We have
$$\Maps_{\bC}(\bc_1,\bc_2)=\tau^{\leq 0}(\CHom_\bC(\bc_1,\bc_2)),$$
where we view the right-hand side as a connective spectrum.

\ssec{Acknowledgements}

This paper (obviously) owes its existence to V.~Drinfeld.

\medskip

We would also like to thank D.~Kazhdan, S.~Raskin, G.~Stefanich for valuable discussions.

\medskip

The research of N.R. is supported by an NSERC Discovery Grant (RGPIN-2025-0696).

\medskip

The research of Y.V. was partially supported by the ISF grants
2091/21 and 2889/25. 

\section{The enhancement construction} \label{s:enh}

In this section we introduce a general categorical procedure, in the framework of which
we will define $\AGCat$. As was mentioned in the introduction (see \secref{sss:Stefanich}),
this procedure appears to have a broader use.

\ssec{Setup and construction}\label{ss:ker setup}

\sssec{}

Let $\bO$ be a closed\footnote{Recall that a (symmetric)
monoidal category is \emph{closed} if it has internal homs; i.e. for every $x\in \bO$, the functor $(-)
\otimes x: \bO \to \bO$ admits a right adjoint.} symmetric monoidal category, $\CV$ a presentable symmetric monoidal category
\footnote{I.e., an
commutative algebra object in $\Cat^{\on{pres}}$, see \secref{sss:pres 2}.} and
$$ F: \bO \to \CV $$
a right-lax symmetric monoidal functor.

\medskip

We will construct a symmetric monoidal $\CV$-module category $\EnhOV$ along with a symmetric monoidal functor
$$ \wtF: \bO \to \EnhOV $$
together with an identification of right-lax symmetric monoidal functors
\begin{equation} \label{e:first req}
F \simeq \uHom_{\EnhOV,\CV}(\uno_{\EnhOV}, -)\circ \wtF,
\end{equation}
where the notation $\uHom_{\EnhOV,\CV}$ stands for the \emph{relative internal}
Hom in $\EnhOV$ with respect to $\CV$.

\sssec{}

As was explained in the introduction, the construction of the category $\EnhOV$ uses the theory of enriched categories.
In Sect. \ref{s:enriched cat}, we provide a digest of the relevant aspects of the theory of enriched categories (in the $\infty$-categorical setting).

\medskip

For a monoidal category $\CA$, we will denote by $\on{Cat}^{\CA}$ the category of $\CA$-enriched categories.  When $\CA$ is \emph{symmetric} monoidal,
$\on{Cat}^{\CA}$ is naturally a symmetric monoidal category, see \secref{sss:enr sym mon}. 

\medskip

We will also be considering module categories over a given presentable monoidal category $\CA$ (see \secref{sss:pres monoidal}); we denote
the 2-category of such by $\CA\mmod$. For $\CM\in \CA\mmod$ and $m_1,m_2\in \CM$, we denote by
$$\uHom_{\CM,\CA}(m_2,m_2)\in \CA$$
the \emph{internal-relative-to} $\CA$ hom, i.e.,
$$\Maps_\CA(a,\uHom_{\CM,\CA}(m_2,m_2)):=\Maps_\CM(a\otimes m_1,m_2).$$

The assignment
$$(m_1,m_2)\mapsto \uHom_{\CM,\CA}(m_2,m_2)$$
makes $\CM$ into an object of $\Cat^\CA$, whose underlying category is the original $\CM$, i.e.,
$$\Hom^\CA_\CM(m_1,m_2):=\uHom_{\CM,\CA}(m_2,m_2).$$

\sssec{}

By \cite[Sect. 7]{GepHaug} (see also \cite{Heine}), the closed symmetric monoidal structure on $\bO$ gives rise to a symmetric monoidal $\bO$-enriched category
$$\on{Self-Enr}(\bO) \in \on{ComAlg}(\Cat^{\bO}) .$$
The space of objects of $\on{Self-Enr}(\bO)$ is the same as that of $\bO$ and for $\bo_1,\bo_2 \in \bO$
$$\Hom^\bO_{\on{Self-Enr}(\bO)}(\bo_1,\bo_2) \simeq \uHom_{\bO}(\bo_1,\bo_2) \in \bO $$
is given by the internal hom in $\bO$.

\medskip

We have a symmetric monoidal equivalence
$$\oblv_{\on{Enr}}(\on{Self-Enr}(\bO)) \simeq \bO ,$$
where $\oblv_{\on{Enr}}$ is the functor that forgets enrichment (see Sect. \ref{sss:forget enr}).

\sssec{}

We define
$$ \enhOV \in \Cat^{\CV} $$
to be the $\CV$-enriched category obtained from $\on{Self-Enr}(\bO)$ and changing enrichment by $F$, i.e.,
$$\enhOV:=\ind_\bO^\CV(\on{Self-Enr}(\bO)).$$

Since $F$ is right-lax \emph{symmetric} monoidal, $\enhOV$ is naturally a symmetric monoidal $\CV$-enriched category
(see \secref{sss:enh sym}).

\medskip

The category $\enhOV$ has the objects \emph{flagged by}\footnote{See \secref{ss:flagged} for what this means.}
$\bO^{\on{grpd}}$ and morphism objects are given by
$$\Hom^\CV_{\enhOV}(\bo_1,\bo_2) = F(\underline{\on{Hom}}_{\bO}(\bo_1,\bo_2)) ,$$
with composition given by the right-lax monoidal structure of $F$.  Moreover, tensor product in $\enhOV$ agrees with that in $\bO$
at the level of objects (see \secref{sss:enh sym}).

\medskip

We have a tautologically defined symmetric monoidal functor
\begin{equation} \label{e:O to enh}
\bO \simeq \oblv_{\bO\on{-Enr}}(\on{Self-Enr}(\bO)) \to \oblv_{\CV\on{-Enr}}(\enhOV).
\end{equation}

\sssec{}

For any $\CV$-enriched category $\CC$, there is a corresponding $\CV$-module category
\begin{equation} \label{e:presheaf recap}
\mathbf{P}^{\CV}(\CC) := \on{Funct}^{\CV}(\CC^{\on{op}}, \CV)
\end{equation}
of $\CV$-functors from the opposite category of $\CC$ to $\CV$.  As in the unenriched category theory, this
category has the universal property that it is the free cocompletion of $\CC$ as a $\CV$-enriched category
(see Proposition \ref{p:env enh} for the precise assertion).

\sssec{}\label{s:ker definitions}

We define $\EnhOV$ as the enriched presheaf category:
$$\EnhOV := \mathbf{P}^{\CV}(\enhOV).$$

\medskip

The symmetric monoidal structure on $\enhOV$ induces one on $\EnhOV$, see Sect. \ref{sss:sym mon preshv}.

\medskip

Composing \eqref{e:O to enh} with the enriched Yoneda embedding (see Sect. \ref{ss:enriched Yoneda}),
we obtain the desired symmetric monoidal functor
$$\wtF: \bO \to \EnhOV .$$

Sometimes, when no confusion is likely to occur, we will write
$$\ul\CV:= \EnhOV.$$

\sssec{}

By the enriched Yoneda lemma (\corref{c:enriched Yoneda}), for $\bo_1,\bo_2\in \bO$,
$$\uHom_{\ul\CV,\CV}(\wtF(\bo_1),\wtF(\bo_2))\simeq \Hom^\CV_{\enhOV}(\bo_1,\bo_2):=
F(\uHom_\bO(\bo_1,\bo_2)).$$

In particular,
$$\uHom_{\ul\CV,\CV}(\uno_{\ul\CV},\wtF(\bo))\simeq F(\bo),$$
as desired, see \eqref{e:first req}.

\sssec{} \label{sss:objects of KER}

Using \eqref{e:presheaf recap}, we will regard an object $\ul{v} \in \ul\CV$
as an assignment
$$ \bo\in \bO \mapsto \ul{v} (\bo) \in \CV,$$
together with compatible system of maps
$$F(\underline{\Hom}_{\bO}(\bo_1,\bo_2)) \otimes \ul{v} (\bo_2) \to \ul{v} (\bo_1) .$$

For example, for $\ul{v}=\wtF(\bo')$, we have
\begin{equation} \label{e:Yo O}
\ul{v} (\bo)=\Hom^\CV_{\enhOV}(\bo,\bo')=F(\uHom_\bO(\bo,\bo')).
\end{equation}

\medskip

By the enriched Yoneda lemma (\lemref{l:enriched Yoneda}), for $\ul{v}\in \ul\CV$ and $\bo\in \bO$,
$$\ul{v} (\bo)\simeq \uHom_{\ul\CV,\CV}(\wtF(\bo),\ul{v}).$$

\ssec{Basic properties of the enhancement construction}

\sssec{}
Let $\bO^{\on{grpd}}$ denote the space of objects of $\bO$.  By Sect. \ref{sss:enr monadic}, we have a monadic adjunction in $\CV\mathbf{\mbox{-}mod}$.

\begin{equation}\label{e:ker-oblv}
\xymatrix{
\on{Funct}(\bO^{\on{grpd}},\CV) \ar@<0.8ex>[r] & \ar@<0.8ex>[l] \EnhOV=\ul\CV.
}
\end{equation}

In terms of Sect. \ref{sss:objects of KER}, the right adjoint in \Cref{e:ker-oblv} sends $\ul{v}\in  \ul\CV$
to its restriction along $\bO^{\on{grpd}}\hookrightarrow \bO$.

\sssec{} \label{sss:prop Enh}

In particular, we obtain that the right adjoint in \Cref{e:ker-oblv} is conservative, and it follows from \Cref{s:weighted limits} that
it preserves limits and colimits and tensors and cotensors\footnote{See Sect. \ref{sss:cotensor as limit} for what this means.}
with objects of $\CV$.

\medskip

By Proposition \ref{p:weighted limits decomp}, this implies that the right adjoint in \Cref{e:ker-oblv} preserves all
weighted limits and colimits.

\medskip

In other words, when we view objects of $\EnhOV$ as in Sect. \ref{sss:objects of KER}, weighted limits and colimits
are computed value-wise.

\sssec{}

Consider the functor
$$\CV \times \bO^{\on{grpd}} \overset{\on{Id}\otimes \on{Yon}_\bO}\longrightarrow
\CV\otimes \on{Funct}(\bO^{\on{grpd}}, \Spc) \to %\on{Funct}(X, \Spc) \otimes \CV \simeq
\on{Funct}(\bO^{\on{grpd}},\CV).$$
The images of the objects $v\times \bo$ for $\bo\in \bO^{\on{grpd}}$, $v\in \CV$ generate the category $\on{Funct}(\bO^{\on{grpd}},\CV)$.
It follows that the corresponding objects
$$v\otimes \wtF(\bo) \in \ul\CV $$
generate $\ul\CV$ under colimits (see Corollary \ref{c:colimit pres of enr pshv} and Sect. \ref{sss:Yoneda generates weighted} for more precise assertions to this effect).

\sssec{}

We observe:

\begin{prop} \label{p:Enh is closed}
Suppose that every object of $\bO$ is dualizable. Then
the symmetric monoidal category $\EnhOV$ is closed; i.e. admits internal homs.
\end{prop}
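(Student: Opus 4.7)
The plan is to produce internal homs via the dualizability of representables and then extend by colimits. Since $\wtF\colon \bO \to \EnhOV$ is \emph{strongly} symmetric monoidal, it preserves duality data: for every dualizable $\bo \in \bO$ with dual $\bo^\vee$, evaluation $\bo^\vee \otimes \bo \to \uno_\bO$ and coevaluation $\uno_\bO \to \bo \otimes \bo^\vee$, applying $\wtF$ produces a dual pair exhibiting $\wtF(\bo)$ as dualizable in $\EnhOV$ with dual $\wtF(\bo^\vee)$; the triangle identities in $\EnhOV$ are images of those in $\bO$. Thus the hypothesis that every object of $\bO$ is dualizable gives that every representable $\wtF(\bo) \in \EnhOV$ is dualizable.

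Given this, for any $\ul{w} \in \EnhOV$ and $\bo \in \bO$, the object $\wtF(\bo^\vee) \otimes \ul{w}$ serves as the internal hom $\uHom_{\EnhOV}(\wtF(\bo), \ul{w})$, since for any $\ul{u} \in \EnhOV$ the dualizability provides
\[
\Maps_{\EnhOV}(\ul{u} \otimes \wtF(\bo), \ul{w}) \simeq \Maps_{\EnhOV}\bigl(\ul{u}, \wtF(\bo^\vee) \otimes \ul{w}\bigr).
\]
More generally, for $v \in \CV$ and $\bo \in \bO$, the object $v \otimes \wtF(\bo) \in \EnhOV$ admits the internal hom
\[
\uHom_{\EnhOV}\bigl(v \otimes \wtF(\bo), \ul{w}\bigr) \simeq \uHom_{\EnhOV,\CV}\bigl(v, \wtF(\bo^\vee) \otimes \ul{w}\bigr),
\]
where the right-hand side is the cotensor of $\wtF(\bo^\vee) \otimes \ul{w}$ by $v$. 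This cotensor exists by the observations of \secref{sss:prop Enh} (i.e., $\EnhOV$ is cotensored over $\CV$), and the adjunction follows by combining dualizability of $\wtF(\bo)$ with the $\CV$-action adjunction.

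To conclude for an arbitrary $\ul{v} \in \EnhOV$, we invoke the fact that $\ul{v}$ is a colimit of objects of the form $v_i \otimes \wtF(\bo_i)$, as recorded just before the proposition. Since the symmetric monoidal structure on $\EnhOV = \bP^\CV(\enhOV)$ is given by Day convolution, $(-) \otimes \ul{w}$ preserves colimits in each variable, so that
\[
\Maps_{\EnhOV}(\ul{u} \otimes \ul{v}, \ul{w}) \simeq \lim_i \Maps_{\EnhOV}\bigl(\ul{u} \otimes (v_i \otimes \wtF(\bo_i)), \ul{w}\bigr).
\]
Applying the previous step term-wise and pulling the limit inside (using that $\EnhOV$ is complete) yields a representing object
\[
\uHom_{\EnhOV}(\ul{v}, \ul{w}) \;:=\; \lim_i \uHom_{\EnhOV, \CV}\bigl(v_i, \wtF(\bo_i^\vee) \otimes \ul{w}\bigr),
\]
and the resulting functor $\uHom_{\EnhOV}(\ul{v}, -)$ is the required right adjoint to $(-) \otimes \ul{v}$.

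The main technical step is the first one: packaging dualizability in $\bO$ into dualizability in $\EnhOV$ via $\wtF$ at the $(\infty,2)$-categorical level, which requires the strong (not just right-lax) symmetric monoidal nature of $\wtF$ established in Sect.~\ref{s:ker definitions}. Everything afterwards is a formal consequence of presentability of $\EnhOV$, its $\CV$-module structure, and the cocontinuity of Day convolution in each variable.
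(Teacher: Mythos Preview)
Your proof is correct and follows essentially the same approach as the paper: reduce to representables $\wtF(\bo)$, observe these are dualizable since $\wtF$ is strictly symmetric monoidal, and then extend to arbitrary objects via colimits. The only cosmetic difference is that the paper packages the reduction using \emph{weighted} colimits (every object of $\EnhOV$ is a weighted colimit of representables, cf.\ \eqref{e:Yoneda weighted colim}) and appeals to \propref{p:weighted colims}, whereas you decompose into an ordinary colimit of objects $v_i\otimes\wtF(\bo_i)$ and handle the $\CV$-tensor part separately via cotensors; by \propref{p:weighted limits decomp} these amount to the same thing.
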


\begin{proof}
Since every object in $\EnhOV$ is a weighted colimit of $\ul{F}(\bo)$ for $\bo \in \bO$ and $\EnhOV$ admits all weighted \emph{limits},
by \propref{p:weighted colims}, it suffices to show that for an object $\ul{v} \in \EnhOV$, there exists an internal hom
$$\uHom_{\EnhOV}(\ul{F}(\bo), \ul{v}) \in \EnhOV.$$

Since the functor $\ul{F}:\bO\to \EnhOV$ is symmetric monoidal and $\bo$ is dualizable, the object $\ul{F}(\bo)$ is dualizable. Hence,
the above internal Hom exists and equals
$$\ul{F}(\bo^\vee)\otimes \ul{v}.$$

\end{proof}

\sssec{}
By the construction of $\ul\CV:=\EnhOV$, we have a right-lax symmetric monoidal adjunction of $\CV$-module categories
\begin{equation}\label{e:adjunction for KER}
\xymatrix{
\bi: \CV \ar@<0.8ex>[r] & \ar@<0.8ex>[l] \ul\CV : \be
}
\end{equation}
given by $\bi(v)=v \otimes \uno_{\ul\CV}$ and
$$\be(\ul{v} ) = \uHom_{\ul\CV,\CV}(\uno_{\ul\CV}, \ul{v} ) \simeq \ul{v} (\uno_{\bO}),$$
see Sect. \ref{sss:objects of KER} for the notation.

\medskip

Note that by definition, for $\bo\in \bO$
\begin{equation}
\bi(v)(\bo) = v\otimes F(\uHom_{\bO}(\bo, \uno_{\bO})) .
\end{equation}

\sssec{}
In practice, the functor $\bi$ is often fully faithful.

\begin{prop}\label{p:strictly unital inc ff}
Suppose that the functor $F$ is strictly unital, i.e. the natural map
$$ \uno_{\CV} \to F(\uno_{\bO}) $$
is an isomorphism.  Then the functor $\bi$ is fully faithful.
\end{prop}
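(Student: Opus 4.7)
Since $\bi$ has a right adjoint $\be$, fully faithfulness is equivalent to the unit map
$$\eta_v : v \to \be(\bi(v))$$
being an isomorphism for every $v \in \CV$. So the plan is to compute $\be(\bi(v))$ explicitly and identify $\eta_v$.

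First I would compute $\be(\bi(v))$ using the description of $\be$ from the paragraph preceding the statement, namely $\be(\ul{v}) = \ul{v}(\uno_\bO)$, together with the formula
$$\bi(v)(\bo) = v \otimes F(\uHom_\bO(\bo, \uno_\bO)).$$
Evaluating at $\bo = \uno_\bO$ and using $\uHom_\bO(\uno_\bO, \uno_\bO) \simeq \uno_\bO$ gives
$$\be(\bi(v)) \simeq v \otimes F(\uno_\bO).$$
Under the hypothesis that $F$ is strictly unital, the right-lax unit constraint $\uno_\CV \to F(\uno_\bO)$ is an isomorphism, so tensoring with $v$ yields $v \otimes F(\uno_\bO) \simeq v$.

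The remaining, and only delicate, step is to verify that the unit map $\eta_v$ is, under the above identification, precisely the map induced by $\uno_\CV \to F(\uno_\bO)$. For this, I would unwind the adjunction $(\bi,\be)$: by construction $\bi(v) = v \otimes \uno_{\ul\CV}$, and the counit $\bi(\be(\ul{v})) \to \ul{v}$ is the evaluation/action map. Tracing through, $\eta_v$ corresponds to the map
$$v \simeq v \otimes \uno_\CV \to v \otimes F(\uno_\bO)$$
obtained from the lax unit of $F$. Strict unitality of $F$ then forces $\eta_v$ to be an isomorphism, proving $\bi$ fully faithful.

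The main thing to be careful about is making the identification of $\eta_v$ with the map induced by $\uno_\CV \to F(\uno_\bO)$ rigorous. A clean way is to observe that via the enriched Yoneda embedding of \secref{sss:objects of KER}, the object $\uno_{\ul\CV} = \wtF(\uno_\bO)$ satisfies $\wtF(\uno_\bO)(\bo) = F(\uHom_\bO(\bo, \uno_\bO))$, and the unit of the adjunction $(\bi,\be)$ on $\uno_\CV$ is tautologically the unit map $\uno_\CV \to F(\uno_\bO) = \be(\uno_{\ul\CV})$; since both $\bi$ and $\be$ are $\CV$-linear, the general $\eta_v$ is obtained from $\eta_{\uno_\CV}$ by tensoring with $v$, reducing the claim to strict unitality.
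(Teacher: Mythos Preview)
Your proof is correct and takes a genuinely different route from the paper's. You argue directly via the adjunction unit: using the explicit formulas $\bi(v)(\bo)=v\otimes F(\uHom_\bO(\bo,\uno_\bO))$ and $\be(\ul v)=\ul v(\uno_\bO)$, you identify $\be\circ\bi(v)\simeq v\otimes F(\uno_\bO)$, and then reduce to the case $v=\uno_\CV$ using that $\be$ is $\CV$-linear (which holds since tensors in $\ul\CV$ are computed value-wise). The paper instead identifies $\CV\simeq\bP^\CV(*)$ and recognizes $\bi$ as the functor $\bP^\CV(*)\to\bP^\CV(\enhOV)$ induced by the inclusion $*\hookrightarrow\enhOV$ of the unit object; strict unitality says this inclusion is enriched-fully-faithful, and the paper then invokes \corref{c:enriched pshv fully faithful} to conclude. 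Your argument is more elementary and self-contained, requiring none of the weighted-colimit machinery of \secref{s:weighted limits} that underlies \corref{c:enriched pshv fully faithful}; the paper's argument is shorter on the page and situates the statement as an instance of the general principle that enriched-fully-faithful functors induce fully faithful functors on enriched presheaves.
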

\begin{proof}
By assumption, the natural map
$$ \uHom_{\CV}(\uno_{\CV}, \uno_{\CV}) \to \uHom_{\ul\CV,\CV}(\bi(\uno_{\CV}), \bi(\uno_{\CV})) $$
is an isomorphism.  Hence, by
\Cref{c:enriched pshv fully faithful}, the functor
$$\bi: \CV \simeq \mathbf{P}^{\CV}(*) \to \EnhOV=\ul\CV$$ is fully faithful.
\end{proof}

%\sssec{}
%If the functor $F$ is not strictly unital, it upgrades to a strictly unital functor
%$$ F: \bO \to F(\uno_{\bO})\on{-mod}(\CV) .$$
%By \Cref{p:ker insensitive to target} below, this gives a fully faithful embedding
%$$ F(\uno_{\bO})\on{-mod}(\CV) \to \EnhOV .$$

\sssec{}

We now show that the general set-up of $F:\bO\to \CV$ can be reduced to one of Proposition \ref{p:strictly unital inc ff}:

\medskip

Note that the functor $F$ induces a right-lax symmetric monoidal functor
$$F^{\on{enh}}: \bO \to F(\uno_\bO)\on{-mod}(\CV),$$
so that the original $F$ is the composition
$$ \bO \overset{F^{\on{enh}}}\longrightarrow  F(\uno_\bO)\on{-mod}(\CV) \overset{\oblv_{F(\uno_\bO)}}\longrightarrow \CV.$$
Note that the functor $F^{\on{enh}}$ is strictly unital.

\begin{prop}\label{p:ker insensitive to target}
There is a natural equivalence of $\CV$-module categories
$$\on{Enh}(\bO,F(\uno_\bO)\on{-mod}(\CV)) \simeq \EnhOV .$$
\end{prop}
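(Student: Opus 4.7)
The plan is to reduce the proposition to a general fact about enriched presheaves under change of enrichment. By construction we have a factorization of right-lax symmetric monoidal functors
$$\bO \xrightarrow{F^{\on{enh}}} \CV' \xrightarrow{G} \CV,\qquad \CV':=F(\uno_\bO)\on{-mod}(\CV),\ G:=\oblv_{F(\uno_\bO)},$$
in which $G$ is lax symmetric monoidal and $F^{\on{enh}}$ is strictly unital. By the functoriality of change-of-enrichment along lax symmetric monoidal functors (applied to the identity $F=G\circ F^{\on{enh}}$), the symmetric monoidal $\CV$-enriched category $\on{enh}(\bO,\CV)$ is identified with $\ind_G\bigl(\on{enh}(\bO,\CV')\bigr)$.

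It therefore suffices to prove the following general statement: for any $\CV'$-enriched category $\CC'$, there is a natural equivalence of $\CV$-module categories
$$\bP^{\CV}(\ind_G\CC') \;\simeq\; \bP^{\CV'}(\CC'),$$
where the right-hand side is regarded as a $\CV$-module by restriction along $\CV\to\CV'$; applying this to $\CC':=\on{enh}(\bO,\CV')$ and matching the symmetric monoidal structures (both inherited from that on $\on{enh}(\bO,\CV')$ via the lax symmetric monoidal $G$) then yields the proposition. To construct the comparison map, I would use the universal property of \propref{p:env enh}: the $\CV'$-enriched Yoneda embedding $\CC'\to\bP^{\CV'}(\CC')$ restricts via $\ind_G$ to a $\CV$-functor $\ind_G\CC'\to\ind_G\bP^{\CV'}(\CC')$, and since $\CV'$-cocompleteness restricts to $\CV$-cocompleteness along $G$, this produces a $\CV$-linear cocontinuous functor
$$\Phi:\bP^{\CV}(\ind_G\CC')\to\bP^{\CV'}(\CC').$$
A candidate inverse $\Psi$ sends a $\CV'$-functor $H:(\CC')^{\on{op}}\to\CV'$ to the $\CV$-functor $c\mapsto G(H(c))$, with structure maps obtained from the lax monoidal comparison $G(\uHom^{\CV'}(-,-))\to\uHom^{\CV}(G(-),G(-))$.

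By the enriched Yoneda lemma (\corref{c:enriched Yoneda}), $\Phi$ is fully faithful on Yoneda generators, since both $\uHom$s between Yoneda images of $c_1,c_2\in\CC'$ compute to $G(\uHom^{\CV'}_{\CC'}(c_1,c_2))$; full faithfulness then extends to the whole category by $\CV$-cocontinuity. The main obstacle will be verifying that $\Phi$ and $\Psi$ are mutually inverse and that $\Psi$ is itself $\CV$-cocontinuous. The substantive point is that every $\CV$-enriched presheaf on $\ind_G\CC'$ automatically carries a compatible $F(\uno_\bO)$-action via the endomorphism algebras $G(\uHom^{\CV'}_{\CC'}(c,c))$, which are canonically $F(\uno_\bO)$-algebras, and this action promotes $\CV$-valued data to $\CV'$-valued data. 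This is a form of the standard identification of $\CV'$-modules with those $\CV$-modules carrying a compatible action of the commutative algebra $F(\uno_\bO)\in\CV$.
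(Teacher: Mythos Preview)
Your approach is essentially correct and very close to the paper's. Both construct the same comparison functor $\Phi:\EnhOV\to\on{Enh}(\bO,\CV')$ (sending Yoneda representables to Yoneda representables) and verify it is an equivalence. The paper constructs $\Phi$ slightly differently: rather than using the universal property of $\bP^\CV$ directly with the restriction $G$, it inducts along the \emph{left} adjoint $L:\CV\to\CV'$ (free $F(\uno_\bO)$-module), applies the base-change formula $\bP^{\CV'}(\ind_L\CC)\simeq\CV'\otimes_\CV\bP^\CV(\CC)$ from \secref{sss:change of enrichment tensor}, and then passes to the adjoint $\CV$-linear map. The resulting functors coincide since both are $\CV$-linear, cocontinuous, and send representables to representables.

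There is a gap in your full faithfulness step. The claim that ``full faithfulness on Yoneda generators extends by $\CV$-cocontinuity'' is not valid on its own: you also need the images $\Phi(\on{Yon}^\CV(c))=\on{Yon}^{\CV'}(c)$ to be totally $\CV$-compact in $\bP^{\CV'}(\CC')$ (compare \propref{p:fully faithful from enriched pshv}). They are totally $\CV'$-compact by \secref{sss:yoneda im cpct}, and total $\CV$-compactness then follows from the projection formula $G(L(v)\otimes_{\CV'}-)\simeq v\otimes_\CV G(-)$, but this must be said explicitly. Similarly, essential surjectivity requires knowing that the representables generate $\bP^{\CV'}(\CC')$ under $\CV$-weighted (not just $\CV'$-weighted) colimits, which again uses that $\CV'$ is generated over $\CV$ by $L(\CV)$. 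The paper sidesteps both points by invoking \corref{c:equiv to enriched pshv} directly, which packages exactly these three conditions. Your explicit inverse $\Psi$ and the ``automatic $F(\uno_\bO)$-action'' observation are correct in spirit, but the paper's route avoids having to organize those coherences by hand.
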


\begin{proof}

We have a right-lax symmetric monoidal adjunction
\begin{equation}\label{e:module adjunction}
\xymatrix{
\CV \ar@<0.8ex>[r] & \ar@<0.8ex>[l] F(\uno_\bO)\on{-mod}(\CV)
}
\end{equation}

By functoriality of the change of enrichment, the counit of the adjunction \Cref{e:module adjunction}
induces a functor
\begin{equation}\label{e:module adjunction 1}
\ind_{\CV}^{F(\uno_\bO)\on{-mod}(\CV)} (\enhOV) \to \on{enh}(\bO,F(\uno_\bO)\on{-mod}(\CV)).
\end{equation}

Since the left adjoint in \secref{e:module adjunction} is \emph{strictly} symmetric monoidal, by Sect. \ref{sss:change of enrichment tensor},
passing to enriched presheaves, the functor \eqref{e:module adjunction 1} gives rise to a functor of $(F(\uno)\on{-mod}(\CV))$-module categories
$$F(\uno_\bO)\on{-mod}(\CV) \underset{\CV}{\otimes} \EnhOV  \to \on{Enh}(\bO,F(\uno_\bO)\on{-mod}(\CV)).$$
By adjunction, this gives rise to a functor of $\CV$-module categories
\begin{equation}\label{e:kernel enhancement}
\EnhOV \to \on{Enh}(\bO,F(\uno_\bO)\on{-mod}(\CV)).
\end{equation}
By construction, the functor \Cref{e:kernel enhancement} maps representable objects to representable objects and is
fully faithful (in the enriched sense) on representable objects.  Thus, by \Cref{c:equiv to enriched pshv} it is an equivalence.
\end{proof}

\ssec{The universal property} \label{ss:univ property}

\sssec{}

The isomorphism
$$F\simeq \be\circ \wtF$$
gives rise to a natural transformation
$$\bi\circ F\overset{\alpha}\to \wtF$$
as right-lax symmetric monoidal functors $\bO\to \ul\CV$.

\sssec{}

We claim that the above data $(\wtF,\bi,\alpha)$ satisfy the following universal property:

\begin{prop} \label{p:univ property}
For a presentable symmetric monoidal category $\bC$, precomposition with the triple $(F,\bi,\alpha)$ defines an
equivalence between:

\smallskip

\noindent{\em(i)} The groupoid of colimit-preserving symmetric monoidal functors $\ul\CV\to \bC$;

\smallskip

\noindent{\em(ii)} The groupoid of triples $(G,\bj,\beta)$, where:

\begin{itemize}

\item $G$ is a symmetric monoidal functor $\bO\to \bC$;

\item $\bj$ is a colimit-preserving symmetric monoidal functor $\CV\to \bC$;

\item $\beta$ is a natural transformation $\bj\circ F \to G$
as right-lax symmetric monoidal functors $\bO\to \bC$.

\end{itemize}

\end{prop}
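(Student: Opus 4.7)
The plan is to construct the equivalence by unfolding the three-step definition
$$\EnhOV=\bP^\CV\bigl(\ind_\bO^\CV(\on{Self-Enr}(\bO))\bigr)$$
and applying the universal property appropriate to each step. The forward direction (i)$\to$(ii) is immediate: given a colimit-preserving symmetric monoidal $H:\ul\CV\to\bC$, set $\bj:=H\circ\bi$ (colimit-preserving because $\bi$ is a left adjoint by \eqref{e:adjunction for KER} and $H$ preserves colimits), $G:=H\circ\wtF$, and $\beta:=H\ast\alpha$. Each of these inherits the relevant (right-lax) symmetric monoidal structure, and the assignment is manifestly natural. For the inverse, I fix a triple $(G,\bj,\beta)$ and build $H$ in three layers that invert the three steps of the construction.

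First, $\bj$ equips $\bC$ with the structure of a presentable $\CV$-module category, hence with a $\CV$-enrichment via internal relative homs. By the universal property of enriched presheaves (\propref{p:env enh}) as the free $\CV$-linear cocompletion of an enriched category, colimit-preserving $\CV$-linear symmetric monoidal functors $\bP^\CV(\enhOV)\to\bC$ correspond to symmetric monoidal $\CV$-enriched functors $\enhOV\to\bC$. Next, by the adjunction between change of enrichment $\ind_\bO^\CV$ and pullback of enrichment along $F$, such $\CV$-enriched functors out of $\enhOV=\ind_\bO^\CV(\on{Self-Enr}(\bO))$ are in bijection with symmetric monoidal $\bO$-enriched functors $\on{Self-Enr}(\bO)\to\bC$, where $\bC$ is now viewed as $\bO$-enriched via $\bj\circ F$. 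Both of these steps are formal consequences of already-established material in \secref{s:enriched cat}, and both are compatible with the symmetric monoidal structures because we have set things up (in \secref{sss:enh sym} and \secref{sss:sym mon preshv}) so that each operation preserves symmetric monoidal structures on enriched categories.

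The main obstacle is the third layer: identifying symmetric monoidal $\bO$-enriched functors $\on{Self-Enr}(\bO)\to\bC$ (with $\bC$ carrying the $\bO$-enrichment induced by $\bj\circ F$) with the data of a symmetric monoidal $G:\bO\to\bC$ together with a right-lax symmetric monoidal natural transformation $\beta:\bj\circ F\to G$. The underlying functor on flagged object spaces is $G$; the enrichment data consists of morphisms $\uHom_\bO(\bo_1,\bo_2)\to\uHom_{\bC,\bO}(G(\bo_1),G(\bo_2))$ in $\bO$, which by adjunction transpose to maps $(\bj\circ F)(\uHom_\bO(\bo_1,\bo_2))\otimes G(\bo_1)\to G(\bo_2)$. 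The claim is that such data, compatible with composition and the symmetric monoidal structures, is equivalent to specifying $\beta$: given $\beta$, one builds the enrichment by combining $\beta_{\uHom_\bO(\bo_1,\bo_2)}$ with $G$ applied to the canonical evaluation $\uHom_\bO(\bo_1,\bo_2)\otimes\bo_1\to\bo_2$ (which exists by closedness of $\bO$) and the monoidal structure on $G$; conversely, restricting along the unit $\uno_\bO\to\uHom_\bO(\bo,\bo)$ and the canonical map of $\on{Self-Enr}(\bO)$ extracts $\beta$. The upshot is an enriched-Yoneda style bijection using the self-enrichment of $\bO$.

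Finally, I would verify that composing the three equivalences sends $(G,\bj,\beta)$ to an $H$ whose composites with $\bi$, $\wtF$, and whose whiskering with $\alpha$ recover $(\bj,G,\beta)$, and that this is inverse to the forward functor of the first paragraph. Unitality and compatibility with the natural transformation $\alpha:\bi\circ F\to\wtF$ are automatic from how $\alpha$ was produced in \secref{ss:univ property}, namely as the image under the enriched Yoneda functor of the tautological symmetric monoidal map \eqref{e:O to enh}. Naturality in $\bC$ holds at each layer, giving the stated equivalence of groupoids.
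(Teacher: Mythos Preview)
Your proof is correct and follows essentially the same approach as the paper's: both unfold the three-step construction $\EnhOV=\bP^\CV(\ind_\bO^\CV(\on{Self-Enr}(\bO)))$, applying \propref{p:env enh} for the presheaf layer, the universal property of $\ind_\bO^\CV$ (\secref{sss:induce rich}) for the change-of-enrichment layer, and then unpacking the compatibility data on the self-enrichment layer as the natural transformation $\beta:\bj\circ F\to G$. The paper's proof is terser on the forward direction and on the third layer, but the substance is identical.
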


\begin{proof}

We construct a map in the opposite direction.

\medskip

Let us be given a colimit-preserving symmetric monoidal functor $\bj:\CV\to \bC$. In particular,
we can view $\bC$ a commutative algebra object in $\Cat^\CV$.

\medskip

Then, by \propref{p:env enh}, the datum of a colimit-preserving symmetric monoidal functor $\ul\CV\to \bC$ is equivalent to that
of a symmetric monoidal $\CV$-enriched functor
$$\enhOV\to \bC.$$

By the universal property in \secref{sss:induce rich}, the latter is the same as a symmetric monoidal functor
$$\on{Self-Enr}(\bO) \to \bC$$
compatible with $F$. In particular, this produces a symmetric monoidal functor $G:\bO\to \bC$.

\medskip

The structure on $G$ of compatibility with $F$ amounts to maps
$$F(\Hom^\bO_{\on{Enr}(\bO)}(\bo_1,\bo_2))\to \uHom_{\bC,\CV}(G(\bo_1),G(\bo_2)),$$
compatible with the symmetric monoidal structure.

\medskip

The latter structure is equivalent to a natural transformation
$$F\to j^R\circ G$$
as right-lax symmetric monoidal functors $\bO\to \CV$, or which is the same, as a natural transformation
$$j\circ F\to G$$
as right-lax symmetric monoidal functors $\bO\to \bC$.

\end{proof}

\sssec{}

Here is an application of Proposition \ref{p:univ property}:

\begin{prop}\label{p:kernel nothing new}
Suppose that $F$ is (strictly) symmetric monoidal and every object in $\bO$ is dualizable.  Then the adjunction \Cref{e:adjunction for KER} is an equivalence.
\end{prop}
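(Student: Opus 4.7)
The plan is to verify that the unit and counit of the adjunction $\bi \dashv \be$ are equivalences. For the unit, observe that $F$ being strictly symmetric monoidal is in particular strictly unital; hence Proposition \ref{p:strictly unital inc ff} applies directly and yields that $\bi$ is fully faithful, i.e.\ the unit $\id_\CV \to \be\circ\bi$ is an isomorphism. It therefore remains to show that $\bi$ is essentially surjective.

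For essential surjectivity I would exploit the generation statement following \eqref{e:ker-oblv}: the category $\ul\CV$ is generated under colimits by objects of the form $v \otimes \wtF(\bo)$ with $v\in\CV$ and $\bo\in\bO$. Since $\bi$ is a left adjoint in $\CV\mmod$, it preserves colimits and is $\CV$-linear; it therefore suffices to place each $\wtF(\bo)$ in the essential image of $\bi$. The natural candidate is $\bi(F(\bo))$, and the comparison is precisely the component $\alpha_\bo\colon \bi(F(\bo))\to \wtF(\bo)$ of the transformation $\alpha$.

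The heart of the argument is to show that $\alpha_\bo$ is an equivalence under the standing hypotheses. By \secref{sss:prop Enh}, the restriction functor $\ul\CV \to \on{Funct}(\bO^{\on{grpd}},\CV)$ is conservative, so the check is reduced to values at each $\bo'\in\bO$. Using \eqref{e:Yo O} and the formula in the display preceding Proposition \ref{p:strictly unital inc ff},
$$\wtF(\bo)(\bo') = F(\uHom_\bO(\bo',\bo)), \qquad \bi(F(\bo))(\bo') = F(\bo)\otimes F(\uHom_\bO(\bo',\uno_\bO)).$$
Because $\bo'$ is dualizable, $\uHom_\bO(\bo',-)\simeq (\bo')^\vee\otimes(-)$, and so the comparison reduces to the structure map $F(\bo)\otimes F((\bo')^\vee)\to F((\bo')^\vee\otimes \bo)$, which is an isomorphism because $F$ is \emph{strictly} symmetric monoidal.

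The principal obstacle is not the computation itself but the bookkeeping required to identify the value-wise map above with $\alpha_\bo$ (as opposed to some ad hoc isomorphism of the same source and target). By construction, $\alpha$ is the composite $\bi\circ F \simeq \bi\circ\be\circ\wtF \overset{\on{counit}}\to \wtF$ obtained from the identification $F\simeq \be\circ\wtF$ of \eqref{e:first req}. Unwinding the counit of $\bi\dashv\be$ together with the enriched Yoneda description of $\wtF$ (\secref{sss:objects of KER}) shows that the value of $\alpha_\bo$ at $\bo'$ is precisely the composition morphism in $\enhOV$, which is by construction the right-lax structure map of $F$. Strictness of $F$ then finishes the proof.
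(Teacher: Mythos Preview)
Your proof is correct, but it follows a different route from the paper's. The paper invokes the universal property of \propref{p:univ property}: to show $\bi$ is an equivalence it suffices to show that, for any target $\bC$, the natural transformation $\beta:\bj\circ F\to G$ appearing there is automatically an isomorphism. Since $F$ is strict and $\bj$ is symmetric monoidal, both $\bj\circ F$ and $G$ are \emph{strict} symmetric monoidal functors $\bO\to\bC$, and the paper then appeals to the general lemma that any (symmetric monoidal) natural transformation between strict symmetric monoidal functors out of a category in which every object is dualizable is invertible.

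Your argument instead verifies directly that the particular transformation $\alpha:\bi\circ F\to\wtF$ is an isomorphism, by evaluating at each $\bo'$ and identifying the resulting map with the lax-monoidal structure map $F(\bo)\otimes F((\bo')^\vee)\to F((\bo')^\vee\otimes \bo)$. This is effectively a hands-on instance of the general lemma the paper cites. The paper's approach is cleaner in that it isolates the essential categorical input and avoids the bookkeeping you flag in your last paragraph; your approach has the advantage of not needing the universal property at all, relying only on the conservativity statement of \secref{sss:prop Enh} and the generation statement from \corref{c:colimit pres of enr pshv}.
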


\begin{proof}

It suffices to show that (under the assumptions on $\bO$ and $F$), in the situation of
Proposition \ref{p:univ property}, the natural transformation $\beta$ is automatically an isomorphism.

\medskip

This follows from the fact that a natural transformation between two (strict) symmetric monoidal functors
$$G_1\to G_2, \quad G_i:\bO\to \bC$$
is an isomorphism, provided that every object of $\bO$ is dualizable.

\end{proof}

\ssec{Change of source}

\sssec{}

Let us now be given two pairs $(\bO_1,F_1:\bO_1\to \CV)$ and $(\bO_2,F_2:\bO_2\to \CV)$.
Denote
$$\ul\CV_1:=\on{Enh}(\bO_1,\CV) \text{ and } \ul\CV_2:=\on{Enh}(\bO_2,\CV).$$

Let us be given a symmetric monoidal functor $\Phi:\bO_1\to \bO_2$, equipped with a symmetric monoidal
natural transformation
\begin{equation} \label{e:F 12}
F_1\to F_2\circ \Phi,
\end{equation}
as right-lax symmetric monoidal functors $\bO_1\to \CV$.

\sssec{} \label{sss:change of source}

We claim that in this case there a naturally defined symmetric monoidal functor
$$\ul\Phi:\ul\CV_1\to \ul\CV_2$$
equipped with the identifications
$$\ul{F_2}\circ \Phi\simeq \ul\Phi\circ \ul{F_1} \text{ and } \bi_2\simeq \ul\Phi\circ \bi_1$$
that make the following diagram commute:
$$
\CD
\ul\Phi\circ \bi_1\circ F_1 & @>{\alpha_1}>> & \ul\Phi\circ \ul{F_1} \\
@V{\sim}VV & & @V{\sim}VV \\
\bi_2 \circ F_1 @>>> \bi_2 \circ F_2\circ \Phi @>{\alpha_2}>> \ul{F_2}\circ \Phi.
\endCD
$$

 \sssec{}

 Indeed, this follows from \propref{p:univ property}, where we take
 $$\bC:=\ul\CV_2,\,\, G=\ul{F_2}\circ \Phi,\,\, \bj=\bi_2$$
 and $\beta$ be the composite bottom horizontal arrow in the above diagram.

 \sssec{}

 Note that from \propref{p:fully faithful from enriched pshv}, we obtain:

 \begin{cor} \label{c:change of source ff}
 Suppose that all objects in $\bO_1$ are dualizable and \eqref{e:F 12} is an isomorphism. Then the functor $\ul\Phi$
 is fully faithful.
 \end{cor}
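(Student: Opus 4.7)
The plan is to reduce the statement to a question about the $\CV$-enriched functor $\on{enh}(\bO_1,\CV)\to \on{enh}(\bO_2,\CV)$ underlying $\ul\Phi$, and then check full faithfulness by an explicit computation of hom objects that uses dualizability.

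First, I would unravel the construction of $\ul\Phi$ from \secref{sss:change of source}. By construction, $\ul\Phi$ is a colimit-preserving symmetric monoidal functor $\bP^\CV(\on{enh}(\bO_1,\CV))\to \bP^\CV(\on{enh}(\bO_2,\CV))$ obtained from a $\CV$-enriched symmetric monoidal functor
\[
\phi:\on{enh}(\bO_1,\CV)\to \on{enh}(\bO_2,\CV)
\]
by passing to enriched presheaves, where $\phi$ agrees with $\Phi$ on the underlying spaces of objects. Consequently, $\ul\Phi\circ \wtF_1\simeq \wtF_2\circ \Phi$. Since such colimit-preserving functors out of an enriched presheaf category are controlled by their restriction to representables, \propref{p:fully faithful from enriched pshv} reduces the claim to showing that $\phi$ is fully faithful in the enriched sense, i.e., that for every $\bo_1,\bo_2\in \bO_1$ the induced map
\[
\Hom^\CV_{\on{enh}(\bO_1,\CV)}(\bo_1,\bo_2)\longrightarrow \Hom^\CV_{\on{enh}(\bO_2,\CV)}(\Phi(\bo_1),\Phi(\bo_2))
\]
is an isomorphism in $\CV$.

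Next, I would compute both sides. By the definition of $\enhOV$ recalled in \secref{ss:ker setup}, the left-hand side equals $F_1(\uHom_{\bO_1}(\bo_1,\bo_2))$ and the right-hand side equals $F_2(\uHom_{\bO_2}(\Phi(\bo_1),\Phi(\bo_2)))$. Here the dualizability hypothesis enters decisively: since $\bo_1$ is dualizable in $\bO_1$ and $\Phi$ is strictly symmetric monoidal, $\Phi(\bo_1)$ is dualizable with dual $\Phi(\bo_1^\vee)$, so that
\[
\uHom_{\bO_2}(\Phi(\bo_1),\Phi(\bo_2))\simeq \Phi(\bo_1)^\vee\otimes \Phi(\bo_2)\simeq \Phi(\bo_1^\vee\otimes \bo_2)\simeq \Phi(\uHom_{\bO_1}(\bo_1,\bo_2)).
\]
Combining this with the hypothesis that \eqref{e:F 12} is an isomorphism $F_1\xrightarrow{\sim}F_2\circ \Phi$ yields the desired identification, completing the proof.

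The main obstacle is identifying precisely which hypothesis makes the composition law respected, but this is built into the construction of $\phi$ as a \emph{symmetric monoidal} $\CV$-enriched functor, so once the hom-wise isomorphism above is established the conclusion is immediate from \propref{p:fully faithful from enriched pshv}. The role of the dualizability assumption is to convert the internal hom in $\bO_2$ into something of the form $\Phi(-)$, at which point the strict symmetric monoidality of the natural transformation \eqref{e:F 12} does the rest.
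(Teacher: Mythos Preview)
Your proposal is correct and follows the same approach as the paper, which simply cites \propref{p:fully faithful from enriched pshv} without elaboration; you have supplied exactly the details one would expect (identifying the enriched hom objects via dualizability and the isomorphism \eqref{e:F 12}). One small omission: \propref{p:fully faithful from enriched pshv} has two hypotheses, and you only verify (i). You should also note that $\ul\Phi$ sends representables to representables, $\wtF_1(\bo)\mapsto \wtF_2(\Phi(\bo))$, so condition (ii) holds by \secref{sss:yoneda im cpct}; equivalently, cite \corref{c:enriched pshv fully faithful} directly, which already packages both checks for maps between enriched presheaf categories.
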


\section{Construction of \texorpdfstring{$\AGCat$}{AGCat}} \label{s:AGCat}

In this section we construct the $(\infty,2)$-category $\AGCat$ and study its basic properties.

\ssec{The category of correspondences} \label{ss:corr}

\sssec{}

Given a category $\bC$ with finite limits (i.e., fiber products and a final object), we let $\on{Corr}(\bC)$ be the category of correspondences as
defined in \cite[Chapter 7, Sect. 1]{GR1}.

\medskip

Explicitly, objects of $\on{Corr}(\bC)$ are the same as objects of $\bC$. For a pair of objects $\bc_1,\bc_2$,
the groupoid of morphisms $\Maps_{\on{Corr}(\bC)}(\bc_1,\bc_2)$ consists of diagrams
\begin{equation} \label{e:corr Intro}
\CD
\bc_{1,2} @>>> \bc_2 \\
@VVV \\
\bc_1,
\endCD
\end{equation}
with the composition given by
$$(\bc_{1,2},\bc_{2,3})\mapsto \bc_{1,2}\underset{\bc_2}\times \bc_{2,3}.$$

\medskip

We will refer to the horizontal morphism in \eqref{e:corr Intro} as the \emph{forward} arrow, and
the vertical one as the \emph{backward} arrow.

\sssec{} \label{sss:dual corr}

The category $\on{Corr}(\bC)$ has a natural symmetric monoidal structure, induced by the Cartesian
product in $\bC$.

\medskip

Note that every object in $\on{Corr}(\bC)$ is dualizable and canonically self-dual. Namely,
the unit and counit of the self-duality are given by
$$
\CD
\bc @>{\Delta_\bC}>> \bc\times \bc \\
@VVV \\
\{*\}
\endCD
$$
and
$$
\CD
\bc @>>> \{*\} \\
@V{\Delta_\bC}VV \\
\bc\times \bc,
\endCD
$$
respectively, where $\{*\}$ is the final object in $\bC$.

\medskip

In particular, we obtain that $\on{Corr}(\bC)$ is closed and for $\bc_1,\bc_2\in \bC$, the internal hom
object $\uHom_{\on{Corr}(\bC)}(\bc_1,\bc_2)$ is $\bc_1\times \bc_2$.

\sssec{} \label{sss:corr var}

Let $\on{class}_1$ and $\on{class}_2$ be two classes of morphisms in $\bC$, each stable
under composition, and under base change with respect to each other, i.e., for a Cartesian square
$$
\CD
\bc'_1 @>{f'}>> \bc'_2 \\
@V{g_1}VV @VV{g_2}V \\
\bc_1 @>{f}>> \bc_2
\endCD,
$$
if $f\in \on{class}_1$ and $g_2\in \on{class}_2$, then $f'\in \on{class}_1$ and $g_1\in \on{class}_2$.

\medskip

Then we define a 1-full subcategory
$$\on{Corr}_{\on{class}_1,\on{class}_2}(\bC)\subset \on{Corr}(\bC),$$
where we restrict morphisms to those diagrams \eqref{e:corr Intro}, where
the forward morphism belongs to $\on{class}_1$ and the backward
morphism belongs to $\on{class}_2$.

\sssec{}

For $\on{class}_1=\on{all}$ and $\on{class}_2=\on{isom}$, we obtain
an equivalence
$$\on{Corr}_{\on{all},\on{isom}}(\bC)=\bC$$
and for $\on{class}_1=\on{isom}$ and $\on{class}_2=\on{class}$, we obtain
an equivalence
$$\on{Corr}_{\on{isom},\on{all}}(\bC)=\bC^{\on{op}}.$$

\ssec{The functor \texorpdfstring{$\Shv(-)$}{\Shv(-)}}

\sssec{}

Let $B$ be a base scheme, and consider the category $\on{Corr}(\Sch_{/B})$.

\medskip

The sheaf-theoretic input to the construction of the category $\AGCat$ is provided by a functor
\begin{equation} \label{e:Shv corr}
\Shv(-):\on{Corr}(\Sch_{/B})\to \DGCat,
\end{equation}
which is a \emph{constructible sheaf theory} in the sense of \cite[Sect. 1.1.4]{AGKRRV1}.

\sssec{}

This functor assigns to a scheme $X$ over $B$ the corresponding category $\Shv(X)$ and to a morphism
$f:X_1\to X_2$ the functors
$$f_*:\Shv(X_1)\to \Shv(X_2) \text{ and } f^!:\Shv(X_2)\to \Shv(X_1).$$

\medskip

Thus, the restriction $\Shv(-)|_{\on{Corr}_{\on{isom,all}}(\Sch_{/B})}$
is the functor
\begin{equation} \label{e:upper !}
(\Sch_{/B})^{\on{op}}\to \DGCat, \quad X\mapsto \Shv(X), \quad (X_1\overset{f}\to X_2)\mapsto f^!,
\end{equation}
and  the restriction $\Shv(-)|_{\on{Corr}_{\on{all,isom}}(\Sch_{/B})}$
is the functor
\begin{equation} \label{e:lower *}
\Sch_{/B}\to \DGCat, \quad X\mapsto \Shv(X), \quad (X_1\overset{f}\to X_2)\mapsto f_*.
\end{equation}

\sssec{}

The datum of extension of these functors to a functor \eqref{e:Shv corr} amounts to a homotopy-compatible
system of base-change isomorphisms: for
$$
\CD
X_1\underset{X_2}\times X'_2 @>{f'}>> X'_2 \\
@V{g_1}VV @VV{g_2}V \\
X_1 @>{f}>> X_2
\endCD,
$$
an isomorphism
$$g_2^!\circ f_* \simeq f'_*\circ g_1^!,$$
as functors $\Shv(X_1)\to \Shv(X'_2)$.

\medskip

The gist of the work \cite{GR1} can be summarized as follows: the datum of just \eqref{e:lower *}
(or \eqref{e:upper !}) uniquely recovers \eqref{e:Shv corr}.

\sssec{}

For this paper, we let \eqref{e:Shv corr} be the theory of $\ell$-adic sheaves, defined as in \cite[Sect. A.1.1(d')]{GKRV},
followed by tensoring up $(-)\underset{\BQ_\ell}\otimes \ol\BQ_\ell$.

\ssec{Construction of \texorpdfstring{$\AGCat$}{AGCat}}

\sssec{}

Morally, we would like to apply the set-up of \secref{ss:ker setup} to
$$\bO:=\on{Corr}(\on{Sch}_{/B}),\,\, \CV=\DGCat,\,\, F:=\Shv(-).$$

Unfortunately, we cannot quite do that, because in \secref{ss:ker setup} we assumed that
$\CV$ is presentable, while $\DGCat$ is not.

\medskip

To circumvent this, we will employ the recipe of \secref{sss:DG Cat}.

\sssec{}

In the setting of \secref{ss:ker setup}, we take
$$\bO:=\on{Corr}(\on{Sch}_{/B}),\,\, \CV=\DGCat_\kappa,\,\, F:=\Shv(-),$$
where $\kappa$ is as in \secref{sss:pres 1}.

\medskip

Set
\begin{equation} \label{e:AGCat kappa defn}
\AGCat_{\kappa,/B}:=\on{Enh}(\on{Corr}(\on{Sch}_{/B}),\DGCat_\kappa).
\end{equation}

\begin{equation} \label{e:AGCat defn}
\AGCat_{/B}:=\underset{\kappa<\lambda_0}{\on{colim}}\, \AGCat_{\kappa,/B},
\end{equation}
where the colimit is taken in $\Cat_{\on{large}}$,
cf. \secref{sss:DG Cat}.

\begin{rem}

From this point on, in the main body of the paper, we will ignore the difference between
$\DGCat$ and $\DGCat_\kappa$, and pretend that the construction in \secref{ss:ker setup}
was applied directly that $\DGCat$.

\medskip

We leave it to the reader to verify that the constructions and statements that follow stay
within our specified set-theoretic framework.

\end{rem}

\begin{rem}

When $B=\on{pt}=\on{Spec}(k)$, we will write simply $\AGCat$ instead of $\AGCat_{/B}$.

\end{rem}

\ssec{Basic properties of  \texorpdfstring{$\AGCat$}{AGCat}}

\sssec{}

By construction, the category $\AGCat_{/B}$ comes equipped with the following data:

\begin{itemize}

\item A symmetric monoidal functor
\begin{equation}\label{e:ushv as functor out of corr}
\underline{\on{Shv}}(-): \on{Corr}(\on{Sch}_{/B}) \to \AGCat_{/B};
\end{equation}

\item An adjunction
\begin{equation}\label{e:DGCta to AGCat}
\bi:\DGCat\rightleftarrows  \AGCat_{/B}:\be,
\end{equation}
with $\bi$ (strictly) symmetric monoidal;

\smallskip

\item An identification $\Shv(-)\simeq \be\circ \ul\Shv(-)$ as right-lax symmetric monoidal functors.

\end{itemize}

\medskip

The above data give rise to a natural transformation
$$\alpha:\bi\circ \Shv(-)\to \ul\Shv(-).$$

The triple $(\ul\Shv(-),\bi,\alpha)$ has a universal property formulated in Sect. \ref{ss:univ property}. I.e.,
we can think of $(\AGCat_{/B},\ul\Shv(-))$ as a universal way of turning $\Shv(-)$ into being strictly
monoidal.

\sssec{} \label{sss:AGCat expl}

As explained in \Cref{sss:objects of KER}, we will regard objects of $\AGCat$ as functors
$$ \ul{\bC}: \on{Sch}_{/B}^{\on{grpd}} \to \DGCat$$
endowed with additional structure: namely, for every $X_1,X_2\in \on{Sch}_{/B}$, and every sheaf
$$\CF\in \Shv(X_1\underset{B}\times X_2),$$
of a functor
\begin{equation} \label{e:functoriality KER}
\ul\bC(\CF): \ul{\bC}(X_1) \to \ul{\bC}(X_2)
\end{equation}
that is natural in all parameters.

\sssec{} \label{sss:Shv line} \label{sss:conv}

From this point of view, for an object $Z \in \on{Sch}_{/B}$, the object
$\ul\Shv(Z) \in \AGCat_{/B}$ is given by the assignment
$$ \on{Sch}_{/B} \ni X \mapsto \on{Shv}(Z\underset{B}\times X)$$
with the functors
$$\on{Shv}(Z\underset{B}\times X_1)\to \on{Shv}(Z\underset{B}\times X_2), \quad \CF\in \on{Shv}(X_1\underset{B}\times X_2)$$
given by convolution:
$$\CF \mapsto [\CF], \quad [\CF](\CF_1):=(p_2)_*(p_1^!(\CF_1)\sotimes p^!_{1,2}(\CF)),$$
where $p_1,p_2,p_{1,2}$ are the three projections from $Z\underset{B}\times X_1\underset{B}\times X_2$ to
$$Z\underset{B}\times X_1,\,\, Z\underset{B}\times X_2,\,\, X_1\underset{B}\times X_2,$$
respectively.

\sssec{} \label{sss:AGCat simple}

The data of \eqref{e:functoriality KER} can be rewritten as follows:

\medskip

\begin{itemize}

\item For every $X$, the category $\ul\bC(X)$ is acted on by $\Shv(X)$, which is regarded as a (symmetric) monoidal
category with respect to the $\sotimes$ operation;

\medskip

\item For every $f:X_1\to X_2$ (over $B$), we have a pullback functor $f^!:\ul\bC(X_2)\to \ul\bC(X_1)$, which is $\Shv(X_2)$-linear
with respect to $f^!:\Shv(X_2)\to \Shv(X_1)$.

\medskip

\item For every $f:X_1\to X_2$, we have a pushforward functor $f_*:\ul\bC(X_1)\to \ul\bC(X_2)$, which is $\Shv(X_2)$-linear
with respect to $f^!:\Shv(X_2)\to \Shv(X_1)$.

\medskip

\item For a diagram
$$
\CD
X_1\underset{Y}\times X_2 @>{f'_1}>> X_2 \\
@V{f'_2}VV @VV{f_2}V \\
X_1 @>{f_1}>> Y
\endCD
$$
an identification
$$(f'_1)_*\circ (f'_2)^!\simeq (f_2)^!\circ (f_1)_*, \quad \ul\bC(X_1)\to \ul\bC(X_2);$$

\item A homotopy-coherent system of compatibilities between the above data.

\end{itemize}

\sssec{Example}

For $\ul\bC=\ul\Shv(Z)$, the data from \secref{sss:AGCat simple} is particularly evident.

\sssec{}

In terms of the description in \secref{sss:AGCat expl}, the adjoint functors \eqref{e:DGCta to AGCat} look
as follows:

\medskip

For $\bC\in \DGCat$,
$$\bi(\bC)(X)=\bC\otimes \Shv(X)$$
and
$$\be(\ul\bC)=\ul\bC(B).$$

The natural transformation
$$\alpha:\bi\circ \Shv(-)\to \ul\Shv(-)$$
assigns to $Z\in \Sch_{/B}$ the collection of functors
$$\Shv(Z)\otimes \Shv(X)\to \Shv(Z\underset{B}\times X).$$

\sssec{} \label{sss:limits AGCat}

The category $\AGCat_{/B}$ has limits/colimits and tensors/cotensors by objects of $\DGCat$.
By \secref{sss:prop Enh}, all these operations are computed value-wise in terms of \eqref{sss:AGCat expl}.

\medskip

Additionally, thanks to \propref{p:Enh is closed}, $\AGCat_{/B}$ is closed; i.e. admits internal homs.

\sssec{}

By \Cref{p:ker insensitive to target}, the construction of $\AGCat_{/B}$ is not very sensitive to the target of the functor
$\Shv(-)$. In particular, we would get the same category if we replace
$$\DGCat \simeq \on{Vect}\on{-mod}(\Cat^{\on{pres}}) $$
by $\Cat^{\on{pres}}$ or $\Shv(B)\mod(\DGCat)$.

\medskip

In any case, the functor $\bi$ extends to a symmetric monoidal functor
$$\Shv(B)\mmod(\DGCat) \to \AGCat_{/B},$$
and the latter is fully faithful (see Propositions \ref{p:strictly unital inc ff} and \ref{p:ker insensitive to target}).

\ssec{Variants}

We can consider several variants of the category $\AGCat$.

\sssec{D-module variant}

Let $k$ be an algebraically closed field of characteristic zero.  To a scheme $X$ over $k$, we can attach the category of D-modules $\Dmod(X)$.
These assemble to a (strictly) symmetric monoidal functor
$$\Dmod(-): \on{Corr}(\on{Sch}_k) \to \DGCat_k ,$$
and we can perform the same construction as above.

\medskip

However, by \Cref{p:kernel nothing new}, there is an equivalence
$$\on{Enh}(\on{Corr}(\on{Sch}_k),\DGCat_k) \simeq \DGCat_k.$$
This suggests the following guiding meta-principle for much of this work: theorems about categories about
D-modules should hold in the $\ell$-adic setting when categories of sheaves are interpreted as objects of $\AGCat$.

\sssec{Affine schemes are enough}

Instead of considering the source of $F$ to be all schemes over $B$, we can instead consider only affine schemes.
As we shall presently see, this produces an equivalent category.

\medskip

Let $\on{Shv}|_{\on{aff}}(-)$ denote the restriction of the functor $\Shv(-)$ to the (1-full) subcategory
$$\on{Corr}(\on{Sch}_{/B}^{\on{aff}}) \subset \on{Corr}(\on{Sch}_{/B}), $$
and let
$$\AGCat_{/B}^{\on{aff}} := \on{Enh}(\Corr(\affSch_{/B}),\DGCat).$$

By \secref{sss:change of source}, we have a naturally defined functor
\begin{equation}\label{e:aff to all}
\AGCat^{\on{aff}}_{/B} \to \AGCat_{/B}.
\end{equation}

Moreover, by \corref{c:change of source ff} (and \secref{sss:dual corr}), the functor \eqref{e:aff to all}
is fully faithful. We claim:

\begin{prop}\label{p:affines are enough}
The functor \Cref{e:aff to all} is an equivalence.
\end{prop}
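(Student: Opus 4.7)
First, by \corref{c:change of source ff} applied to the inclusion $\Corr(\affSch_{/B})\hookrightarrow \Corr(\Sch_{/B})$ (using that every object of $\Corr(\affSch_{/B})$ is dualizable, see \secref{sss:dual corr}), the functor \eqref{e:aff to all} is already fully faithful. It is moreover $\DGCat$-linear and colimit-preserving, as is visible from the construction recalled in \secref{sss:change of source} together with \propref{p:univ property}, so its essential image in $\AGCat_{/B}$ is stable under colimits and under tensors with objects of $\DGCat$. By the generation statement in \secref{sss:prop Enh}, the objects $\bC\otimes \ul\Shv(X)$ with $\bC\in \DGCat$ and $X\in \Sch_{/B}$ generate $\AGCat_{/B}$ under colimits. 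Hence the task reduces to showing that $\ul\Shv(X)$ lies in the essential image for every $X\in \Sch_{/B}$.

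For this, I would fix a finite affine Zariski cover $\{U_i\}$ of $X$ (possible because $X$ is quasi-compact) and form the Čech nerve $U^\times_\bullet$ of $\sqcup_i U_i \to X$. Since $X$ is separated, each finite intersection $U_{i_0}\cap\cdots\cap U_{i_n}$ is affine, and a finite disjoint union of affines is affine; so every $U^\times_n$ belongs to $\affSch_{/B}$. Viewing the face maps as forward morphisms in $\Corr(\Sch_{/B})$, we obtain a simplicial diagram $\ul\Shv(U^\times_\bullet)$ in $\AGCat^{\on{aff}}_{/B}\hookrightarrow \AGCat_{/B}$, and the claim to establish becomes
$$\ul\Shv(X)\;\simeq\;\on{colim}_{[n]\in \Delta^{\on{op}}}\, \ul\Shv(U^\times_n).$$

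To check this, I would invoke \secref{sss:prop Enh}, which tells us that colimits in $\AGCat_{/B}$ are computed value-wise on $\Sch_{/B}^{\on{grpd}}$ (via the conservative restriction right adjoint). So the claim reduces to showing, for each test scheme $X_0\in \Sch_{/B}$, that
$$\Shv(X\underset{B}\times X_0)\;\simeq\;\on{colim}_{[n]\in \Delta^{\on{op}}}\, \Shv(U^\times_n\underset{B}\times X_0),$$
with structure maps given by $*$-pushforward along the base-changed open embeddings. This is the standard $*$-descent property of the sheaf theory $\Shv(-)$ for the finite affine Zariski cover $\{U_i\underset{B}\times X_0\}$ of $X\underset{B}\times X_0$, and is part of the package of a constructible sheaf theory in the sense of \cite{AGKRRV1}.

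The one point that needs care — and is the main obstacle — is the identification of the simplicial structure: the face maps on $\ul\Shv(U^\times_\bullet)$ coming from $\ul\Shv(-):\Corr(\Sch_{/B})\to \AGCat_{/B}$ must be matched, value-wise, with the $*$-pushforward maps appearing in the descent statement. Unwinding the convolution formula of \secref{sss:conv} in the special case of an open embedding (where one of the three projections is an isomorphism) shows that $\ul\Shv$ applied to such a forward morphism evaluates to ordinary $*$-pushforward, as required. Once this matching is in place, the argument is routine.
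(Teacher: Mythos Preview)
Your approach is essentially the same as the paper's: reduce to showing that $\ul\Shv(X)$ lies in the essential image for every scheme $X$, present $X$ as a colimit of affines (you use a Čech nerve, the paper uses a general sifted Zariski presentation), and then check the resulting colimit identity value-wise. The reduction steps are fine.

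The real gap is in your final step. What you need is a \emph{codescent} statement:
\[
\Shv(X\underset{B}\times X_0)\;\simeq\;\underset{[n]\in\Delta^{\on{op}}}{\on{colim}}\,\Shv(U^\times_n\underset{B}\times X_0)
\]
with transition functors given by $*$-pushforward along open immersions. You call this ``the standard $*$-descent property,'' but standard Zariski descent is a \emph{limit} statement with $*$-pullback (equivalently $!$-pullback, since these agree for open immersions), and the passage from that to your colimit statement is not automatic. The paper supplies exactly this missing step: since the $*$-pushforwards preserve compact objects, the colimit is compactly generated and one may dualize; under Verdier self-duality $(f_*)^\vee\simeq f^!$, so the dual of your codescent map is the Zariski $!$-descent map, which is an equivalence. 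You should include this argument (or an equivalent one) rather than appealing to an unspecified package.

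A minor point: your final paragraph about ``matching the simplicial structure'' is a non-issue. By construction (see \secref{sss:Shv line}), the functor $\ul\Shv(-)$ sends a forward morphism $f$ in $\Corr(\Sch_{/B})$ to $f_*$ value-wise; no unwinding of convolution is needed for this.
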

\begin{proof}
By \corref{c:equiv to enriched pshv}, it suffices to show that for every $Z \in \on{Sch}_{/B}$, the object
$$ \underline{\on{Shv}}(Z) \in \AGCat_{/B}$$
is in the essential image of the functor \Cref{e:aff to all}.  By construction, this is the case if $X$ is affine.

\medskip

Now, every scheme $Z$ is a sifted colimit (in the category of Zariski sheaves) of affine schemes
$$ Z \simeq \underset{i\in I}{\on{colim}}\  S_i.$$

The operation of *-pushforward give rise to a functor
\begin{equation}\label{e:affine cover}
\underset{i\in I}{\on{colim}}\ \underline{\on{Shv}}(S_i)\to \underline{\on{Shv}}(Z)
\end{equation}
in $\AGCat_{/B}$.  Since \Cref{e:aff to all} commutes with colimits, it is enough to show that \eqref{e:affine cover}
is an equivalence.

\medskip

Thus, by \secref{sss:limits AGCat}, we have to show that \eqref{e:affine cover} evaluates to an equivalence on every $X\in \Sch_{/B}$. I.e.,
we have to show that the functor
\begin{equation} \label{e:Zar codescent}
\underset{i\in I}{\on{colim}}\, \Shv(S_i\underset{B}\times X)\to \Shv(Z\underset{B}\times X)
\end{equation}
is an equivalence.

\medskip

The transition functors in \eqref{e:Zar codescent} preserve compactness. Hence, the left-hand side in \eqref{e:Zar codescent}
is compactly generated and its dual identifies with
$$\underset{i\in I^{\on{op}}}{\on{lim}}\, (\Shv(S_i\times X))^\vee.$$

Using Verdier duality (see \secref{ss:recall Verdier}), we identify the functor dual to \eqref{e:Zar codescent} with
\begin{equation} \label{e:Zar descent}
\Shv(Z\underset{B}\times X)\to \underset{i\in I^{\on{op}}}{\on{lim}}\, \Shv(S_i\underset{B}\times X),
\end{equation}
where in the right-hand side, the transition functors are given by !-restriction.

\medskip

Thus, it suffices to show that \eqref{e:Zar descent} is an equivalence. However,
this follows from Zariski descent for $\Shv(-)$.

\end{proof}

In light of \Cref{p:affines are enough}, in the sequel we will sometimes restrict to affine schemes as test objects for
$\AGCat_{/B}$.

\sssec{The \emph{left} version of $\AGCat$} \label{sss:left version}

In the setting of $\ell$-adic sheaves, there is another version of the category $\AGCat_{/B}$.  Namely, we have another functor
$$ \on{Shv}^{\on{left}}(-): \on{Corr}(\on{Sch}_{/B}) \to \DGCat $$
which takes a scheme $X$ to $\on{Shv}(X)$ and morphisms given by $*$-pullback and $!$-pushforward.

\medskip

This functor can be formally expressed in terms of $\on{Shv}$ as follows.  The value of $\on{Shv}$ on every morphism in $\on{Corr}(\on{Sch}_{/B})$ admits a left adjoint.
Let
$$ \on{Shv}^{L}(-): \on{Corr}(\on{Sch}_{/B})^{\on{op}} \to \DGCat $$
denote the functor obtained from $\on{Shv}$ by passing to left adjoints.  Now, the functor $\on{Shv}^{\on{left}}(-)$ is given by the composite
$$\on{Corr}(\on{Sch}_{/B}) \simeq \on{Corr}(\on{Sch}_{/B})^{\on{op}} \overset{\on{Shv}^L}{\longrightarrow} \DGCat $$
where the anti-equivalence of $\on{Corr}(\on{Sch}_{/B})$ is given by reading correspondences backwards (equivalently, it corresponding to passing to duals).

\medskip

We define
$$\AGCat^{\on{left}}_{/B}:=\on{Enh}(\on{Corr}(\on{Sch}_{/B}),\DGCat),$$
formed using the above functor $\on{Shv}^{\on{left}}(-)$.

\medskip

As before, we have a symmetric monoidal functor
$$\underline{\on{Shv}}^{\on{left}}(-): \on{Corr}(\on{Sch}_{/B}) \to \AGCat^{\on{left}}_{/B}.$$

\medskip

The contents of the previous two sections carry over to the present setting without any change\footnote{The two versions are, however,
substantially different from the point of view of interaction of monoidal duality on $\AGCat$ and Verdier duality on schemes discussed in \secref{s:duality}.}.

\begin{rem}

For most applications (such as those involving the sheaves-functions correspondence), $\AGCat_{/B}$ is the more useful category.

\medskip

Yet, there are some instances (notably ones that have to do with cohomology of shtukas), where it is more convenient to
use $\AGCat^{\on{left}}_{/B}$.

\end{rem}

\ssec{Change of base}

\sssec{}

Let $\phi:B'\to B$ be a map between base schemes. Base change defines a symmetric monoidal functor
$$\Phi:\Corr(\Sch_{/B})\to \Corr(\Sch_{/B'}).$$

In addition, !-pullback defines a symmetric monoidal natural transformation:

\begin{equation} \label{e:bc nat trans}
\Shv(-)\to \Shv(\Phi(-)).
\end{equation}

\sssec{}

Hence, applying \secref{sss:change of source}, we obtain a symmetric monoidal functor
$$\ul\Phi:\AGCat_{/B}\to \AGCat_{/B'}$$
equipped with identifications
$$\bi'\simeq \ul\Phi\circ \bi \text{ and } \ul\Shv(\Phi(-)) \simeq \ul\Phi\circ \ul\Shv(-).$$

\medskip

In addition, from \corref{c:change of source ff} we obtain:

\begin{cor}
Suppose that \eqref{e:bc nat trans} is an isomorphism. Then the functor
$\ul\Phi$ is fully faithful.
\end{cor}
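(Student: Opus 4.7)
The plan is to reduce the statement directly to \corref{c:change of source ff}, the general fact about the change-of-source construction established at the end of \secref{s:enh}. Setting
\[
\bO_1 := \Corr(\Sch_{/B}), \quad \bO_2 := \Corr(\Sch_{/B'}), \quad \CV := \DGCat,
\]
with $F_1 := \Shv(-)$, $F_2 := \Shv(-)$ and $\Phi$ the base-change functor, the natural transformation \eqref{e:F 12} becomes exactly \eqref{e:bc nat trans}, which is assumed to be an isomorphism.

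The only remaining hypothesis of \corref{c:change of source ff} to check is that every object of $\bO_1 = \Corr(\Sch_{/B})$ is dualizable. However, this has already been observed in \secref{sss:dual corr}: every object of $\Corr(\bC)$ (for $\bC$ any category with finite limits) is canonically self-dual, with unit and counit given by correspondences built from the diagonal $\Delta_\bc: \bc\to \bc\times \bc$ and the terminal map to $\{*\}$. In particular this applies to $\bC = \Sch_{/B}$.

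Therefore both hypotheses of \corref{c:change of source ff} are satisfied and the conclusion that $\ul\Phi: \AGCat_{/B}\to \AGCat_{/B'}$ is fully faithful follows immediately. There is no genuine obstacle here: the proof is essentially a one-line invocation of an already-proved general result, together with the observation that $\Corr(-)$ has only dualizable objects.
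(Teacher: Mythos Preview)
Your proposal is correct and follows exactly the paper's approach: the paper derives this corollary directly ``from \corref{c:change of source ff}'' without further argument, and your write-up simply makes explicit the verification of the two hypotheses (dualizability in $\Corr(\Sch_{/B})$ via \secref{sss:dual corr}, and the assumed isomorphism \eqref{e:bc nat trans} playing the role of \eqref{e:F 12}).
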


\sssec{}

Let now $k'\supset k$ be an algebraically closed field extension; set $B':=\Spec(k')\underset{\Spec(k)}\times B$. Base change gives rise to a
symmetric functor
$$\Phi:\Corr(\Sch_{/B})\to \Corr(\Sch_{/B'})$$
and a symmetric monoidal natural transformation:
\begin{equation} \label{e:bc nat trans fields}
\Shv(-)\to \Shv(\Phi(-)).
\end{equation}

Applying \secref{sss:change of source} again, we obtain a symmetric monoidal functor
$$\ul\Phi:\AGCat_{/B}\to \AGCat_{/B'}$$
equipped with identifications
$$\bi'\simeq \ul\Phi\circ \bi \text{ and } \ul\Shv(\Phi(-)) \simeq \ul\Phi\circ \ul\Shv(-).$$

If \eqref{e:bc nat trans fields} is an isomorphism, then $\ul\Phi$ is fully faithful.

\ssec{Adjunctions in $\AGCat$}

\sssec{}

The category $\AGCat_{/B}$ is tensored over $\DGCat$, and in particular $\on{Cat}$; hence, it naturally has the structure of a 2-category (see \Cref{sss:enriched 2cat}).

\medskip

We have the following characterization of adjunctions in $\AGCat$.

\begin{prop} \label{p:adj AGCat1}
Let $F: \ul\bC_1 \to \ul\bC_2$ be a 1-morphism in $\AGCat_{/B}$.  Then $F$ admits a left (resp. right) adjoint if and only if
for every scheme $X$, the corresponding functor
$$\ul\bC_1(X) \to \ul\bC_2(X) $$
admits a left (resp. right) adjoint, and for every scheme $X'$ and a \emph{constructible sheaf}
$\CF\in \Shv(X \underset{B}\times X')^{c}$, the square
$$
\xymatrix{
\ul\bC_1(X) \ar[r]\ar[d] & \ul\bC_2(X) \ar[d]\\
\ul\bC_1(X') \ar[r] & \ul\bC_2(X')
}
$$
satisfies the left (resp. right) Beck-Chevalley condition (see \Cref{sss:BC conditions}).
\end{prop}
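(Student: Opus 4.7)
The plan is to leverage the explicit description of $1$-morphisms in $\AGCat_{/B}$ from \sssec{sss:AGCat expl}--\sssec{sss:AGCat simple}: a $1$-morphism $F\colon \ul\bC_1\to \ul\bC_2$ is a family of functors $F_X\colon \ul\bC_1(X)\to \ul\bC_2(X)$ equipped with a homotopy-coherent system of natural isomorphisms
$$F_{X'}\circ \ul\bC_1(\CF)\simeq \ul\bC_2(\CF)\circ F_X$$
for every $\CF\in \Shv(X\underset{B}\times X')$. The whole argument will then amount to mate calculus in a $2$-category, combined with one compactness argument to pass from constructible sheaves to all sheaves.

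For the necessity direction, assume $F$ admits, say, a left adjoint $F^L$ in the $2$-category $\AGCat_{/B}$. By \sssec{sss:limits AGCat}, the evaluation functor $\AGCat_{/B}\to \DGCat$, $\ul\bC\mapsto \ul\bC(X)$, is a $2$-functor (indeed it preserves all weighted limits and colimits), so it sends our adjunction to pointwise adjunctions $(F^L)_X\dashv F_X$. Moreover, the very structure of $F^L$ as a $1$-morphism in $\AGCat_{/B}$ provides isomorphisms $(F^L)_{X'}\circ \ul\bC_2(\CF)\simeq \ul\bC_1(\CF)\circ (F^L)_X$, which a routine mate calculation identifies with the Beck--Chevalley transformation associated with $F$ for \emph{every} $\CF$, not only the constructible ones.

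For the sufficiency direction, suppose we are given pointwise adjoints $F^L_X\dashv F_X$ and that the left Beck--Chevalley condition holds for every constructible $\CF$. For each pair $X,X'$, mate calculus produces a natural transformation
$$\beta_\CF\colon F^L_{X'}\circ \ul\bC_2(\CF)\to \ul\bC_1(\CF)\circ F^L_X,$$
depending functorially on $\CF\in \Shv(X\underset{B}\times X')$. Both source and target, as functors of $\CF$ with values in $\Funct_{\DGCat}(\ul\bC_2(X),\ul\bC_1(X'))$, are $\DGCat$-linear and therefore preserve all colimits in the variable $\CF$. Since $\Shv(X\underset{B}\times X')$ is compactly generated by constructible sheaves and $\beta_\CF$ is assumed to be an isomorphism on these, $\beta_\CF$ is an isomorphism for \emph{all} $\CF$.

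The main obstacle is the final step: assembling the $F^L_X$'s together with the $\beta_\CF$'s into a genuine $1$-morphism $F^L\colon \ul\bC_2\to \ul\bC_1$ in $\AGCat_{/B}$, with unit and counit $2$-morphisms satisfying the triangle identities. I would handle this using the uniqueness of adjoints up to a contractible space of choices in an $(\infty,2)$-category, applied levelwise. The pointwise adjoints $F^L_X$ are determined by the $F_X$'s, and hence the entire coherent system of isomorphisms $\beta_\CF$ (including compatibility with composition of kernels via convolution, and all higher coherences built into the $\DGCat$-enriched presheaf structure) is forced by applying mate calculus to the full coherent data witnessing $F$ as a morphism in $\AGCat_{/B}$. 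Equivalently, one can invoke the fact that the $2$-functor $\AGCat_{/B}\to \prod_X \DGCat$ obtained from \eqref{e:ker-oblv} is conservative and preserves all weighted limits and colimits, so that adjointness of a $1$-morphism can be detected after restriction, while the compatibility conditions assemble the adjoint back into the enriched presheaf. The right-adjoint case is entirely parallel with the arrows of adjunction reversed.
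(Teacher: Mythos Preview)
Your overall strategy matches the paper's: both directions hinge on pointwise adjoints plus Beck--Chevalley, and the reduction from arbitrary $\CF$ to constructible $\CF$ via compact generation is exactly what the paper does. The necessity direction is fine as you wrote it.

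The gap is in your final assembly step for sufficiency. You write that the pointwise adjoints $F^L_X$ and the isomorphisms $\beta_\CF$ can be promoted to a $1$-morphism in $\AGCat_{/B}$ by ``uniqueness of adjoints applied levelwise'' or by the conservativity of $\AGCat_{/B}\to \prod_X \DGCat$. Neither of these is enough as stated. Conservativity of a $2$-functor does \emph{not} imply that it detects adjointability of $1$-morphisms: one can have a $1$-morphism whose image admits an adjoint without the original doing so (the obstruction is precisely the failure of Beck--Chevalley, but establishing that this is the \emph{only} obstruction is the content of the theorem, not an input to it). And ``uniqueness of adjoints levelwise'' tells you that each $F^L_X$ is determined, but not that the coherence data for $F^L$ as an enriched natural transformation exists; you still have to build the full simplicial diagram of compatibilities, not just the $1$-skeleton given by the $\beta_\CF$.

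The paper avoids this by invoking \propref{p:adjoints in V-functors}, which is the general statement that a natural transformation in $\on{Funct}^{\CA}(\CC,\CM)$ admits an adjoint iff it does so pointwise and the squares
$$
\xymatrix{
\Hom^{\CA}_{\CC}(c',c'')\otimes \Phi_1(c') \ar[r]\ar[d] & \Hom^{\CA}_{\CC}(c',c'')\otimes \Phi_2(c') \ar[d]\\
\Phi_1(c'') \ar[r] & \Phi_2(c'')
}
$$
satisfy Beck--Chevalley. That proposition in turn is proved by reformulating adjointability as a factorization through the $1$-full inclusion $\on{Funct}(\on{Adj},\CM)\hookrightarrow \on{Funct}([1],\CM)$ (\propref{p:basic bc}) and then applying \propref{p:1-full functors} to detect when an enriched functor lands in a $1$-full subcategory. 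This is exactly the coherence machine your argument is missing: it converts the problem from ``assemble an infinite tower of higher compatibilities'' into ``check a property on objects and $1$-morphisms,'' which is what your conditions supply. Once that is in place, the only remaining work is your compact-generation reduction, which is the entire content of the paper's proof.
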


\begin{proof}

We prove the assertion for the left adjoint (the proof is identical for the right adjoint).

\medskip

Suppose the conditions are satisfied.
By Proposition \ref{p:adjoints in V-functors}, it remains to show that for every scheme $X'$, the diagram
$$
\xymatrix{
\Shv(X\times X') \otimes \ul\bC_1(X) \ar[r]\ar[d] & \Shv(X \times X') \otimes \ul\bC_2(X) \ar[d] \\
\ul\bC_1(X') \ar[r] & \ul\bC_2(X')
}$$
satisfies the left Beck-Chevalley condition.

\medskip

The category $\Shv(X\times X') \otimes \ul\bC_2(X)$ is generated under colimits by objects of the form
$\CF \boxtimes \bc_2$ for $\CF \in \Shv(X\times X')^{c}$
and $\bc_2\in \ul\bC_2(X)$.  Thus, since all functors preserve colimits,
it suffices to check the Beck-Chevalley condition on these objects.  This is exactly the condition in the proposition.

\end{proof}

\sssec{}

\Cref{p:basic bc} gives a general characterization of functors from the 2-category $\on{Adj}$.  Specializing to the target $\AGCat_{/B}$, we have

\begin{prop}\label{p:adj in dgcat ag}
The restriction functor
$$ \on{Funct}(\on{Adj}, \AGCat_{/B}) \to \on{Funct}([1], \AGCat_{/B}) $$
is 1-full and the essential image is characterized as follows:

\smallskip

\noindent{\em(a)}
An object $\ul\bC_1 \to \ul\bC_2 \in \on{Funct}([1], \AGCat)$ is in the essential image if and only if
it admits a right (resp. left) adjoint in $\AGCat_{/B}$;

\smallskip

\noindent{\em(b)}
A morphism from $(\ul\bC_1 \to \ul\bC_2)$ to $(\ul\bC_1' \to \ul\bC_2')$ lies in the essential image if and only if
for every scheme $X$, the corresponding square
$$ \xymatrix{
\ul\bC_1(X) \ar[r]\ar[d] & \ul\bC_2(X)\ar[d]\\
\ul\bC'_1(X) \ar[r] & \ul\bC_2'(X)
}$$
satisfies the right (resp. left) Beck-Chevalley condition.

\end{prop}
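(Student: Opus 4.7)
The plan is to apply \propref{p:basic bc} to the target $(\infty,2)$-category $\AGCat_{/B}$. That proposition characterizes, for any $(\infty,2)$-category $\CC$, the 1-full image of the restriction $\on{Funct}(\on{Adj}, \CC) \to \on{Funct}([1], \CC)$ abstractly: the objects in the image are arrows admitting an adjoint, and a natural transformation between two such lies in the image if and only if it satisfies the Beck-Chevalley condition with respect to the chosen adjoints. The task is thus to reformulate these abstract conditions value-wise, using the description of $\AGCat_{/B}$-objects from \secref{sss:AGCat expl}.

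Part (a) is then immediate from \propref{p:adj AGCat1}, which provides precisely the value-wise characterization of the existence of an adjoint in $\AGCat_{/B}$.

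For part (b), the key additional input is that the adjoints in $\AGCat_{/B}$ are in fact computed value-wise: if $F : \ul\bC_1 \to \ul\bC_2$ admits a right adjoint $G$, then for every $X \in \Sch_{/B}$, the functor $G(X)$ is a right adjoint to $F(X)$, and the unit and counit of $F \dashv G$ evaluate on $X$ to those of $F(X) \dashv G(X)$. This is essentially what is established in the proof of \propref{p:adj AGCat1} through \propref{p:adjoints in V-functors}. Granted this, the Beck-Chevalley 2-cell attached to any square in $\AGCat_{/B}$ is built functorially from the units/counits of the horizontal arrows together with the square data, and so its evaluation on each $X$ is exactly the Beck-Chevalley 2-cell of the corresponding value-wise square. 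Since the evaluation maps on objects of $\on{Sch}_{/B}^{\on{grpd}}$ form a jointly conservative collection of functors out of $\AGCat_{/B}$ (see \secref{sss:prop Enh}), being an isomorphism in $\AGCat_{/B}$ is equivalent to being an isomorphism value-wise, which is precisely the condition stated in (b).

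The only step requiring genuine care is the value-wise computation of the adjoint together with its unit and counit; this, however, is essentially packaged by \propref{p:adj AGCat1}, and what remains is routine 2-categorical bookkeeping, so no serious obstacle is expected.
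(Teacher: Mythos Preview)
Your proposal is correct and follows the same approach as the paper, which simply introduces the proposition with the sentence ``\Cref{p:basic bc} gives a general characterization of functors from the 2-category $\on{Adj}$. Specializing to the target $\AGCat_{/B}$, we have\ldots'' and provides no further argument. You supply the details the paper leaves implicit: namely, that the abstract Beck--Chevalley condition in $\AGCat_{/B}$ from \propref{p:basic bc} translates to the value-wise condition because adjoints and their units/counits are computed value-wise (via \propref{p:adjoints in V-functors}) and because invertibility of 2-morphisms is detected value-wise. One small remark: your appeal to \secref{sss:prop Enh} for this last point is justified, but the cleanest route is to note that cotensors are computed value-wise, so a 2-morphism $\alpha$ (encoded as a 1-morphism $\ul\bC_1 \to \ul\bC_2^{[1]}$) is invertible precisely when it factors through $\ul\bC_2^{J}$ for $J$ the walking isomorphism, and this factorization is checked value-wise. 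Also, for part (a) you do not actually need \propref{p:adj AGCat1}: the statement ``admits an adjoint in $\AGCat_{/B}$'' is literally what \propref{p:basic bc} gives, with no translation required.
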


\section{The value of \texorpdfstring{$\uShv$}{ulShv} on prestacks} \label{s:prestacks}

In this subsection we study the two ways to associate objects of $\AGCat$ to prestacks
$$\CY\mapsto \ul\Shv(\CY) \text { and } \CY\mapsto \ul\Shv(\CY)_{\on{co}}.$$

We study their interaction, questions of dualizability and behavior with respect to products.

\ssec{Sheaves on prestacks}

\sssec{}\label{sss:def of sh on prestack}

Let $\CY$ be a prestack locally of finite type over $B$.  Recall from \cite[Appendix F]{AGKRRV1}, that the category of $\ell$-adic sheaves on $\CY$
is defined by
$$ \Shv(\CY):=  \underset{S\in (\affSch_{/\CY})^{\on{op}}}{\on{lim}}\  \Shv(S) \in \DGCat,$$
where the limit is taken with respect to $!$-pullbacks as transition functors.

\medskip

Note that by Zariski descent, we can equivalently rewrite the above limit as
$$\underset{X\in (\Sch_{/\CY})^{\on{op}}}{\on{lim}}\  \Shv(X) \in \DGCat.$$

In particular, when $\CY=X$ is a scheme, the above definition recovers the usual $\Shv(X)$.

\begin{rem} \label{r:comp gen prestack}

Since we are working in the constructible context, we can rewrite the limit presentation of $\Shv(X)$ as
a colimit:
$$\underset{X\in \Sch_{/\CY}}{\on{colim}}\  \Shv(X),$$
with $(-)_!$ as transition functors, see \cite[Chapter 1, Proposition 2.5.7]{GR1}.

\medskip

In particular, $\Shv(\CY)$ is compactly generated and hence dualizable.

\end{rem}

\sssec{} \label{sss:def of ush on prestack}

The category $\Shv(\CY)$ naturally upgrades to an object of $\AGCat_{/B}$; namely, we define
$$ \uShv(\CY) := \underset{X\in (\Sch_{/\CY})^{\on{op}}}{\on{lim}}\  \uShv(S) \in \AGCat_{/B}.$$

\medskip

It follows from \secref{sss:limits AGCat} that for any scheme $Y$,
\begin{equation} \label{e:value of ushv prestack}
\uShv(\CY)(X) \simeq \Shv(\CY \underset{B}\times X).
\end{equation}
In particular, for every $X$, the DG category $\uShv(\CY)(X)$ is compactly generated.

\sssec{} \label{sss:def of ush on prestack !}

Let $f: \CY_1 \to \CY_2$ be a morphism of prestacks over $B$. The functor between index categories
$$\affSch_{/\CY_1}\overset{f\circ -}\to \affSch_{/\CY_2}$$
gives rise to a pullback functor
$$f^!:\ul\Shv(\CY_2)\to \ul\Shv(\CY_1).$$

In terms of \eqref{e:value of ushv prestack}, the above functor is given by
$$(f\times \on{id})^!:\Shv(\CY_2\underset{B}\times X)\to \Shv(\CY_1\underset{B}\times X).$$

\medskip

The assignment $\CY\mapsto \ul\Shv(\CY)$ with !-pullbacks extends to a functor
$$(\PreStk_{/B})^{\on{op}}\to \AGCat_{/B}.$$

\sssec{}

Let $f: \CY_1 \to \CY_2$ is a schematic morphism of prestacks over $B$.  In this case, we can define the map
$$ f_*: \uShv(\CY_1) \to \uShv(\CY_2) $$
given in terms of \secref{sss:AGCat expl} by $*$-pushforward (which is defined for schematic morphisms of prestacks):
$$(f \times \on{id}_X)_*: \Shv(\CY_1 \underset{B}\times X) \to \Shv(\CY_2 \underset{B}\times X).$$

\sssec{}\label{sss:extension of ushv to prestacks}

The functoriality of the $*$-pushforward functor along schematic maps can be organized as follows.  Consider the 1-full symmetric monoidal category
$$\on{Corr}_{\on{sch;all}}(\on{PreStk}_{/B})\subset  \on{Corr}(\on{PreStk}_{/B}),$$
where we require the \emph{forward} morphisms to be schematic (see \secref{sss:corr var}).

\medskip

Let
\begin{equation}\label{e:ushv from prestacks}
\uShv(-): \on{Corr}_{\on{sch;all}}(\on{PreStk}_{/B}) \to \AGCat_{/B}
\end{equation}
be the right Kan extension of \Cref{e:ushv as functor out of corr} along the inclusion functor
$$\on{Corr}(\on{Sch}_{/B}) \to \on{Corr}_{\on{sch;all}}(\on{PreStk}_{/B}). $$

\medskip

Note that for every prestack $\CY$, the (opposite of the) functor
$$(\Sch^{\on{op}})_{/\CY}\to \on{Corr}(\on{Sch}_{/B})
\underset{\on{Corr}_{\on{sch;all}}(\on{PreStk}_{/B})}\times \on{Corr}_{\on{sch;all}}(\on{PreStk}_{/B})_{\CY/}$$
is cofinal (since it admits a right adjoint). In particular, the restriction of \Cref{e:ushv from prestacks} to
$$(\on{PreStk}_{/B})^{\on{op}} \subset \on{Corr}_{\on{sch;all}}(\on{PreStk}_{/B}) $$
agrees with that in \Cref{sss:def of ush on prestack !}.

\ssec{Cosheaves on prestacks}

\sssec{}

For a prestack $\CY$ define
$$\Shv(\CY)_{\on{co}}:=\underset{S\in \affSch_{/\CY}}{\on{colim}}\  \Shv(S) \in \DGCat,$$
where the colimit is taken with respect to $*$-pullshforwards as transition functors.

\medskip

The equivalence \eqref{e:Zar codescent} implies that the above colimit maps isomorphically to
$$\underset{X\in \Sch_{/\CY}}{\on{colim}}\  \Shv(X).$$

In particular, when $\CY=X$ is a scheme, we recover the usual $\Shv(X)$.

\sssec{} \label{sss:functoriality co}

For a map of prestacks $f:\CY_1\to \CY_2$ we have a tautologically defined functor
$$f_{*,\on{co}}:\Shv(\CY_1)_{\on{co}}\to \Shv(\CY_2)_{\on{co}}.$$

If $f$ is schematic, we have the base change functor
$$\affSch_{/\CY_2}\to \affSch_{/\CY_1}, \quad S\mapsto S\underset{\CY_2}\times \CY_1,$$
which gives rise to a functor
$$f^!_{\on{co}}: \Shv(\CY_2)_{\on{co}}\to \Shv(\CY_1)_{\on{co}}.$$

\sssec{}

The assignment
$$\CY\mapsto \Shv(\CY)_{\on{co}}$$
extends to a functor
$$\Shv(-)_{\on{co}}:\on{Corr}_{\on{all,sch}}(\on{PreStk})\to \DGCat,$$
where
$$\on{Corr}_{\on{all,sch}}(\on{PreStk})\subset \on{Corr}(\on{PreStk})$$ is a 1-full subcategory,
in which the backward morphisms are required to be schematic.

\medskip

Namely, $\Shv(-)_{\on{co}}$
is the left Kan extension of $$\Shv(-):\on{Corr}(\Sch)\to \DGCat$$ along
$$\on{Corr}(\Sch)\hookrightarrow \on{Corr}_{\on{all,sch}}(\on{PreStk}).$$

\medskip

Since the functor
$$\Sch_{/\CY}\to \on{Corr}(\Sch)\underset{\on{Corr}_{\on{all,sch}}(\on{PreStk})}\times (\on{Corr}_{\on{all,sch}}(\on{PreStk}))_{/\CY}$$
is cofinal (it admits a left adjoint), the value of the above left Kan extension on $\CY$ recovers $\Shv(\CY)_{\on{co}}$.

\sssec{}

For a prestack $\CY$ over $B$, set
$$\ul\Shv(\CY)_{\on{co}}:=\underset{X\in \Sch_{/\CY}}{\on{colim}}\  \uShv(X)\in \AGCat_{/B},$$
where the colimit is taken with respect to the *-pushforward functors.

\medskip

For $\CY=X\in \Sch$, we recover the usual object $\uShv(X)$.

\sssec{}

We claim:

\begin{lem} \label{l:value of u co shv}
The value $\ul\Shv(\CY)_{\on{co}}(X)$ of $\ul\Shv(\CY)_{\on{co}}$ on $X$ is given by
$\Shv(\CY\underset{B}\times X)_{\on{co}}$.
\end{lem}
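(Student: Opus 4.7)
The plan is to reduce the statement to a cofinality argument between two indexing categories, after first using that colimits in $\AGCat_{/B}$ are computed value-wise.

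First I would unwind the definition. By construction,
$$\ul\Shv(\CY)_{\on{co}} = \underset{X'\in \Sch_{/\CY}}{\on{colim}}\  \uShv(X'),$$
where the transition functors are given by $*$-pushforward as 1-morphisms in $\AGCat_{/B}$. By \secref{sss:limits AGCat} (which rests on \secref{sss:prop Enh}, where the right adjoint in \eqref{e:ker-oblv} is shown to be conservative and colimit-preserving), colimits in $\AGCat_{/B}$ are computed value-wise. Combined with the formula $\uShv(X')(X)=\Shv(X'\underset{B}\times X)$, this yields
$$\ul\Shv(\CY)_{\on{co}}(X)\simeq \underset{X'\in \Sch_{/\CY}}{\on{colim}}\ \Shv(X'\underset{B}\times X),$$
with transition functors given by $(f\times \on{id}_X)_*$ for $f:X'_1\to X'_2$ over $\CY$.

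Next I would identify the right-hand side with $\Shv(\CY\underset{B}\times X)_{\on{co}}$. Consider the base-change functor
$$\Phi:\Sch_{/\CY}\to \Sch_{/\CY\underset{B}\times X},\qquad (X'\to \CY)\mapsto (X'\underset{B}\times X\to \CY\underset{B}\times X).$$
The functor $\Phi$ is compatible with the $*$-pushforward transition functors defining the respective colimits, so it induces a natural comparison map
$$\underset{X'\in \Sch_{/\CY}}{\on{colim}}\ \Shv(X'\underset{B}\times X)\to \Shv(\CY\underset{B}\times X)_{\on{co}}.$$
It suffices to show that $\Phi$ is cofinal.

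Cofinality will be checked by exhibiting, for every $S\in \Sch_{/\CY\underset{B}\times X}$, an initial object in the comma category $\Phi_{S/}$. Writing $f:S\to \CY$ and $g:S\to X$ for the two projections of the structure map, I claim the initial object is $X':=S$ (viewed over $\CY$ via $f$) together with the map $(\on{id}_S,g):S\to S\underset{B}\times X$ over $\CY\underset{B}\times X$. Given any other pair $(X'\to \CY,\ S\to X'\underset{B}\times X)$, the projection $S\to X'\underset{B}\times X\to X'$ provides the unique morphism in $\Sch_{/\CY}$ from the candidate initial object to $(X',f')$, and the compatibility over $\CY\underset{B}\times X$ is automatic because the second projection of $S\to X'\underset{B}\times X$ is forced to equal $g$. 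Hence $\Phi_{S/}$ has an initial object, so it is contractible, $\Phi$ is cofinal, and the colimit comparison is an equivalence. The routine but main bookkeeping obstacle is matching transition functors under $\Phi$ (so that the colimits genuinely agree, not just their objects); this is immediate from the definition of the transition functors in both sides as $*$-pushforwards.
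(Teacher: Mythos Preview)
Your proof is correct and follows essentially the same route as the paper: reduce to value-wise colimits via \secref{sss:prop Enh}, then show the base-change functor $\Sch_{/\CY}\to \Sch_{/\CY\underset{B}\times X}$ is cofinal. The paper phrases the cofinality step slightly more succinctly by observing that this functor admits a left adjoint $(Z'\to \CY\underset{B}\times X)\mapsto (Z'\to \CY)$; your verification that each comma category $\Phi_{S/}$ has an initial object is exactly the unwinding of this adjunction.
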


\begin{proof}

By \secref{sss:prop Enh}, we have to establish an isomorphism
$$\underset{Z\in \Sch_{/\CY}}{\on{colim}}\, \Shv(Z\underset{B}\times X) \simeq
\underset{Z'\in \Sch_{/\CY\underset{B}\times X}}{\on{colim}}\, \Shv(Z').$$

This follows from the fact that the functor
$$\Sch_{/\CY}\to \Sch_{/\CY\underset{B}\times X}, \quad Z\mapsto Z\underset{B}\times X$$
is cofinal. Indeed, it has a left adjoint given by
$$(Z' \to \CY \underset{B}\times Y) \mapsto (Z' \to \CY).$$

\end{proof}

\sssec{}

As above, for any $f:\CY_1\to \CY_2$, we have a map
$$f_{\on{co},*}:\ul\Shv(\CY_1)_{\on{co}}\to \ul\Shv(\CY_2)_{\on{co}},$$
and when $f$ is schematic also a map
$$f_{\on{co}}^!:\ul\Shv(\CY_2)_{\on{co}}\to \ul\Shv(\CY_1)_{\on{co}}.$$

When we evaluate the above functors on $X\in \Sch$, in terms of the identification of \lemref{l:value of u co shv},
we recover the corresponding functors from \secref{sss:functoriality co} for
$$\CY_1\underset{B}\times X \overset{f\times \on{id}}\to \CY_2\underset{B}\times X.$$

\sssec{}

We define the functor
$$\uShv(-)_{\on{co}}:\on{Corr}_{\on{all,sch}}(\on{PreStk}_{/B})\to \AGCat_{/B},$$
to be the left Kan extension of
$$\uShv(-):\on{Corr}(\Sch_{/B})\to \AGCat_{/B}$$ along
$$\on{Corr}(\Sch_{/B})\hookrightarrow \on{Corr}_{\on{all,sch}}(\on{PreStk}_{/B}).$$

\medskip

As above, the value of the above left Kan extension on $\CY$ recovers $\ul\Shv(\CY)_{\on{co}}$.

\ssec{The \emph{right-lax} symmetric monoidal structure(s)}

\sssec{}

By \cite[Theorem B.1.3]{AGKRRV1} (or a special case of \cite[Chapter 9, Proposition 3.2.4]{GR1}),
the functor \Cref{e:ushv from prestacks} is naturally right-lax symmetric monoidal.

\medskip

Concretely, this means that for a pair of prestacks $\CY_1$ and $\CY_2$ over $B$, we have a canonically defined map
\begin{equation} \label{e:ten prod prestack}
\uShv(\CY_1)\otimes \uShv(\CY_2)\overset{\boxtimes}\to \uShv(\CY_1\underset{B}\times \CY_2).
\end{equation}

\sssec{}

Explicitly, the map \eqref{e:ten prod prestack} is given by
\begin{multline} \label{e:boxtimes}
\uShv(\CY_1)\otimes \uShv(\CY_2) \simeq
\left(\underset{X_1 \in (\Sch_{/\CY_1})^{\on{op}}}{\on{lim}}\,  \uShv(X_1)\right)\otimes \uShv(\CY_2) \overset{\rm{(A)}}\to \\
\to \underset{X_1 \in (\Sch_{/\CY_1})^{\on{op}}}{\on{lim}}\,  \left(\uShv(X_1)\otimes \uShv(\CY_2)\right)
\simeq \underset{X_1 \in (\Sch_{/\CY_1})^{\on{op}}}{\on{lim}}\,
\left(\uShv(X_1)\otimes \left(\underset{X_2 \in (\Sch_{/\CY_2})^{\on{op}}}{\on{lim}}\, \uShv(X_2)\right)\right) \overset{\rm{(B)}}\to \\
\to \underset{X_1 \in (\Sch_{/\CY_1})^{\on{op}}}{\on{lim}}\,
\left(\underset{X_2 \in (\Sch_{/\CY_2})^{\on{op}}}{\on{lim}}\, \uShv(X_1)\otimes \uShv(X_2)\right)\simeq \\
\simeq \underset{X_1 \in (\Sch_{/\CY_1})^{\on{op}},X_2 \in (\Sch_{/\CY_2})^{\on{op}}}{\on{lim}}\,
\uShv(X_1)\otimes \uShv(X_2)
\simeq \underset{X_1 \in (\Sch_{/\CY_1})^{\on{op}},X_2 \in (\Sch_{/\CY_2})^{\on{op}}}{\on{lim}}\,
\uShv(X_1 \underset{B}\times X_2) \simeq \\
\simeq \underset{X \in (\Sch_{/\CY_1 \underset{B}\times \CY_2})^{\on{op}}}{\on{lim}}\, \uShv(X)=\ul\Shv(\CY_1 \underset{B}\times \CY_2),
\end{multline}
where the last isomorphism is due to the fact that the functor
\begin{equation} \label{e:prod cofinal}
\Sch_{/\CY_1} \times \Sch_{/\CY_2} \to \Sch_{/(\CY_1 \underset{B}\times \CY_2)},
\quad
((X_1\to \CY_1), (X_2\to \CY_2))\mapsto (X_1 \underset{B}\times X_2\to \CY_1 \underset{B}\times \CY_2)
\end{equation}
is cofinal.

\sssec{}

When $\CY_1$ and $\CY_2$ are both schemes, the map \eqref{e:ten prod prestack} is an isomorphism; this
was the main point of introducing $\AGCat_{/B}$.

\medskip

However, it is not necessarily an isomorphism for
general prestacks $\CY_1$ and $\CY_2$: this is due to the fact that the map (A) in \eqref{e:boxtimes}
is not necessarily an isomorphism (we will see, however, that the map (B) \emph{is} always an isomorphism).

\medskip

However, in the course of this section we will distinguish a class of prestacks $\CY_1$, namely,
\emph{AG tame} prestacks, for which the map \eqref{e:ten prod prestack} is
an isomorphism against any $\CY_2$, see \propref{c:AG good ten}.

\sssec{}

By contrast, the functor
$$\uShv(-)_{\on{co}}:\on{Corr}_{\on{all,sch}}(\on{PreStk}_{/B})\to \AGCat_{/B}$$
is \emph{strictly} symmetric monoidal.

\medskip

Namely, for a pair of prestacks $\CY_1$ and $\CY_2$, we have
\begin{multline} \label{e:boxtimes co}
\uShv(\CY_1)_{\on{co}}\otimes \uShv(\CY_2)_{\on{co}} \simeq
\left(\underset{X_1 \in \Sch_{/\CY_1}}{\on{colim}}\,  \uShv(X_1)\right)\otimes
\left(\underset{X_2 \in \Sch_{/\CY_2}}{\on{colim}}\,  \uShv(X_2)\right) \simeq \\
\simeq \underset{X_1 \in \Sch_{/\CY_1},X_2 \in \Sch_{/\CY_2}}{\on{colim}}\,   \uShv(X_1)\otimes  \uShv(X_2)\simeq
\underset{X_1 \in \Sch_{/\CY_1},X_2 \in \Sch_{/\CY_2}}{\on{colim}}\, \uShv(X_1 \underset{B}\times X_2) \simeq \\
\simeq \underset{X \in \Sch_{/\CY_1 \underset{B}\times \CY_2}}{\on{colim}}\, \uShv(X)=\ul\Shv(\CY_1 \underset{B}\times \CY_2)_{\on{co}},
\end{multline}
where the last isomorphism is due to the cofinality of \eqref{e:prod cofinal}.

\ssec{Tame prestacks}

In this subsection we let $\CY$ be a prestack with a schematic diagonal.

\sssec{} \label{sss:Omega}

Due to the assumption on $\CY$, for any a map $f:X\to \CY$, where $X$ is a scheme, is schematic.
Hence, we have a well-defined funcor
$$f_*:\Shv(X)\to \Shv(\CY).$$

These functors assemble to a functor
\begin{equation} \label{e:Omega}
\Omega_\CY:\Shv(\CY)_{\on{co}}\to \Shv(\CY).
\end{equation}

\sssec{}

Note that for a schematic map between prestacks $f:\CY_1\to \CY_2$, we have the commutative diagrams
\begin{equation} \label{e:Omega *}
\CD
\Shv(\CY_1)_{\on{co}} @>{\Omega_{\CY_1}}>>  \Shv(\CY_1) \\
@V{f_{\on{co},*}}VV @VV{f_*}V \\
\Shv(\CY_2)_{\on{co}} @>{\Omega_{\CY_2}}>>  \Shv(\CY_2)
\endCD
\end{equation}
and
\begin{equation} \label{e:Omega !}
\CD
\Shv(\CY_1)_{\on{co}} @>{\Omega_{\CY_1}}>>  \Shv(\CY_1) \\
@A{f^!_{\on{co}}}AA @AA{f^!}A \\
\Shv(\CY_2)_{\on{co}} @>{\Omega_{\CY_2}}>>  \Shv(\CY_2)
\endCD
\end{equation}

\sssec{}

We shall say that $\CY$ is \emph{tame} if $\Omega_\CY$ is an equivalence.

\medskip

Not all prestacks are tame. However, in \secref{sss:Verdier compat} we will
single out a class of algebraic stacks, for which it is. Conjecturally, this includes all quasi-compact algebraic
stacks.

\sssec{}

Recall that $\CY$ is a \emph{pseudo-scheme} if (up to sheafification in the h-topology)
it can be rewritten as a (not necessarily filtered) colimit of schemes with transition maps that are proper.  For example,
and ind-scheme is such.

\medskip

We claim:

\begin{lem}
Any pseudo-scheme is tame.
\end{lem}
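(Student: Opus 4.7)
The plan is to exploit the proper base change / adjunction $f_*\dashv f^!$ for proper morphisms $f$, which is what makes pseudo-schemes tame.

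First, I would pick a presentation $\CY \simeq \colim_{i\in I} X_i$ in h-sheaves, with the $X_i$ schemes and all transition morphisms $f_{ij}:X_i\to X_j$ proper. Using h-descent for $\Shv(-)$, I would identify
$$\Shv(\CY) \simeq \underset{i\in I^{\on{op}}}{\on{lim}}\,\Shv(X_i),$$
where the limit is formed via the $!$-pullback functors. Dually, a cofinality argument — that any $Z\in\Sch_{/\CY}$ factors contractibly through the presentation, at least h-locally — would yield
$$\Shv(\CY)_{\on{co}} \simeq \underset{i\in I}{\on{colim}}\,\Shv(X_i),$$
with transition functors the proper pushforwards $(f_{ij})_*$.

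Next, I would invoke the general principle in $\DGCat$: if every transition functor in a diagram $D:I\to\DGCat$ admits a right adjoint, then $\colim_I D$ is canonically equivalent to $\lim_{I^{\on{op}}} D^R$, where $D^R$ is the diagram of right adjoints. Since for proper $f$ the functor $f^!$ is right adjoint to $f_*$, this produces a natural equivalence
$$\underset{i\in I}{\on{colim}}\,\Shv(X_i)\overset{\sim}\longrightarrow \underset{i\in I^{\on{op}}}{\on{lim}}\,\Shv(X_i).$$

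Finally, I would verify that the composite of the three equivalences above agrees with $\Omega_\CY$. Concretely, this amounts to checking that for each $i$, the composite sends the image in the colimit of $\CF\in \Shv(X_i)$ to $(X_i\to\CY)_*(\CF)\in\Shv(\CY)$, which is the very definition of $\Omega_\CY$ from \secref{sss:Omega}. This follows by tracing the unit/counit of the adjunction $f_*\dashv f^!$ through the colim-to-lim equivalence. The main obstacle is the cofinality of $I\to \Sch_{/\CY}$ needed to identify $\Shv(\CY)_{\on{co}}$ as $\colim_{i\in I}\Shv(X_i)$, together with the h-descent input needed for the parallel identification of $\Shv(\CY)$; once both reductions are available, the rest is formal from the proper adjunction.
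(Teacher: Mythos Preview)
Your proposal is correct and follows essentially the same route as the paper: present $\CY$ as a colimit of schemes along proper maps, identify $\Shv(\CY)$ and $\Shv(\CY)_{\on{co}}$ with the limit and colimit over this index category, and then invoke the general colimit-to-limit equivalence (the paper cites \cite[Chapter 1, Proposition 2.5.7]{GR1}) coming from the $(f_*,f^!)$-adjunction for proper $f$. The paper is terser about the two identifications you flag as obstacles (it simply asserts \eqref{e:ps sch 1} and \eqref{e:ps sch 2}), so your explicit mention of h-descent and cofinality is a welcome clarification rather than a departure.
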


\begin{proof}

Write (up to h-sheafification)
$$\CY=\underset{i\in I}{\on{colim}}\, Y_i,$$
where $Y_i$ are schemes are the transition maps are proper. Then
\begin{equation} \label{e:ps sch 1}
\Shv(\CY)\simeq \underset{i\in I^{\on{op}},(-)^!}{\on{lim}}\, \Shv(Y_i),
\end{equation}
and
\begin{equation} \label{e:ps sch 2}
\Shv(\CY)_{\on{co}}\simeq \underset{i\in I,(-)_*}{\on{colim}}\, \Shv(Y_i).
\end{equation}

Note, however, that since the transition functors in \eqref{e:ps sch 1} are the right adjoints
of those in \eqref{e:ps sch 2}, we obtain that the functor
\begin{equation} \label{e:Omega proof}
\Shv(\CY)_{\on{co}}\simeq \underset{i\in I,(-)_*}{\on{colim}}\, \Shv(Y_i)\to \Shv(\CY)
\end{equation}
assembled from the left adjoints of the evaluation functors
$$\Shv(\CY)\to  \Shv(Y_i),$$
is an equivalence, see \cite[Chapter 1, Proposition 2.5.7]{GR1}.

\medskip

Now, by construction, the functor \eqref{e:Omega proof} is the functor $\Omega_\CY$.

\end{proof}

\sssec{}

As in \secref{sss:Omega}, the maps
$$f_*:\ul\Shv(X)\to \uShv(\CY), \quad (X,f)\in \Sch_{/\CY}$$
assemble into a map
\begin{equation}\label{e:ul Omega}
\ul\Omega_\CY:\uShv(\CY)_{\on{co}}\to \uShv(\CY).
\end{equation}

\medskip

In terms of the identification given by \lemref{l:value of u co shv}, the values of the map
\eqref{e:ul Omega} are given by the functors \eqref{e:Omega} for $\CY\underset{B}\times X$.

\medskip

The commutative diagrams \eqref{e:Omega *} and \eqref{e:Omega !} give rise to
commutative diagrams

\begin{equation} \label{e:u Omega *}
\CD
\uShv(\CY_1)_{\on{co}} @>{\ul\Omega_{\CY_1}}>>  \uShv(\CY_1) \\
@V{f_{\on{co},*}}VV @VV{f_*}V \\
\uShv(\CY_2)_{\on{co}} @>{\ul\Omega_{\CY_2}}>>  \uShv(\CY_2)
\endCD
\end{equation}
and
\begin{equation} \label{e:u Omega !}
\CD
\uShv(\CY_1)_{\on{co}} @>{\ul\Omega_{\CY_1}}>>  \uShv(\CY_1) \\
@A{f^!_{\on{co}}}AA @AA{f^!}A \\
\uShv(\CY_2)_{\on{co}} @>{\ul\Omega_{\CY_2}}>>  \uShv(\CY_2)
\endCD
\end{equation}

\sssec{}

We shall say that $\CY\in \on{PreStk}_{/B}$ is \emph{universally tame} if the map \eqref{e:ul Omega} is an isomorphism
in $\AGCat_{/B}$. Explicitly, this means that for any $X\in \Sch$, the prestack
$$\CY\underset{B}\times X$$
is tame.

\medskip

We shall say that $\CY\in \on{PreStk}_{/B}$ is \emph{AG tame} of both $\CY$ and  $\CY \underset{B}\times \CY$ are universally tame.

\sssec{}

Let $\CY_1$ and $\CY_2$ be a pair of prestacks over $B$, both with a schematic diagonal. Note that we have a commutative diagram
\begin{equation} \label{e:Omega prod diag}
\CD
\ul\Shv(\CY_1)_{\on{co}}\otimes \ul\Shv(\CY_2)_{\on{co}} @>{\text{\eqref{e:boxtimes co}}}>{\sim}> \ul\Shv(\CY_1\underset{B}\times \CY_2)_{\on{co}} \\
@V{\ul\Omega_{\CY_1}\otimes \ul\Omega_{\CY_2}}VV @VV{\ul\Omega_{\CY_1\underset{B}\times \CY_2}}V \\
\ul\Shv(\CY_1)\otimes \ul\Shv(\CY_2) @>>{\text{\eqref{e:ten prod prestack}}}> \ul\Shv(\CY_1\underset{B}\times \CY_2).
\endCD
\end{equation}

Hence, we obtain:

\begin{cor} \label{c:Omega prod diag}
Suppose that $\CY_1$, $\CY_2$ are universally tame. Then the following are equivalent:

\medskip

\noindent{\em(i)} The prestack $\CY_1\underset{B}\times \CY_2$ is also universally tame;

\noindent{\em(ii)} The map
$$\ul\Shv(\CY_1)\otimes \ul\Shv(\CY_2) \overset{\boxtimes}\to \ul\Shv(\CY_1\underset{B}\times \CY_2)$$
of \eqref{e:ten prod prestack} is an isomorphism.

\end{cor}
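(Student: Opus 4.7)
The plan is to read off the equivalence directly from the commutative diagram \eqref{e:Omega prod diag}, observing that three of its four arrows are isomorphisms under the given hypotheses, which forces the remaining two to be isomorphic or not simultaneously.

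First I would note that the top horizontal arrow of \eqref{e:Omega prod diag} is always an isomorphism: this is the content of the computation \eqref{e:boxtimes co} establishing that the functor
$$\uShv(-)_{\on{co}}:\on{Corr}_{\on{all,sch}}(\on{PreStk}_{/B})\to \AGCat_{/B}$$
is \emph{strictly} symmetric monoidal. So this arrow is an isomorphism irrespective of the tameness hypothesis.

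Next, under the assumption that $\CY_1$ and $\CY_2$ are universally tame, the maps $\ul\Omega_{\CY_1}$ and $\ul\Omega_{\CY_2}$ are isomorphisms in $\AGCat_{/B}$ by definition. Since $\otimes$ is a bifunctor on the symmetric monoidal $(\infty,2)$-category $\AGCat_{/B}$, the tensor product of two isomorphisms is an isomorphism, so the left vertical arrow
$$\ul\Omega_{\CY_1}\otimes \ul\Omega_{\CY_2}: \ul\Shv(\CY_1)_{\on{co}}\otimes \ul\Shv(\CY_2)_{\on{co}} \to \ul\Shv(\CY_1)\otimes \ul\Shv(\CY_2)$$
is an isomorphism as well.

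Finally, by commutativity of \eqref{e:Omega prod diag} and the 2-out-of-3 property for isomorphisms, the bottom horizontal arrow (which is the map \eqref{e:ten prod prestack}, whose being an isomorphism is exactly condition (ii)) is an isomorphism if and only if the right vertical arrow $\ul\Omega_{\CY_1\underset{B}\times \CY_2}$ is, which is exactly condition (i). This proves the equivalence. There is essentially no obstacle here beyond verifying that the diagram \eqref{e:Omega prod diag} commutes, but the commutativity has already been asserted as part of its construction (it encodes the compatibility between the natural transformation $\ul\Omega$ and the symmetric monoidal structure on the source and target functors $\ul\Shv(-)_{\on{co}}$ and $\ul\Shv(-)$), so the argument is purely formal.
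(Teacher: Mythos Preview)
Your argument is correct and is exactly the one the paper intends: the corollary is recorded immediately after diagram \eqref{e:Omega prod diag} with the words ``Hence, we obtain,'' and your unpacking---top arrow always an isomorphism by \eqref{e:boxtimes co}, left vertical arrow an isomorphism by the universal tameness hypothesis, then 2-out-of-3---is precisely the formal deduction behind that ``Hence.''
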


In particular, we obtain:

\begin{cor}\label{c:tensor product verd comp}
Let $\CY$ be an AG tame prestack.  Then the map
$$ \uShv(\CY) \otimes \uShv(\CY) \overset{\boxtimes}\to \uShv(\CY \underset{B}\times \CY)$$
is an isomorphism.
\end{cor}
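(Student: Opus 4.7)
The plan is to deduce this immediately from \corref{c:Omega prod diag} applied with $\CY_1 = \CY_2 = \CY$. By the definition of AG tameness, if $\CY$ is AG tame then both $\CY$ and $\CY \underset{B}\times \CY$ are universally tame. In particular, the hypothesis of \corref{c:Omega prod diag} (namely that $\CY_1$ and $\CY_2$ are universally tame) is satisfied, and condition (i) of that corollary (namely that $\CY_1 \underset{B}\times \CY_2 = \CY \underset{B}\times \CY$ is universally tame) also holds.

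Thus the implication (i) $\Rightarrow$ (ii) in \corref{c:Omega prod diag} yields exactly the assertion that the $\boxtimes$ map
$$\uShv(\CY) \otimes \uShv(\CY) \to \uShv(\CY \underset{B}\times \CY)$$
is an isomorphism.

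There is essentially no obstacle to overcome: the entire content has been packaged into \corref{c:Omega prod diag} (whose proof in turn rests on the commutative diagram \eqref{e:Omega prod diag}, combined with the strict symmetric monoidality of $\uShv(-)_{\on{co}}$ established in \eqref{e:boxtimes co}). The present corollary is simply the diagonal specialization of that more general statement, made possible by the fact that the definition of AG tame is tailored precisely to ensure that both factors and the product are universally tame simultaneously. No further computation is required.
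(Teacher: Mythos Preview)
Your proof is correct and takes exactly the same approach as the paper: the corollary is stated immediately after \corref{c:Omega prod diag} with the phrase ``In particular, we obtain,'' indicating it is simply the specialization $\CY_1=\CY_2=\CY$ together with the definition of AG tame.
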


\ssec{Dualizability in $\AGCat$}

\sssec{} \label{sss:self-dual AG sch}

Since the functor \Cref{e:ushv as functor out of corr} is symmetric monoidal, by \secref{sss:dual corr},
for every scheme $X$, the object
$$\uShv(X) \in \AGCat_{/B}$$
is canonically self-dual with the unit map given by
$$\uno_{\AGCat_{/B}} = \uShv(B) \overset{(p_{X/B})^!}\longrightarrow \uShv(X)
\overset{(\Delta_{X/B})_*}\longrightarrow \uShv(X \underset{B}\times X)\simeq \uShv(X) \otimes \uShv(X)$$
and the counit map given by
$$\uShv(X)\otimes \uShv(X) \simeq \uShv(X\underset{B}\times X)
\overset{(\Delta_{X/B})^!}\longrightarrow \uShv(X) \overset{(p_{X/B})_*}\longrightarrow \uShv(B) \simeq \uno_{\AGCat_{/B}},$$
where $p_{X/B}: X \to B$ is the projection map and $\Delta_{X/B}: X \to X\underset{B}\times X$ is the diagonal.

\sssec{} \label{sss:dual of maps}

It follows formally that for a map of schemes $f:X_1\to X_2$, under the above identifications
$$\ul\Shv(X_i)^\vee \simeq \ul\Shv(X_i),$$
we have
$$(f^!)^\vee\simeq f_*$$
as maps
$$\ul\Shv(X_1)\to \ul\Shv(X_2).$$

\sssec{}

We will be interested in more general dualizable objects in $\AGCat$.  First, we claim:

\begin{prop}\label{p:tensor with dualizable}
Let $\CY$ be a prestack such that $\uShv(\CY) \in \AGCat_{/B}$ is a dualizable object.
Then for any prestack $\CY'$, the natural map \emph{(}see \eqref{e:ten prod prestack}\emph{)}
$$ \uShv(\CY') \otimes \uShv(\CY) \overset{\boxtimes}{\to} \uShv(\CY' \underset{B}\times \CY)$$
is an isomorphism.
\end{prop}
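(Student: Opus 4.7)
The plan is to directly analyze the factorization of $\boxtimes$ already written out in \eqref{e:boxtimes}, specialized to $\CY_1 := \CY'$ and $\CY_2 := \CY$. Every arrow in that chain is either a manifest isomorphism or an instance of cofinality (including the final identification, which uses \eqref{e:prod cofinal}), with the sole exceptions being the two comparison maps labeled (A) and (B). So the argument reduces to checking that (A) and (B) are isomorphisms.

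First I would dispose of (B), which asserts that for any fixed scheme $X_1$, the functor $-\otimes \uShv(X_1)$ commutes with the limit $\underset{X_2 \in (\Sch_{/\CY})^{\on{op}}}{\on{lim}}\, \uShv(X_2)$. This holds unconditionally because, by \secref{sss:self-dual AG sch}, the object $\uShv(X_1) \in \AGCat_{/B}$ is self-dual, in particular dualizable; hence $-\otimes \uShv(X_1)$ admits both a left and a right adjoint, and in particular preserves arbitrary limits. (This is exactly the remark flagged parenthetically in the discussion following \eqref{e:boxtimes}.)

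The step where the hypothesis enters is (A), namely the comparison
$$\Bigl(\underset{X_1 \in (\Sch_{/\CY'})^{\on{op}}}{\on{lim}}\, \uShv(X_1)\Bigr)\otimes \uShv(\CY) \longrightarrow \underset{X_1 \in (\Sch_{/\CY'})^{\on{op}}}{\on{lim}}\, \bigl(\uShv(X_1)\otimes \uShv(\CY)\bigr).$$
Since $\uShv(\CY)$ is assumed dualizable, $-\otimes \uShv(\CY)$ is the right adjoint of $-\otimes \uShv(\CY)^\vee$, and hence preserves all limits. Consequently (A) is an isomorphism, and composing with the already-established isomorphisms in the rest of \eqref{e:boxtimes} yields the desired identification.

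I do not anticipate any real obstacle: once the factorization \eqref{e:boxtimes} is in hand, dualizability of $\uShv(\CY)$ is tailored exactly to upgrade the single problematic step into an isomorphism, and everything else is formal. The only thing worth double-checking is that the adjoint $-\otimes \uShv(\CY)^\vee$ exists as an endofunctor of $\AGCat_{/B}$ (not merely on some subcategory), which is clear since $\AGCat_{/B}$ is closed symmetric monoidal by \propref{p:Enh is closed} and dualizable objects yield honest internal-hom adjunctions.
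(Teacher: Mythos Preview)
Your proposal is correct and follows essentially the same approach as the paper: both reduce to checking that the maps (A) and (B) in the chain \eqref{e:boxtimes} are isomorphisms, with (A) handled by the assumed dualizability of $\uShv(\CY)$ and (B) by the dualizability of $\uShv(X_1)$ for schemes $X_1$. The paper's proof is just a terser version of what you wrote.
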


\begin{proof}

We need to show that both maps (A) and (B) in \eqref{e:boxtimes} are isomorphisms
(with $\CY_1=\CY'$ and $\CY_2=\CY$).

\medskip

The fact that (A) is an isomorphism follows from the assumption that $\uShv(\CY)$
is dualizable.

\medskip

The fact that (B) is an isomorphism follows from the fact that the object $\ul\Shv(X)$
for $X\in \Sch$ is dualizable.

\end{proof}

\sssec{}

Next, we claim:

\begin{prop} \label{p:internal Hom co}
For a pair of prestacks we have a canonical identifification
$$\uHom_{\AGCat_{/B}}(\uShv(\CY_1)_{\on{co}}, \uShv(\CY_2)) \simeq \uShv(\CY_1 \underset{B}\times \CY_2).$$
\end{prop}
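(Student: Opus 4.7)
The plan is to evaluate the internal hom by unfolding $\uShv(\CY_1)_{\on{co}}$ as a colimit and turning the resulting internal Hom into a limit, then using self-duality of $\uShv(X_1)$ for schemes $X_1 \in \Sch_{/\CY_1}$ to convert each term into a tensor product (hence, by \propref{p:tensor with dualizable}, into $\uShv(X_1 \underset{B}\times \CY_2)$). Finally, I will recognize the resulting limit as the defining limit of $\uShv(\CY_1 \underset{B}\times \CY_2)$.

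First, by the definition
$$\uShv(\CY_1)_{\on{co}} \simeq \underset{X_1 \in \Sch_{/\CY_1}}{\on{colim}}\, \uShv(X_1),$$
with transition functors given by $*$-pushforward, and since $\uHom_{\AGCat_{/B}}(-, \uShv(\CY_2))$ sends colimits in the first variable to limits, we obtain
$$\uHom_{\AGCat_{/B}}(\uShv(\CY_1)_{\on{co}}, \uShv(\CY_2)) \simeq \underset{X_1 \in (\Sch_{/\CY_1})^{\on{op}}}{\on{lim}}\, \uHom_{\AGCat_{/B}}(\uShv(X_1), \uShv(\CY_2)).$$

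Next, for each scheme $X_1$ the object $\uShv(X_1)$ is dualizable (in fact self-dual, by \secref{sss:self-dual AG sch}), so
$$\uHom_{\AGCat_{/B}}(\uShv(X_1), \uShv(\CY_2)) \simeq \uShv(X_1) \otimes \uShv(\CY_2).$$
Applying \propref{p:tensor with dualizable} (with the dualizable factor being $\uShv(X_1)$), this is identified with $\uShv(X_1 \underset{B}\times \CY_2)$. The key bookkeeping step is the identification of transition functors: a map $f: X_1 \to X_1'$ over $\CY_1$ induces $f_*: \uShv(X_1) \to \uShv(X_1')$ in the colimit; its dual, in view of \secref{sss:dual of maps}, is $f^!$, so the transition morphism in our limit becomes $(f \times \on{id})^!: \uShv(X_1' \underset{B}\times \CY_2) \to \uShv(X_1 \underset{B}\times \CY_2)$.

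The final step is to unfold $\uShv(X_1 \underset{B}\times \CY_2)$ further. By \Cref{sss:def of ush on prestack} (and using that \eqref{e:prod cofinal} is cofinal), we have
$$\uShv(X_1 \underset{B}\times \CY_2) \simeq \underset{X_2 \in (\Sch_{/\CY_2})^{\on{op}}}{\on{lim}}\, \uShv(X_1 \underset{B}\times X_2),$$
so combining the two limits and invoking the cofinality of \eqref{e:prod cofinal} once more yields
$$\underset{(X_1, X_2) \in (\Sch_{/\CY_1} \times \Sch_{/\CY_2})^{\on{op}}}{\on{lim}}\, \uShv(X_1 \underset{B}\times X_2) \simeq \underset{X \in (\Sch_{/\CY_1 \underset{B}\times \CY_2})^{\on{op}}}{\on{lim}}\, \uShv(X) = \uShv(\CY_1 \underset{B}\times \CY_2),$$
as desired.

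The main obstacle I anticipate is the careful identification of the transition functors under the self-duality of $\uShv(X_1)$, including verifying that these identifications are natural in $X_1$ (so that the putative isomorphisms of diagrams assemble into an isomorphism of limits). This amounts to checking that the self-duality of \secref{sss:self-dual AG sch} depends functorially on $X_1$ in the appropriate way, which in turn follows from the symmetric monoidality of $\uShv(-): \on{Corr}(\Sch_{/B}) \to \AGCat_{/B}$ combined with the fact that every object in $\on{Corr}(\Sch_{/B})$ is canonically self-dual (\secref{sss:dual corr}).
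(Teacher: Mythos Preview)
Your proposal is correct and follows essentially the same approach as the paper: unfold the colimit defining $\uShv(\CY_1)_{\on{co}}$, convert the internal Hom to a limit, use self-duality of $\uShv(X_1)$ together with \propref{p:tensor with dualizable}, and conclude via the cofinality of \eqref{e:prod cofinal}. The paper's proof is slightly terser (it indexes over $\affSch_{/\CY_1}$ and collapses the final step directly, whereas you spell out the double limit and the identification of transition functors), but the argument is the same.
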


\begin{proof}

We have
\begin{multline*}
\uHom_{\AGCat_{/B}}(\uShv(\CY_1)_{\on{co}}, \uShv(\CY_2)) \simeq
 \underset{X \in (\affSch_{/\CY_1})^{\on{op}}}{\on{lim}}\, \uHom_{\AGCat_{/B}}(\uShv(X), \uShv(\CY_2)) \overset{\text{\secref{sss:self-dual AG sch}}}\simeq \\
 \simeq  \underset{X \in (\affSch_{/\CY_1})^{\on{op}}}{\on{lim}}\,  \uShv(X)\otimes \uShv(\CY_2) \overset{\text{\propref{p:tensor with dualizable}}}\simeq  \\
\simeq  \underset{X \in (\affSch_{/\CY_1})^{\on{op}}}{\on{lim}}\, \uShv(X \underset{B}\times \CY_2) \simeq
 \uShv(\CY_1 \underset{B}\times \CY_2),
 \end{multline*}
 where the last isomorphism follows from the cofinality of \eqref{e:prod cofinal}.
 \end{proof}

 In particular, we obtain:

 \begin{cor} \label{c:dual co ushv}
 For a prestack $\CY$, we have
 $$\uHom_{\AGCat_{/B}}(\uShv(\CY)_{\on{co}}, \uno_{\AGCat_{/B}})\simeq \uShv(\CY).$$
 \end{cor}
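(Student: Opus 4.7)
The plan is essentially immediate: this corollary is a direct specialization of \propref{p:internal Hom co}. I would apply that proposition in the case $\CY_1 = \CY$ and $\CY_2 = B$, where $B$ is the base scheme viewed as the final object of $\Sch_{/B}$.

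The two small observations needed to finish are: first, the identification $\uShv(B) \simeq \uno_{\AGCat_{/B}}$, which holds because $\ul\Shv(-)\colon \on{Corr}(\Sch_{/B}) \to \AGCat_{/B}$ is (strictly) symmetric monoidal and $B$ is the unit object of $\on{Corr}(\Sch_{/B})$; and second, the canonical isomorphism $\CY \underset{B}\times B \simeq \CY$, which holds essentially tautologically in $\on{PreStk}_{/B}$. Substituting these into the conclusion of \propref{p:internal Hom co} gives
\[
\uHom_{\AGCat_{/B}}(\uShv(\CY)_{\on{co}}, \uno_{\AGCat_{/B}}) \simeq \uHom_{\AGCat_{/B}}(\uShv(\CY)_{\on{co}}, \uShv(B)) \simeq \uShv(\CY \underset{B}\times B) \simeq \uShv(\CY),
\]
as desired.

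There is no real obstacle here; the content of the corollary is entirely absorbed by \propref{p:internal Hom co}. If anything, the only thing to be slightly careful about is that the identification $\uShv(B) \simeq \uno_{\AGCat_{/B}}$ is invoked as part of the monoidal structure on $\ul\Shv(-)$ recorded in the construction of $\AGCat_{/B}$ (and this is already implicit in \secref{sss:self-dual AG sch}, where the unit map out of $\uno_{\AGCat_{/B}}$ is written as a map out of $\uShv(B)$). So the proof is a one-line deduction from the preceding proposition.
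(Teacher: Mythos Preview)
Your proposal is correct and matches the paper's approach exactly: the paper states the corollary immediately after \propref{p:internal Hom co} with the phrase ``In particular, we obtain,'' indicating precisely the specialization $\CY_2 = B$ that you describe.
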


\sssec{}  \label{sss:dual co ushv}

Using \secref{sss:dual of maps}, we obtain that for a map $f:\CY_1\to \CY_2$ between arbitrary prestacks,
under the identification of \corref{c:dual co ushv}, the map
$$(f_{\on{co},*})^\vee:\uHom_{\AGCat_{/B}}(\uShv(\CY_2)_{\on{co}}, \uno_{\AGCat_{/B}})\to
\uHom_{\AGCat_{/B}}(\uShv(\CY_1)_{\on{co}}, \uno_{\AGCat_{/B}})$$
corresponds to $f^!$.

\medskip

Similarly, we obtain that if $f$ is schematic, the functor
 $$(f^!_{\on{co}})^\vee:\uHom_{\AGCat_{/B}}(\uShv(\CY_1)_{\on{co}}, \uno_{\AGCat_{/B}})\to
\uHom_{\AGCat_{/B}}(\uShv(\CY_1)_{\on{co}}, \uno_{\AGCat_{/B}})$$
corresponds to $f_*$.

 \sssec{}

 As another consequence of \propref{p:internal Hom co}, we obtain:

\begin{cor} \label{c:hom verd comp}
Let $\CY$ be universally tame. There is a natural isomorphism
$$\uHom_{\AGCat_{/B}}(\uShv(\CY), \uShv(\CY')) \simeq \uShv(\CY \underset{B}\times \CY') .$$
\end{cor}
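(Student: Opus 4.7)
The plan is to deduce this corollary as an essentially immediate consequence of \propref{p:internal Hom co} combined with the universal tameness hypothesis. The definition of $\CY$ being universally tame is that the canonical map
$$\ul\Omega_\CY:\uShv(\CY)_{\on{co}}\to \uShv(\CY)$$
is an isomorphism in $\AGCat_{/B}$. Substituting this isomorphism into the source of the internal-Hom on the left-hand side of the desired equivalence, we get
$$\uHom_{\AGCat_{/B}}(\uShv(\CY),\uShv(\CY'))\xrightarrow{\ \sim\ }\uHom_{\AGCat_{/B}}(\uShv(\CY)_{\on{co}},\uShv(\CY')),$$
functorially in $\CY'$, via precomposition with $\ul\Omega_\CY$.

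The second step is to apply \propref{p:internal Hom co} with $\CY_1=\CY$ and $\CY_2=\CY'$, which directly gives
$$\uHom_{\AGCat_{/B}}(\uShv(\CY)_{\on{co}},\uShv(\CY'))\simeq \uShv(\CY\underset{B}\times \CY').$$
Composing the two isomorphisms yields the corollary.

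There is no real obstacle to speak of; the content lies entirely in the two ingredients already established. The only thing to double-check is that the composed isomorphism is canonical (as opposed to being merely an abstract equivalence), but this is automatic since both constituent maps are canonical: $\ul\Omega_\CY$ comes from the universal property of the colimit defining $\uShv(\CY)_{\on{co}}$, and the identification in \propref{p:internal Hom co} is built from self-duality of $\uShv(X)$ for $X\in \Sch_{/B}$ (see \secref{sss:self-dual AG sch}) together with the cofinality of \eqref{e:prod cofinal}.
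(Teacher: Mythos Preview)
Your proof is correct and is exactly the argument the paper has in mind: the corollary is stated immediately after \propref{p:internal Hom co} with the phrase ``As another consequence of \propref{p:internal Hom co}, we obtain,'' and the only missing step is precisely the substitution of $\uShv(\CY)_{\on{co}}$ for $\uShv(\CY)$ via $\ul\Omega_\CY$, which is what universal tameness provides.
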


\sssec{} \label{sss:univ self dual}

Thus, we obtain that \emph{if} $\CY$ is a universally tame prestack such that
$\uShv(\CY)$ is dualizable, \emph{then} $\uShv(\CY)$ is canonically self-dual as an object of $\AGCat_{/B}$.

\medskip

In the next subsection we will see that this is the case for AG tame prestacks.

\ssec{Dualizability of AG tame prestacks}

\sssec{}

We are going to prove:

\begin{thm} \label{t:self duality of verd comp}
Let $\CY$ be an AG tame prestack.  Then the object $\uShv(\CY)$ is dualizable in $\AGCat_{/B}$.
\end{thm}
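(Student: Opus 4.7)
The plan is to establish that $\uShv(\CY)$ is \emph{self-dual} (and hence dualizable) in $\AGCat_{/B}$ by explicitly exhibiting unit and counit maps and verifying the triangle identities. The role of AG tameness is twofold: by \corref{c:tensor product verd comp} it provides the crucial isomorphism $\uShv(\CY)\otimes\uShv(\CY)\simeq\uShv(\CY\underset{B}\times\CY)$, and via $\ul\Omega_{\CY}$ and $\ul\Omega_{\CY\underset{B}\times\CY}$ it lets us freely convert between the sheaf and cosheaf presentations. The schematic diagonal of $\CY$ is the other key input: it makes $(\Delta_{\CY})_*$ available in the sheaf formalism and $(\Delta_{\CY})^!_{\on{co}}$ available in the cosheaf formalism.

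For the unit, I use the sheaf functoriality of \secref{sss:extension of ushv to prestacks} (which accepts arbitrary backward morphisms and schematic forward morphisms):
$$\eta:\uno=\uShv(B)\xrightarrow{(p_{\CY/B})^!}\uShv(\CY)\xrightarrow{(\Delta_{\CY})_*}\uShv(\CY\underset{B}\times\CY)\simeq\uShv(\CY)\otimes\uShv(\CY).$$
The sheaf formalism cannot supply a counit, because $p_{\CY/B}$ is typically not schematic. Instead I use the cosheaf functoriality of \secref{sss:functoriality co}, in which $*$-pushforwards are defined for \emph{all} morphisms and $!$-pullbacks are defined for schematic ones:
$$\epsilon:\uShv(\CY)\otimes\uShv(\CY)\simeq\uShv(\CY\underset{B}\times\CY)_{\on{co}}\xrightarrow{(\Delta_{\CY})^!_{\on{co}}}\uShv(\CY)_{\on{co}}\xrightarrow{(p_{\CY/B})_{\on{co},*}}\uShv(B)=\uno.$$

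The main obstacle is verifying the triangle identity $(\on{id}\otimes\epsilon)\circ(\eta\otimes\on{id})=\on{id}_{\uShv(\CY)}$ (the other is symmetric). Formally in $\on{Corr}(\on{PreStk}_{/B})$ the two defining correspondences compose, by the fiber-product rule, to the identity correspondence on $\CY$, thanks to the Cartesian square
$$
\CD
\CY @>{\Delta_{\CY}}>> \CY\underset{B}\times\CY \\
@V{\Delta_{\CY}}VV @VV{\on{id}\times\Delta_{\CY}}V \\
\CY\underset{B}\times\CY @>{\Delta_{\CY}\times\on{id}}>> \CY\underset{B}\times\CY\underset{B}\times\CY,
\endCD
$$
whose four legs are all schematic. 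Turning this formal collapse into an equation in $\AGCat_{/B}$ is delicate because $\eta$ lives on the sheaf side while $\epsilon$ lives on the cosheaf side. I plan to consolidate the composite entirely on the cosheaf side by passing through $\ul\Omega^{-1}$ at the target of $\eta$, using \eqref{e:u Omega *} to convert the schematic $(\Delta_{\CY})_*$ into $(\Delta_{\CY})_{\on{co},*}$ (the remaining $(p_{\CY/B})^!$, which cannot be moved through $\ul\Omega$ since $p_{\CY/B}$ is not schematic, simply sits in front of a cosheaf-side diagram). Once everything is cosheaf-side, the triangle identity reduces to the base change identity $(\Delta_{\CY})_{\on{co},*}\circ(\Delta_{\CY})^!_{\on{co}}\simeq(\on{id}\times\Delta_{\CY})^!_{\on{co}}\circ(\Delta_{\CY}\times\on{id})_{\on{co},*}$ for the Cartesian square above, which is built into the functoriality of $\uShv(-)_{\on{co}}$ on $\on{Corr}_{\on{all,sch}}(\on{PreStk}_{/B})$ since all four arrows are schematic.
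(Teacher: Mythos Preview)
Your approach differs substantially from the paper's. The paper does not construct unit and counit and verify triangle identities; instead it invokes the criterion of \propref{p:charact of duality}: an object $a$ is dualizable iff the canonical map $\uHom(a,\uno)\otimes a\to\uHom(a,a)$ is an isomorphism. Using \corref{c:hom verd comp} (needing only universal tameness of $\CY$), both internal Homs are identified with $\uShv(\CY)$ and $\uShv(\CY\underset{B}\times\CY)$, and the map between them becomes the $\boxtimes$ map, an isomorphism by \corref{c:tensor product verd comp}. This avoids the triple product and all sheaf/cosheaf bookkeeping. The unit and counit you write down are correct (they agree with \propref{p:unit self Y} and \corref{c:counit u}), but in the paper these are derived \emph{after} dualizability is known.

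Your direct approach can be made to work, but the sketch has a gap. The claim that ``the triangle identity reduces to the base change identity'' is incomplete. After base change in the middle of the composite $(\on{id}\otimes\epsilon)\circ(\eta\otimes\on{id})$, one is left with
$$(\on{pr}_1)_{\on{co},*}\circ\Delta_{\on{co},*}\circ\Delta^!_{\on{co}}\circ\bigl((\ul\Omega_{\CY}^{-1}\circ p_{\CY/B}^!)\otimes\on{id}\bigr).$$
The first two factors collapse via $\on{pr}_1\circ\Delta=\on{id}$, but the remaining $\Delta^!_{\on{co}}\circ\bigl((\ul\Omega_{\CY}^{-1}\circ p^!)\otimes\on{id}\bigr)$ is \emph{not} purely cosheaf-side, contrary to your claim: the map $\ul\Omega_{\CY}^{-1}\circ p^!$ has no description in $\on{Corr}_{\on{all,sch}}$ since $p$ is not schematic. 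To finish you must convert back via \eqref{e:u Omega !} and \eqref{e:Omega prod diag}, identify $p^!\otimes\on{id}$ under the sheaf-side isomorphism $\uShv(\CY)^{\otimes 2}\simeq\uShv(\CY^2)$ with $\on{pr}_2^!$ (this uses the right-lax monoidal functoriality of $\uShv(-)$ on $!$-pullbacks, which is strict here), and finally use $\on{pr}_2\circ\Delta=\on{id}$. These steps are where the non-schematic $p$ actually gets absorbed; ``simply sits in front'' undersells what must be checked.
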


\sssec{}

The proof of \thmref{t:self duality of verd comp} will rely on the following characterization of dualizability:

\medskip

Recall that if $\CA$ is a monoidal category we have the notion of left and right dualizable objects (see \cite[Sect. 4.6.1]{HA})
which agree in the case when $\CA$ is symmetric monoidal.  Recall also that for two objects $a_1,a_2\in \CA$
the internal hom $\uHom_{\CA}(a,b)$
is the object representing the functor
$$ a \mapsto \on{Maps}_\CA(a \otimes a_1, a_2) .$$
The proof of the following is identical to \cite[Lemma 4.6.1.6]{HA}.

\begin{prop} \label{p:charact of duality}
Let $\CA$ be a monoidal category.  An object $a \in \CA$ is right dualizable if and only if

\smallskip

\noindent{\em(i)} There exist internal hom objects $\uHom_\CA(a,\uno_\CA)$ and $\uHom_\CA(a,a)$, and

\smallskip

\noindent{\em(ii)} The tautological map $\uHom_\CA(a,\uno_\CA)\otimes a\to \uHom_\CA(a,a)$
is an isomorphism.

\end{prop}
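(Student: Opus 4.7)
My plan is to follow the standard argument for this kind of characterization of dualizable objects via internal homs, which carries over from the 1-categorical setting (e.g., Lurie, \emph{Higher Algebra}, 4.6.1.6) to $\infty$-categories without essential change.

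For the forward implication, suppose $a^\vee$ is a right dual of $a$. Then the functor $-\otimes a:\CA\to\CA$ admits a right adjoint given by tensoring with $a^\vee$, so for every $b\in\CA$ the object $b\otimes a^\vee$ represents the functor $c\mapsto\Maps_\CA(c\otimes a,b)$. Hence the internal homs $\uHom_\CA(a,b)$ exist and are canonically identified with $b\otimes a^\vee$, naturally in $b$. Specializing to $b=\uno_\CA$ and $b=a$ yields (i), and under these identifications the tautological map in (ii) becomes an associator (in the symmetric case, composed with a braiding), hence is an equivalence.

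For the converse, assume (i) and (ii) and set $a^\vee:=\uHom_\CA(a,\uno_\CA)$. Take the evaluation $e:a^\vee\otimes a\to\uno_\CA$ to be the counit of the adjunction $(-\otimes a)\dashv\uHom_\CA(a,-)$. For the coevaluation, observe that $\on{id}_a\in\Maps_\CA(a,a)$ corresponds under this adjunction to a canonical map $\tilde c:\uno_\CA\to\uHom_\CA(a,a)$; composing $\tilde c$ with the inverse of the equivalence in (ii) produces the coevaluation $c:\uno_\CA\to a^\vee\otimes a$.

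It remains to verify the two triangle identities for the pair $(c,e)$. This is a diagram chase using the naturality of the internal hom adjunction together with the compatibility of $e$ with the composition map on internal homs $\uHom_\CA(a,\uno_\CA)\otimes\uHom_\CA(\uno_\CA,a)\to\uHom_\CA(a,a)$, relative to which $c$ is by construction the map encoding $\on{id}_a$. The main obstacle is organizing these identities coherently at the level of $\infty$-categories; however, both reduce via the Yoneda lemma to elementary assertions about mapping spaces which follow immediately from the definitions of $c$ and $e$, so the passage from the 1-categorical proof is routine.
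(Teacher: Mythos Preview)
Your approach is exactly the paper's: the paper does not give a proof at all but simply states that the argument is identical to \cite[Lemma 4.6.1.6]{HA}, which is precisely the reference you invoke and whose proof you sketch. One small quibble: in the forward direction the tautological map does not literally become ``an associator''---under the identification $\uHom_\CA(a,b)\simeq b\otimes a^\vee$ it is the natural map $b\otimes\uHom_\CA(a,\uno)\to\uHom_\CA(a,b)$ specialized to $b=a$, which is an equivalence because the functor $b\mapsto b\otimes a^\vee$ is right adjoint to $-\otimes a$---but this does not affect the validity of your argument.
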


\sssec{Proof of \thmref{t:self duality of verd comp}}

By \propref{p:charact of duality} we need to show that the canonical map
\begin{equation} \label{e:XX}
\uHom_{\AGCat_{/B}}(\uShv(\CY), \uno_{\AGCat_{/B}})\otimes \uShv(\CY)\to
\uHom_{\AGCat_{/B}}(\uShv(\CY),\uShv(\CY))
\end{equation}
is an isomorphism. We identify
$$\uHom_{\AGCat_{/B}}(\uShv(\CY), \uno_{\AGCat_{/B}}) \simeq \ul\Shv(\CY) \text{ and }
\uHom_{\AGCat_{/B}}(\uShv(\CY),\uShv(\CY)) \simeq \ul\Shv(\CY \underset{B}\times \CY)$$
by \corref{c:hom verd comp}. Under these identifications, the map \eqref{e:XX} is the map
$$\ul\Shv(\CY) \otimes \ul\Shv(\CY)\overset{\boxtimes}{\to} \ul\Shv(\CY \underset{B}\times \CY)$$
of \eqref{e:boxtimes}. The latter map is an isomorphism by \Cref{c:tensor product verd comp}.

\qed[\thmref{t:self duality of verd comp}]

\sssec{}

By \Cref{p:tensor with dualizable}, we obtain:

\begin{cor} \label{c:AG good ten}
Let $\CY$ be an AG tame prestack and $\CY'$ a prestack.  Then the canonical map
$$\uShv(\CY) \otimes \uShv(\CY')\to \uShv(\CY \underset{B}\times \CY')$$
is an isomorphism.
\end{cor}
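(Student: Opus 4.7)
The plan is to combine the two immediately preceding results. Since $\CY$ is AG tame, \thmref{t:self duality of verd comp} applies and gives that $\ul\Shv(\CY)$ is a dualizable object of $\AGCat_{/B}$. With dualizability in hand, I would then invoke \propref{p:tensor with dualizable}: once $\ul\Shv(\CY)$ is dualizable, the map
\[
\ul\Shv(\CY')\otimes \ul\Shv(\CY)\to \ul\Shv(\CY'\underset{B}\times \CY)
\]
is automatically an isomorphism for \emph{any} prestack $\CY'$. By symmetry of the tensor product, this is the map in the statement.

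In slightly more detail, the content of \propref{p:tensor with dualizable} is that dualizability of $\ul\Shv(\CY)$ forces the arrow (A) in the formula \eqref{e:boxtimes} to be an isomorphism, while arrow (B) is always an isomorphism because each $\ul\Shv(X)$ with $X$ a scheme is dualizable (indeed, self-dual as noted in \secref{sss:self-dual AG sch}). Thus the composite expressing $\ul\Shv(\CY')\otimes \ul\Shv(\CY)$ as a limit of $\ul\Shv(X_1\underset{B}\times X_2)$ indexed by $\Sch_{/\CY'}\times \Sch_{/\CY}$ matches the analogous limit for $\ul\Shv(\CY'\underset{B}\times\CY)$ via the cofinality of \eqref{e:prod cofinal}.

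Since there is no real obstacle here beyond these two citations, I do not expect any difficulty: the entire proof consists of applying \thmref{t:self duality of verd comp} to get dualizability and then feeding the result into \propref{p:tensor with dualizable}. The only thing to double-check is that the map in the current statement (with $\CY$ on the left factor) agrees with the one in \propref{p:tensor with dualizable} (with $\CY$ on the right factor) under the symmetric monoidal symmetry of $\AGCat_{/B}$, which is immediate from the naturality and symmetry of the right-lax structure on $\ul\Shv(-)$.
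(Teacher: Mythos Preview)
Your proposal is correct and matches the paper's approach exactly: the paper derives this corollary directly from \propref{p:tensor with dualizable}, using \thmref{t:self duality of verd comp} (just proved) to supply the dualizability hypothesis. Your additional remarks about arrows (A) and (B) and the symmetry of the factors are accurate but more detailed than what the paper records.
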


\sssec{}

We now obtain a characterization of AG tame prestacks in terms of $\AGCat_{/B}$:

\begin{prop}\label{p:AG Verdier in terms of dgcatag}
Let $\CY$ be a \emph{universally} tame prestack. Then the following conditions are equivalent:

\smallskip

\noindent{\em(i)} $\CY$ is AG tame;

\medskip

\noindent{\em(ii)} The object $\uShv(\CY)\in \AGCat_{/B}$ is dualizable;

\medskip

\noindent{\em(iii)} The map $\uShv(\CY)\otimes \uShv(\CY)\to \uShv(\CY\underset{B}\times \CY)$ is an isomorphism.

\end{prop}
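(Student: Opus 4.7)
The proof should be a cycle of implications (i) $\Rightarrow$ (ii) $\Rightarrow$ (iii) $\Rightarrow$ (i), each of which reduces to a result established earlier in the section.

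For (i) $\Rightarrow$ (ii), I would simply invoke \thmref{t:self duality of verd comp}: AG tameness of $\CY$ already guarantees that $\ul\Shv(\CY)$ is dualizable (in fact, self-dual) in $\AGCat_{/B}$. For (ii) $\Rightarrow$ (iii), I would apply \propref{p:tensor with dualizable} with $\CY' := \CY$: once $\ul\Shv(\CY)$ is dualizable, the map $\boxtimes$ from $\ul\Shv(\CY)\otimes \ul\Shv(\CY')$ to $\ul\Shv(\CY\underset{B}\times \CY')$ is automatically an isomorphism for every prestack $\CY'$, and specializing to $\CY'=\CY$ gives (iii).

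The implication (iii) $\Rightarrow$ (i) is the one that genuinely uses the hypothesis that $\CY$ is universally tame. By definition, AG tameness of $\CY$ is equivalent to the condition that $\CY \underset{B}\times \CY$ is universally tame. Now apply \corref{c:Omega prod diag} with $\CY_1 = \CY_2 = \CY$: since both factors are universally tame by hypothesis, the corollary says precisely that $\CY\underset{B}\times \CY$ is universally tame if and only if the map $\ul\Shv(\CY)\otimes \ul\Shv(\CY) \overset{\boxtimes}\to \ul\Shv(\CY\underset{B}\times \CY)$ is an isomorphism. Condition (iii) is exactly the latter statement, so (i) follows.

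In summary, no genuinely new argument is needed; the proof is a direct assembly of \thmref{t:self duality of verd comp}, \propref{p:tensor with dualizable}, and \corref{c:Omega prod diag}. The only mild point to be careful about is that the reverse direction (iii) $\Rightarrow$ (i) really does require the standing universal tameness of $\CY$ itself, since \corref{c:Omega prod diag} presupposes it for both factors; without this hypothesis the equivalence would fail. No step is a serious obstacle.
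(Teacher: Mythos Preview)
Your proposal is correct and matches the paper's proof essentially verbatim: the paper proves (i) $\Rightarrow$ (ii) via \thmref{t:self duality of verd comp}, (ii) $\Rightarrow$ (iii) via \propref{p:tensor with dualizable}, and (iii) $\Rightarrow$ (i) via \corref{c:Omega prod diag}. Your additional remark about where the universal tameness hypothesis is actually used is accurate and worth keeping.
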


\begin{proof}

First, (i) implies (ii) by \Cref{t:self duality of verd comp}. Next, (ii) implies (iii) by \propref{p:tensor with dualizable}.
Finally, (iii) implies (i) by \corref{c:Omega prod diag}.

\end{proof}

\sssec{}

We can now prove that the notion of being AG Verdier-compatible is stable under products:

\begin{prop}\label{p:AG verdier products}
If $\CY_1$ and $\CY_2$ are AG tame, then so is $\CY_1 \underset{B}\times \CY_2$.
\end{prop}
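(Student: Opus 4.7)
The plan is to reduce the claim to \Cref{p:AG Verdier in terms of dgcatag}, which --- given universal tameness --- identifies AG tameness with dualizability of $\uShv$ in $\AGCat_{/B}$. I would therefore establish two things: \emph{(a)} that $\CY_1 \underset{B}\times \CY_2$ is universally tame, and \emph{(b)} that $\uShv(\CY_1 \underset{B}\times \CY_2)$ is dualizable in $\AGCat_{/B}$. The implication (ii)$\Rightarrow$(i) in \Cref{p:AG Verdier in terms of dgcatag} then immediately yields AG tameness of the product.

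For (a), I would appeal to \Cref{c:Omega prod diag}: since $\CY_1$ and $\CY_2$ are in particular universally tame, the universal tameness of $\CY_1 \underset{B}\times \CY_2$ is equivalent to the boxtimes map
$$\uShv(\CY_1) \otimes \uShv(\CY_2) \overset{\boxtimes}\to \uShv(\CY_1 \underset{B}\times \CY_2)$$
being an isomorphism. This is exactly the content of \Cref{c:AG good ten} applied with the AG tame prestack $\CY_1$ and the (arbitrary) prestack $\CY_2$. For (b), the same application of \Cref{c:AG good ten} gives the identification $\uShv(\CY_1 \underset{B}\times \CY_2) \simeq \uShv(\CY_1) \otimes \uShv(\CY_2)$; each factor $\uShv(\CY_i)$ is dualizable by \Cref{t:self duality of verd comp}, and the tensor product of dualizable objects in a symmetric monoidal category is again dualizable.

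I do not expect a real obstacle: the theorems \Cref{t:self duality of verd comp} and \Cref{c:AG good ten}, combined with the characterization \Cref{p:AG Verdier in terms of dgcatag}, do all of the work. The one point that could invite confusion is that a naive direct attack on condition (iii) of \Cref{p:AG Verdier in terms of dgcatag} --- i.e.\ verifying the boxtimes formula for $\CY_1 \underset{B}\times \CY_2$ with itself by factoring across $\CY_1^{\times_B 2} \times_B \CY_2^{\times_B 2}$ --- would seem to require AG tameness, rather than merely universal tameness, of the factors $\CY_i \underset{B}\times \CY_i$, and this is \emph{not} among the hypotheses. Routing through condition (ii) of \Cref{p:AG Verdier in terms of dgcatag} sidesteps this issue entirely, because $\uShv(\CY_i)$ itself is already dualizable, and dualizability is preserved under arbitrary tensor products.
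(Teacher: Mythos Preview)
Your proposal is correct and follows essentially the same route as the paper's proof: establish universal tameness of $\CY_1 \underset{B}\times \CY_2$ via \corref{c:Omega prod diag} together with the boxtimes isomorphism, establish dualizability of $\uShv(\CY_1 \underset{B}\times \CY_2)$ by identifying it with a tensor product of dualizables, and then invoke \propref{p:AG Verdier in terms of dgcatag}. The only cosmetic difference is that the paper cites \propref{p:tensor with dualizable} directly for the boxtimes isomorphism (rather than its corollary \corref{c:AG good ten}), but this is the same argument.
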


\begin{proof}
We apply the characterization of AG tame prestacks from \Cref{p:AG Verdier in terms of dgcatag}.
By \corref{c:AG good ten},
we have:
$$ \uShv(\CY_1) \otimes \uShv(\CY_2) \simeq \uShv(\CY_1 \underset{B}\times \CY_2) .$$
Hence, since $\uShv(\CY_1)$ and $\uShv(\CY_2)$ are dualizable, so is $\uShv(\CY_1 \underset{B}\times \CY_2)$.

\medskip

The fact that $\CY_1 \underset{B}\times \CY_2$ is universally tame follows from
\corref{c:Omega prod diag} using \propref{p:tensor with dualizable}.

\end{proof}

\ssec{Description of unit and counit}

In this section we let $\CY$ be an AG tame prestack. By \thmref{t:self duality of verd comp}, we know that
$\uShv(\CY)$ is dualizable as an object of  $\AGCat_{/B}$. Moreover, by \secref{sss:univ self dual}, it is self-dual.

\medskip

In this subsection we will describe explicitly the unit and counit maps for this self-duality.

\sssec{}

Note that since $\CY$ was assumed to have a schematic diagonal, we have a well-defined map
$$(\Delta_{\CY/B})_*:\uShv(\CY)\to \uShv(\CY\underset{B}\times \CY).$$

First, we will prove:

\begin{prop} \label{p:unit self Y}
The unit of the self-duality on $\uShv(\CY)$ is given by the map
\begin{equation} \label{e:unit Y formula}
\uno_{\AGCat_{/B}}=\ul\Shv(B) \overset{(p_{\CY/B})^!}\to \ul\Shv(\CY)\overset{(\Delta_{\CY/B})_*}\to
\uShv(\CY\underset{B}\times \CY) \overset{\text{\corref{c:tensor product verd comp}}}\simeq
\uShv(\CY)\otimes \uShv(\CY).
\end{equation}
\end{prop}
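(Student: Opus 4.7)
The strategy is to identify $u_\CY$ abstractly via the characterization of self-duality from \propref{p:charact of duality}, and then to verify that this abstract description coincides with \eqref{e:unit Y formula} by checking agreement after $!$-pullback to test schemes $X \in \affSch_{/\CY}$.

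Recall that for a dualizable object $V$ in a closed symmetric monoidal category with $V^\vee := \uHom(V, \uno)$, the unit $u: \uno \to V \otimes V^\vee$ of the self-duality is the morphism corresponding to $\on{id}_V$ under the canonical isomorphism $V \otimes V^\vee \simeq \uHom(V, V)$. Apply this with $V = \ul\Shv(\CY)$ in $\AGCat_{/B}$: by \corref{c:tensor product verd comp}, $V \otimes V \simeq \ul\Shv(\CY \underset{B}\times \CY)$ (using AG tameness, which also identifies $V^\vee \simeq V$ via \corref{c:dual co ushv}), and by \corref{c:hom verd comp}, $\uHom(V, V) \simeq \ul\Shv(\CY \underset{B}\times \CY)$. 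Thus $u_\CY$ is a morphism $\uno \to \ul\Shv(\CY \underset{B}\times \CY)$ in $\AGCat_{/B}$, which via the $\bi \dashv \be$ adjunction (applied at $B$) corresponds to an object of $\Shv(\CY \underset{B}\times \CY)$, and hence is determined by its $!$-restrictions to schemes $X \underset{B}\times \CY$ for $X \in \affSch_{/\CY}$.

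To pin down the abstract $u_\CY$, I would trace $\on{id}_V$ through the chain of isomorphisms underlying \corref{c:hom verd comp}:
$$\uHom(V, V) \simeq \uHom(\ul\Shv(\CY)_{\on{co}}, V) \simeq \underset{X \in \affSch_{/\CY}}{\on{lim}}\, \uHom(\ul\Shv(X), V) \simeq \underset{X}{\on{lim}}\, \ul\Shv(X \underset{B}\times \CY).$$
The first map is precomposition with $\ul\Omega_\CY$, so $\on{id}_V$ is sent to $\ul\Omega_\CY$, whose component at $X$ is the canonical pushforward $(f_X)_*: \ul\Shv(X) \to \ul\Shv(\CY)$. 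For each $X$, the equivalence $\uHom(\ul\Shv(X), \ul\Shv(\CY)) \simeq \ul\Shv(X) \otimes \ul\Shv(\CY) \simeq \ul\Shv(X \underset{B}\times \CY)$ (coming from the scheme-level self-duality of \secref{sss:self-dual AG sch} and \propref{p:tensor with dualizable}) sends $(f_X)_*$ to
$$e_X := (\on{id}_{\ul\Shv(X)} \otimes (f_X)_*) \circ u_X = (\Gamma_{f_X})_* \circ (p_X)^!,$$
where $u_X = (\Delta_X)_* \circ (p_X)^!$ is the explicit unit from \secref{sss:self-dual AG sch} and $\Gamma_{f_X} = (\on{id}_X, f_X): X \to X \underset{B}\times \CY$ is the graph of $f_X$.

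It remains to verify that the proposed formula \eqref{e:unit Y formula} $!$-restricts to $e_X$ under $(f_X \times \on{id}_\CY)^!$. Base change applied to the Cartesian square
$$
\CD
X @>{f_X}>> \CY \\
@V{\Gamma_{f_X}}VV @VV{\Delta_\CY}V \\
X \underset{B}\times \CY @>{f_X \times \on{id}_\CY}>> \CY \underset{B}\times \CY
\endCD
$$
yields $(f_X \times \on{id}_\CY)^! \circ (\Delta_\CY)_* = (\Gamma_{f_X})_* \circ (f_X)^!$, and combining with $p_X = p_\CY \circ f_X$ gives $(f_X \times \on{id}_\CY)^! \circ (\Delta_\CY)_* \circ (p_\CY)^! = (\Gamma_{f_X})_* \circ (p_X)^! = e_X$, as required. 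The main obstacle is the bookkeeping in the previous paragraph: pinning down that the abstract self-duality datum produced by \propref{p:charact of duality} really does correspond under the identifications of Corollaries \ref{c:tensor product verd comp}, \ref{c:dual co ushv}, \ref{c:hom verd comp} to the explicit chain of isomorphisms above. Once this identification is in place, the proposition reduces to the routine base-change computation just given.
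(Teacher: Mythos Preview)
Your approach is essentially the same as the paper's: both identify the abstract unit as the element corresponding to $\on{id}_{\uShv(\CY)}$ under $\uShv(\CY)\otimes\uShv(\CY)\simeq\uHom(\uShv(\CY),\uShv(\CY))$, reduce to test schemes $X\overset{f}\to\CY$, and verify agreement with the proposed formula via the base-change square for $\Gamma_f$ and $\Delta_\CY$. The only cosmetic differences are that the paper restricts along the second tensor factor (via $\on{id}\otimes f^!$) rather than the first, and phrases the reduction in $\AGCat_{/B}$ (using that $\uShv(\CY)$ is a limit) rather than at the level of objects of $\Shv(\CY\underset{B}\times\CY)$; the paper also makes the ``bookkeeping'' step you flag explicit by invoking the compatibility diagram \eqref{e:u Omega *} together with \secref{sss:dual co ushv} to show $(f^!)^\vee\simeq f_*$ under the identification $\uShv(\CY)^\vee\simeq\uShv(\CY)$.
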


\begin{proof}

We will think of the left copy of $\uShv(\CY)$ in the tensor product as $\uShv(\CY)^\vee$ via
$$\uShv(\CY)^\vee \overset{\text{\corref{c:dual co ushv}}}\simeq \uShv(\CY)_{\on{co}}\overset{\ul\Omega_\CY}\to
\uShv(\CY).$$

Since $\uShv(\CY)$ is the limit of $\ul\Shv(X)$ over $X\overset{f}\to \CY$, using \secref{sss:self-dual AG sch},
we conclude that we need to show that for every $f:X\to \CY$, the composition of \eqref{e:unit Y formula} with
$$\uShv(\CY)\otimes \uShv(\CY) \overset{\on{id}\otimes f^!}\longrightarrow \uShv(\CY)\otimes \uShv(X)\simeq
\uShv(\CY)^\vee\otimes \uShv(X)$$
identifies with
\begin{multline*}
\uno_{\AGCat_{/B}}=\ul\Shv(B) \overset{(p_{X/B})^!}\to \ul\Shv(X)\overset{(\Delta_{X/B})_*}\to
\uShv(X\underset{B}\times X) \simeq \\
\simeq \uShv(X) \otimes \uShv(X) \simeq \uShv(X)^\vee \otimes \uShv(X)\overset{(f^!)^\vee\otimes \on{Id}}\longrightarrow
\uShv(\CY)^\vee\otimes \uShv(X),
\end{multline*}
functorially in $(X,f)$.

\medskip

Using the commutative diagram
$$
\CD
\uShv(\CY)\otimes \uShv(X) @>{\boxtimes}>> \uShv(\CY\underset{B}\times X) \\
@A{\on{Id}\otimes f^!}AA @AA{(\on{id}\times f)^!}A \\
\uShv(\CY)\otimes \uShv(\CY) @>{\boxtimes}>> \uShv(\CY\underset{B}\times \CY),
\endCD
$$
and base change, we obtain that composition of \eqref{e:unit Y formula} with $\on{id}\otimes f^!$ is the map
\begin{multline*}
\uno_{\AGCat_{/B}}=\ul\Shv(B) \overset{(p_{X/B})^!}\to \ul\Shv(X)\overset{(\Delta_{X/B})_*}\to
\uShv(X\underset{B}\times X) \simeq \\
\simeq \uShv(X)\otimes \uShv(X)\overset{f_*\otimes \on{Id}}\longrightarrow \uShv(\CY)\otimes \uShv(X).
\end{multline*}

Hence, it remains to show that the functor
$$\uShv(X)\simeq \uShv(X)^\vee \overset{(f^!)^\vee}\to \uShv(\CY)^\vee$$
identifies with
$$\uShv(X) \overset{f_*}\to \uShv(\CY) \simeq \uShv(\CY)^\vee.$$

However, this follows by combining \eqref{e:u Omega *} and \secref{sss:dual co ushv}.

\end{proof}

\sssec{} \label{sss:black triangle}

For a (not necessarily) schematic map $f:\CY_1\to \CY_2$ between tame prestacks,
we introduce the functor
$$f_\blacktriangle:\Shv(\CY_1)\to \Shv(\CY_2)$$
to be the composition
$$\Shv(\CY_1) \overset{\Omega_{\CY_1}}\simeq
\Shv(\CY_1)_{\on{co}} \overset{f_{\on{co,*}}}\longrightarrow \Shv(\CY_2)_{\on{co}} \overset{\Omega_{\CY_2}}\simeq
\Shv(\CY_2).$$

Note that if $f$ is schematic, it follows from \eqref{e:Omega *} that we have
$$f_\blacktriangle\simeq f_*.$$

\medskip

For future use, we note:

\begin{lem} \label{l:base change renorm}
Let
$$
\CD
\CY'_1 @>{g_1}>> \CY_1 \\
@V{f'}VV @VV{f}V \\
\CY'_2 @>{g_2}>> \CY_2
\endCD
$$
be a Cartesian diagram of tame prestacks, where the horizontal maps are schematic. Then
there exists a canonical isomorphism
$$g_2^!\circ f_\blacktriangle\simeq f'_\blacktriangle \circ g_1^!.$$
\end{lem}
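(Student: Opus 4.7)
The strategy is to reduce the claim to base change in the ``co'' world, where it is encoded by the functoriality of $\Shv(-)_{\on{co}}$ on $\on{Corr}_{\on{all,sch}}(\on{PreStk})$.

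First I would unwind the definition of $(-)_\blacktriangle$. Since $\CY_1,\CY_2,\CY'_1,\CY'_2$ are tame, the comparison maps $\Omega_{\CY_?}$ are equivalences, and by \secref{sss:black triangle} we may identify
$$f_\blacktriangle = \Omega_{\CY_2} \circ f_{\on{co},*} \circ \Omega_{\CY_1}^{-1} \quad \text{and} \quad f'_\blacktriangle = \Omega_{\CY'_2} \circ f'_{\on{co},*} \circ \Omega_{\CY'_1}^{-1}.$$
Next, since the horizontal maps $g_1,g_2$ are schematic, the compatibility diagram \eqref{e:Omega !} applied to each one rewrites
$$g_i^! \circ \Omega_{\CY_i} \simeq \Omega_{\CY'_i} \circ (g_i)^!_{\on{co}}, \quad i=1,2.$$

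Substituting these into both sides of the desired isomorphism, the $\Omega$'s cancel on the outside and what must be proved is reduced to a base-change isomorphism in the cosheaf theory:
$$(g_2)^!_{\on{co}} \circ f_{\on{co},*} \simeq f'_{\on{co},*} \circ (g_1)^!_{\on{co}}, \quad \Shv(\CY_1)_{\on{co}} \to \Shv(\CY'_2)_{\on{co}}.$$

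This last statement is built into the construction of $\Shv(-)_{\on{co}}$ as a functor out of the correspondence category. Indeed, the Cartesian square exhibits a single morphism $\CY_1 \to \CY'_2$ in $\on{Corr}_{\on{all,sch}}(\on{PreStk})$ given by the span $\CY_1 \xleftarrow{g_1} \CY'_1 \xrightarrow{f'} \CY'_2$, and this morphism factors in two ways: as the forward arrow $f$ composed with the backward arrow $g_2$, and as the backward arrow $g_1$ composed with the forward arrow $f'$. Applying the (strictly functorial) symmetric monoidal functor $\Shv(-)_{\on{co}}$ from \secref{sss:functoriality co} yields the desired coherent isomorphism.

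The only mild subtlety — and the closest thing to a real obstacle — is to ensure these identifications are made in a homotopy-coherent way rather than as isolated isomorphisms. This is handled uniformly by observing that each of $\Omega_{\CY_?}$, $(g_i)^!_{\on{co}}$, $f_{\on{co},*}$, and their compatibilities are part of the data of the functors $\Shv(-)_{\on{co}}$ and $\Shv(-)$ restricted along the diagonal $\on{Corr}_{\on{sch,sch}}(\on{PreStk}) \hookrightarrow \on{Corr}_{\on{all,sch}}(\on{PreStk}) \cap \on{Corr}_{\on{sch,all}}(\on{PreStk})$, and the natural transformation $\Omega$ between them restricted to this intersection.
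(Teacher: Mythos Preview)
Your argument is correct and is precisely the unwinding of the paper's one-line proof ``Follows from \eqref{e:Omega !}'': you use \eqref{e:Omega !} to transport the $g_i^!$ across the $\Omega$'s, reducing to the base-change isomorphism $(g_2)^!_{\on{co}}\circ f_{\on{co},*}\simeq f'_{\on{co},*}\circ (g_1)^!_{\on{co}}$, which is encoded in the functor $\Shv(-)_{\on{co}}$ out of $\on{Corr}_{\on{all,sch}}(\on{PreStk})$. The paper leaves this second ingredient implicit, but it is exactly what you have spelled out.
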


\begin{proof}

Follows from \eqref{e:Omega !}.

\end{proof}

\begin{cor} \label{c:proj formula}
Suppose that $\CY_1,\CY_2,\CY_1\times \CY_2,\CY_2\times \CY_2$ are tame.
Then the functor $f_\blacktriangle$ satisfies the projection formula:
$$f_\blacktriangle(\CF_1)\sotimes\CF_2\simeq f_\blacktriangle(\CF_1\sotimes f^!(\CF_2)).$$
\end{cor}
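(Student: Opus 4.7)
My plan is to reduce the projection formula to a base-change identity supplied by \lemref{l:base change renorm}, applied to the Cartesian square built from the graph of $f$ and the diagonal of $\CY_2$.

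First, I would rewrite both sides of the desired identity as $!$-pullbacks of $\CF_1 \boxtimes \CF_2 \in \Shv(\CY_1 \times \CY_2)$. Using the identity $\CF \sotimes \CG = \Delta^!(\CF \boxtimes \CG)$, the compatibility $(\on{id}_{\CY_1} \times f)^!(\CF_1 \boxtimes \CF_2) \simeq \CF_1 \boxtimes f^!\CF_2$, and the factorization $(\on{id}_{\CY_1} \times f) \circ \Delta_{\CY_1} = \Gamma_f$, where $\Gamma_f := (\on{id},f) : \CY_1 \to \CY_1 \times \CY_2$ is the graph, I obtain
$$\CF_1 \sotimes f^!(\CF_2) \simeq \Gamma_f^!(\CF_1 \boxtimes \CF_2), \qquad f_\blacktriangle(\CF_1) \sotimes \CF_2 \simeq \Delta_{\CY_2}^!\bigl(f_\blacktriangle(\CF_1) \boxtimes \CF_2\bigr).$$

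Next, I apply \lemref{l:base change renorm} to the Cartesian square
$$
\CD
\CY_1 @>{\Gamma_f}>> \CY_1 \times \CY_2 \\
@V{f}VV @VV{f \times \on{id}}V \\
\CY_2 @>>{\Delta_{\CY_2}}> \CY_2 \times \CY_2,
\endCD
$$
whose horizontal arrows are schematic ($\Delta_{\CY_2}$ by the schematic-diagonal assumption implicit in tameness of $\CY_2$, and $\Gamma_f$ as its base change) and whose four corners are precisely the tame prestacks in our hypothesis. The lemma yields $f_\blacktriangle \circ \Gamma_f^! \simeq \Delta_{\CY_2}^! \circ (f \times \on{id})_\blacktriangle$, and applying this to $\CF_1 \boxtimes \CF_2$ gives
$$f_\blacktriangle(\CF_1 \sotimes f^!\CF_2) \simeq \Delta_{\CY_2}^!\bigl((f \times \on{id})_\blacktriangle(\CF_1 \boxtimes \CF_2)\bigr).$$

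Comparing this with the formula for $f_\blacktriangle(\CF_1) \sotimes \CF_2$ above, the proof reduces to the Künneth-type identity
$$(f \times \on{id})_\blacktriangle(\CF_1 \boxtimes \CF_2) \simeq f_\blacktriangle(\CF_1) \boxtimes \CF_2,$$
which I expect to be the main obstacle. To establish it, I would unwind $(-)_\blacktriangle = \Omega \circ (-)_{\on{co},*} \circ \Omega^{-1}$, valid because all four prestacks are tame, and then use the presentation $\Shv(\CY_i)_{\on{co}} \simeq \underset{X_i \in \Sch_{/\CY_i}}{\on{colim}}\, \Shv(X_i)$ together with the cofinality of the product functor \eqref{e:prod cofinal} to reduce to generators of the form $\CF_i = (g_i)_*(\CG_i)$ arising from scheme maps $g_i: X_i \to \CY_i$. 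On such generators both sides become $(fg_1 \times g_2)_*(\CG_1 \boxtimes \CG_2)$ via the standard Künneth formula for schematic pushforward (itself an instance of base change), and the full identification then extends by preservation of the colimits used to present $\CF_1$ and $\CF_2$.
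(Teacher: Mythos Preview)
Your approach is exactly the paper's: apply \lemref{l:base change renorm} to the Cartesian square with $\Gamma_f$ and $\Delta_{\CY_2}$ as horizontal arrows. The paper's proof consists of that single sentence and nothing more; you have unwound what that application actually entails, including the K\"unneth-type identity $(f\times\on{id})_\blacktriangle(\CF_1\boxtimes\CF_2)\simeq f_\blacktriangle(\CF_1)\boxtimes\CF_2$, which the paper leaves implicit. Your reduction of that identity to scheme-level generators via $\Omega^{-1}$ and the colimit presentation of $\Shv(-)_{\on{co}}$ is a correct way to justify it.
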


\begin{proof}

We apply \lemref{l:base change renorm} to
$$
\CD
\CY_1 @>{\on{Graph}_f}>> \CY_1\times \CY_2 \\
@V{f}VV @VV{f\times \on{id}}V  \\
\CY_2 @>{\Delta_{\CY_2}}>> \CY_2\times \CY_2.
\endCD
$$

\end{proof}

\sssec{}

Similarly, for a map $f:\CY_1\to \CY_2$ between \emph{universally} tame prestacks
we define the morphism
$$f_\blacktriangle:\uShv(\CY_1)\to \uShv(\CY_2)$$
to be the composition
$$\uShv(\CY_1) \overset{\ul\Omega_{\CY_1}}\simeq
\uShv(\CY_1)_{\on{co}} \overset{f_{\on{co,*}}}\longrightarrow \uShv(\CY_2)_{\on{co}} \overset{\ul\Omega_{\CY_2}}\simeq
\uShv(\CY_2).$$

The values of the latter map are given by $(f\times \on{id})_\blacktriangle$ for
$$f\times \on{id}:\CY_1\underset{B}\times X \to \CY_2\underset{B}\times X.$$

\sssec{}

Unwinding, from \propref{p:unit self Y} and \secref{sss:dual co ushv} we obtain:

\begin{cor} \label{c:dual of ! u}
For a map $f:\CY_1\to \CY_2$ between AG tame prestacks, with respect to
the self-dualities $\uShv(\CY_i)\simeq  \uShv(\CY_i)^\vee$, we have
$$(f^!)^\vee \simeq f_\blacktriangle.$$
\end{cor}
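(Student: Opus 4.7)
The plan is to combine \propref{p:unit self Y} with the observation of \secref{sss:dual co ushv}: as the author's use of ``Unwinding'' suggests, the corollary is essentially a matter of tracking self-duality identifications through these two ingredients, both of which are already in place.

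First I would make the self-duality on $\uShv(\CY)$ explicit. Inspecting the proof of \propref{p:unit self Y}, for an AG tame $\CY$ the self-duality used in that statement factors as
$$
\uShv(\CY)^\vee \,\overset{\text{\corref{c:dual co ushv}}}{\simeq}\, \uShv(\CY)_{\on{co}} \,\overset{\ul\Omega_\CY}{\simeq}\, \uShv(\CY),
$$
where the first step is obtained from the identification $\eta_\CY: \uShv(\CY)_{\on{co}}^\vee \simeq \uShv(\CY)$ of \corref{c:dual co ushv} by dualizing (noting that $\uShv(\CY)_{\on{co}}$ is dualizable, being equivalent via $\ul\Omega_\CY$ to $\uShv(\CY)$, which is dualizable by \thmref{t:self duality of verd comp}), and the second step uses universal tameness.

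Next I would invoke \secref{sss:dual co ushv}: for any $f: \CY_1 \to \CY_2$, under the identifications $\eta_{\CY_i}$, the dual $(f_{\on{co},*})^\vee$ of the map $f_{\on{co},*}: \uShv(\CY_1)_{\on{co}} \to \uShv(\CY_2)_{\on{co}}$ corresponds to $f^!: \uShv(\CY_2) \to \uShv(\CY_1)$. Dualizing this relation once more is equivalent to saying that $(f^!)^\vee: \uShv(\CY_1)^\vee \to \uShv(\CY_2)^\vee$ corresponds, under the first step of the chain above, to $f_{\on{co},*}$. Transporting further through the equivalences $\ul\Omega_{\CY_i}$, the map $(f^!)^\vee$ read between copies of $\uShv(\CY_i)$ becomes $\ul\Omega_{\CY_2} \circ f_{\on{co},*} \circ \ul\Omega_{\CY_1}^{-1}$, which is precisely the definition of $f_\blacktriangle$ from \secref{sss:black triangle}. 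The only real obstacle is bookkeeping: making sure the directions of $\ul\Omega_\CY$, of double-dualization, and of $\eta_\CY$ all compose consistently; no genuinely new computation is required beyond the ingredients already proved.
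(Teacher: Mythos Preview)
Your proposal is correct and is exactly the unwinding the paper has in mind: the self-duality on $\uShv(\CY)$ is explicitly described in the proof of \propref{p:unit self Y} as the composite $\uShv(\CY)^\vee \simeq \uShv(\CY)_{\on{co}} \overset{\ul\Omega_\CY}\simeq \uShv(\CY)$, and combining this with the identification $(f_{\on{co},*})^\vee \simeq f^!$ from \secref{sss:dual co ushv} (dualized once) immediately yields $(f^!)^\vee \simeq \ul\Omega_{\CY_2}\circ f_{\on{co},*}\circ \ul\Omega_{\CY_1}^{-1} = f_\blacktriangle$. Your bookkeeping is in order.
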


\sssec{}

As a formal consequence of \corref{c:dual of ! u} we obtain:

\begin{cor} \label{c:counit u}
The counit of the self-duality of $\uShv(\CY)$ is given by
$$\uShv(\CY)\otimes \uShv(\CY)\overset{\boxtimes}\to \uShv(\CY\underset{B}\times \CY)
\overset{(\Delta_{\CY/B})^!}\longrightarrow \ul\Shv(\CY) \overset{(p_{\CY/B})_\blacktriangle}\longrightarrow
\ul\Shv(B)=\uno_{\AGCat_{/B}}.$$
\end{cor}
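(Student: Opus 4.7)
The plan is to deduce the formula for the counit directly from the formula for the unit (Proposition \ref{p:unit self Y}) by dualization, together with the identification $(f^!)^\vee \simeq f_\blacktriangle$ from Corollary \ref{c:dual of ! u}.

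In a symmetric monoidal $(\infty,2)$-category, once a self-duality $V \simeq V^\vee$ of a dualizable object $V$ is specified, the counit $\epsilon: V\otimes V \to \uno$ is the dual of the unit $\eta: \uno \to V\otimes V$, in the sense that $\epsilon$ is obtained by applying $(-)^\vee$ to $\eta$ and using the self-dualities on $V\otimes V$ to re-identify source and target. Applying this to $V := \uShv(\CY)$, the unit is, by Proposition \ref{p:unit self Y}, the composite
$$\uno = \ul\Shv(B) \xrightarrow{(p_{\CY/B})^!} \ul\Shv(\CY) \xrightarrow{(\Delta_{\CY/B})_*} \uShv(\CY\underset{B}\times \CY) \xrightarrow{\boxtimes^{-1}} \uShv(\CY)\otimes \uShv(\CY),$$
where the last arrow is the inverse of the isomorphism of Corollary \ref{c:tensor product verd comp}. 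Dualizing this composite and reversing the order, the counit is identified with
$$\uShv(\CY)\otimes \uShv(\CY) \xrightarrow{(\boxtimes^{-1})^\vee} \uShv(\CY\underset{B}\times \CY) \xrightarrow{((\Delta_{\CY/B})_*)^\vee} \ul\Shv(\CY) \xrightarrow{((p_{\CY/B})^!)^\vee} \uno.$$
By Corollary \ref{c:dual of ! u}, the right-most map equals $(p_{\CY/B})_\blacktriangle$. Since $\Delta_{\CY/B}$ is schematic (because $\CY$ has a schematic diagonal), we have $(\Delta_{\CY/B})_*=(\Delta_{\CY/B})_\blacktriangle$, and Corollary \ref{c:dual of ! u} then identifies its dual with $(\Delta_{\CY/B})^!$.

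What remains is to identify the dual of $\boxtimes^{-1}$ with $\boxtimes$ itself, i.e.\ to check compatibility of the self-duality of $\uShv(\CY \underset{B}\times \CY)$ (as an AG tame prestack, via Proposition \ref{p:AG verdier products} and Theorem \ref{t:self duality of verd comp}) with the tensor-square of the self-duality of $\uShv(\CY)$. I expect this to be the only nontrivial point of the argument: it is the multiplicativity of the self-duality data under products. To handle it, I would write $\uShv(\CY)$ as the limit $\lim_{X \to \CY}\uShv(X)$ over schemes as in the proof of Proposition \ref{p:unit self Y}; since the counit is determined by its pairings with elements from the schematic factors (via the limit presentation), it suffices to check the corresponding statement for a scheme $X$, where Section \ref{sss:self-dual AG sch} gives the formula tautologically and the map $\boxtimes: \uShv(X)\otimes \uShv(X) \to \uShv(X\underset{B}\times X)$ is an isomorphism by construction. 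Functoriality in $(X,f) \in \Sch_{/\CY}$, together with the compatibilities of $\boxtimes$ with $f^!$ and $f_\blacktriangle$, then promotes this to the desired identification for $\CY$, completing the derivation of the stated formula.
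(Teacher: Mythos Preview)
Your overall strategy---dualize the unit formula from Proposition~\ref{p:unit self Y} and invoke Corollary~\ref{c:dual of ! u} to identify the individual pieces---is exactly what the paper means by ``formal consequence,'' and your identification of the one nontrivial point (the compatibility of $\boxtimes$ with the two self-dualities on $\uShv(\CY\underset{B}\times\CY)$) is correct.

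However, your proposed resolution of that point is not quite right. You suggest using the \emph{limit} presentation $\uShv(\CY)\simeq\lim_{X\to\CY}\uShv(X)$ and restricting the counit to schematic factors. But a map \emph{out of} a limit is not determined by its restrictions along the structure maps, so this reduction does not work as stated. The clean argument goes through the \emph{colimit} side instead: the self-duality of $\uShv(\CZ)$ for AG tame $\CZ$ is, by construction (see \S\ref{sss:univ self dual} and Corollary~\ref{c:dual co ushv}), the composite
\[
\uShv(\CZ)^\vee \;\overset{(\ul\Omega_\CZ)^\vee}{\simeq}\; (\uShv(\CZ)_{\on{co}})^\vee \;\overset{\text{Cor.~\ref{c:dual co ushv}}}{\simeq}\; \uShv(\CZ).
\]
Now the functor $\uShv(-)_{\on{co}}$ is \emph{strictly} symmetric monoidal (see \eqref{e:boxtimes co}), and the commutative square \eqref{e:Omega prod diag} says precisely that $\ul\Omega_{\CY\times\CY}$ corresponds under $\boxtimes$ to $\ul\Omega_\CY\otimes\ul\Omega_\CY$. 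This immediately gives that the AG tame self-duality on $\uShv(\CY\underset{B}\times\CY)$ agrees under $\boxtimes$ with the tensor-square self-duality on $\uShv(\CY)\otimes\uShv(\CY)$, which is what you need to conclude $(\boxtimes^{-1})^\vee=\boxtimes$. With this in place, your dualization argument goes through verbatim.
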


\section{Duality in \texorpdfstring{$\DGCat$}{DGCat} and \texorpdfstring{$\AGCat$}{AGCat}} \label{s:duality}

In this section we study how monoidal dualilty in $\DGCat$ interacts with Verdier duality
on schemes and algebraic stacks.

\ssec{Ambidexterity} \label{ss:ambidex}

\sssec{} \label{ss:ker duality setup}

Recall the general setup in \Cref{ss:ker setup}: we have a closed symmetric monoidal category $\bO$ and a right-lax
symmetric monoidal functor
$$ F: \bO \to \CV .$$

Suppose now that $F$ satisfies the following additional hypotheses:

\medskip

\begin{enumerate}
\item\label{i:duality unit}
$F$ is strictly unital; i.e. the natural map
$$ \uno_{\CV} \to F(\uno_{\bO}) $$
is an isomorphism.

\smallskip

\item\label{i:duals in O}
Every object $\bo\in \bO$ is dualizable.

\medskip

\item\label{i:pairing after F}
For every $\bo\in \bO$, the pairing
$$ F(\bo) \otimes F(\bo^{\vee}) \to F(\bo \otimes \bo^{\vee}) \to F(\uno_{\bO}) \simeq \uno_{\CV} $$
is the counit of a duality pairing, where $\bo^{\vee}$ is the dual of $\bo$ and the right map is the image under $F$ of the duality pairing of $\bo$ and $\bo^{\vee}$.
\end{enumerate}

\sssec{}

Recall that we have the adjunction \Cref{e:adjunction for KER}
$$ \xymatrix{
\bi: \CV \ar@<0.8ex>[r] & \ar@<0.8ex>[l] \ul\CV : \be }
$$
of $\CV$-module categories.

\medskip

By \Cref{p:strictly unital inc ff}, condition (\ref{i:duality unit}) implies that the functor $\bi$ is fully faithful.  Thus,
the unit natural transformation
\begin{equation}\label{e:unit of alpha-rho}
\on{Id}_{\CV} \to \be \circ \bi
\end{equation}
is an isomorphism.

\medskip

By \propref{p:Enh is closed}, $\ul\CV$ is a closed symmetric monoidal category. We have:
\begin{equation}\label{e:two inner Homs}
\be(\uHom_{\ul\CV}(\ul{v}_1,\ul{v}_2)) \simeq \uHom_{\ul\CV,\CV}(\ul{v}_1,\ul{v}_2) \in \CV.
\end{equation}

\sssec{}\label{sss:reform of dual cond}

Assuming conditions (\ref{i:duality unit}) and (\ref{i:duals in O}), condition (\ref{i:pairing after F}) is equivalent to the following two conditions:

\medskip

\noindent{(3')} For every $\bo\in \bO$, the object $F(\bo)\in \CV$ is dualizable, and

\medskip

\noindent{(3'')} The map
$$\uHom_{\ul\CV,\CV}(\wtF(\bo), \uno_{\ul\CV}) \to \uHom_{\ul\CV,\CV}(\bi\circ \be(\wtF(\bo)), \uno_{\ul\CV}) \simeq
\uHom_{\ul\CV,\CV}(\bi(F(\bo)), \uno_{\ul\CV}) \simeq \uHom_{\CV}(F(\bo), \uno_\CV),$$
given by restriction along the counit map $\bi \circ \be(\wtF(\bo)) \to \wtF(\bo)$, is an isomorphism.

\sssec{}

We claim:

\begin{prop}\label{p:alpha rho ambi}
The adjunction $(\bi, \be)$ is ambidexterous, with the counit of the $(\be, \bi)$-adjunction given by the inverse
of \Cref{e:unit of alpha-rho}.
\end{prop}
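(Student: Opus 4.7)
The plan is to establish ambidexterity by directly exhibiting $\be$ as left adjoint to $\bi$ via a natural isomorphism
\[
\Phi_{\ul v, v}\colon \uHom_{\ul\CV, \CV}(\ul v, \bi(v)) \to \uHom_\CV(\be(\ul v), v),
\]
defined as the composition of applying $\be$ with (the inverse of) the unit isomorphism \eqref{e:unit of alpha-rho}. Provided $\Phi$ is shown to be an isomorphism, it witnesses the adjunction $\be \dashv \bi$, and evaluating at $\ul v = \bi(v')$ will yield the counit of the new adjunction, which by construction of $\Phi$ is tautologically the inverse of \eqref{e:unit of alpha-rho}, giving the second assertion.

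To show $\Phi_{\ul v, v}$ is an isomorphism, I would argue that, as contravariant functors of $\ul v$, both sides send weighted colimits in $\ul\CV$ to weighted limits in $\CV$. This is automatic for the left-hand side; for the right-hand side, it follows from the fact recorded in \secref{sss:prop Enh} that $\be$ preserves weighted colimits (it factors as the monadic conservative right adjoint of \Cref{e:ker-oblv} followed by evaluation at $\uno_\bO \in \bO^{\on{grpd}}$, both of which are colimit-preserving). Since $\ul\CV = \bP^\CV(\enhOV)$ is generated under weighted colimits by the representables $\wtF(\bo)$, this reduces the claim to the case $\ul v = \wtF(\bo)$.

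In the representable case, the enriched Yoneda lemma together with the explicit description in \secref{sss:objects of KER} give
\[
\uHom_{\ul\CV, \CV}(\wtF(\bo), \bi(v)) \simeq \bi(v)(\bo) \simeq v \otimes F(\bo^\vee),
\]
where the identification $\uHom_\bO(\bo, \uno_\bO) \simeq \bo^\vee$ uses dualizability of $\bo$ (hypothesis \ref{i:duals in O}). On the other hand, \eqref{e:first req} gives $\be\wtF(\bo) \simeq F(\bo)$, and the reformulation of the duality hypothesis in \secref{sss:reform of dual cond} asserts that $F(\bo)$ is dualizable in $\CV$ with dual canonically isomorphic to $F(\bo^\vee)$, whence
\[
\uHom_\CV(F(\bo), v) \simeq v \otimes F(\bo^\vee).
\]
Unwinding the definition of $\Phi$ shows it realizes precisely this identification.

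The main technical point I anticipate being delicate is the reduction to representables: we need $\Phi$ to be a natural transformation of $\CV$-enriched contravariant functors on $\ul\CV$ (not merely of their underlying mapping spaces), so that pointwise verification on the generating representables $\wtF(\bo)$ propagates to all of $\ul\CV$. I would handle this by formulating $\Phi$ intrinsically at the level of the relative internal Homs $\uHom_{\ul\CV, \CV}$, and then invoking the universal property of $\bP^\CV(\enhOV)$ as the free $\CV$-cocompletion of $\enhOV$ (\propref{p:env enh}) together with the enriched Yoneda lemma.
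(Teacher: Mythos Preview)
Your proposal is correct and follows essentially the same route as the paper: reduce to representables via the weighted-colimit presentation of objects in $\ul\CV$, then verify the map is an isomorphism on $\wtF(\bo)$ using the duality hypotheses (the paper packages the representable case via a commutative square isolating conditions (3') and (3'') separately, whereas you combine them into the single statement $F(\bo)^\vee \simeq F(\bo^\vee)$, but the content is the same). Your closing concern about enriched naturality is unnecessary: ordinary naturality of $\Phi$ together with the fact that both sides take weighted colimits in $\ul v$ to weighted limits already suffices for the reduction, exactly as the paper argues via \eqref{e:Yoneda weighted colim}.
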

\begin{proof}
We need to show that for every $v\in \CV$ and every $\ul{v} \in \ul\CV$, the natural map
\begin{equation}\label{e:ambidex check}
\uHom_{\ul\CV,\CV}(\ul{v}, \bi(v)) \to \uHom_{\CV}(\be(\ul{v}), \be\circ \bi(v)) \overset{\sim}{\to}
\uHom_{\CV}(\be(\ul{v}), v)
\end{equation}
is an isomorphism.

\medskip

Since both sides take weighted colimits in the $\ul{v}$-variable to weighted limits, by
\Cref{e:Yoneda weighted colim}, it suffices to show that
\Cref{e:ambidex check} is an isomorphism for $\ul{v}=\wtF(\bo)$ for $\bo\in \bO$.

\medskip

By definition, $\bi(v) \simeq \uno_{\ul\CV}\otimes v$.  Hence, we have a commutative diagram
$$ \xymatrix{
\uHom_{\ul\CV,\CV}(\wtF(\bo), \uno_{\ul\CV}) \otimes v \ar[r]\ar[d] & \uHom_{\ul\CV,\CV}(\wtF(\bo), \bi(v)) \ar[d]\\
\uHom_{\ul\CV,\CV}(\bi\circ \be(\wtF(\bo)), \uno_{\ul\CV}) \otimes v \ar[r] & \uHom_{\ul\CV,\CV}(\bi\circ \be(\wtF(\bo)), \bi(v))\ar[r]^-{\sim} &  \uHom_\CV(\be(\wtF(\bo)), v),
}$$
where the composite map
$$\uHom_{\ul\CV,\CV}(\wtF(\bo), \bi(v))  \to \uHom_{\ul\CV,\CV}(\bi\circ \be(\wtF(\bo)), \bi(v))\simeq  \uHom_\CV(\be(\wtF(\bo)), v)$$
is the map \eqref{e:ambidex check} for $\ul{v}=\wtF(\bo)$. Thus, we need to check that the right vertical map in this diagram is an isomorphism.

\medskip

In the above diagram, the left vertical map is an isomorphism by condition (3'') above; the top horizontal map is an isomorphism since $\wtF(\bo)$ is dualizable;
and the bottom horizontal map is an isomorphism because $\bi \circ \be (\wtF(\bo))\simeq \bi(F(\bo))$ is dualizable by condition (3') above.
Hence, the right vertical map is an isomorphism, as desired.

\end{proof}

\begin{rem}

In the specific situation of $\AGCat$, we will show in \Cref{c:counit has right adjoint AGCat} that more is true.  Namely, in this case the counit map
$$ \bi \circ \be \to \on{Id} $$
admits a right adjoint (which makes sense since $\AGCat$ is naturally a 2-category). In this situation (i.e., when $\be\circ \bi=\on{Id}$), this right adjoint
is automatically the unit of an $(\be, \bi)$-adjunction, which is automatically the same as the $(\be, \bi)$-adjunction described above.

\end{rem}

\ssec{Duality of the values} \label{ss:duality values}

We keep the assumptions of \Cref{ss:ker duality setup}. In this subsection, we will show that the functor
$$ \be: \ul\CV \to \CV $$
maps dualizable objects to dualizable objects, and we will describe the duality data explicitly.

\sssec{}

We are going to prove:

\begin{prop}\label{p:rho preserves duality}
Suppose that $\ul{v}\in \ul\CV$ is a dualizable object.  Then $\be(\ul{v})$ is also dualizable, and the paring
\begin{equation}\label{e:counit on rho}
 \be(\ul{v}) \otimes \be(\ul{v}^{\vee}) \to \be(\ul{v} \otimes \ul{v}^{\vee}) \to \be(\uno_{\CV}) \simeq \uno_{\CV}
\end{equation}
is the counit of a duality.
\end{prop}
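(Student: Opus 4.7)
The strategy is to apply the internal-hom criterion of \propref{p:charact of duality}: it is enough to construct a natural isomorphism
$$\uHom_\CV(\be(\ul{v}), w) \simeq \be(\ul{v}^\vee) \otimes w$$
for all $w \in \CV$. Specializing to $w = \uno_\CV$ and $w = \be(\ul{v})$ and comparing the tautological maps, one obtains both (i) the existence of internal homs and (ii) the requisite isomorphism of criterion \propref{p:charact of duality}. In particular, this automatically identifies the dual of $\be(\ul{v})$ with $\be(\ul{v}^\vee)$.

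The key computational input is the ambidexterity of $(\bi,\be)$ provided by \propref{p:alpha rho ambi}, which upgrades $\bi$ to a right adjoint of $\be$ as well. Combining this new adjunction with the $\CV$-linearity of $\be$ (i.e. $\be(\bi(v') \otimes \ul{v}) \simeq v' \otimes \be(\ul{v})$) and the defining property of $\uHom_{\ul\CV,\CV}$, for every $v' \in \CV$ I would compute
$$\Maps_\CV(v' \otimes \be(\ul{v}), w) \simeq \Maps_{\ul\CV}(\bi(v') \otimes \ul{v}, \bi(w)) \simeq \Maps_\CV\bigl(v', \uHom_{\ul\CV,\CV}(\ul{v}, \bi(w))\bigr).$$
By Yoneda this yields $\uHom_\CV(\be(\ul{v}), w) \simeq \uHom_{\ul\CV,\CV}(\ul{v}, \bi(w))$. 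Dualizability of $\ul{v}$ in $\ul\CV$ gives $\uHom_{\ul\CV}(\ul{v}, \bi(w)) \simeq \ul{v}^\vee \otimes \bi(w)$; passing through the identification \eqref{e:two inner Homs} and invoking $\CV$-linearity of $\be$ once more gives $\uHom_\CV(\be(\ul{v}), w) \simeq \be(\ul{v}^\vee) \otimes w$, as desired.

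It remains to identify the counit of the resulting duality with the explicit pairing \eqref{e:counit on rho}. By construction, this counit is the evaluation map $\uHom_\CV(\be(\ul{v}), \uno_\CV) \otimes \be(\ul{v}) \to \uno_\CV$ transported across the identification just built. Unwinding the adjunctions, this transported map factors as the composite
$$\be(\ul{v}) \otimes \be(\ul{v}^\vee) \to \be(\ul{v} \otimes \ul{v}^\vee) \to \be(\uno_{\ul\CV}) \simeq \uno_\CV,$$
where the first arrow is the lax-monoidal structure on $\be$ (induced by the symmetric monoidality of its left adjoint $\bi$) and the second is induced by the duality pairing of $\ul{v}$.

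\textbf{Main obstacle.} The internal-hom computation itself is essentially formal once ambidexterity is invoked. The genuine subtlety is the last step: confirming that the counit produced by \propref{p:charact of duality} agrees \emph{as a specific morphism} with the composite displayed in \eqref{e:counit on rho}, rather than merely with some homotopic pairing. This is a diagram chase tying together the $(\bi,\be)$-counit, the $\CV$-linear structure on $\be$, and the lax symmetric monoidal structure on $\be$; the chase is routine but must be done carefully because the whole statement of the proposition hinges on it.
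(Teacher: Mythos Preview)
Your proposal is correct and follows essentially the same approach as the paper: both arguments use the ambidexterity of \propref{p:alpha rho ambi} to identify $\uHom_\CV(\be(\ul v),w)\simeq \uHom_{\ul\CV,\CV}(\ul v,\bi(w))$, then invoke dualizability of $\ul v$ together with \eqref{e:two inner Homs} and the $\CV$-linearity of $\be$ to rewrite this as $\be(\ul v^\vee)\otimes w$. The paper packages the first half as a commutative square showing that $\uHom_\CV(\be(\ul v),\uno_\CV)\otimes v\to\uHom_\CV(\be(\ul v),v)$ is an isomorphism, and handles the counit identification by a separate unwinding that the induced map $\be(\ul v^\vee)\to\uHom_\CV(\be(\ul v),\uno_\CV)$ is an isomorphism---but this is the same content as your direct computation and your final diagram chase.
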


\begin{proof}

First, we will show that $\be(\ul{v})$ is dualizable.  We need to show that for any $v\in \CV$, the natural map
$$ \uHom_\CV(\be(\ul{v}), \uno_\CV)\otimes v \to \uHom_\CV(\be(\ul{v}), v)$$
is an isomorphism. We have a commutative diagram
\begin{equation}\label{e:rho dual diagram}
\xymatrix{
\uHom_{\ul\CV,\CV}(\ul{v}, \uno_{\ul\CV})\otimes v \ar[r] \ar[d] & \uHom_{\ul\CV,\CV}(\ul{v}, \bi(v)) \ar[d]\\
\uHom_{\CV}(\be(\ul{v}), \uno_\CV) \otimes v \ar[r] & \uHom_{\CV}(\be(\ul{v}), v)
}
\end{equation}
The vertical maps is \Cref{e:rho dual diagram} are isomorphisms by \Cref{p:alpha rho ambi}.
Since $\ul{v}$ is dualizable, the map
$$ \uHom_{\ul\CV}(\ul{v}, \uno_{\ul\CV}) \otimes v \to \uHom_{\ul\CV}(\ul{v}, \bi(v))$$
is an isomorphism.  Applying the functor $\be$ and using \eqref{e:two inner Homs},
we obtain that the top horizontal map in \Cref{e:rho dual diagram} is an isomorphism.
Hence the bottom horizontal map is also an isomorphism, as desired.  Thus, the object $\be(\ul{v})$ is dualizable.

\medskip

It remains to show that \Cref{e:counit on rho} is the counit of a duality.  We need to show that the map
\begin{equation}\label{e:check duality}
\be(\ul{v}^{\vee}) \to \uHom_\CV(\be(\ul{v}), \uno_{\CV})
\end{equation}
induced by \Cref{e:counit on rho} is an isomorphism.  Unraveling the definition, the map \Cref{e:check duality} is the composite
$$ \be(\ul{v}^{\vee}) \simeq \be(\uHom_{\ul\CV}(\ul{v}, \uno_{\ul\CV})) \simeq
\uHom_{\ul\CV,\CV}(\ul{v}, \uno_{\ul\CV}) \to \uHom_\CV(\be(\ul{v}), \uno_\CV), $$
which is an isomorphism by \Cref{p:alpha rho ambi}.

\end{proof}

\sssec{}
As a consequence of \Cref{p:rho preserves duality}, we obtain that all values of a dualizable object are dualizable:

\begin{cor}\label{c:duality of values}
Let $\ul{v}\in \ul\CV$ be a dualizable object and $\bo\in \bO$.  Then $\ul{v}(\bo)\in \CV$ is dualizable with
dual $\ul{v}^{\vee}(\bo^{\vee})$ and the counit given by
$$ \ul{v}(\bo)\otimes \ul{v}^{\vee}(\bo^{\vee}) \to (\ul{v} \otimes \ul{v}^{\vee})(\bo \otimes \bo^{\vee}) \to (\ul{v}\otimes \ul{v}^{\vee})(\uno_{\bO}) \to
\uno_{\ul\CV}(\uno_{\bO}) \simeq \uno_{\CV}.$$
\end{cor}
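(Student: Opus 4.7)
The plan is to reduce the statement to \propref{p:rho preserves duality} by realizing $\ul{v}(\bo)$ as $\be$ applied to a dualizable object of $\ul\CV$. The key identification is the enriched-Yoneda formula from \Cref{sss:objects of KER}, namely $\ul{v}(\bo)\simeq \uHom_{\ul\CV,\CV}(\wtF(\bo),\ul{v})$. Since $\bo\in\bO$ is dualizable by hypothesis \eqref{i:duals in O} of \Cref{ss:ker duality setup}, and $\wtF:\bO\to\ul\CV$ is (strictly) symmetric monoidal, the object $\wtF(\bo)$ is dualizable in $\ul\CV$ with dual $\wtF(\bo^\vee)$. Hence
\[
\ul{v}(\bo)\simeq \uHom_{\ul\CV,\CV}(\wtF(\bo),\ul{v})\simeq \uHom_{\ul\CV,\CV}(\uno_{\ul\CV},\wtF(\bo^\vee)\otimes \ul{v})\simeq \be\bigl(\wtF(\bo^\vee)\otimes\ul{v}\bigr).
\]

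Now, since $\ul{v}$ is dualizable in $\ul\CV$ by assumption and $\wtF(\bo^\vee)$ is dualizable with dual $\wtF(\bo)$, the tensor product $\wtF(\bo^\vee)\otimes\ul{v}$ is dualizable in $\ul\CV$ with dual $\wtF(\bo)\otimes\ul{v}^\vee$. Applying \propref{p:rho preserves duality}, we obtain that $\be(\wtF(\bo^\vee)\otimes\ul{v})$ is dualizable in $\CV$, with dual $\be(\wtF(\bo)\otimes\ul{v}^\vee)\simeq \ul{v}^\vee(\bo^\vee)$, exactly as claimed.

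Finally, to match the counit with the stated formula, I would unravel the counit produced by \propref{p:rho preserves duality}: it is the composite
\[
\be(\wtF(\bo^\vee)\otimes\ul{v})\otimes \be(\wtF(\bo)\otimes\ul{v}^\vee)\to \be\bigl((\wtF(\bo^\vee)\otimes\ul{v})\otimes(\wtF(\bo)\otimes\ul{v}^\vee)\bigr)\to \be(\uno_{\ul\CV})\simeq \uno_{\CV},
\]
where the second arrow is induced by the counit of duality $\wtF(\bo^\vee)\otimes\wtF(\bo)\simeq \wtF(\bo^\vee\otimes\bo)\to \wtF(\uno_\bO)\simeq \uno_{\ul\CV}$ tensored with the counit $\ul{v}\otimes\ul{v}^\vee\to \uno_{\ul\CV}$. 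The main bookkeeping step is to rewrite this composite in terms of values at $\bo$: the lax-monoidal structure of evaluation $(-)(\bo)\otimes(-)(\bo')\to(-)(\bo\otimes\bo')$ reorganizes the first map, while the counit $\bo^\vee\otimes\bo\to\uno_\bO$ (equivalently, the unit $\uno_\bO\to\bo\otimes\bo^\vee$ together with the contravariance of evaluation) implements the passage $(\ul{v}\otimes\ul{v}^\vee)(\bo\otimes\bo^\vee)\to(\ul{v}\otimes\ul{v}^\vee)(\uno_\bO)$, and finally the counit $\ul{v}\otimes\ul{v}^\vee\to\uno_{\ul\CV}$ evaluated at $\uno_\bO$ followed by the strict unitality isomorphism $\uno_{\ul\CV}(\uno_\bO)\simeq F(\uno_\bO)\simeq \uno_{\CV}$ recovers the displayed formula.

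The only real obstacle is this last identification of the two descriptions of the counit; it is essentially a naturality/diagram chase using the Yoneda identification together with the fact that the counit of the duality $\wtF(\bo^\vee)\otimes\wtF(\bo)\to\uno_{\ul\CV}$ corresponds under Yoneda to the evaluation map $\wtF(\bo)\to\wtF(\bo)$, and is therefore formal once one sets up the enriched-Yoneda dictionary from \Cref{sss:objects of KER} carefully.
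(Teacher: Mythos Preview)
Your proof is correct and follows essentially the same approach as the paper: identify $\ul{v}(\bo)\simeq \be(\wtF(\bo^\vee)\otimes\ul{v})$ via enriched Yoneda and dualizability of $\wtF(\bo)$, then apply \propref{p:rho preserves duality} to the dualizable object $\wtF(\bo^\vee)\otimes\ul{v}$. The paper's proof stops there and does not spell out the counit identification you sketch in your final two paragraphs; your additional unraveling is correct but goes beyond what the paper records.
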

\begin{proof}
We have
$$ \ul{v}(\bo) \simeq \uHom_{\ul\CV,\CV}(\wtF(\bo), \ul{v}) \simeq \be(\uHom_{\ul\CV}(\wtF(\bo), \ul{v}))\simeq
\be(\wtF(\bo^{\vee})\otimes \ul{v}) .$$

Now, $\wtF(\bo^{\vee}) \otimes \ul{v} \in \ul\CV$ is a dualizable object (since it's a tensor product of two dualizable objects).
Applying \Cref{p:rho preserves duality} to this object, we obtain the desired result.
\end{proof}

\ssec{Recollections on Verdier duality on schemes} \label{ss:recall Verdier}

\sssec{}

Let $X$ be a scheme.  In this case, by definition, we have
$$\Shv(X) = \on{Ind}(\Shv(X)^{\on{constr}}) $$
is the ind-category of constructible $\ell$-adic sheaves.  Now, Verdier duality gives an equivalence
\begin{equation} \label{e:Verdier sch}
\BD_X: (\Shv(X)^{\on{constr}})^{\on{op}} \simeq (\Shv(X)^{\on{constr}}).
\end{equation}

\sssec{}

Recall that if $\bC$ is a compactly generated DG category, then it is dualizable and the dual is given by
\begin{equation} \label{e:dual of comp gen}
\bC^{\vee} \simeq \on{Ind}((\bC^{c})^{\on{op}}) ,
\end{equation}
where $\bC^{c} \subset \bC$ is the full subcategory consisting of compact objects.

\medskip

Moreover, suppose that
$$ F: \bC_1 \to \bC_2 $$
is a functor between compactly generated categories that preserves compact objects; i.e. $F = \on{Ind}(F^c)$
for a DG functor
$$ F^c: \bC_1^c \to \bC_2^c .$$
Then the dual $F^{\vee}$ of $F$ identifies with
$$\bC_2^{\vee} \simeq \on{Ind}((\bC_2^c)^{\on{op}}) \overset{\on{Ind}((F^c)^{\on{op}})^R}\longrightarrow
\on{Ind}((\bC_1^c)^{\on{op}})\simeq \bC_1^\vee,$$
i.e., the right adjoint of the ind-extension of $(F^c)^{\on{op}}: \bC_1^{\on{op}} \to \bC_2^{\on{op}}$.

\sssec{} \label{sss:Verdier schemes}

Applying this to the category $\Shv(X)$, we see that the Verdier duality functor gives rise to an identification
\begin{equation}\label{e:Verdier as self-duality}
\Shv(X)^\vee \simeq \Shv(X).
\end{equation}

\sssec{} \label{sss:Verdier unit}

Recall also that for $\CF_1,\CF_2\in \Shv(X)^{\on{constr}}$, we have
$$\CHom_{\Shv(X)}(\CF_1,\CF_2) \simeq \on{C}^\cdot(X,\BD_X(\CF_1)\sotimes \CF_2).$$

This implies that the pairing
$$\Shv(X)\otimes \Shv(X)\to \Vect$$
corresponding to \eqref{e:Verdier as self-duality} is given by
$$\Shv(X)\otimes \Shv(X)\overset{\boxtimes}\to \Shv(X\times X)\overset{\Delta_X^!}\to \Shv(X)
\overset{(p_X)_*}\to \Shv(\on{pt})\simeq \Vect.$$

\sssec{}

The above formula for the pairing implies that for a map of schemes $f: X_1 \to X_2$, we have an identification of functors on the
subcategories of compact objects
$$ f^! \circ \BD_{X_2} \simeq \BD_{X_1} \circ f^* $$
and therefore with respect to the self-duality \Cref{e:Verdier as self-duality}, we have
\begin{equation} \label{e:dual !}
(f^!)^{\vee} \simeq (f^*)^R \simeq f_* .
\end{equation}

\sssec{}

Let $\CY$ be a prestack, and recall the category $\Shv(\CY)_{\on{co}}$. Since the transition functors
in the colimit preserve compactness, the category $\Shv(\CY)_{\on{co}}$ is compactly generated, and
hence dualizable.

\medskip

From \eqref{e:dual !} it follows that we have a canonical identification
\begin{equation} \label{e:dual of co}
(\Shv(\CY)_{\on{co}})^\vee \simeq \Shv(\CY).
\end{equation}

Under this identification, for a map of prestacks $f:\CY_1\to \CY_2$, we have
\begin{equation} \label{e:dual of co ! new}
f^!\simeq (f_{\on{co},*})^\vee,
\end{equation}
and if $f$ is schematic
\begin{equation} \label{e:dual of co * new}
f_*\simeq (f^!_{\on{co}})^\vee.
\end{equation}

\begin{rem}

Note that formulas \eqref{e:dual !}, \eqref{e:dual of co ! new} and \eqref{e:dual of co * new} look
similar to those in Sects. \ref{sss:dual of maps} and \ref{sss:dual co ushv}. However, for
now, these are ``apples and oranges": the former talk about duality in $\DGCat$ and the
latter about duality in $\AGCat$. The two will be related in the next subsection, see \corref{c:counit values}.

\end{rem}

\ssec{Duality in $\AGCat$}

We now specialize the discussion in Sects. \ref{ss:ambidex} and \ref{ss:duality values} to
$$\Shv:\on{Corr}(\Sch)\to \DGCat,$$
i.e., to the setting of \secref{s:AGCat} with $B=\on{pt}$.

\medskip

The latter assumption is needed in order for $\Vect\to \Shv(B)$ to be
an equivalence.  As was mentioned already, in this case we write $\AGCat$ instead of $\AGCat_{/B}$.

\sssec{}

We claim that conditions (1)-(3) in \secref{ss:ker duality setup} are satisfied. Indeed, condition (1)
is satisfied since $B=\on{pt}$.
Condition (2) is satisfied because every object in $\on{Corr}(\on{Sch})$ is self-dual
(see \secref{sss:dual corr}). Condition (3) is satisfied by \secref{sss:Verdier unit}.

\medskip

Thus, the contents of Sects. \ref{ss:ambidex}-\ref{ss:duality values} are applicable.

\sssec{}

Applying \Cref{c:duality of values} to $\AGCat$, we obtain:

\begin{cor} \label{c:counit values}
Let $\ul{\bC} \in \AGCat$ be a dualizable object.  Then for any scheme $X$,
$$\ul{\bC}(X) \in \DGCat $$
is dualizable, with dual $\ul{\bC}^{\vee}(X)$ and counit pairing
\begin{multline*}
\ul{\bC}(X)\otimes \ul{\bC}^{\vee}(X) \to (\ul{\bC}\otimes \ul{\bC}^{\vee})(X \times X)
\overset{\Delta_X^!}\to (\ul{\bC}\otimes \ul{\bC}^{\vee})(X) \overset{(p_X)_*}\to
(\ul{\bC}\otimes \ul{\bC}^{\vee})(\on{pt})\to \uno_{\AGCat}(\on{pt})\simeq
\on{Vect}.
\end{multline*}

In particular, $\ul\bC(\on{pt})$ is dualizable with counit given by
$$\ul{\bC}(\on{pt})\otimes \ul{\bC}^{\vee}(\on{pt}) \to (\ul{\bC}\otimes \ul{\bC}^{\vee})(\on{pt})\to
\uno_{\AGCat}(\on{pt})\simeq
\on{Vect} .$$
\end{cor}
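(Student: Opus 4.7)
My plan is to apply \corref{c:duality of values} directly in the setting that defines $\AGCat$, namely $(\bO,\CV,F)=(\on{Corr}(\on{Sch}),\DGCat,\Shv(-))$ with $B=\on{pt}$. The first step is to verify the three conditions of \secref{ss:ker duality setup}: condition (1) is immediate since $B=\on{pt}$ forces $\uno_\CV=\Vect\simeq \Shv(\on{pt})=F(\uno_\bO)$; condition (2) holds because every object of $\on{Corr}(\on{Sch})$ is canonically self-dual, per \secref{sss:dual corr}; and condition (3) is precisely the content of \secref{sss:Verdier unit}, since under the self-duality of $X$ in $\on{Corr}(\on{Sch})$ the induced pairing $\Shv(X)\otimes\Shv(X)\to\Vect$ is the composition
$$\Shv(X)\otimes\Shv(X)\overset{\boxtimes}\to \Shv(X\times X)\overset{\Delta_X^!}\to \Shv(X)\overset{(p_X)_*}\to \Vect,$$
which is exactly the Verdier self-duality pairing.

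With these hypotheses in place, \corref{c:duality of values} immediately gives that $\ul\bC(X)$ is dualizable with dual $\ul\bC^\vee(X^\vee)=\ul\bC^\vee(X)$. To obtain the explicit form of the counit stated in the corollary, one only has to unwind the abstract map
$$(\ul\bC\otimes\ul\bC^\vee)(X\otimes X)\to (\ul\bC\otimes\ul\bC^\vee)(\uno_\bO)$$
supplied by \corref{c:duality of values}. By the explicit description in \secref{sss:dual corr}, the counit morphism $X\otimes X\to \uno_\bO=\on{pt}$ in $\on{Corr}(\on{Sch})$ is the correspondence with backward leg $\Delta_X:X\to X\times X$ and forward leg $p_X:X\to\on{pt}$; by the description of the functoriality of objects of $\AGCat$ on correspondences from \secref{sss:AGCat simple}, the image of this morphism under any object of $\AGCat$ is $(p_X)_*\circ\Delta_X^!$, which is precisely the middle of the displayed composite.

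The ``in particular'' assertion is the specialization $X=\on{pt}$: there $\Delta_{\on{pt}}$ and $p_{\on{pt}}$ are both identities, so the two intermediate arrows collapse and only the monoidal structure map $(\ul\bC\otimes\ul\bC^\vee)(\on{pt})\to\uno_{\AGCat}(\on{pt})\simeq\Vect$ remains. No genuine obstacle arises; the only point requiring care is matching the convention on forward versus backward legs of a correspondence (and thus $(-)_*$ versus $(-)^!$) between \secref{sss:dual corr} and \secref{sss:AGCat simple}, so that the abstract counit from \corref{c:duality of values} translates to exactly the composite stated.
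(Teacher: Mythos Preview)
Your proof is correct and follows essentially the same approach as the paper: the paper verifies conditions (1)--(3) of \secref{ss:ker duality setup} in the paragraph preceding the corollary (using exactly the references you cite), and then derives the corollary by a direct application of \corref{c:duality of values}. Your additional unwinding of the counit morphism via the explicit correspondence $\on{pt}\leftarrow X\overset{\Delta_X}\to X\times X$ is a helpful elaboration of what the paper leaves implicit.
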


\sssec{}

Let $\CY$ be an AG tame prestack. From Corollaries \ref{c:counit values} and \ref{c:counit u}, we obtain:

\begin{cor} \label{c:counit AG tame value}
The category $\Shv(\CY)$ is self-dual with the counit given by
\begin{equation} \label{e:counit AG tame value}
\Shv(\CY)\otimes \Shv(\CY)\overset{\boxtimes}\to \Shv(\CY\times \CY)
\overset{\Delta_\CY^!}\to \Shv(\CY) \overset{(p_\CY)_\blacktriangle}\to \Vect.
\end{equation}
\end{cor}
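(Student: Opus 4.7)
The plan is to obtain \corref{c:counit AG tame value} as a direct combination of \corref{c:counit values} (applied to $\ul\bC=\uShv(\CY)$) with the explicit formula for the counit of the self-duality of $\uShv(\CY)$ in $\AGCat$ from \corref{c:counit u}. First, since $\CY$ is AG tame, \thmref{t:self duality of verd comp} gives that $\uShv(\CY)$ is dualizable in $\AGCat$, and by \secref{sss:univ self dual} it is canonically self-dual. Since $B=\on{pt}$, evaluation at the point gives $\uShv(\CY)(\on{pt})=\Shv(\CY)$, and \corref{c:counit values} then yields that $\Shv(\CY)$ is dualizable in $\DGCat$ with counit equal to
$$\Shv(\CY)\otimes \Shv(\CY)\to (\uShv(\CY)\otimes \uShv(\CY))(\on{pt})\to \uno_{\AGCat}(\on{pt})\simeq \Vect.$$

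Next, I would identify each of the two arrows separately. For the first arrow: since $\CY$ is AG tame, \corref{c:tensor product verd comp} gives an isomorphism $\uShv(\CY)\otimes \uShv(\CY)\simeq \uShv(\CY\times \CY)$; evaluating at $\on{pt}$ and unwinding the construction of the right-lax monoidal structure in \eqref{e:boxtimes}, the first arrow is precisely the external tensor product $\boxtimes:\Shv(\CY)\otimes \Shv(\CY)\to \Shv(\CY\times \CY)$. For the second arrow: by \corref{c:counit u}, the counit of the self-duality of $\uShv(\CY)$ in $\AGCat$ is the composite
$$\uShv(\CY)\otimes \uShv(\CY)\overset{\boxtimes}\to \uShv(\CY\times \CY)\overset{(\Delta_\CY)^!}\to \uShv(\CY)\overset{(p_\CY)_\blacktriangle}\to \uno_{\AGCat},$$
and evaluating on $\on{pt}\in \Sch$ this becomes $\Shv(\CY\times \CY)\overset{\Delta_\CY^!}\to \Shv(\CY)\overset{(p_\CY)_\blacktriangle}\to \Vect$; here the value of $f^!$ on $\on{pt}$ is the usual $!$-pullback, and the value of $f_\blacktriangle$ on $\on{pt}$ is the prestack-level $f_\blacktriangle$ by construction (see the paragraph following \lemref{l:base change renorm}).

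Stringing the two pieces together gives exactly the formula \eqref{e:counit AG tame value} claimed. The only point requiring a bit of care is the identification of the structure map $\uShv(\CY)(\on{pt})\otimes \uShv(\CY)(\on{pt})\to (\uShv(\CY)\otimes \uShv(\CY))(\on{pt})$ with $\boxtimes$; this is immediate from the definition of $\ul\Shv$ on schemes (\secref{sss:Shv line}) combined with the tameness identification of \corref{c:tensor product verd comp}. There is no genuine obstacle: the statement is essentially the composition of two facts already in hand, with the role of AG tameness being exactly to make the tensor product $\uShv(\CY)\otimes \uShv(\CY)$ compute $\uShv(\CY\times \CY)$ at the point.
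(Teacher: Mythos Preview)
Your proof is correct and follows exactly the paper's approach: the paper derives this corollary directly ``from Corollaries~\ref{c:counit values} and~\ref{c:counit u}'', and you have simply spelled out the evaluation-at-$\on{pt}$ step that combines them. The additional unwinding you do (identifying the lax monoidal structure map with $\boxtimes$ and the values of $(-)^!$ and $(-)_\blacktriangle$) is precisely what is implicit in the paper's one-line derivation.
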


\sssec{}

Unwinding, we obtain that the self-duality
$$\Shv(\CY)^\vee\simeq \Shv(\CY)$$
given by \corref{c:counit AG tame value} equals
\begin{equation} \label{e:Verdier tame prel}
\Shv(\CY)^\vee \overset{\Omega_\CY^\vee}\simeq (\Shv(\CY)_{\on{co}})^\vee \overset{\text{\eqref{e:dual of co}}}\simeq \Shv(\CY).
\end{equation}

We will denote the resulting equivalence
\begin{equation} \label{e:Verdier tame}
(\Shv(\CY)^c)^{\on{op}}\simeq \Shv(\CY)^c
\end{equation}
by $\BD_\CY$, see \eqref{e:dual of comp gen}.

\medskip

Note that by construction, when $\CY=X$ is a scheme, this is the same functor as \eqref{e:Verdier sch}.

\begin{rem}

Note that the identification \eqref{e:Verdier tame prel} and the equivalence \eqref{e:Verdier tame} take
place for any tame prestack (it does not need to be AG or universally tame).

\medskip

For future reference, we note that if $\CY_1$, $\CY_2$ and $\CY_1\times \CY_2$ are tame, and $\CF_i\in \Shv(\CY_i)^c$, we have
a canonical identification
\begin{equation} \label{e:Verdier ext}
\BD_{\CY_1}(\CF_1)\boxtimes \BD_{\CY_2}(\CF_2)\simeq \BD_{\CY_1\times \CY_2}(\CF_1\boxtimes \CF_2).
\end{equation}

\end{rem}

\ssec{Ambidexterity in $\AGCat$}

\sssec{}

Recall the adjunction 
\begin{equation} \label{e:DGCta to AGCat pt}
\xymatrix{
\bi: \DGCat \ar@<0.8ex>[r] & \ar@<0.8ex>[l] \AGCat : \be }
\end{equation}
of \eqref{e:DGCta to AGCat}.

\medskip

The counit is a natural transformation
\begin{equation}\label{e:conunit of alpha rho adjunction}
\bi \circ \be \to \on{Id}_{\AGCat}
\end{equation}
of $\DGCat$-module functors $\AGCat \to \AGCat$.

\sssec{}

We wll prove:

\begin{prop} \label{p:adjoint of counit in DGCat-ag}
The counit natural transformation \Cref{e:conunit of alpha rho adjunction} admits a right adjoint in the 2-category
$\on{Funct}_{\DGCat}(\AGCat, \AGCat)$.
\end{prop}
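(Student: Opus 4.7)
The plan is to verify that the natural transformation $\epsilon: \bi \circ \be \to \on{Id}_{\AGCat}$ satisfies the conditions for having a right adjoint in the 2-category of $\DGCat$-linear endofunctors. This reduces, pointwise in $\ul\bC \in \AGCat$, to checking that the 1-morphism $\epsilon_{\ul\bC}: \bi\be(\ul\bC) \to \ul\bC$ admits a right adjoint in the 2-category $\AGCat$. Once this is established, the pointwise right adjoints necessarily agree with the unit $\on{Id}_{\AGCat} \to \bi \circ \be$ of the $(\be, \bi)$-adjunction produced in \propref{p:alpha rho ambi}, so naturality and $\DGCat$-linearity of the assembled right adjoint are forced by uniqueness.

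For the pointwise step I apply \propref{p:adj AGCat1}. The value of $\epsilon_{\ul\bC}$ at a scheme $X$ is the colimit-preserving $\DGCat$-morphism
\[
\alpha_X \colon \ul\bC(\on{pt}) \otimes \Shv(X) \to \ul\bC(X), \quad c \otimes \CF \mapsto \CF \sotimes p_X^!(c),
\]
so it automatically admits a right adjoint in $\DGCat$. The substantive content is the right Beck--Chevalley condition: for every scheme $X'$ and every \emph{constructible} kernel $\CH \in \Shv(X \times X')^c$, the tautologically commuting square with horizontal arrows $\alpha_X, \alpha_{X'}$ and vertical arrows $\on{id}\otimes[\CH]$ and $[\CH]_{\ul\bC}$ must satisfy the right BC.

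The crucial input for this verification is that for a constructible $\CH$, the 1-morphism $[\CH]_{\ul\bC}: \ul\bC(X) \to \ul\bC(X')$ admits a left adjoint realized again as convolution with a constructible kernel --- namely $[\CH^\vee]_{\ul\bC}$ for the constructible sheaf $\CH^\vee \in \Shv(X' \times X)^c$ obtained from $\CH$ via Verdier duality on $X \times X'$ combined with the swap $X \times X' \simeq X' \times X$. This reflects the self-duality of $\ul\Shv(X)$ and $\ul\Shv(X')$ in $\AGCat$ established in \secref{sss:self-dual AG sch}, together with the fact that Verdier duality preserves the constructible subcategory. Granting this, the right Beck--Chevalley condition is equivalent (by passing to left adjoints of the horizontal arrows $\alpha_X, \alpha_{X'}$ and of the vertical arrows $[\CH]$, all of which preserve colimits) to the naturality square of $\epsilon$ along the $\AGCat$-morphism $[\CH^\vee]: \ul\bC(X') \to \ul\bC(X)$, which commutes tautologically.

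The main obstacle is precisely this last claim: that convolution with a constructible kernel admits a left adjoint realized as convolution with another constructible kernel. This would fail for arbitrary non-constructible kernels, which is exactly why the restriction to constructible $\CH$ built into \propref{p:adj AGCat1} is used in an essential way here.
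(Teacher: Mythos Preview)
Your pointwise verification is close in spirit to the paper's argument, but the assembly step has a genuine gap. Showing that each $\epsilon_{\ul\bC}$ admits a right adjoint in $\AGCat$ is \emph{not} sufficient to conclude that $\epsilon$ admits a right adjoint in $\on{Funct}_{\DGCat}(\AGCat,\AGCat)$: by \propref{p:basic bc}, one must also verify the right Beck--Chevalley condition for every morphism $\phi:\ul\bC_1\to\ul\bC_2$ in $\AGCat$. You try to circumvent this by saying the pointwise right adjoints ``necessarily agree'' with the unit $\eta$ of the $(\be,\bi)$-adjunction from \propref{p:alpha rho ambi}, whence naturality is ``forced by uniqueness''. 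But this is precisely what is at stake: \propref{p:alpha rho ambi} gives an adjunction of (enriched) categories, not a $2$-categorical adjunction between $\epsilon_{\ul\bC}$ and $\eta_{\ul\bC}$ inside $\AGCat$. Uniqueness of right adjoints only tells you that \emph{if} $\eta_{\ul\bC}$ were a right adjoint of $\epsilon_{\ul\bC}$, it would be the only one; it does not show that $\eta_{\ul\bC}$ \emph{is} one. Indeed, the identification you want is exactly the content of \corref{c:counit has right adjoint AGCat}, which the paper deduces \emph{from} the proposition, not the other way around.

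The paper avoids this difficulty by never working with arbitrary $\ul\bC$: it uses the universal property $\AGCat=\bP^{\DGCat}(\on{enh}(\Corr(\Sch),\DGCat))$ via \propref{p:env enh} to rewrite $\on{Funct}_{\DGCat}(\AGCat,\on{Funct}([1],\AGCat))$ as $\on{Funct}^{\DGCat}(\on{enh}(\Corr(\Sch),\DGCat),\on{Funct}([1],\AGCat))$. This reduces the problem to checking three concrete conditions \emph{only} for the generating objects $\ul\Shv(Z)$; the Beck--Chevalley condition for morphisms between them (your missing condition) becomes condition~(3) there, which is then seen to coincide with condition~(2) by symmetry. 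Your pointwise argument for a general $\ul\bC$ could likely be made to work (the claim that $[\CH]_{\ul\bC}$ admits the left adjoint $[\CH^\vee]_{\ul\bC}$ does follow from the adjunction at the level of the representing $1$-morphisms in $\AGCat$), but without the reduction to generators you would still owe the Beck--Chevalley verification for arbitrary $\phi:\ul\bC_1\to\ul\bC_2$, which you have not supplied.
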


Let us explain what \propref{p:adjoint of counit in DGCat-ag} says in concrete terms:

\medskip

First, for any $\ul\bC\in \AGCat$
and $X\in \Sch$, the functor
$$\Shv(X)\otimes \ul\bC(\on{pt})\to \ul\bC(X)$$
admits a right adjoint.

\medskip

Second (see \propref{p:adj AGCat1}), for every $\CF\in \Shv(X'\times X'')$, the diagram
$$
\CD
\Shv(X')\otimes \ul\bC(\on{pt}) @>>> \ul\bC(X') \\
@V{\CF\otimes \on{Id}}VV @VV{\ul\bC(\CF)}V \\
\Shv(X'')\otimes \ul\bC(\on{pt}) @>>> \ul\bC(X'')
\endCD
$$
satisfies the right Beck-Chevalley condition.

\medskip

And third (see \propref{p:adj in dgcat ag}), for a map $\alpha:\ul\bC_1\to \ul\bC_2$ and $X\in \Sch$, the
diagram
$$
\CD
\Shv(X)\otimes \ul\bC_1(\on{pt}) @>>> \ul\bC_1(X) \\
@V{\on{Id}\otimes \alpha(\on{pt})}VV @VV{\alpha(X)}V \\
\Shv(X)\otimes \ul\bC_2(\on{pt}) @>>> \ul\bC_2(X)
\endCD
$$
satisfies the right Beck-Chevalley condition.

\sssec{Proof of \propref{p:adjoint of counit in DGCat-ag}}

The assignment
$$ \ul\bC \mapsto (\bi(\ul\bC(\on{pt})) \to \ul\bC) $$
can be viewed as a $\DGCat$-module functor

\begin{equation}\label{e:curry counit restr}
\AGCat \to \on{Funct}([1], \AGCat) .
\end{equation}

The assertion of the proposition is equivalent to the assertion that the functor \eqref{e:curry counit restr}
factors through $\on{Funct}(\on{Adj}, \AGCat)$.

\medskip

By \propref{p:env enh}, we have
\begin{multline}\label{e:AGCat adj}
 \on{Funct}_{\DGCat}(\AGCat, \on{Funct}([1], \AGCat)) \simeq \\
\simeq \on{Funct}^{\DGCat}(\on{enh}(\on{Corr}(\Sch),\DGCat), \on{Funct}([1], \AGCat)),
\end{multline}
and we need to show that the object corresponding to \Cref{e:curry counit restr} in the right-hand
side of \eqref{e:AGCat adj}
lies in the 1-full subcategory
\begin{multline}
\on{Funct}^{\DGCat}(\on{enh}(\on{Corr}(\Sch),\DGCat), \on{Funct}(\on{Adj}, \AGCat))\subset \\
\subset \on{Funct}^{\DGCat}(\on{enh}(\on{Corr}(\Sch),\DGCat), \on{Funct}([1], \AGCat)).
\end{multline}

\medskip

By \Cref{p:adj in dgcat ag} and \Cref{p:1-full functors}, we need to show the following:

\medskip

\begin{enumerate}

\item
For every scheme $Z$ and another scheme $X$, the functor
\begin{equation} \label{e:boxtimes XZ}
\Shv(X)\otimes \Shv(Z) \overset{\boxtimes}\to \Shv(X\times Z)
\end{equation}
admits a right adjoint.

\medskip

\item
For $\CF\in \Shv(X'\times X'')$, the diagram
%\begin{equation} \label{e:BC XZ}
$$
\CD
\Shv(X')\otimes \Shv(Z) @>>> \Shv(X'\times Z) \\
@V{[\CF]\otimes \on{Id}}VV @VV{[\CF]}V \\
\Shv(X'')\otimes \Shv(Z) @>>> \Shv(X''\times Z)
\endCD
$$
%\end{equation}
satisfies the right Beck-Chevalley condition, where the notation $[-]$ is as in \secref{sss:conv}.

\medskip

\item
For $\CG\in \Shv(Z_1\times Z_2)$, the diagram
$$
\CD
\Shv(X)\otimes \Shv(Z_1) @>>> \Shv(X\times Z_1) \\
@V{\on{Id}\otimes [\CG]}VV @VV{[\CG]}V \\
\Shv(X)\otimes \Shv(Z_2) @>>> \Shv(X\times Z_2)
\endCD
$$
satisfies the right Beck-Chevalley condition.

\end{enumerate}

Note, however, that conditions (2) and (3) amount to the same, up to swapping
$X$ and $Z$.

\medskip

Condition (1) follows from the fact that the functor \eqref{e:boxtimes XZ}
maps compact objects to compact objects.

\medskip

To prove condition (2), we can assume that $\CF$ is compact (i.e., constructible). In this case,
the functor $[\CF]$ admits a left adjoint, which is explicitly given by
$$\CF''\mapsto (p')_!(\BD_{X'\times X''}(\CF)\otimes (p'')^*(\CF'')),$$
where $p',p''$ are the projections from $X'\times X''$ to $X'$ and $X''$, respectively.

\medskip

Now condition (2) follows from the fact that the rotated diagram
$$
\CD
\Shv(X')\otimes \Shv(Z) @>{[\CF]\otimes \on{Id}}>> \Shv(X'')\otimes \Shv(Z)  \\
@VVV @VVV \\
\Shv(X'\times Z) @>{\CF}>>  \Shv(X''\times Z)
\endCD
$$
satisfies the \emph{left} Beck-Chevalley condition.

\qed[\propref{p:adjoint of counit in DGCat-ag}]

\sssec{}

Recall from \Cref{p:alpha rho ambi} that the adjunction \Cref{e:DGCta to AGCat pt} is canonically ambidexterous.
A consequence of \Cref{p:adjoint of counit in DGCat-ag} is a more precise description of the resulting
$(\be,\bi)$-adjunction:

\begin{cor}\label{c:counit has right adjoint AGCat}
The unit of the $(\be, \bi)$-adjunction
$$\on{Id}_{\AGCat} \to \bi \circ \be $$
is canonically isomorphic to the right adjoint of the counit of the adjunction \Cref{e:DGCta to AGCat pt}.
\end{cor}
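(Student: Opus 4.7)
The plan is to show that the right adjoint $\delta \colon \on{Id}_{\AGCat} \to \bi \circ \be$ provided by \propref{p:adjoint of counit in DGCat-ag}, together with the inverse of the isomorphism of \Cref{e:unit of alpha-rho}, assemble into the unit-counit data of an adjunction $\be \dashv \bi$. By the uniqueness of adjunctions between a fixed pair of 1-morphisms, this newly-assembled adjunction will then be canonically equivalent to the $(\be, \bi)$-adjunction of \propref{p:alpha rho ambi}, whose unit is the natural transformation appearing in the statement; this yields the required canonical isomorphism.

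Write $\eta \colon \on{Id}_{\DGCat} \to \be \circ \bi$ for the unit of the $(\bi, \be)$-adjunction, i.e., the isomorphism of \Cref{e:unit of alpha-rho}. The main task is to verify the two triangle identities for the candidate data $(\delta, \eta^{-1})$; after rearrangement, these are the equivalences of natural transformations $\delta \bi \simeq \bi \eta$ and $\be \delta \simeq \eta \be$. The key observation is that right-whiskering with the $\DGCat$-linear 1-morphism $\bi$ and left-whiskering with $\be$ define 2-functors from the 2-category $\on{Funct}_{\DGCat}(\AGCat, \AGCat)$ into $\on{Funct}_{\DGCat}(\DGCat, \AGCat)$ and $\on{Funct}_{\DGCat}(\AGCat, \DGCat)$, respectively. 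As 2-functors, they preserve the adjunction $\epsilon \dashv \delta$, producing adjunctions $\epsilon \bi \dashv \delta \bi$ and $\be \epsilon \dashv \be \delta$.

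Next, the two triangle identities for the original $(\bi, \be)$-adjunction read $(\epsilon \bi) \circ (\bi \eta) = \on{id}_\bi$ and $(\be \epsilon) \circ (\eta \be) = \on{id}_\be$. Since $\eta$ is an isomorphism, both $\bi \eta$ and $\eta \be$ are isomorphisms, which forces $\epsilon \bi$ and $\be \epsilon$ to be isomorphisms with explicit inverses $\bi \eta$ and $\eta \be$, respectively. Since the right adjoint of an isomorphism is canonically its inverse, we obtain $\delta \bi \simeq (\epsilon \bi)^{-1} = \bi \eta$ and $\be \delta \simeq (\be \epsilon)^{-1} = \eta \be$, which are precisely the required triangle identities. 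Invoking uniqueness of adjoints then identifies the resulting adjunction with that of \propref{p:alpha rho ambi}, exhibiting $\delta$ as the canonical unit of the $(\be, \bi)$-adjunction.

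The principal obstacle is the careful bookkeeping within the $(\infty, 2)$-categorical framework: one must properly formulate what it means for the natural transformation $\epsilon$ to admit a right adjoint inside the 2-category $\on{Funct}_{\DGCat}(\AGCat, \AGCat)$, and confirm that pre- and post-composition with the 1-morphisms $\bi$ and $\be$ give rise to 2-functors on the relevant hom 2-categories that preserve such adjunctions. Once this framework is in place, the derivation of the triangle identities via the principle ``an adjoint of an isomorphism is its inverse'' becomes essentially formal.
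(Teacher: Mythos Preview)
Your proposal is correct and follows essentially the same approach as the paper: both construct a new $(\be,\bi)$-adjunction by passing to right adjoints of the unit and counit of the $(\bi,\be)$-adjunction, then identify it with the adjunction of \propref{p:alpha rho ambi} via uniqueness. The paper's proof is terser---it invokes the general principle ``passing to right adjoints gives an adjunction in the other direction'' and matches the two adjunctions by observing they share the counit $\eta^{-1}$---whereas you unpack that principle explicitly by verifying the triangle identities for $(\delta,\eta^{-1})$ through the whiskering argument; this is a reasonable and slightly more self-contained route to the same conclusion.
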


\begin{proof}

Both the unit and the counit of \Cref{e:DGCta to AGCat pt} admit right adjoints: the unit because it's
an isomorphism and the counit by \Cref{p:adjoint of counit in DGCat-ag}.  Passing to right adjoints gives an adjunction
in the other direction.

\medskip

By construction, the counit of this new $(\be, \bi)$-adjunction equals the counit from \Cref{p:alpha rho ambi}:
both are given by the inverse of $\on{Id}\to \be\circ \bi$.

\end{proof}

\ssec{The notion of Verdier compatible algebraic stack}

In this subsection we let $\CY$ be an algebraic stack with a schematic diagonal.

\sssec{}

Note that we have an inclusion:
$$\Shv(\CY)^{c} \subset \Shv(\CY)^{\on{constr}}.$$

I.e., compact objects are constructible (but, in general, not vice versa).  Recall from \cite[Sect. F.2.6]{AGKRRV1} that $\CY$
is called \emph{Verdier-compatible} if the subcategory $\Shv(\CY)^c$ is preserved by Verdier duality functor\footnote{We are insert the
superscript $\on{Verdier}$ in the notation in order to avoid conflating it with the functor $\BD_\CY$ in \eqref{e:Verdier tame}. See
Remark \ref{r:Verdier funct} for the
relation between the two functors.}
$$\BD^{\on{Verdier}}_\CY:(\Shv(\CY)^{\on{constr}})^{\on{op}}\simeq \Shv(\CY)^{\on{constr}}.$$

\medskip

By \cite[Theorem F.2.8]{AGKRRV1}, all quasi-compact stacks of finite type that are locally quotient stacks are Verdier-compatible.
According to \cite[Conjecture F.2.7]{AGKRRV1}, all quasi-compact algebraic stack with a schematic diagonal are Verdier-compatible.

\sssec{}

% \sssec{}
% Verdier duality for Verdier-compatible stacks ... dual of $f_*$ is $f^!$.

\medskip

By \eqref{e:dual of comp gen}, we obtain that for a Verdier-compatible algebraic stack there is a canonical equivalence
\begin{equation}\label{e:Verdier as self-duality stack}
\Shv(\CY)^\vee \simeq \Shv(\CY)
\end{equation}

\sssec{} \label{sss:Verdier compat}

Since $\CY$ has a schematic diagonal, we have a well-defined functor
\begin{equation} \label{e:Omega again}
\Omega_\CY:\Shv(\CY)_{\on{co}}\to \Shv(\CY),
\end{equation}
see \eqref{e:Omega}. We claim:

\begin{prop}\label{p:Verdier-compatible in terms of colim}
Let $\CY$ be an algebraic stack.  $\CY$ is Verdier-compatible if and only if it is tame.
\end{prop}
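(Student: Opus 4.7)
The plan is to prove both implications by analyzing compact objects on the two sides, using the basic Verdier-duality identity $f_*(\CF) \simeq \BD^{\on{Verdier}}_\CY(f_!(\BD_X(\CF)))$ (valid in $\Shv(\CY)^{\on{constr}}$) to connect the generating sets of $\Shv(\CY)^c$ and $\Shv(\CY)_{\on{co}}^c$.

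First I would set up compact generators for both sides. By the remark following \secref{sss:def of sh on prestack}, $\Shv(\CY)$ can be rewritten as $\underset{X\in \Sch_{/\CY}}{\on{colim}}\, \Shv(X)$ with $(-)_!$ as transitions, while $\Shv(\CY)_{\on{co}}$ is by definition the analogous colimit with $(-)_*$-transitions. In both cases the transitions preserve compactness (for $(-)_*$, by Deligne's finiteness for finite-type morphisms of schemes), so both categories are compactly generated, with $\Shv(\CY)^{c}$ generated under finite colimits and retracts by the $f_!(\CG)$, and $\Shv(\CY)_{\on{co}}^{c}$ by the $f_*(\CF)$, as $f:X\to\CY$ and $\CF,\CG\in\Shv(X)^{c}$ vary. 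The functor $\Omega_\CY$ sends the generator $f_*(\CF)\in \Shv(\CY)_{\on{co}}^c$ to $f_*(\CF)\in \Shv(\CY)^{\on{constr}}$, which by the displayed identity equals $\BD^{\on{Verdier}}_\CY(f_!(\BD_X(\CF)))$.

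Since the objects $f_!(\BD_X(\CF))$ themselves form a generating set of $\Shv(\CY)^{c}$ (because $\BD_X$ is an involution on $\Shv(X)^c$), this shows that $\Omega_\CY$ preserves compact objects if and only if $\BD^{\on{Verdier}}_\CY$ carries $\Shv(\CY)^{c}$ into itself, i.e., if and only if $\CY$ is Verdier-compatible. The direction tame $\Rightarrow$ Verdier-compatible is then immediate, since any equivalence of compactly generated DG categories automatically preserves compact objects. For the converse, assuming Verdier-compatibility, $\Omega_\CY$ preserves compacts, so it suffices to check that $\Omega_\CY^c:\Shv(\CY)_{\on{co}}^c \to \Shv(\CY)^c$ is an equivalence. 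Passing \eqref{e:dual of co} to compacts gives an equivalence $\Shv(\CY)_{\on{co}}^c \simeq (\Shv(\CY)^c)^{\on{op}}$, assembled from the Verdier self-dualities of $\Shv(X)$, under which $f_*(\CF)$ corresponds to $f_!(\BD_X(\CF))$; under this identification $\Omega_\CY^c$ becomes exactly the restriction of $\BD^{\on{Verdier}}_\CY$ to $(\Shv(\CY)^c)^{\on{op}}$, which is an equivalence by Verdier-compatibility. Ind-extending yields that $\Omega_\CY$ itself is an equivalence.

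The main subtlety will be verifying that, under the concrete identification $\Shv(\CY)_{\on{co}}^c \simeq (\Shv(\CY)^c)^{\on{op}}$ coming from \eqref{e:dual of co}, the functor $\Omega_\CY^c$ genuinely coincides as a functor (and not merely on representatives) with the restriction of $\BD^{\on{Verdier}}_\CY$; this reduces to the (essentially formal) compatibility of the global Verdier duality $\BD^{\on{Verdier}}_\CY$ with the Verdier dualities $\BD_X$ on schemes via the functors $f_!$ and $f_*$, which is built into the construction of $\BD^{\on{Verdier}}_\CY$ but should be spelled out carefully.
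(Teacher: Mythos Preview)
Your proof is correct and follows essentially the same strategy as the paper. Both hinge on the identity $\Omega_\CY(\on{ins}_f(\CF)) = f_*(\CF) \simeq \BD^{\on{Verdier}}_\CY(f_!(\BD_X(\CF)))$ to relate compact generators on the two sides, and both derive ``tame $\Rightarrow$ Verdier-compatible'' immediately from this.

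For the converse, there is a cosmetic difference in presentation: you restrict $\Omega_\CY$ to compact objects and identify $\Omega_\CY^c$ with the restriction of $\BD^{\on{Verdier}}_\CY$ via the equivalence $\Shv(\CY)_{\on{co}}^c \simeq (\Shv(\CY)^c)^{\on{op}}$ coming from \eqref{e:dual of co}; the paper instead dualizes $\Omega_\CY$ and shows that the composite
$$\Shv(\CY) \overset{\text{\eqref{e:Verdier as self-duality stack}}}\simeq \Shv(\CY)^\vee \overset{\Omega_\CY^\vee}\longrightarrow \Shv(\CY)_{\on{co}}^\vee \overset{\text{\eqref{e:dual of co}}}\simeq \Shv(\CY)$$
is the identity. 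These are the same argument seen through the equivalence $\bC^\vee \simeq \on{Ind}((\bC^c)^{\on{op}})$: your statement that $\Omega_\CY^c$ becomes $\BD^{\on{Verdier}}_\CY$ under the identification is exactly the content of the paper's ``identity'' claim, and the subtlety you flag (functorial compatibility of $\BD^{\on{Verdier}}_\CY$ with the schemewise $\BD_X$ via $f_!,f_*$) is the same thing the paper passes over with ``it is easy to see.'' The paper's dual formulation is marginally slicker since it avoids separately checking that $\Omega_\CY$ preserves compacts before restricting, but neither approach gains anything substantive over the other.
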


\begin{proof}
The transition functors in the colimit $$\Shv(\CY)_{\on{co}}=\underset{S\in \affSch_{/\CY}}{\on{colim}}\  \Shv(S)$$
preserve compact objects.  In particular, the compact objects in the colimit
are generated by objects of the form
$$ \on{ins}_f(\mathcal{F}) \in \underset{S\in \affSch_{/\CY}}{\on{colim}} \  \Shv(S), $$
for a map $f: S \to \CY$, $\CF\in \Shv(S)^{\on{constr}}$, and where
$$ \on{ins}_f: \Shv(S) \to \underset{S\in \affSch_{/\CY}}{\on{colim}} \  \Shv(S)$$
is the tautological functor.

\medskip

By definition, $\CY$ is Verdier-compatible if the Verdier dual of every compact object of $\Shv(\CY)$ is compact.
The compact objects in $\Shv(\CY)$ are generated by objects of the form
$f_!(\mathcal{F})$ for a map $f: S\to \CY$ and $\mathcal{F}\in \Shv(S)^{\on{constr}}$.  The Verdier dual of $f_!(\mathcal{F})$ is given by
$$ f_*(\mathbb{D}_S(\mathcal{F})) \simeq \Omega_\CY\Bigl(\on{ins}_f(\mathbb{D}_\CY(\mathcal{F}))\Bigr) .$$
In particular, if \Cref{e:Omega} is an equivalence, the objects $f_*(\mathbb{D}(\mathcal{F}))$
are compact and hence $\CY$ is Verdier-compatible.

\medskip

Now, suppose that $\CY$ is Verdier-compatible. In order to show that \eqref{e:Omega} is an equivalence,
it suffices to show that its dual functor is an equivalence. However, it is easy to see that the composition
$$\Shv(\CY) \overset{\text{\eqref{e:Verdier as self-duality stack}}}\simeq
\Shv(\CY)^\vee \overset{\Omega_\CY^\vee}\to \Shv(\CY)^\vee_{\on{co}}
\overset{\text{\eqref{e:dual of co}}}\simeq \Shv(\CY)$$
is the identity functor.

\end{proof}

\begin{rem} \label{r:Verdier funct}

Note that in the process of proof of \propref{p:Verdier-compatible in terms of colim}, we have shown that for
a Verider-compatible algebaric stack, the equivalence \eqref{e:Verdier as self-duality stack} equals
\begin{equation}\label{e:Verdier alg stack big}
\Shv(\CY)^\vee \overset{\text{\eqref{e:dual of co}}}\simeq \Shv(\CY)_{\on{co}} \overset{\Omega_\CY}\to \Shv(\CY).
\end{equation}

In particular, the equivalence
$$(\Shv(\CY)^c)^{\on{op}}\simeq \Shv(\CY)^c.$$
induced by $\BD^{\on{Verdier}}_\CY$ equals $\BD_\CY$ from \eqref{e:Verdier tame}.

\end{rem}

\begin{rem}

Let $f:\CY_1\to \CY_2$ be a map between Verdier-compatible algebraic stacks. Recall the functor
$$f_\blacktriangle:\Shv(\CY_1)\to \Shv(\CY_2),$$
see \secref{sss:black triangle}.

\medskip

It is not difficult to show that $f_\blacktriangle$ is the ind-extension of the functor
$$\Shv(\CY_1)^c\to \Shv(\CY_2)^c,$$
obtained by restriction from the \emph{usual} $f_*$, defined to be the (a priori, non-colimit preserving)
right adjoint of the functor $f^*$, see Remark \ref{r:comp gen prestack}.

\medskip

Thus, $f_\blacktriangle$ has the same meaning as in \cite[Sect. 1.2]{GaVa} and \cite[Sect. A.2.3]{AGKRRV2}.

\end{rem}

\begin{rem}

We note also that for a map $f:\CY_1\to \CY_2$ between Verdier-compatible algebraic stacks,
we have a natural transformation
\begin{equation} \label{e:triangle to star}
f_\blacktriangle \to f_*.
\end{equation}

Namely, \eqref{e:triangle to star} is obtained by adjunction from the natural transformation
\begin{equation} \label{e:triangle to star adj}
f^*\circ f_\blacktriangle \to \on{Id},
\end{equation}
which in its turn described as follows:

\medskip

For any $(X,g)\in \Sch_{/\CY_1}$, the precomposition of \eqref{e:triangle to star adj} with $g_*$ is the
natural transformation
\begin{multline*}
f^*\circ f_\blacktriangle \circ g_*\overset{g\text{ is schematic}}\simeq
f^*\circ f_\blacktriangle \circ g_\blacktriangle\simeq f^*\circ (f\circ g)_\blacktriangle\overset{f\circ g\text{ is schematic}}\simeq
f^*\circ (f\circ g)_*\overset{\text{unit}}\to \\
\to g_*\circ g^*\circ f^*\circ (f\circ g)_*\simeq
g_*\circ (f\circ g)^*\circ (f\circ g)_*\overset{\text{counit}}\to g_*.
\end{multline*}

\end{rem}

\sssec{}

We give the following definitions:

\medskip

\begin{itemize}
\item
$\CY$ is \emph{universally Verdier-compatible} if $\CY \underset{B}\times X$ is Verdier-compatible for every scheme $X$;

\smallskip

\item
$\CY$ is \emph{AG Verdier-compatible} if both $\CY$ and $\CY \underset{B}\times \CY$ are universally Verdier-compatible.

\end{itemize}

\medskip

From \propref{p:Verdier-compatible in terms of colim} we obtain:

\begin{cor} \label{c:colimit pres of ushv} \hfill

\smallskip

\noindent{\em(a)} $\CY$ is universally Verdier-compatible if and only if it is universally tame.

\smallskip

\noindent{\em(b)}
$\CY$ is AG Verdier-compatible if and only if it is AG tame.

\end{cor}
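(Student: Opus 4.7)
The plan is to deduce both parts directly from \propref{p:Verdier-compatible in terms of colim}, since the definitions of universal (resp. AG) Verdier-compatibility and universal (resp. AG) tameness are obtained by imposing the same product conditions on the respective base notions.

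First I would handle (a). By definition, $\CY$ is universally Verdier-compatible iff $\CY \underset{B}\times X$ is Verdier-compatible for every scheme $X$, and $\CY$ is universally tame iff $\CY \underset{B}\times X$ is tame for every scheme $X$. To invoke \propref{p:Verdier-compatible in terms of colim} on $\CY \underset{B}\times X$, I need to know that $\CY \underset{B}\times X$ is an algebraic stack with schematic diagonal; this follows from the hypothesis on $\CY$ together with the fact that the diagonal of $\CY \underset{B}\times X$ over $B$ is a base change of $\Delta_{\CY/B} \times \Delta_{X/B}$, both of which are schematic. With this in hand, \propref{p:Verdier-compatible in terms of colim} applied to each $\CY \underset{B}\times X$ yields the equivalence of the two ``for all $X$'' conditions, proving (a).

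Next I would handle (b). By definition, $\CY$ is AG Verdier-compatible iff both $\CY$ and $\CY \underset{B}\times \CY$ are universally Verdier-compatible, while $\CY$ is AG tame iff both $\CY$ and $\CY \underset{B}\times \CY$ are universally tame. Since $\CY \underset{B}\times \CY$ is again an algebraic stack with schematic diagonal (the diagonal being a base change of $\Delta_{\CY/B} \times \Delta_{\CY/B}$), part (a) applies to both $\CY$ and $\CY \underset{B}\times \CY$. Combining these two instances of (a) yields (b).

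There is no substantive obstacle here: the corollary is purely a bookkeeping consequence of \propref{p:Verdier-compatible in terms of colim} once one observes that the required schematic-diagonal hypothesis is preserved under the products $\CY \underset{B}\times X$ and $\CY \underset{B}\times \CY$ appearing in the definitions.
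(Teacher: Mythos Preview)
Your proposal is correct and matches the paper's approach exactly: the paper simply states the corollary as an immediate consequence of \propref{p:Verdier-compatible in terms of colim} without writing out a proof, and your argument spells out precisely the intended deduction (applying the proposition to each $\CY \underset{B}\times X$, and then to both $\CY$ and $\CY \underset{B}\times \CY$).
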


From \Cref{p:AG verdier products}, we obtain:

\begin{cor}
If $\CY_1$ and $\CY_2$ are AG Verdier compatible, then so is $\CY_1\underset{B}\times \CY_2$.
\end{cor}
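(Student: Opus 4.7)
The plan is to reduce the statement to \propref{p:AG verdier products} via the equivalence between AG Verdier compatibility and AG tameness provided by \corref{c:colimit pres of ushv}(b). This is purely a two-step argument: first translate the hypotheses, then invoke the product stability statement for AG tame prestacks, then translate back.

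More concretely, suppose $\CY_1$ and $\CY_2$ are AG Verdier-compatible. By \corref{c:colimit pres of ushv}(b), this is equivalent to saying that $\CY_1$ and $\CY_2$ are AG tame. Then \propref{p:AG verdier products} applies and yields that $\CY_1 \underset{B}\times \CY_2$ is AG tame. Applying \corref{c:colimit pres of ushv}(b) once more (in the other direction) gives that $\CY_1 \underset{B}\times \CY_2$ is AG Verdier-compatible, as required.

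There is essentially no obstacle to this argument: the content has already been done in \propref{p:AG verdier products}, whose proof used \corref{c:AG good ten} to identify $\uShv(\CY_1)\otimes \uShv(\CY_2)$ with $\uShv(\CY_1\underset{B}\times \CY_2)$ (so dualizability is inherited from the factors), together with \corref{c:Omega prod diag} and \propref{p:tensor with dualizable} to handle universal tameness of the product. The only thing one should verify for cleanliness is that the equivalence in \corref{c:colimit pres of ushv}(b) is applied correctly: AG Verdier compatibility of $\CY_i$ means both $\CY_i$ and $\CY_i\underset{B}\times \CY_i$ are universally Verdier-compatible, which matches precisely the definition of AG tameness under the identification of part (a) of that corollary.
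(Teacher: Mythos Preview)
Your proposal is correct and takes essentially the same approach as the paper: the corollary is stated immediately after \corref{c:colimit pres of ushv} with the preface ``From \Cref{p:AG verdier products}, we obtain,'' and your argument spells out exactly this deduction—translate AG Verdier compatibility to AG tameness via \corref{c:colimit pres of ushv}(b), apply \propref{p:AG verdier products}, and translate back.
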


\begin{rem} \label{r:counit Y}

Let $\CY$ be AG Verdier-compatible. It follows from Remark \ref{r:Verdier funct} and \corref{c:counit AG tame value}
that the counit of the self-duality \eqref{e:Verdier as self-duality stack} is given by formula
\eqref{e:counit AG tame value}.

\medskip

However, one can show that this formula for the counit is valid for \emph{any} Verdier compatible
prestack, see  \cite[Sect. 9]{DrGa}.

\end{rem}

\section{Computing trace in \texorpdfstring{$\AGCat$}{AGCat}} \label{s:Trace}

In this subsection we perform the trace calculation, deriving formula \eqref{e:Tr calc},
and relate it to Grothendieck's sheaves-functions correspondence.

\ssec{The Grothendieck sheaves-functions correspondence}   \label{ss:less Frob}

In this subsection we let $\CY$ be a Verdier-compatible algebraic stack over $\ol\BF_q$, defined over $\BF_q$.

\sssec{} \label{sss:over F_q}

Let $$\on{Frob}_\CY: \CY \to \CY $$
denote the geometric Frobenius morphism. (We will simply write $\Frob$ if no confusion is likely to occur.)

\medskip

Suppose that $\CF \in \on{Shv}(\CY)^c\subset  \on{Shv}(\CY)^{\on{constr}}$ is an object equipped with a (weak) Weil structure, i.e. a map
$$\Frob^*(\CF)\to \CF,$$
or, equivalently, a map
$$\alpha: \CF \to \on{Frob}_*(\CF) .$$

\medskip

Given this data, the sheaves-functions correspondence associates to $\CF$ a function
$$\sfunct(\CF, \alpha) \in \sFunct(\CY(\mathbb{F}_q), \ol\BQ_\ell) $$
on the set of $\BF_q$-points of $\CY$.

\medskip

Given a point
$$y \in \CY(\BF_q),$$
the value of $\sfunct(\CF, \alpha)$ at $y$ is the trace of Frobenius acting on the *-stalk $\CF_y$.

\sssec{}

Let us rewrite the above construction in a more categorical language. First, we note:

\begin{lem} \label{l:Frob triangle}
There is a canonical isomorphism
$$\Frob_*\simeq \Frob_\blacktriangle;$$
in particular, the functor $\Frob_*$ is a map in $\DGCat$.
\end{lem}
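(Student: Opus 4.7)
The strategy is to reduce to the general fact noted just after the definition of $f_\blacktriangle$: for a \emph{schematic} morphism $f$ between universally tame prestacks, one has a canonical identification $f_\blacktriangle\simeq f_*$ obtained from the commutative square \eqref{e:Omega *} (which becomes an identification of its vertical arrows once the horizontal arrows $\Omega_{(-)}$ are equivalences). Since $\CY$ is Verdier-compatible, by \propref{p:Verdier-compatible in terms of colim} it is tame, so $\Omega_\CY$ is an equivalence of DG categories. The only remaining input is to verify that $\Frob_\CY$ is schematic.

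First I would verify schematicness of $\Frob_\CY$. Writing $\CY\simeq \CY_0\underset{\Spec(\BF_q)}\times \Spec(\ol\BF_q)$ for the $\BF_q$-form $\CY_0$, we have $\Frob_\CY\simeq \Frob_{\CY_0}\times \on{id}$, so it suffices to show that the absolute Frobenius $\Frob_{\CY_0}$ is schematic. For any smooth atlas $U_0\to \CY_0$ by a scheme, the Frobenius square
$$
\CD
U_0 @>\Frob_{U_0}>> U_0 \\
@VVV @VVV \\
\CY_0 @>\Frob_{\CY_0}>> \CY_0
\endCD
$$
is 2-Cartesian, a general feature of Frobenius in characteristic $p$. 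Since $\Frob_{U_0}$ is a finite morphism of schemes, this exhibits $\Frob_{\CY_0}$ as representable by finite (universal homeomorphism) morphisms; passing to the base change to $\ol\BF_q$ preserves schematicness.

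Second, with schematicness of $\Frob_\CY$ in hand, specialize \eqref{e:Omega *} to $f=\Frob_\CY$:
$$
\CD
\Shv(\CY)_{\on{co}} @>\Omega_\CY>> \Shv(\CY) \\
@V\Frob_{\on{co},*}VV @VV\Frob_*V \\
\Shv(\CY)_{\on{co}} @>\Omega_\CY>> \Shv(\CY).
\endCD
$$
Since both horizontal arrows are equivalences, composing with $\Omega_\CY^{-1}$ yields the canonical isomorphism
$$\Frob_*\simeq \Omega_\CY\circ \Frob_{\on{co},*}\circ \Omega_\CY^{-1}=\Frob_\blacktriangle,$$
by the very definition of $\Frob_\blacktriangle$ in \secref{sss:black triangle}. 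The ``in particular'' statement is then automatic: $\Frob_{\on{co},*}$ is a morphism in $\DGCat$ by construction of $\Shv(\CY)_{\on{co}}$ as a colimit in $\DGCat$ with $*$-pushforwards as transition functors, and $\Omega_\CY$ is an equivalence in $\DGCat$, so $\Frob_\blacktriangle\simeq\Frob_*$ is continuous.

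The main obstacle is the schematicness verification in Step 1, since it is the only place where a characteristic-$p$ input enters (in the D-module setting of the paper's meta-principle, the analogous statement is vacuous). Everything else is a direct invocation of results already in the paper.
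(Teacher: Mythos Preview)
Your reduction strategy is natural, but the key step fails: the geometric Frobenius $\Frob_\CY$ is \emph{not} schematic for a general algebraic stack $\CY$. The Frobenius square you write down always commutes, but it is 2-Cartesian only when the atlas $U_0\to \CY_0$ is \'etale (since the comparison map $U_0\to U_0\times_{\CY_0,\Frob_{\CY_0}}\CY_0$ is the relative Frobenius $\Frob_{U_0/\CY_0}$, an isomorphism precisely in the \'etale case). For a smooth atlas of positive relative dimension this fails. Concretely, take $\CY_0=B\BG_m$ over $\BF_q$: the geometric Frobenius is induced by $[q]:\BG_m\to \BG_m$, and its fiber over the trivial torsor is $B\mu_q$, which is not a scheme. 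So your invocation of the identification $f_\blacktriangle\simeq f_*$ from \secref{sss:black triangle} (valid only for schematic $f$) is not available here, and the argument does not go through for Artin stacks such as $BG$ with $\dim G>0$.

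The paper circumvents this by using the natural transformation $f_\blacktriangle\to f_*$ of \eqref{e:triangle to star}, which is defined for \emph{arbitrary} maps between Verdier-compatible algebraic stacks (with $f_*$ the a priori non-continuous right adjoint of $f^*$). Since the two agree on compacts and $f_\blacktriangle$ is continuous, the transformation is an isomorphism iff $f_*$ is continuous. The paper then shows $\Frob_*$ is in fact an equivalence by proving $\Frob^*$ is: for any scheme $X\to\CY$, the map $\Frob_X$ factors as a radicial map followed by $\Frob_{\CY/X}$, so $\Frob_{\CY/X}^*$ is an equivalence by 2-out-of-3. Your argument would be correct as written for Deligne--Mumford stacks, but the lemma is stated (and needed) in the Artin setting.
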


\begin{proof}

Recall that for a map between (Verdier-compatible) algebraic stacks we have a natural transformation
$$(-)_\blacktriangle \to (-)_*,$$
see \eqref{e:triangle to star}. We claim that this map is an isomorphism for the geometric Frobenius map.

\medskip

Recall that the two functors coincide on compact objects, while $(-)_\blacktriangle$ preserves
colimits. Hence, the natural transformation between them is an isomorphism if and only if
$(-)_*$ also preserves colimits.

\medskip

We claim that this is the case for the Frobenius map. In fact, we claim that $\Frob_*$
is an equivalence. Since $\Frob_*$ is by definition the right adjoint of $\Frob^*$,
it suffices to show that the latter is an equivalence.\footnote{Here is an alternative argument:
we can write $\Shv(\CY)$ as the limit of $\Shv(X)$ under *-pullbacks for schemes $X$ mapping to $\CY$.
Since for each $\Frob_X^*$ is an equivalence, so is $\Frob_\CY^*$.}

\medskip

To prove this, it suffices to show that for any scheme $X$ mapping to $\CY$, pullback with respect to the map
$$\Frob_{\CY/X}: X\underset{\CY,\Frob_\CY}\times \CY\to X$$
induces an equivalence on $\Shv(-)$.

\medskip

Note that the map $\Frob_X:X\to X$ factors as
$$X\to X\underset{\CY,\Frob}\times \CY \overset{\Frob_{\CY/X}}\to X,$$
with the first arrow being radicial, and hence pullback with respect to it
is an equivalence.

\medskip

This implies that $\Frob_{\CY/X}^*$ is an equivalence by the 2-out-3 property, since
$\Frob_X^*$ is an equivalence.

\end{proof}

\sssec{}

Thanks to \lemref{l:Frob triangle}, it makes sense to consider the object
$$\on{Tr}(\on{Frob}_*, \Shv(\CY))\in \Vect.$$

\medskip

Furthermore, the pair $(\CF, \alpha)$ defines a class
$$\on{cl}(\CF, \alpha) \in \on{Tr}(\on{Frob}_*, \Shv(\CY)),$$
see \cite[Sect. 3.4.3]{GKRV}.

\sssec{}

We claim that there is a natural map, called ``naive local term''
\begin{equation}\label{e:def LT naive}
\on{LT}^{\on{naive}}: \on{Tr}(\on{Frob}_*, \Shv(\CY)) \to \sFunct(\CY(\BF_q), \ol\BQ_\ell)
\end{equation}
that can be described as follows:

\medskip

For every $y \in \CY(\ol\BF_q)$, let
$$ i_y: \on{pt} \to \CY$$
denote the inclusion of $y$. Consider the functor
$$i_y^* : \Shv(\CY) \to \on{Shv}(\on{pt})=\Vect.$$

\medskip

If $y\in \CY(\BF_q)$, we have
$$(i_y)^*\circ \Frob_*\simeq (i_y)^*.$$

Hence, by the functoriality of the categorical trace (see \cite[Sect. 3.4.1]{GKRV}), we obtain a map
$$\on{Tr}(\Frob,i_y^*):\on{Tr}(\on{Frob}_*, \Shv(\CY)) \to \on{Tr}(\on{Id},\Vect) \simeq \ol\BQ_\ell.$$

The map $\on{LT}^{\on{naive}}$ is defined so that the composition
$$\on{Tr}(\on{Frob}_*, \Shv(\CY)) \to \sFunct(\CY(\BF_q), \ol\BQ_\ell) \overset{\on{ev}_y}\to \ol\BQ_\ell$$
is the above map $\on{Tr}(\Frob,i_y^*)$.

\medskip

Note, however, that the local term map \Cref{e:def LT naive} is typically \emph{not} an isomorphism.

\sssec{}

Unwinding, we obtain that
\begin{equation} \label{e:faisceaux-fonctions}
\sfunct(\CF, \alpha)=\on{LT}^{\on{naive}}(\on{cl}(\CF, \alpha)).
\end{equation}

\sssec{}

In this section we will show that the Grothendieck sheaves-functions correspondence and the local term map have natural
interpretations via $\AGCat$.

\medskip

Specifically, we will show that
\begin{equation} \label{e:LT AGCat}
\on{Tr}(\on{Frob}_*, \uShv(\CY)) \simeq \sFunct(\CY(\BF_q), \ol\BQ_\ell),
\end{equation}
where the trace in the left-hand side is taking place in the symmetric monoidal 2-category $\AGCat$
and so is an object of
$$\Maps_{\AGCat}(\uno_{\AGCat},\uno_{\AGCat})\simeq \Vect.$$

\medskip

The isomorphism \eqref{e:LT AGCat}, which we will denote by $\on{LT}^{\on{AG}}$, ``corrects" the failure of
$\on{LT}^{\on{naive}}$ to be an isomorphism. That said, the map $\on{LT}^{\on{naive}}$ also
has a natural interpretation via $\AGCat$, see \thmref{t:LT Frob}.

\ssec{Traces in $\AGCat$}

In this subsection, we let $\CY$ be an AG tame prestack.

\sssec{}

Let $\CF$ be an object of $\Shv(\CY\times \CY)$. We can think of $\CF$ as a map
$$\uno_{\AGCat}\to \uShv(\CY\times \CY)\overset{\text{\corref{c:hom verd comp}}}\simeq
\uHom_{\AGCat}(\uShv(\CY),\uShv(\CY)),$$
i.e., an object in $\Maps_{\AGCat}(\uShv(\CY),\uShv(\CY))$, which we will denote by $\ul{[\CF]}$.

\medskip

Since $\uShv(\CY)$ is dualizable, it makes sense to consider
\begin{equation} \label{e:Tr sheaf}
\Tr(\ul{[\CF]},\uShv(\CY))\in \Maps_{\AGCat}(\uno_{\AGCat},\uno_{\AGCat})\simeq \Vect.
\end{equation}

\sssec{} \label{sss:ren cochain}

We claim:

\begin{prop} \label{p:Tr sheaf}
The object \eqref{e:Tr sheaf} identifies canonically with
$$\on{C}^\cdot_\blacktriangle(\CY,\Delta_\CY^!(\CF)).$$
\end{prop}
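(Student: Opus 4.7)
The plan is to unwind the trace in the symmetric monoidal $(\infty,2)$-category $\AGCat$ using the explicit formulas for the unit and counit of the self-duality of $\uShv(\CY)$, which we already have in hand thanks to \propref{p:unit self Y} and \corref{c:counit u}.

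First I would observe that, since $\uShv(\CY)$ is self-dual (\thmref{t:self duality of verd comp}), the datum of $\CF \in \Shv(\CY\times \CY)$ corresponds, via
$$\Maps_{\AGCat}(\uno_{\AGCat}, \uShv(\CY\times \CY)) \overset{\text{\corref{c:tensor product verd comp}}}\simeq \Maps_{\AGCat}(\uno_{\AGCat}, \uShv(\CY)\otimes \uShv(\CY)) \simeq \Maps_{\AGCat}(\uShv(\CY), \uShv(\CY)),$$
precisely to the endomorphism $\ul{[\CF]}$. By the general definition of the categorical trace of an endomorphism of a dualizable object, $\Tr(\ul{[\CF]},\uShv(\CY))$ is then the composition
$$\uno_{\AGCat} \overset{\CF}\longrightarrow \uShv(\CY)\otimes \uShv(\CY) \overset{\on{counit}}\longrightarrow \uno_{\AGCat}.$$

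Next, I would plug in the explicit formula for the counit from \corref{c:counit u}. This rewrites the above composition as
$$\uno_{\AGCat} \overset{\CF}\longrightarrow \uShv(\CY\times \CY) \overset{\Delta_\CY^!}\longrightarrow \uShv(\CY) \overset{(p_\CY)_\blacktriangle}\longrightarrow \uno_{\AGCat}.$$
Since $B = \on{pt}$ here, we have $\uno_{\AGCat} = \uShv(\on{pt})$, and a morphism from $\uShv(\on{pt})$ to any $\ul\bC \in \AGCat$ is, by the universal property of the Yoneda-type embedding (equivalently by \secref{sss:AGCat expl}), the same data as an object of $\ul\bC(\on{pt})$. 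Evaluating the above composition on the scheme $\on{pt}$ via \eqref{e:value of ushv prestack} gives the composition of functors
$$\Vect \overset{\CF}\longrightarrow \Shv(\CY\times\CY) \overset{\Delta_\CY^!}\longrightarrow \Shv(\CY) \overset{(p_\CY)_\blacktriangle}\longrightarrow \Vect,$$
which sends $\ol\BQ_\ell$ to $(p_\CY)_\blacktriangle(\Delta_\CY^!(\CF)) =: \on{C}^\cdot_\blacktriangle(\CY, \Delta_\CY^!(\CF))$, as desired.

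The only step that requires genuine care (rather than bookkeeping) is the first one: checking that the trace of $\ul{[\CF]}$ in the 2-category $\AGCat$ really is computed by the standard formula $\on{counit} \circ (\on{id}\otimes \ul{[\CF]}) \circ \on{unit}$ with respect to \emph{our} chosen self-duality data. This, however, is a formal consequence of the definition of the trace in a symmetric monoidal $(\infty,2)$-category together with \corref{c:counit u} and \propref{p:unit self Y}, which pin down the unit and counit compatibly; no further sheaf-theoretic input is needed.
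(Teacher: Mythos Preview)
Your proof is correct and follows essentially the same route as the paper's: both write the trace as $\on{counit}\circ(\on{id}\otimes\ul{[\CF]})\circ\on{unit}$, identify the first two maps with the object $\CF\in\Shv(\CY\times\CY)$ under the K\"unneth equivalence of \corref{c:tensor product verd comp}, and then read off the answer by plugging in the counit formula from \corref{c:counit u} and evaluating at $\on{pt}$. The only difference is cosmetic: the paper spells out explicitly that $(\on{id}\otimes\ul{[\CF]})\circ\on{unit}$ recovers $\CF$, whereas you absorb this into the general duality bookkeeping.
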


In the above formula and elsewhere,
$$\on{C}^\cdot_\blacktriangle(\CY,-):=(p_\CY)_\blacktriangle.$$

\begin{proof}[Proof of \propref{p:Tr sheaf}]

By definition, $\Tr(\ul{[\CF]},\uShv(\CY))$ is the composition
$$\uno_{\AGCat} \overset{\on{unit}}\to \uShv(\CY)\otimes \uShv(\CY) \overset{\on{id}\otimes \ul{[\CF]}}\longrightarrow
\uShv(\CY)\otimes \uShv(\CY) \overset{\on{counit}}\to \uno_{\AGCat}.$$

Using \corref{c:tensor product verd comp}, we identify
\begin{equation} \label{e:sq Y}
\uShv(\CY)\otimes \uShv(\CY) \simeq \uShv(\CY\times \CY),
\end{equation}
and by definition, under this identification, the composition
$$\uno_{\AGCat} \overset{\on{unit}}\to \uShv(\CY)\otimes \uShv(\CY) \overset{\on{id}\otimes \ul{[\CF]}}\longrightarrow
\uShv(\CY)\otimes \uShv(\CY)\simeq  \uShv(\CY\times \CY),$$
viewed as an object of
$$\Maps(\uno_{\AGCat},  \uShv(\CY\times \CY))\simeq \uShv(\CY\times \CY)(\on{pt})\simeq
\Shv(\CY\times \CY),$$
equals $\CF$.

\medskip

Thus, it suffices to show that the map
$$\uShv(\CY\times \CY)\overset{\text{\eqref{e:sq Y}}}\simeq
\uShv(\CY)\otimes \uShv(\CY) \overset{\on{counit}}\to \uno_{\AGCat},$$
evaluated on $\on{pt}$ equals the map
$$\Shv(\CY\times \CY)\overset{\Delta_\CY^!}\to \Shv(\CY)\overset{\on{C}^\cdot_\blacktriangle(\CY,-)}\longrightarrow \Vect.$$

However, this is implied by \corref{c:counit u}.

\end{proof}

\sssec{}

We will denote the isomorphism established in the above proposition by
$$\on{LT}^{\on{AG}}:\Tr(\ul{[\CF]},\uShv(\CY)) \simeq \on{C}^\cdot_\blacktriangle(\CY,\Delta_\CY^!(\CF)).$$

\sssec{} \label{sss:corr microcosm}

We now consider a particular case of the above situation:

\medskip

Let
\begin{equation} \label{e:hom corr}
\xymatrix{
\CY & \ar[l]_{c_l} \CZ \ar[r]^{c_r} & \CY
}
\end{equation}
be a correspondence, where both $\CZ$ and
$$ \on{Fix}(\CZ) := \CZ \underset{\CY \times \CY}{\times} \CY $$
are also AG tame.

\medskip

Consider the object
$$(c_l,c_r)_{\blacktriangle}(\omega_\CZ) \in \Shv(\CX \times \CX).$$
defines a morphism. Denote by $$\ul{[\CZ]}:=[(c_l,c_r)_{\blacktriangle}(\omega_\CZ)]$$ the corresponding
object in $\Maps_{\AGCat}(\uShv(\CY),\uShv(\CY))$.

\sssec{}

As a special case of \propref{p:Tr sheaf}, and using \lemref{l:base change renorm},
we obtain:

\begin{cor} \label{c:Tr corr}
The object
$$\Tr(\ul{[\CZ]},\uShv(\CY))\in \Maps_{\AGCat}(\uno_{\AGCat},\uno_{\AGCat})\simeq \Vect$$
identifies canonically with
$$\on{C}^{\cdot}_{\blacktriangle}(\on{Fix}(C), \omega_{\on{Fix}(C)}).$$
\end{cor}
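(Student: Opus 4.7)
The plan is to combine \propref{p:Tr sheaf} with base change for the $\blacktriangle$-pushforward along the Cartesian diagram defining $\on{Fix}(\CZ)$.

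First, I would apply \propref{p:Tr sheaf} directly to the sheaf $\CF := (c_l,c_r)_\blacktriangle(\omega_\CZ) \in \Shv(\CY\times\CY)$. This yields the identification
$$\Tr(\ul{[\CZ]},\uShv(\CY)) \simeq \on{C}^\cdot_\blacktriangle\bigl(\CY,\,\Delta_\CY^!((c_l,c_r)_\blacktriangle(\omega_\CZ))\bigr).$$
So the task reduces to identifying the right-hand side with $\on{C}^\cdot_\blacktriangle(\on{Fix}(\CZ),\omega_{\on{Fix}(\CZ)})$.

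Next, I would invoke \lemref{l:base change renorm} for the Cartesian square
$$
\CD
\on{Fix}(\CZ) @>{\tilde\Delta}>> \CZ \\
@V{\widetilde{(c_l,c_r)}}VV @VV{(c_l,c_r)}V \\
\CY @>{\Delta_\CY}>> \CY\times\CY.
\endCD
$$
The horizontal maps are schematic (the diagonal $\Delta_\CY$ is schematic by assumption on $\CY$, and $\tilde\Delta$ is its base change). The tameness hypotheses of the lemma hold: $\CY$ and $\CY\times\CY$ are tame since $\CY$ is AG tame, and $\CZ$, $\on{Fix}(\CZ)$ are tame by assumption. The lemma therefore gives
$$\Delta_\CY^!\circ (c_l,c_r)_\blacktriangle \simeq \widetilde{(c_l,c_r)}_\blacktriangle\circ \tilde\Delta^!.$$
Applied to $\omega_\CZ$, and using $\tilde\Delta^!(\omega_\CZ) \simeq \omega_{\on{Fix}(\CZ)}$, this gives
$$\Delta_\CY^!\bigl((c_l,c_r)_\blacktriangle(\omega_\CZ)\bigr)\simeq \widetilde{(c_l,c_r)}_\blacktriangle(\omega_{\on{Fix}(\CZ)}).$$

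Finally, I would use functoriality of $(-)_\blacktriangle$ under composition to conclude. Namely, from the definition $f_\blacktriangle = \Omega_{\CY_2}\circ f_{\on{co},*}\circ \Omega_{\CY_1}^{-1}$ and the composition property of $(-)_{\on{co},*}$, one gets $(p_\CY)_\blacktriangle\circ \widetilde{(c_l,c_r)}_\blacktriangle\simeq (p_\CY\circ \widetilde{(c_l,c_r)})_\blacktriangle = (p_{\on{Fix}(\CZ)})_\blacktriangle$. Assembling the identifications produces the desired canonical isomorphism
$$\Tr(\ul{[\CZ]},\uShv(\CY))\simeq \on{C}^\cdot_\blacktriangle(\on{Fix}(\CZ),\omega_{\on{Fix}(\CZ)}).$$
There is no genuine obstacle here: everything reduces to a formal application of \propref{p:Tr sheaf} and the base change \lemref{l:base change renorm}. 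The only point requiring attention is bookkeeping of the tameness hypotheses at each vertex of the base-change square, which the statement of the corollary explicitly provides.
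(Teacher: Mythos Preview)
Your proof is correct and follows exactly the approach indicated by the paper, which states only that the corollary is obtained ``as a special case of \propref{p:Tr sheaf}, and using \lemref{l:base change renorm}.'' Your write-up simply unpacks this: apply \propref{p:Tr sheaf}, then use the base-change \lemref{l:base change renorm} along the Cartesian square defining $\on{Fix}(\CZ)$, and finish with functoriality of $(-)_\blacktriangle$.
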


\ssec{Functoriality properties}

\sssec{}

Let $\CY_i$, $i=1,2$ be AG tame prestacks and $\CF_i\in \Shv(\CY_i\times \CY_i)$.

\medskip

Let now $\CH$ be an object in $\Shv(\CY_1\times \CY_2)$. Consider the corresponding map
$$\ul{[\CH]}:\uShv(\CY_1)\to \uShv(\CY_2).$$

Let us be given a map
$$\alpha:\CF_1\star \CH\to \CH\star \CF_2$$ 
in $\Shv(\CY_1\times \CY_2)$, where $(-)\star (-)$ denote the usual convolution operation:
\begin{multline*}
\CF_{1,2}\in \Shv(\CY_1\times \CY_2),\,\, \CF_{2,3}\in \Shv(\CY_2\times \CY_3) \mapsto \\
\CF_{1,2}\star \CF_{2,3}:=(p_1\times p_3)_\blacktriangle(\on{id}\times \Delta_{\CY_2}\times \on{id})^!(\CF_{1,2}\boxtimes \CF_{2,3}).
\end{multline*}

\sssec{} \label{sss:funct Tr gen}

Assume now that the 1-morphism $\ul{[\CH]}$ admits a right adjoint in $\AGCat$. I.e., there exists
$\CH^R\in \Shv(\CY_2\times \CY_1)$, equipped with the maps
$$\Delta_*(\omega_{\CY_1})\overset{\on{u}}\longrightarrow \CH\star \CH^R \text{ and }
\CH^R\star \CH \overset{\on{co-u}}\longrightarrow \Delta_*(\omega_{\CY_2}),$$
satisfying the usual adjunction axioms.

\medskip

In this case, by \cite[Sect. 3.3.4]{GKRV}, the map $\alpha$ induces a map
\begin{equation} \label{e:UL alpha 1}
\Tr(\ul{[\CF_1]},\uShv(\CY_1))\to \Tr(\ul{[\CF_2]},\uShv(\CY_2)).
\end{equation}

\sssec{}

On the other hand, we claim that the morphisms $\alpha$, $\on{u}$ and $\on{co-u}$ define a map
\begin{equation} \label{e:UL alpha 2}
\on{C}^\cdot_\blacktriangle(\CY_1,\Delta^!_{\CY_1}(\CF_1))\to
\on{C}^\cdot_\blacktriangle(\CY_2,\Delta^!_{\CY_2}(\CF_2)).
\end{equation}

Namely, \eqref{e:UL alpha 2} is the composition the following three morphisms:

\begin{enumerate}

\item The map
$$\on{C}^\cdot_\blacktriangle(\CY_1,\Delta^!_{\CY_1}(\CF_1)) \to
\on{C}^\cdot_\blacktriangle(\CY_1\times \CY_1\times \CY_2,{}'\Delta^!(\CF_1\boxtimes \CH\boxtimes \CH^R)),$$
induced by $\on{u}$,
where $'\Delta$ is the morphism
$$\CY_1\times \CY_1\times \CY_2\to \CY_1\times \CY_1\times \CY_1\times \CY_2\times \CY_2\times\CY_1, \quad
(y'_1,y''_1,y_2)\mapsto (y'_1,y''_1,y''_1,y_2,y_2,y'_1);$$

\item The map
$$\on{C}^\cdot_\blacktriangle(\CY_1\times \CY_1\times \CY_2,{}'\Delta^!(\CF_1\boxtimes \CH\boxtimes \CH^R))\to
\on{C}^\cdot_\blacktriangle(\CY_1\times \CY_2\times \CY_2,{}''\Delta^!(\CH\boxtimes \CF_2\boxtimes \CH^R)),$$
induced by $\alpha$, where $''\Delta$ is the morphism
$$\CY_1\times \CY_2\times \CY_2\to \CY_1\times \CY_2\times \CY_2\times \CY_2\times \CY_2\times \CY_1, \quad
(y_1,y'_2,y''_2)\mapsto (y_1,y'_2,y'_2,y''_2,y''_2,y_1);$$

\item The map
$$\on{C}^\cdot_\blacktriangle(\CY_1\times \CY_2\times \CY_2,{}''\Delta^!(\CH\boxtimes \CF_2\boxtimes \CH^R))\to
\on{C}^\cdot_\blacktriangle(\CY_2,\Delta^!_{\CY_2}(\CF_2)),$$
induced by $\on{co-u}$.

\end{enumerate}

\sssec{}

Unwinding, we obtain:

\begin{lem} \label{l:funct Tr}
The following diagram commutes:
$$
\CD
\Tr(\ul{[\CF_1]},\uShv(\CY_1)) @>{\text{\eqref{e:UL alpha 1}}}>>  \Tr(\ul{[\CF_2]},\uShv(\CY_2)) \\
@V{\text{\propref{p:Tr sheaf}}}V{\sim}V @V{\sim}V{\text{\propref{p:Tr sheaf}}}V  \\
\on{C}^\cdot_\blacktriangle(\CY_1,\Delta^!_{\CY_1}(\CF_1))  @>{\text{\eqref{e:UL alpha 2}}}>>
\on{C}^\cdot_\blacktriangle(\CY_2,\Delta^!_{\CY_2}(\CF_2)).
\endCD
$$
\end{lem}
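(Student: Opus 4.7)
The plan is to unwind \eqref{e:UL alpha 1} using the general construction of the map induced on traces by a 2-cell compatible with an adjunction (recalled in \cite[Sect.~3.3.4]{GKRV}) and to match each intermediate stage with the corresponding piece of \eqref{e:UL alpha 2} via \propref{p:Tr sheaf}. Explicitly, \eqref{e:UL alpha 1} factors as a four-arrow composite
\begin{multline*}
\Tr(\ul{[\CF_1]}, \uShv(\CY_1))\overset{\on{u}}\to
\Tr(\ul{[\CH^R]}\circ \ul{[\CH]}\circ \ul{[\CF_1]}, \uShv(\CY_1))\overset{\on{cyc}}\simeq \\
\Tr(\ul{[\CH]}\circ \ul{[\CF_1]}\circ \ul{[\CH^R]}, \uShv(\CY_2))
\overset{\alpha}\to \Tr(\ul{[\CF_2]}\circ \ul{[\CH]}\circ \ul{[\CH^R]}, \uShv(\CY_2))\overset{\on{co-u}}\to \\
\Tr(\ul{[\CF_2]}, \uShv(\CY_2)),
\end{multline*}
in which every intermediate trace is of a 1-morphism given by a convolution kernel.

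The next step is to apply \propref{p:Tr sheaf} to each of these five terms. Since that identification is natural in the kernel variable, the arrows labeled $\on{u}$, $\alpha$ and $\on{co-u}$ translate into the cohomology-level maps induced by the corresponding morphisms of kernels. After rewriting the iterated convolutions by unfolding the formula in \secref{sss:Shv line} and using base change (\lemref{l:base change renorm}) together with the projection formula (\corref{c:proj formula}), the intermediate cohomology objects become $\on{C}^\cdot_\blacktriangle(\CY_1\times \CY_1\times \CY_2,{}'\Delta^!(\CF_1\boxtimes \CH\boxtimes \CH^R))$ and $\on{C}^\cdot_\blacktriangle(\CY_1\times \CY_2\times \CY_2,{}''\Delta^!(\CH\boxtimes \CF_2\boxtimes \CH^R))$, recovering the sources and targets of steps \emph{(1)}--\emph{(3)} of \eqref{e:UL alpha 2}. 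The arrows induced by $\on{u}$, $\alpha$ and $\on{co-u}$ then match steps \emph{(1)}, \emph{(2)} and \emph{(3)} of \eqref{e:UL alpha 2} by naturality.

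The hard part will be the cyclic-invariance isomorphism $\on{cyc}$, where the self-dualities on $\uShv(\CY_1)$ and $\uShv(\CY_2)$ used at the two ends must be matched, under the adjunction $\ul{[\CH]}\dashv \ul{[\CH^R]}$, in a way that translates on cohomology to the identity on the cyclic fiber product. For this, the explicit formulas for the unit and counit of the self-duality from \propref{p:unit self Y} and \corref{c:counit u} will be combined with base change, rewriting both sides as $\on{C}^\cdot_\blacktriangle$ of the cyclic fiber product
$$(\CY_1\times \CY_2)\underset{(\CY_1\times \CY_1)\times (\CY_2\times \CY_2)}\times (\CY_2\times \CY_1)$$
with coefficients in $\CF_1\boxtimes \CH\boxtimes \CH^R$ pulled back along the evident map. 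No new geometric input is expected beyond careful bookkeeping of diagonals and projection formulas already used elsewhere in Section~\ref{s:Trace}.
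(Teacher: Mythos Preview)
Your proposal is correct and is precisely what the paper means by ``unwinding'': the paper gives no argument beyond that single word, and your decomposition of \eqref{e:UL alpha 1} into the unit, cyclic, $\alpha$, and counit steps, followed by applying \propref{p:Tr sheaf} termwise, is exactly the unwinding in question. The only remark is that you overstate the difficulty of the cyclic step: once the self-dualities are written via \propref{p:unit self Y} and \corref{c:counit u}, the cyclic isomorphism is already built into the identification of \propref{p:Tr sheaf} (it is the symmetry of $\Delta_{\CY_i}^!$ composed with $(p_{\CY_i})_\blacktriangle$), so no separate verification is needed beyond the base-change bookkeeping you already listed.
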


\sssec{}

Here is a particular case of how \lemref{l:funct Tr} can be applied:

\medskip

Let $h:\CY_1\to \CY_2$ be a \emph{proper} schematic map. Note that by \propref{p:adj AGCat1},
the corresponding 1-morphism 
$$h_*:\uShv(\CY_1)\to \uShv(\CY_2)$$
admits a right adjoint, given by $h^!$. 

\medskip

Let us be given a map
$$\alpha: (\on{id}\times f)_*(\CF_1)\to (f\times \on{id})^!(\CF_2).$$

Note that we have a canonical map
\begin{equation} \label{e:fxd pt map}
\on{C}^\cdot_\blacktriangle(\CY_1,\Delta^!_{\CY_1}(\CF_1)) \to
\on{C}^\cdot_\blacktriangle(\CY_1,\Delta^!_{\CY_2}(\CF_2))
\end{equation}
defined as follows:

\begin{multline*}
\on{C}^\cdot_\blacktriangle(\CY_1,\Delta^!_{\CY_1}(\CF_1)) \to
\on{C}^\cdot_\blacktriangle(\CY_1,\Delta^!_{\CY_1}\circ (\on{id}\times f)^!\circ (\on{id}\times f)_*(\CF_1)) = \\
= \on{C}^\cdot_\blacktriangle(\CY_1,\on{Graph}_f^! \circ (\on{id}\times f)_*(\CF_1)) \overset{\alpha}\to
\on{C}^\cdot_\blacktriangle(\CY_1, \on{Graph}_f^! \circ (f\times \on{id})^!(\CF_2)) \simeq \\
\simeq \on{C}^\cdot_\blacktriangle(\CY_1,f^!\circ \Delta^!_{\CY_2}(\CF_2)) \simeq
\on{C}^\cdot_\blacktriangle(\CY_2,f_*\circ f^!\circ \Delta^!_{\CY_2}(\CF_2)) \to
\on{C}^\cdot_\blacktriangle(\CY_2,\Delta^!_{\CY_2}(\CF_2)).
\end{multline*}

We obtain:

\begin{cor}
Under the identification of \propref{p:Tr sheaf}, the morphism
$$\Tr(\ul{[\CF_1]},\uShv(\CY_1)) \to \Tr(\ul{[\CF_2]},\uShv(\CY_2))$$
induced by $f_*$ and $\alpha$ corresponds to the map \eqref{e:fxd pt map}.
\end{cor}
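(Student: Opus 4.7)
The plan is to deduce the corollary from \lemref{l:funct Tr} by identifying explicitly the kernels representing $f_*$ and its right adjoint, and then translating the datum of $\alpha$ and the unit/counit into the language required in \secref{sss:funct Tr gen}. First, I would observe that the 1-morphism $f_*: \uShv(\CY_1)\to \uShv(\CY_2)$ corresponds to the kernel
$$\CH := (\on{Graph}_f)_*(\omega_{\CY_1}) \in \Shv(\CY_1\times \CY_2),$$
as one checks directly using the projection formula together with the fact that $f_\blacktriangle=f_*$ for schematic $f$. By \propref{p:adj AGCat1}, together with proper base change, $f_*$ admits a right adjoint in $\AGCat$ given by $f^!$, and this right adjoint is represented by the kernel $\CH^R := (f,\on{id})_*(\omega_{\CY_1})\in \Shv(\CY_2\times \CY_1)$. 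The unit and counit of the resulting kernel-level adjunction then supply the morphisms $\on{u}$ and $\on{co-u}$ of \secref{sss:funct Tr gen}.

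Next, I would establish canonical isomorphisms
$$\CF_1\star \CH \simeq (\on{id}\times f)_*(\CF_1)\quad\text{and}\quad \CH\star \CF_2 \simeq (f\times \on{id})^!(\CF_2)$$
in $\Shv(\CY_1\times \CY_2)$, obtained from the definition of $\star$ by applying \lemref{l:base change renorm} and projection formulas. Under these isomorphisms, the given datum $\alpha$ becomes exactly a map $\CF_1\star \CH \to \CH\star \CF_2$ of the kind required in \secref{sss:funct Tr gen}. \lemref{l:funct Tr} then identifies the morphism $\Tr(\ul{[\CF_1]},\uShv(\CY_1))\to \Tr(\ul{[\CF_2]},\uShv(\CY_2))$ induced by $f_*$ and $\alpha$ with the three-step composite \eqref{e:UL alpha 2} built from $\on{u}$, $\alpha$, and $\on{co-u}$.

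The final step is a diagram chase showing that, under the substitutions for $\CH$, $\CH^R$, $\on{u}$, and $\on{co-u}$ just described, the three maps in the definition of \eqref{e:UL alpha 2} collapse — via base change along $\on{Graph}_f$ and the diagonals $\Delta_{\CY_1}$, $\Delta_{\CY_2}$ — into exactly the composition \eqref{e:fxd pt map}. Concretely, map (1) of \eqref{e:UL alpha 2} becomes the unit $\on{id}\to (\on{id}\times f)^!\circ (\on{id}\times f)_*$, map (2) becomes application of $\alpha$ inside $\on{Graph}_f^!$, and map (3) becomes the counit $f_*\circ f^!\to \on{id}$ combined with the identification $\on{C}^\cdot_\blacktriangle(\CY_2, f_*(-))\simeq \on{C}^\cdot_\blacktriangle(\CY_1,-)$; properness of $f$ enters here to ensure the $(f_*, f^!)$-adjunction actually exists. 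The main obstacle is the bookkeeping in this last chase — tracking base-change isomorphisms for $\blacktriangle$ through the auxiliary products $\CY_1\times \CY_1\times \CY_2$ and $\CY_1\times \CY_2\times \CY_2$ arising in \secref{sss:funct Tr gen} — rather than anything conceptually new.
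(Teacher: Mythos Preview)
Your proposal is correct and matches the paper's approach: the paper simply writes ``We obtain:'' before the corollary, indicating that it is a direct consequence of \lemref{l:funct Tr} once one translates the data $(f_*,f^!,\alpha)$ into the kernel language of \secref{sss:funct Tr gen}, exactly as you outline. Your explicit identification of $\CH$, $\CH^R$, the isomorphisms $\CF_1\star\CH\simeq(\on{id}\times f)_*(\CF_1)$ and $\CH\star\CF_2\simeq(f\times\on{id})^!(\CF_2)$, and the collapse of the three-step composite \eqref{e:UL alpha 2} onto \eqref{e:fxd pt map} is precisely the unwinding the paper leaves implicit.
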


\sssec{}

We now consider another case of \lemref{l:funct Tr}. Namely, given $\CF\in \Shv(\CY\times \CY)$, we will
explain a geometric procedure that produces elements in
$$\Tr(\ul{[\CF]},\uShv(\CY))\simeq \on{C}^\cdot_\blacktriangle(\CY,\Delta^!_{\CY}(\CF)).$$

\sssec{} \label{sss:class UL 1}

Let $\CG\in \Shv(\CY)$
be a compact object. We claim that the resulting 1-morphism
$$[\CG]:\uno_{\AGCat}\to \uShv(\CY)$$
admits a right adjoint, given by $[\BD_\CY(\CG)]$, where $\BD_\CY$ is an
in \eqref{e:Verdier tame}.

\medskip

Indeed, this follows from \propref{p:adj AGCat1}, using the fact that for
$$X\in \Sch,\,\, \CG'\in \Shv(\CY\times X) \text{ and } \CG_X\in \Shv(X),$$
we have
$$\CHom_{\Shv(\CY\times X)}(\CG\boxtimes \CG_X,\CG')\simeq
\CHom_{\Shv(X)}\left(\CG_X,(p_2)_\blacktriangle(p_1^!(\BD_\CY(\CG))\sotimes \CG')\right),$$
where $p_1$ and $p_2$ are the two projections from $\CY\times X$ to $\CY$ and $X$, respectively.

\medskip

Indeed, we can assume that $\CG_X$ is compact, and both sides in the above formula
identify with $$\on{C}^\cdot_\blacktriangle\left(\CY\times X,(\BD_\CY(\CG)\boxtimes \BD_X(\CG_X))\sotimes \CG'\right)\overset{\text{\eqref{e:Verdier ext}}}\simeq
\on{C}^\cdot_\blacktriangle\left(\CY\times X,\BD_{\CY\times X}(\CG\boxtimes \CG_X)\sotimes \CG'\right).$$

\sssec{} \label{sss:class UL 2}

Let us now be given a map
$$\alpha: \CG\to [\CF](\CG)\simeq \CG\star \CF.$$

By \secref{sss:funct Tr gen}, to this data, we associate a map
$$\ol\BQ_\ell\to \Tr(\ul{[\CF]},\uShv(\CY)),$$
i.e., an element of $\Tr(\ul{[\CF]},\uShv(\CY))$, which we will denote by
$$\on{cl}(\ul{[\CG]},\alpha)\in \Tr(\ul{[\CF]},\uShv(\CY)).$$

\sssec{}

Let us compute this element in terms of the identification of \propref{p:Tr sheaf}. From \lemref{l:funct Tr}, we obtain
that this element is the image of the canonical element
$$\on{u}\in \on{C}^\cdot_\blacktriangle(\CY,\CG\sotimes \BD_\CY(\CG))$$
(the unit of the adjunction) under the map
$$\on{C}^\cdot_\blacktriangle(\CY,\CG\sotimes \BD_\CY(\CG))\simeq \on{C}^\cdot_\blacktriangle(\CY,\Delta_\CY^!(\CG\boxtimes \BD_\CY(\CG)))\to
\on{C}^\cdot_\blacktriangle(\CY,\Delta_\CY^!(\CF)),$$
induced by a map
$$\CG\boxtimes \BD_\CY(\CG)\to \CF,$$
obtained in the following three steps:

\medskip

\begin{enumerate}

\item We start with the counit of the adjunction, i.e., a map
$$\on{co-u}:\CG\boxtimes \BD_\CY(\CG) \to \Delta_*(\omega_\CY);$$

\item We apply to it $[\CF]$ along the first factor and thus obtain a map
$$[\CF](\CG)\boxtimes \BD_\CY(\CG)\to \CF;$$

\item We precompose the latter map with
$$\CG\boxtimes \BD_\CY(\CG) \overset{\alpha\boxtimes \on{id}}\longrightarrow [\CF](\CG)\boxtimes \BD_\CY(\CG).$$

\end{enumerate}

\ssec{Comparing traces in $\AGCat$ and $\DGCat$}

In this subsection we continue to assume that $\CY$ be an AG tame prestack, and we let $\CF$
be an object in $\Shv(\CY\times \CY)$.

\sssec{}

We now consider the induced action of $\ul{[\CF]}$ on $\be(\uShv(\CY))\simeq \Shv(\CY)$; denote the
corresponding endofunctor $[\CF]$. Explicitly, it is given by
$$[\CF](\CF_1):=\CF_1\mapsto (p_2)_\blacktriangle(p_1^!(\CF_1)\sotimes \CF).$$

We will now recall, following \cite{GaVa}, the construction of a map
\begin{equation} \label{e:LT true}
\on{LT}^{\on{true}}:\Tr([\CF],\Shv(\CY))\to \on{C}^\cdot_\blacktriangle(\CY,\Delta_\CY^!(\CF)).
\end{equation}

\sssec{}

Consider the object
$$([\CF]\otimes \on{Id})(\on{unit}_\CY)\in \Shv(\CY)\otimes \Shv(\CY),$$
where $\on{unit}_\CY\in \Shv(\CY)\otimes \Shv(\CY)$ is the unit of the self-duality.

\medskip

We claim:

\begin{lem} \label{l:boxtimes R}
There is a canonical isomorphism
$$([\CF]\otimes \on{Id})(\on{unit}_\CY)\simeq (\boxtimes^R)(\CF),$$
where $\boxtimes^R$ is the right adjoint to
$$\Shv(\CY)\otimes \Shv(\CY) \overset{\boxtimes}\to \Shv(\CY\times \CY).$$
\end{lem}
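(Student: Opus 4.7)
The plan is to translate the desired identity into an assertion about endofunctors via the self-duality of $\Shv(\CY)$ established in Corollary~\ref{c:counit AG tame value}. Concretely, that self-duality provides a canonical equivalence
$$\sigma: \Shv(\CY) \otimes \Shv(\CY) \xrightarrow{\;\sim\;} \Funct_{\DGCat}(\Shv(\CY), \Shv(\CY))$$
of DG categories, sending $\on{unit}_\CY$ to $\on{Id}_{\Shv(\CY)}$. By a tautological consequence of the triangle identities, $(T \otimes \on{Id})(\on{unit}_\CY)$ corresponds under $\sigma$ to the endofunctor $T$, for any continuous $\DGCat$-endofunctor $T$. Applied to $T = [\CF]$, this identifies the left-hand side of the lemma with the endofunctor $[\CF]$. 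The task therefore reduces to showing that $\boxtimes^R(\CF)$ also corresponds to $[\CF]$ under $\sigma$, i.e., that $\sigma \circ \boxtimes^R \simeq [\cdot]$ as functors $\Shv(\CY \times \CY) \to \Funct_{\DGCat}(\Shv(\CY), \Shv(\CY))$.

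As a warm-up, I would first verify the companion identity $[\cdot] \circ \boxtimes \simeq \sigma$ going in the opposite direction. This is a direct calculation: on pure tensors $\CG_1 \otimes \CG_2$ of compact objects, the projection formula (Corollary~\ref{c:proj formula}) yields
$$[\CG_1 \boxtimes \CG_2](\CH) \simeq (p_\CY)_\blacktriangle(\CH \sotimes \CG_1) \otimes_{\Vect} \CG_2,$$
which matches the explicit formula for $\sigma(\CG_1 \otimes \CG_2)(\CH)$ coming from the description of the counit of self-duality in Corollary~\ref{c:counit AG tame value}. Since pure tensors of compact objects compactly generate $\Shv(\CY) \otimes \Shv(\CY)$, continuity in both arguments propagates the identification to all of $\Shv(\CY) \otimes \Shv(\CY)$.

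The main obstacle is upgrading $[\cdot] \circ \boxtimes \simeq \sigma$ to the sought-for adjoint statement $\sigma \circ \boxtimes^R \simeq [\cdot]$; this is the content of showing that $\sigma^{-1} \circ [\cdot]$ satisfies the defining universal property of the right adjoint of $\boxtimes$. Note that $\boxtimes^R$ exists and is continuous because $\boxtimes$ preserves compactness (external tensor products of compact objects are compact in $\Shv(\CY \times \CY)$, since $\CY \times \CY$ is tame). By Yoneda, the adjoint assertion reduces to showing that for all compact $\CG_1, \CG_2 \in \Shv(\CY)^c$ and all $\CF \in \Shv(\CY \times \CY)$, the natural map
$$\Maps_{\Shv(\CY \times \CY)}(\CG_1 \boxtimes \CG_2, \CF) \longrightarrow \Maps_{\Funct_{\DGCat}(\Shv(\CY), \Shv(\CY))}\bigl([\CG_1 \boxtimes \CG_2], [\CF]\bigr)$$
induced by $[\cdot]$ is an isomorphism, functorially in $\CF$. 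Using Verdier duality (Remark~\ref{r:Verdier funct} and \eqref{e:Verdier ext}) together with the projection formula to unwind both sides, each reduces to the same expression $\on{C}^\cdot_\blacktriangle(\CY \times \CY, (\BD_\CY(\CG_1) \boxtimes \BD_\CY(\CG_2)) \sotimes \CF)$, and continuity in $\CF$ then upgrades the pointwise identification to the required equality of functors.
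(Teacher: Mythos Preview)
Your proposal is correct and follows essentially the same route as the paper. Both arguments reformulate the lemma as the assertion that, under the self-duality identification $\Shv(\CY)\otimes\Shv(\CY)\simeq\Funct(\Shv(\CY),\Shv(\CY))$, the object corresponding to the endofunctor $[\CF]$ is $\boxtimes^R(\CF)$; both then verify this by testing against compact pure tensors $\CG_1\otimes\CG_2$ and reducing each side to $\on{C}^\cdot_\blacktriangle\bigl(\CY\times\CY,(\BD_\CY(\CG_1)\boxtimes\BD_\CY(\CG_2))\sotimes\CF\bigr)$ via Verdier duality and the projection formula. The only cosmetic difference is that the paper computes $\CHom_{\Shv(\CY)\otimes\Shv(\CY)}(\CF_1\otimes\CF_2,[\CF])$ directly in the tensor product, while you first pass to $\Funct$ via your warm-up identification $[\cdot]\circ\boxtimes\simeq\sigma$ and compute $\Maps_{\Funct}([\CG_1\boxtimes\CG_2],[\CF])$ there; the underlying computation is the same.
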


\begin{proof}

The assertion of the lemma can be reformulated as follows: for $\CF\in \Shv(\CY\times \CY)$, the object
$$[\CF]\in \on{Funct}(\Shv(\CY),\Shv(\CY))\simeq \Shv(\CY)^\vee\otimes \Shv(\CY)\simeq \Shv(\CY)\otimes \Shv(\CY)$$
is given by $\boxtimes^R(\CF)$.

\medskip

In fact, this is true for $\CF\in \Shv(\CY_1\times \CY_2)$ and $[\CF]\in \on{Funct}(\Shv(\CY_1),\Shv(\CY_2))$. This is
\cite[Lemma~4.5 and footnote~4]{GaVa}, which we reproduce here for completeness.

\medskip

It suffices to show that for $\CF_1\in \Shv(\CY_1)^c$ and $\CF_2\in \Shv(\CY_2)^c$,
\begin{equation} \label{e:compute kernel}
\CHom_{\Shv(\CY_1)\otimes \Shv(\CY_2)}(\CF_1\otimes \CF_2,[\CF])\simeq
\CHom_{\Shv(\CY_1\times \CY_2)}(\CF_1\boxtimes \CF_2,\CF).
\end{equation}

Unwinding, we obtain that the left-hand side in \eqref{e:compute kernel} is
\begin{multline*}
\CHom_{\Shv(\CY_1)}(\CF_1,[\CF](\BD_{\CY_2}(\CF_2)))\simeq
\CHom_{\Shv(\CY_1)}\left(\CF_1,(p_1)_\blacktriangle(\CF\sotimes p_2^!(\BD_{\CY_2}(\CF_2)))\right)\simeq \\
\simeq \on{C}^\cdot_\blacktriangle\left(\CY_1,\BD_{\CY_1}(\CF_1)\sotimes (p_1)_\blacktriangle(\CF\sotimes p_2^!(\BD_{\CY_2}(\CF_2)))\right)
\simeq
\on{C}^\cdot_\blacktriangle\left(\CY_1\times \CY_2,\CF\sotimes (\BD_{\CY_1}(\CF_1)\boxtimes \BD_{\CY_2}(\CF_2))\right) \simeq \\
\overset{\text{\eqref{e:Verdier ext}}}\simeq
\on{C}^\cdot_\blacktriangle\left(\CY_1\times \CY_2,\CF\sotimes \BD_{\CY_1\times \CY_2}(\CF_1\boxtimes \CF_2)\right),
\end{multline*}
which identifies with the right-hand side of \eqref{e:compute kernel}, as desired.

\end{proof}

\sssec{}

From \lemref{l:boxtimes R} we obtain a canonical map
\begin{equation} \label{e:boxtimes R back}
\boxtimes(([\CF]\otimes \on{Id})(\on{unit}_\CY))\simeq \boxtimes(\boxtimes^R(\CF))\to \CF
\end{equation}

By definition,
$$\Tr([\CF],\Shv(\CY))=\on{counit}_\CY(([\CF]\otimes \on{Id})(\on{unit}_\CY)).$$

By Remark \ref{r:counit Y}, we obtain that
$$\Tr([\CF],\Shv(\CY)) \simeq \on{C}^\cdot_\blacktriangle\left(\CY,\Delta_\CY^!\left(\boxtimes(([\CF]\otimes \on{Id})(\on{unit}_\CY))\right)\right).$$

Composing with \eqref{e:boxtimes R back} we obtain the desired map $\on{LT}^{\on{true}}$ in
\eqref{e:LT true}.

\sssec{}

Let $\ul\bC \in \AGCat$ be a dualizable object; in particular, by \corref{c:counit values}, $\be(\ul\bC)\in \DGCat$ is also dualizable.
Since the functor
$$\bi: \DGCat \to \AGCat$$
is symmetric monoidal, we have a map of dualizable objects
$$\bi\circ \be(\ul\bC) \to \ul{\bC}.$$

\medskip

By \Cref{p:adjoint of counit in DGCat-ag}, this 1-morphism admits a right adjoint.
Hence, by \cite[Sect. 3.4.3]{GKRV} for an endomorphism $\alpha$ of $\ul\bC$, we obtain a map
\begin{equation} \label{e:funct Tr}
\Tr(\bi\circ\be(\alpha),\bi\circ \be(\ul\bC))\to \Tr(\alpha,\ul\bC)
\end{equation}
in the category $\Maps_{\AGCat}(\uno_{\AGCat},\uno_{\AGCat})$.

\sssec{}

In addition, since $\bi$ is symmetric monoidal, we have
$$\Tr(\bi\circ\be(\alpha),\bi\circ \be(\ul\bC))\simeq \bi(\Tr(\be(\alpha),\be(\ul\bC))),$$
where:

\begin{itemize}

\item $\Tr(\be(\alpha),\be(\ul\bC))\in \Maps_{\DGCat}(\uno_{\DGCat},\uno_{\DGCat})\simeq \Vect$;

\medskip

\item The symbol $\bi$ in the right-hand side 
indicates the functor $$\Vect\simeq\Maps_{\DGCat}(\uno_{\DGCat},\uno_{\DGCat})\to \Maps_{\AGCat}(\uno_{\AGCat},\uno_{\AGCat})\simeq\Vect,$$
induced by $\bi$, which is (homotopic to) the identity.

\end{itemize}

\sssec{}

Summarizing, we obtain a map
\begin{equation} \label{e:funct Tr bis}
\Tr(\be(\alpha),\be(\ul\bC))\to \Tr(\alpha,\ul\bC)
\end{equation}
in $\Vect$.

\sssec{}

We apply the above discussion to $\ul\bC=\uShv(\CY)$ and $\alpha=\ul{[\CF]}$, so that $\be(\alpha)=[\CF]$.
Thus, \eqref{e:funct Tr bis} yields a map
\begin{equation} \label{e:true again}
\Tr([\CF],\Shv(\CY))\to \Tr(\ul{[\CF]},\uShv(\CY)).
\end{equation}

\sssec{}

Let $\CG$ be a compact object of $\Shv(\CY)$ equipped with a map
$$\alpha:\CG\to [\CF](\CG).$$

By \secref{sss:class UL 2}, to this data we assign an element
$$\on{cl}(\ul{[\CG]},\alpha)\in \Tr(\ul{[\CF]},\uShv(\CY)).$$

In addition, we can consider
$$\on{cl}([\CG],\alpha)\in \Tr(\ul{\CF},\Shv(\CY)).$$

From the functoriality of the natural transformation \eqref{e:funct Tr bis}, we obtain:

\begin{lem} \label{l:class UL}
The image image of $\on{cl}([\CG],\alpha)$ under \eqref{e:true again} equals
$\on{cl}(\ul{[\CG]},\alpha)$.
\end{lem}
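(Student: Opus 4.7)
The plan is to deduce the lemma from the general functoriality of the class construction, exploiting that the map \eqref{e:true again} arises from the counit of the adjunction \eqref{e:DGCta to AGCat pt} together with the trace-functoriality associated to right-adjointable 1-morphisms (\cite[Sect.~3.3.4]{GKRV}). The basic observation is that the 1-morphism $\ul{[\CG]}: \uno_{\AGCat} \to \uShv(\CY)$ factors canonically as
$$\uno_{\AGCat} \simeq \bi(\uno_{\DGCat}) \xrightarrow{\bi([\CG])} \bi \circ \be(\uShv(\CY)) \longrightarrow \uShv(\CY),$$
where the last arrow is the counit of the $(\bi,\be)$-adjunction. This factorization is immediate from the universal property of the adjunction applied to $\CG \in \Shv(\CY) \simeq \be(\uShv(\CY))$, and moreover the intertwining data $\alpha$ (with respect to $\ul{[\CF]}$ on the target and $[\CF]$ on the source) correspond tautologically under this factorization.

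The argument then splits into two steps. First, since $\bi: \DGCat \to \AGCat$ is (strictly) symmetric monoidal, the general naturality of the class construction under symmetric monoidal functors implies that the image of $\on{cl}([\CG], \alpha)$ under $\bi$ is $\on{cl}(\bi([\CG]), \bi(\alpha))$, viewed inside $\Tr(\bi\circ\be(\ul{[\CF]}), \bi\circ\be(\uShv(\CY)))$. Second, by \propref{p:adjoint of counit in DGCat-ag} the counit $\bi \circ \be(\uShv(\CY)) \to \uShv(\CY)$ is right-adjointable in the 2-category $\AGCat$, and the trace-functoriality map it induces is, by construction, \eqref{e:funct Tr bis}. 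Composing the two steps above and invoking naturality of $\on{cl}$ under composition of right-adjointable 1-morphisms, the image of $\on{cl}([\CG], \alpha)$ is precisely the class associated to the composite 1-morphism $\ul{[\CG]}$ endowed with its induced structure, i.e., $\on{cl}(\ul{[\CG]}, \alpha)$.

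The principal technical obstacle will be verifying the compatibility of the right adjoints that enter the two class constructions. Concretely, one must identify the right adjoint of $\ul{[\CG]}$ computed in \secref{sss:class UL 1} (namely $[\BD_\CY(\CG)]$) with the composition of the right adjoint of $\bi([\CG])$ and the right adjoint of the counit supplied by \corref{c:counit has right adjoint AGCat}; the Verdier-duality structure on $\Shv(\CY)^c$ enters here because, on compact objects, the right adjoint of the DGCat-level $[\CG]: \Vect \to \Shv(\CY)$ is also represented by $\BD_\CY(\CG)$, which passes through $\bi$ consistently with the description of $\ul{[\BD_\CY(\CG)]}$. Once this matching is in place, the coherence of the two units (and two counits) of the adjunctions, together with the factorization above, yields the desired identification $\on{cl}(\ul{[\CG]}, \alpha) = \text{image of } \on{cl}([\CG], \alpha)$ purely formally.
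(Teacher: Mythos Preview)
Your proposal is correct and is essentially an explicit unpacking of the paper's own proof, which consists of the single phrase ``from the functoriality of the natural transformation \eqref{e:funct Tr bis}''. The factorization of $\ul{[\CG]}$ through $\bi([\CG])$ followed by the counit, and the two-step naturality argument (first under the symmetric monoidal functor $\bi$, then under the right-adjointable counit), is exactly the content of that functoriality; the ``technical obstacle'' you flag concerning the matching of right adjoints is automatic from the essential uniqueness of right adjoints and need not be verified separately.
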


\sssec{}

We are going to prove:

\begin{thm} \label{t:true LT}
The map
$$\Tr([\CF],\Shv(\CY))\overset{\text{\eqref{e:true again}}}\to \Tr(\ul{[\CF]},\uShv(\CY)) \overset{\on{LT}^{\on{AG}}}\simeq
\on{C}^\cdot_\blacktriangle(\CY,\Delta_\CY^!(\CF))$$
equals the above map $\on{LT}^{\on{true}}$.
\end{thm}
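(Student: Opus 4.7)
The plan is to unwind both maps explicitly as morphisms in $\Vect$ built from the common ingredient $(p_\CY)_\blacktriangle\circ \Delta_\CY^!$, and to identify the comparison \eqref{e:true again} as the map induced by the counit of the adjunction $\boxtimes\dashv \boxtimes^R$. The $\AGCat$ side is essentially immediate from \propref{p:Tr sheaf}: via \propref{p:unit self Y}, \corref{c:counit u}, and the isomorphism $\boxtimes:\uShv(\CY)\otimes \uShv(\CY)\simeq \uShv(\CY\times \CY)$ (\corref{c:tensor product verd comp}), the trace $\Tr(\ul{[\CF]},\uShv(\CY))$ reduces by direct substitution to $(p_\CY)_\blacktriangle\circ \Delta_\CY^!(\CF)$, and this reduction is $\on{LT}^{\on{AG}}$.

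On the $\DGCat$ side, \lemref{l:boxtimes R} identifies $([\CF]\otimes \on{Id})(\on{unit}_\CY)$ with $\boxtimes^R(\CF)$, while by \remref{r:counit Y} the counit of self-duality on $\Shv(\CY)$ has the same formula $(p_\CY)_\blacktriangle\circ \Delta_\CY^!\circ \boxtimes$ as in $\AGCat$. Combining these gives $\Tr([\CF],\Shv(\CY))\simeq (p_\CY)_\blacktriangle\circ \Delta_\CY^!\bigl(\boxtimes\circ \boxtimes^R(\CF)\bigr)$, and by construction $\on{LT}^{\on{true}}$ is precisely postcomposition, inside $\Delta_\CY^!$, with the adjunction counit $\boxtimes\circ \boxtimes^R(\CF)\to \CF$.

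The map \eqref{e:true again} is produced by functoriality of categorical trace along the 1-morphism $\iota:\bi(\Shv(\CY))\to \uShv(\CY)$, which admits a right adjoint by \propref{p:adjoint of counit in DGCat-ag}; symmetric monoidality of $\bi$ identifies $\Tr(\bi([\CF]),\bi(\Shv(\CY)))$ with $\Tr([\CF],\Shv(\CY))$. The counit of the self-duality in both categories is computed by the same formula $(p_\CY)_\blacktriangle\circ \Delta_\CY^!\circ \boxtimes$, so that ingredient of the trace-functoriality diagram transports identically. The entire content therefore lies on the unit side, and what has to be shown is that trace functoriality along $\iota$ intertwines the two units via the counit of $\boxtimes\dashv \boxtimes^R$.

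The main obstacle is precisely this identification: one must check that the functoriality map carries the $\DGCat$-unit $\on{unit}_\CY\simeq \boxtimes^R(\Delta_*(\omega_\CY))$ to the $\AGCat$-unit $\Delta_*(\omega_\CY)\in \uShv(\CY\times \CY)$ through the adjunction counit $\boxtimes\circ \boxtimes^R(\Delta_*(\omega_\CY))\to \Delta_*(\omega_\CY)$. Granted this universal statement, applying $\bi([\CF])\otimes \on{Id}$ (which corresponds via $\iota$ to $\ul{[\CF]}\otimes \on{Id}$) transforms it into the counit $\boxtimes\circ \boxtimes^R(\CF)\to \CF$ on unit-plus-$[\CF]$-data, and then the shared counit formula finishes the proof. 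The verification of the universal claim reduces, via the self-enriched description of $\enhOV$ and the explicit form of the right adjoint of $\iota$ supplied by \propref{p:adjoint of counit in DGCat-ag}, to the defining naturality of the $(\boxtimes,\boxtimes^R)$ adjunction applied to $\Delta_*(\omega_\CY)$; this is the technical heart.
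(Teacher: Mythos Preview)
Your approach differs from the paper's, and the difference is instructive. You attempt a direct unwinding of the trace-functoriality map \eqref{e:true again}, reducing everything to the claim that this map ``is'' the counit $\boxtimes\circ\boxtimes^R\to\on{Id}$ applied to the unit object $\Delta_*(\omega_\CY)$. The paper instead uses a naturality-in-$\CF$ reduction: both $\on{LT}^{\on{true}}$ and the composite $\on{LT}^{\on{AG}}\circ\text{\eqref{e:true again}}$ are natural with respect to morphisms $\CF'\to\CF''$ in $\Shv(\CY\times\CY)$; the theorem is then checked for $\CF\in\Shv(\CY)\otimes\Shv(\CY)\subset\Shv(\CY\times\CY)$ (where both $\on{LT}^{\on{true}}$ and \eqref{e:true again} are isomorphisms and the identification is genuinely tautological); and arbitrary $\CF$ is reduced to this case by taking $\CF':=\boxtimes(\boxtimes^R(\CF))$ and noting via \lemref{l:boxtimes R} that the counit $\CF'\to\CF$ induces an isomorphism $[\CF']\to[\CF]$ of endofunctors of $\Shv(\CY)$, hence of traces.

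The paper's route sidesteps exactly the ``technical heart'' you leave unproven. Your last paragraph asserts that the required identification reduces to naturality of the $(\boxtimes,\boxtimes^R)$ adjunction, but this is not obvious: the trace-functoriality map of \cite[Sect.~3.4.3]{GKRV} for a right-adjointable $f:M\to N$ weaves together both duality data and the $(f,f^R)$ adjunction in a specific pattern, and matching that pattern with the map you want (after evaluation at $\on{pt}$ and transport through the self-duality identifications on both sides) requires real bookkeeping that you have not supplied. Your approach may well go through, but as written the key step is a promissory note. The paper's naturality argument buys precisely the ability to replace that bookkeeping with the one-line observation that when $\CF$ already lies in $\Shv(\CY)\otimes\Shv(\CY)$ there is nothing to compare.
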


\begin{proof}

We first consider the case when $\CF$ belongs to the full subcategory
$$\Shv(\CY)\otimes \Shv(\CY)\subset \Shv(\CY\times \CY).$$

In this case, the assertion of the theorem is obtained by unraveling the definitions
(and the maps $\on{LT}^{\on{true}}$ and \eqref{e:true again} are isomorphisms).

\medskip

Next we note that for a morphism $\alpha:\CF'\to \CF''$ in $\Shv(\CY\times \CY)$, we have the commutative
diagrams
$$
\CD
\Tr([\CF'],\Shv(\CY)) @>{\on{LT}^{\on{true}}}>> \on{C}^\cdot_\blacktriangle(\CY,\Delta_\CY^!(\CF')) \\
@V{\alpha}VV @VV{\alpha}V \\
\Tr([\CF''],\Shv(\CY)) @>{\on{LT}^{\on{true}}}>> \on{C}^\cdot_\blacktriangle(\CY,\Delta_\CY^!(\CF''))
\endCD
$$
and
$$
\CD
\Tr([\CF'],\Shv(\CY))  @>{\text{\eqref{e:true again}}}>> \Tr(\ul{[\CF']},\uShv(\CY)) @>{\on{LT}^{\on{AG}}}>>  \on{C}^\cdot_\blacktriangle(\CY,\Delta_\CY^!(\CF'))  \\
@V{\alpha}VV @V{\alpha}VV @VV{\alpha}V \\
\Tr([\CF''],\Shv(\CY))  @>{\text{\eqref{e:true again}}}>> \Tr(\ul{[\CF'']},\uShv(\CY)) @>{\on{LT}^{\on{AG}}}>>  \on{C}^\cdot_\blacktriangle(\CY,\Delta_\CY^!(\CF'')).
\endCD
$$

\medskip

Hence, if the assertion of the theorem holds for $\CF'$ and the map
$$\Tr([\CF'],\Shv(\CY)) \overset{\alpha}\to \Tr([\CF''],\Shv(\CY))$$
is an isomorphism, then it also holds for $\CF''$.

\medskip

Finally, we note that for $\CF'':=\CF$ and $\CF':=\boxtimes(\boxtimes^R(\CF))$, we have
$$\CF'\in \Shv(\CY)\otimes \Shv(\CY)\subset \Shv(\CY\times \CY),$$
and by \lemref{l:boxtimes R}, the counit map
$$\boxtimes(\boxtimes^R(\CF))\to \CF$$
induces an isomorphism
$$[\boxtimes(\boxtimes^R(\CF))]\to [\CF]$$
(as endofunctors of $\Shv(\CY)$), and hence also an isomorphism of traces.

\end{proof}

\ssec{The Frobenius correspondence} \label{ss:more Frob}

In this subsection we let $\CY$ be an algebraic stack over $\ol\BF_q$, defined over $\BF_q$.
We will assume that $\CY$ is AG Verdier compatible (and hence AG tame).

\sssec{} \label{sss:back to alg stack}

Note that since $\CY$ was assumed to be an algebraic stack, the stack
$\CY^\Frob$ is \emph{algebro-geometrically discrete}, i.e., it is isomorphic to the groupoid
$\CY(\BF_q)$ with the trivial algebro-geometric structure (see \cite[Lemma 6.4]{GaVa}).

\medskip

In particular,
$$\on{C}_\blacktriangle(\CY^\Frob,\omega_{\CY^\Frob})\simeq \sFunct(\CY(\mathbb{F}_q),\ol\BQ_\ell).$$

\sssec{}

Taking \eqref{e:hom corr} to be
$$ \xymatrix{
\CY & \ar[l]_{\on{id}} \CY \ar[r]^{\on{Frob}} & \CY,
}$$
from \corref{c:Tr corr}, we obtain:

\begin{cor} \label{c:Tr Frob AG}
Let $\CY$ be an AG Verdier-compatible algebraic stack defined over $\mathbb{F}_q$.
Then
$$\on{Tr}(\on{Frob}_{\blacktriangle}; \uShv(\CY)) \simeq \sFunct(\CY(\mathbb{F}_q), \ol\BQ_\ell). $$
\end{cor}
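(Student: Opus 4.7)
The plan is to apply \corref{c:Tr corr} to the correspondence
$$\CY \xleftarrow{\on{id}_\CY} \CY \xrightarrow{\on{Frob}_\CY} \CY,$$
identify the resulting $\ul{[\CZ]}$ with $\on{Frob}_\blacktriangle$, and interpret the output geometrically.

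First I would verify the hypotheses of \corref{c:Tr corr}: both $\CZ = \CY$ and $\on{Fix}(\CZ)$ must be AG tame. The former is the assumption, via \corref{c:colimit pres of ushv}. For the latter, by definition
$$\on{Fix}(\CZ) = \CY \underset{\CY \times \CY}\times \CY,$$
where the two maps to $\CY \times \CY$ are $(c_l,c_r) = (\on{id}_\CY, \on{Frob}_\CY)$ and the diagonal $\Delta_\CY$; this is precisely the Frobenius fixed-point stack $\CY^{\on{Frob}}$. As recalled in \secref{sss:back to alg stack}, $\CY^{\on{Frob}}$ is algebro-geometrically discrete (isomorphic to the groupoid $\CY(\BF_q)$ with trivial algebro-geometric structure), i.e., a disjoint union of classifying stacks of finite groups, hence AG tame.

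Next I would identify $\ul{[\CZ]}$ with $\on{Frob}_\blacktriangle$. Write $\Gamma = (\on{id}_\CY, \on{Frob}_\CY) \colon \CY \to \CY \times \CY$ for the graph of Frobenius. The defining kernel of $\ul{[\CZ]}$ is $\Gamma_\blacktriangle(\omega_\CY)$. Since $\CY$ has a schematic diagonal, $\Gamma$ is schematic, so $\Gamma_\blacktriangle = \Gamma_*$, and unwinding the convolution prescription of \secref{sss:conv} shows that the associated 1-morphism $\uShv(\CY) \to \uShv(\CY)$ in $\AGCat$ is $\on{Frob}_*$, which coincides with $\on{Frob}_\blacktriangle$ by \lemref{l:Frob triangle}.

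Applying \corref{c:Tr corr} now yields
$$\on{Tr}(\on{Frob}_\blacktriangle, \uShv(\CY)) \simeq \on{C}^{\cdot}_{\blacktriangle}(\CY^{\on{Frob}}, \omega_{\CY^{\on{Frob}}}),$$
and the isomorphism $\on{C}^{\cdot}_{\blacktriangle}(\CY^{\on{Frob}}, \omega_{\CY^{\on{Frob}}}) \simeq \sFunct(\CY(\BF_q), \ol\BQ_\ell)$ from \secref{sss:back to alg stack} completes the argument. The only nontrivial point is verifying AG tameness of $\on{Fix}(\CZ)$ and matching the kernel with the desired endofunctor; once those are in hand, the result is a direct specialization of \corref{c:Tr corr}.
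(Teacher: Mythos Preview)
Your proposal is correct and follows exactly the paper's approach: the paper simply says ``Taking \eqref{e:hom corr} to be the Frobenius correspondence, from \corref{c:Tr corr} we obtain \corref{c:Tr Frob AG}'', and you have spelled out the verifications (AG tameness of $\on{Fix}(\CZ)=\CY^{\Frob}$, identification of $\ul{[\CZ]}$ with $\Frob_\blacktriangle$) that the paper leaves implicit.
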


\sssec{}

Consider the composition
\begin{multline} \label{e:funct Tr bis bis}
\Tr(\Frob_*,\Shv(\CY)) \overset{\text{\lemref{l:Frob triangle}}}\simeq
\Tr(\Frob_\blacktriangle,\Shv(\CY))\overset{\text{\eqref{e:true again}}}\longrightarrow  \\
\to \Tr(\Frob_\blacktriangle,\uShv(\CY))\overset{\text{\corref{c:Tr Frob AG}}}\simeq \sFunct(\CY(\mathbb{F}_q), \ol\BQ_\ell).
\end{multline}

We claim:

\begin{thm} \label{t:LT Frob}
The map \eqref{e:funct Tr bis bis} equals the map $\on{LT}^{\on{naive}}$
of \eqref{e:def LT naive}.
\end{thm}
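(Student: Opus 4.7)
The plan is to verify the identity after composing with the evaluation map $\on{ev}_y:\sFunct(\CY(\BF_q),\ol\BQ_\ell)\to \ol\BQ_\ell$ at each $\BF_q$-point $y$. This reduction is legitimate because, by \secref{sss:back to alg stack}, $\CY^\Frob\simeq \CY(\BF_q)$ is algebro-geometrically discrete, so the collection $\{\on{ev}_y\}_{y\in \CY(\BF_q)}$ is jointly conservative (every object of $\sFunct(\CY(\BF_q),\ol\BQ_\ell)$ is determined by its values at points).

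First, I would identify $\on{ev}_y$ intrinsically. A point $y\in \CY(\BF_q)$ gives a $\Frob$-equivariant map $i_y:\on{pt}\to \CY$, which assembles into a map of correspondences from $(\on{pt},\on{id},\on{pt})$ to the Frobenius correspondence $(\CY,\on{id},\Frob)$, compatible with fixed-point loci. Applying the functoriality of \corref{c:Tr corr} (which is, in turn, a consequence of \propref{p:Tr sheaf} and \lemref{l:funct Tr}), the induced map
$$\Tr(\Frob_\blacktriangle,\uShv(\CY))\longrightarrow \Tr(\on{id},\uShv(\on{pt}))\simeq \ol\BQ_\ell$$
corresponds, under the identifications of \corref{c:Tr Frob AG} and the isomorphism $\on{C}^\cdot_\blacktriangle(\on{pt},\omega_{\on{pt}})\simeq \ol\BQ_\ell$, to the map $\sFunct(\CY(\BF_q),\ol\BQ_\ell)\to \ol\BQ_\ell$ given by pullback along $\on{pt}_y \hookrightarrow \CY^\Frob$, i.e., precisely to $\on{ev}_y$.

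Second, I would establish the commutativity of the square
$$
\CD
\Tr(\Frob_*,\Shv(\CY)) @>{\text{\eqref{e:funct Tr bis bis}}}>> \sFunct(\CY(\BF_q),\ol\BQ_\ell) \\
@V{\Tr(\Frob,i_y^*)}VV  @VV{\on{ev}_y}V  \\
\Tr(\on{id},\Vect) @= \ol\BQ_\ell,
\endCD
$$
where the left arrow is the $\DGCat$-trace functoriality induced by the Frobenius-equivariant functor $i_y^*:\Shv(\CY)\to \Vect$. This is essentially a compatibility between the functorialities of trace in $\DGCat$ and $\AGCat$: because $\bi:\DGCat\to \AGCat$ is symmetric monoidal and the 1-morphism $\bi\circ \be(\uShv(\CY))\to \uShv(\CY)$ used to build \eqref{e:funct Tr bis bis} admits a right adjoint by \propref{p:adjoint of counit in DGCat-ag}, the comparison map \eqref{e:funct Tr bis bis} is natural with respect to morphisms of dualizable objects equipped with endomorphisms. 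Applied to the morphism $i_y^*$ (with its Frobenius equivariance $i_y^*\circ \Frob_*\simeq i_y^*$, upgraded to $\ul{i_y^*}\circ \Frob_\blacktriangle\simeq \ul{i_y^*}$ in $\AGCat$), this naturality gives the square above.

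Combining the two steps: the right-down composite equals $\on{ev}_y\circ \text{\eqref{e:funct Tr bis bis}}$, while by the very definition of $\on{LT}^{\on{naive}}$ in \secref{ss:less Frob}, the left-down composite equals $\on{ev}_y\circ \on{LT}^{\on{naive}}$. Since the square commutes for every $y$, the theorem follows. The main obstacle is the compatibility established in step two; this is not automatic and requires a careful unwinding of the construction of \eqref{e:true again}, verifying that the right adjoint to $\bi\circ \be\to \on{id}$ (furnished by \propref{p:adjoint of counit in DGCat-ag}), when inserted into the general trace-functoriality recipe of \secref{sss:funct Tr gen}, indeed intertwines the trace functorialities associated to a morphism such as $i_y^*$. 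Once this compatibility is in place, the rest of the argument is a book-keeping exercise using \lemref{l:funct Tr}.
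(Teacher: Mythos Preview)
Your approach has a genuine gap, and it is located precisely where you flag ``the main obstacle'': the claim that $i_y^*$ upgrades to a $1$-morphism $\ul{i_y^*}:\uShv(\CY)\to \uShv(\on{pt})$ in $\AGCat$. It does not. Morphisms in $\AGCat$ are built from the $(!$-pullback, $*$-pushforward, $\sotimes)$ package (see \secref{sss:AGCat simple}), and by \propref{p:adj AGCat1} the would-be left adjoint to $(i_y)_*$ in $\AGCat$ requires, for every constructible kernel $\CF\in\Shv(X'\times X'')$, that $(i_y\times\on{id})^*$ intertwine the convolution functors $[\CF]$. But $[\CF]$ is defined using $\sotimes$ and $(-)^!$, with which $(-)^*$ is not compatible; the relevant Beck--Chevalley squares fail. (Equivalently: $i_y^*$ is a morphism in $\AGCat^{\on{left}}$, not in $\AGCat$.) If you replace $i_y^*$ by $(i_y)^!$, which \emph{does} live in $\AGCat$, then your square in step~2 would commute, but its left vertical arrow becomes $\Tr(\Frob,(i_y)^!)$ rather than $\Tr(\Frob,(i_y)^*)$, so you no longer recover $\on{LT}^{\on{naive}}$.

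The paper's proof is entirely different and much shorter: it first applies \thmref{t:true LT} to identify \eqref{e:funct Tr bis bis} with $\on{LT}^{\on{true}}$ (a map defined internally via the $!/\blacktriangle$ formalism), and then invokes the main theorem of \cite{GaVa} asserting $\on{LT}^{\on{true}}=\on{LT}^{\on{naive}}$ for the Frobenius correspondence. That last equality is exactly the $*/!$ bridge your argument is implicitly trying to establish by formal naturality, and it is a genuinely non-formal result about Frobenius (relying on contracting properties of the Frobenius map); it cannot be obtained by the bookkeeping you outline.
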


\begin{proof}

By \thmref{t:true LT}, the composition \eqref{e:funct Tr bis bis} equals $\on{LT}^{\on{true}}$.
Now, in the particular case of the Frobenius correspondence,
$$\on{LT}^{\on{true}}=\on{LT}^{\on{naive}},$$
by one of the main results of \cite{GaVa}, see Theorem 0.8 in {\it loc. cit.}

\end{proof}

\sssec{}

Let now $\CF$ be an object in $\Shv(\CY)$. We can view $\CF$ as a morphism
$$\ul{[\CF]}:\uno_{\AGCat}\to \uShv(\CY)$$
in $\AGCat$. Note that if $\CF$ is compact, then $[\CF]$ admits a right adjoint, see \secref{sss:class UL 1}.

\medskip

Moreover, let $\CF$ be equipped with a weak Weil structure, i.e., a morphism
$$\alpha:\CF\to \Frob_*(\CF)\simeq \Frob_\blacktriangle(\CF).$$

\medskip

Then by \secref{sss:class UL 2}, to this data we attach an element
$$\on{cl}(\ul{[\CF]},\alpha)\in  \Tr(\Frob_\blacktriangle,\uShv(\CY)).$$

From \thmref{t:LT Frob}, \lemref{l:class UL} and \eqref{e:faisceaux-fonctions}, we obtain:

\begin{cor} \label{c:faisceaux-fonctions again}
The image of $\on{cl}(\ul{[\CF]},\alpha)$ under
$$\Tr(\Frob_\blacktriangle,\uShv(\CY))\overset{\text{\corref{c:Tr Frob AG}}}\simeq
\sFunct(\CY(\mathbb{F}_q), \ol\BQ_\ell)$$
equals $\sfunct(\CF,\alpha)$.
\end{cor}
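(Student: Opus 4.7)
The plan is to assemble three ingredients that have been established earlier in the paper: Lemma~\ref{l:class UL}, Theorem~\ref{t:LT Frob}, and the classical formula~\eqref{e:faisceaux-fonctions}. The structure of the argument is a short diagram chase, so the essential task is to package these results correctly and to verify that the two a priori different notions of ``class'' in play actually coincide.

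First, I would interpret $\Frob_\blacktriangle$ as an endomorphism of $\uShv(\CY)$ coming from the kernel $\CK_\Frob := (\on{id}_\CY,\Frob_\CY)_\blacktriangle(\omega_\CY) \in \Shv(\CY\times \CY)$, i.e.\ $\Frob_\blacktriangle = [\CK_\Frob]$ in the sense of \secref{sss:conv}. Under the identification $\Frob_\blacktriangle \simeq \Frob_*$ of \lemref{l:Frob triangle}, the Weil datum $\alpha: \CF \to \Frob_*(\CF)$ is the same as a map $\CF \to [\CK_\Frob](\CF)$, so the data $(\CF,\alpha)$ indeed fits into the framework of \Cref{sss:class UL 2} with $\CG := \CF$ and $\CF$ there taken to be $\CK_\Frob$.

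Next, I would apply \lemref{l:class UL} to this situation: it asserts that the image of $\on{cl}([\CF],\alpha) \in \Tr(\Frob_\blacktriangle,\Shv(\CY))$ under the natural map \eqref{e:true again} is exactly $\on{cl}(\ul{[\CF]},\alpha) \in \Tr(\Frob_\blacktriangle,\uShv(\CY))$. Combining this with \thmref{t:LT Frob}, which identifies the composition \eqref{e:funct Tr bis bis} with $\on{LT}^{\on{naive}}$, and the classical formula $\sfunct(\CF,\alpha) = \on{LT}^{\on{naive}}(\on{cl}(\CF,\alpha))$ from \eqref{e:faisceaux-fonctions}, we conclude that under the isomorphism $\Tr(\Frob_\blacktriangle,\uShv(\CY)) \simeq \sFunct(\CY(\BF_q),\ol\BQ_\ell)$ of \corref{c:Tr Frob AG}, the class $\on{cl}(\ul{[\CF]},\alpha)$ is sent to $\sfunct(\CF,\alpha)$, as desired.

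The main subtlety—and the only step that is not purely formal—is verifying that the ``class'' $\on{cl}([\CF],\alpha)$ defined via the $\AGCat$-adjunction formalism of \Cref{sss:class UL 1}-\Cref{sss:class UL 2} agrees, under the isomorphism $\Frob_\blacktriangle \simeq \Frob_*$ of \lemref{l:Frob triangle}, with the Grothendieck class $\on{cl}(\CF,\alpha) \in \Tr(\Frob_*,\Shv(\CY))$ built in \cite[Sect. 3.4.3]{GKRV}. Both constructions proceed by the same categorical recipe—start from the unit of the $(\CF, \BD_\CY(\CF))$-duality, apply the endomorphism along the first factor, and compose with the adjunction counit—so the identification reduces to matching the adjunction data for $[\CF]$ on $\Shv(\CY)$ and for $\ul{[\CF]}$ restricted via $\be$, which follows from \Cref{sss:class UL 1}. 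Once this compatibility is in hand, the three invocations above chain together and deliver the corollary.
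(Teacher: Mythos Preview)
Your proposal is correct and follows exactly the same approach as the paper, which simply states that the corollary follows ``from \thmref{t:LT Frob}, \lemref{l:class UL} and \eqref{e:faisceaux-fonctions}.'' You have made the chain of implications explicit and, in addition, flagged the one point the paper leaves implicit: the identification of $\on{cl}([\CF],\alpha)$ (the class produced by the general formalism of \secref{sss:class UL 2} applied in $\DGCat$) with the class $\on{cl}(\CF,\alpha)$ of \cite[Sect.~3.4.3]{GKRV}, which is indeed immediate since both are instances of the same universal construction.
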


\appendix

\section{Recollections on enriched categories} \label{s:enriched cat}

This is a summary of the relevant parts of the theory of enriched $\infty$-categories developed in \cite{GepHaug,Heine,Hinich-Yoneda,Hinich-universal}.

\ssec{The basics} \label{ss:flagged}

\sssec{}

Recall that one of the (most convenient) frameworks for talking about $\infty$-categories
is via Segal spaces (see \cite[Chapter 10, Sect. 1]{GR1} for a review). In this approach, an $\infty$-category $\CC$
is encoded by a space $\CC^{\on{grpd}}$ of objects of $\bC$ and a functor
$$\CC^{\on{grpd}}\times \CC^{\on{grpd}}\to \Spc, \quad (c_1,c_2)\mapsto \Maps_{\CC}(c_1,c_2),$$
equipped with a multiplicative structure.

\medskip

By contrast, when discussing enriched categories, it is often convenient to parameterize
its objects by another space.

\sssec{}

Let $\CA$ be a monoidal category and $\CS$ a space. To this pair, one associates the notion of
$\CS$-flagged $\CA$-enriched category, see \cite[Sect. 2.4]{GepHaug}.

\medskip

An $\CS$-flagged $\CA$-enriched category
$\CC$ consists of a functor
\begin{equation} \label{e:flagged enr}
\CS\times \CS\to \CA, \quad (s_1,s_2)\mapsto \Hom^\CA_{\CS,\CC}(s_1,s_2), \quad s_1,s_2\in \CS,
\end{equation}
endowed with a multiplicative structure
$$\Hom^\CA_{\CS,\CC}(s_2,s_3)\otimes \Hom^\CA_{\CS,\CC}(s_1,s_2)\to \Hom^\CA_{\CS,\CC}(s_1,s_3),$$
equipped with a homotopy-coherent data of associativity.

\sssec{Example} \label{sss:ex enriched}

One can take $\CS=\{*\}$ and $\Hom^\CA(\{*\},\{*\})$ to be any algebra object $A\in \CA$.

\sssec{}

The totality of $\CS$-flagged $\CA$-enriched categories forms a category, denoted $\Cat^\CA(\CS)$.
Furthermore, we can form the ``total space" $\Cat^\CA(\Spc)$, equipped with a Cartesian fibration
\begin{equation} \label{e:flagged}
\Cat^\CA(\Spc)\to \Spc.
\end{equation}

\sssec{} \label{sss:induce rich}

Let $F:\CA_1\to \CA_2$ be a \emph{right-lax} monoidal functor. Given $(\CS_1,\CC_1)\in \Cat^{\CA_1}(\Spc)$
and $(\CS_2,\CC_2)\in \Cat^{\CA_2}(\Spc)$, there is a natural notion of functor
$$(\CS_1,\CC_1)\to (\CS_2,\CC_2)$$
compatible with $F$. The collections of such functors forms a functor
\begin{equation} \label{e:Hom flagged}
(\Cat^{\CA_1}(\Spc))^{\on{op}}\times \Cat^{\CA_2}(\Spc)\to \Spc.
\end{equation}

\medskip

Keeping $(\CS_1,\CC_1)$ fixed and making $(\CS_2,\CC_2)$ variable, the functor \eqref{e:Hom flagged} is corepresentable
by an object, denoted
$$\ind^{\CA_2}_{\CA_1}(\CS_1,\CC_1)\in \Cat^{\CA_2}(\Spc).$$

The assignment
$$(\CS_1,\CC_1)\mapsto \ind^{\CA_2}_{\CA_1}$$
is a functor
\begin{equation} \label{e:induce enrich flagged}
\ind^{\CA_2}_{\CA_1}:\Cat^{\CA_1}(\Spc)\to \Cat^{\CA_2}(\Spc).
\end{equation}

The object $\ind^{\CA_2}_{\CA_1}(\CS_1,\CC_1)=:(\CS_2,\CC_2)$ can be described as follows:
$\CS_2=\CS_1=:\CS$ and for $s',s''\in \CS$,
$$\Hom^{\CA_2}_{\CS,\CC_2}(s',s''):=F(\Hom^{\CA_1}_{\CS,\CC_1}(s',s'')).$$

The composition rule is induced by one on $\CC_1$ using the right-lax
monoidal structure on $F$.

\sssec{}

Take $\CA=\Spc$ equipped with the Cartesian monoidal structure. We have a fully faithful functor
$$\sft:\Cat\to \Cat^\Spc(\Spc)$$
that sends a category $\CC$ to the space $\CS:=\CC^{\on{grpd}}$ and
$$\Hom^\Spc_{\CS,\CC}(c_1,c_2):=\Maps_\CC(c_1,c_2).$$

The functor $\sft$ admits a left adjoint, to be denoted $\sft^L$. Thus, we can view
$\Cat$ as a localization of $\sft^L$.

\begin{rem}

One can give an explicit description of the arrows in $\Cat^\Spc(\Spc)$ that get sent to isomorphisms
by $\sft^L$:

\medskip

Note that to an object $(\CS,\CC)$ one can naturally attach the \emph{set} $\pi_0(\CS,\CC)$: the quotient of
$\pi_0(\CS)$ by the equivalence relation generated by $\pi_0(\CC)$.

\medskip

A morphism $(\CS_1,\CC_1)\to (\CS_2,\CC_2)$ becomes an isomorphism under $\sft^L$ if and only if:

\medskip

\begin{itemize}

\item It is Cartesian with respect to $\Cat^\Spc(\Spc)\to \Spc$;

\medskip

\item The induced map $\pi_0(\CS_1,\CC_1)\to \pi_0(\CS_2,\CC_2)$ is a surjection.

\end{itemize}

\end{rem}

\sssec{} \label{sss:underlying flagged}

Consider the (right-lax) monoidal functor
$$\CA\to \Spc, \quad a\mapsto \Maps_\CA(\uno_\CA,a).$$

Applying the corresponding functor $\ind^\Spc_\CA$, we obtain a functor
$$\Cat^\CA(\Spc)\to \Cat^\Spc(\Spc),$$
which we denote by $\oblv_{\CA\on{-Enr}}$ (or simply $\oblv_{\on{Enr}}$ if the choice of $\CA$ is clear).

\sssec{}

A morphism in $\Cat^\CA(\Spc)$ is called an \emph{enriched equivalence} if:

\medskip

\begin{itemize}

\item It is Cartesian with respect to \eqref{e:flagged};

\medskip

\item Its image under
$$\Cat^\CA(\Spc)\overset{\oblv_{\CA\on{-Enr}}}\longrightarrow \Cat^\Spc(\Spc) \overset{\sft^L}\to \Cat$$
is an isomorphism.

\end{itemize}

\medskip

We define $\Cat^\CA$ to be the localization of $\Cat^\CA(\Spc)$ obtained by inverting
enriched equivalences.

\medskip

Note that by definition
$$\Cat^\Spc\simeq \Cat.$$

\sssec{}

Let us be in the setting of \secref{sss:induce rich}. One shows that the functor \eqref{e:induce enrich flagged}
induces a functor on the corresponding localizations:
\begin{equation} \label{e:induce enrich}
\Cat^{\CA_1}\to \Cat^{\CA_2},
\end{equation}
which we denote by the same character $\ind^{\CA_2}_{\CA_1}$.

\sssec{The underlying category} \label{sss:forget enr}

In particular, applying this to the situation in \secref{sss:underlying flagged}, we obtain a functor
$$\Cat^\CA\to \Cat^\Spc\simeq \Cat,$$
which we denote by $\oblv_{\CA\on{-Enr}}$ (or simply $\oblv_{\on{Enr}}$ if the choice of $\CA$ is clear).

\medskip

For $\CC\in \Cat^\CA$, we will refer to $\oblv_{\CA\on{-Enr}}(\CC)$ as the \emph{underlying category}.

\sssec{}

Let $\CC$ be an object of $\Cat^\CA(\CS)$. Unwinding, we obtain that there is a naturally defined map
\begin{equation} \label{e:flagging to pts}
\CS\to (\oblv_{\on{Enr}}(\CC))^{\on{grpd}}.
\end{equation}

Note, however, that this map is \emph{not necessarily} an equivalence. For example, in the example of
\secref{sss:ex enriched},
$$(\oblv_{\on{Enr}}(\CC))^{\on{grpd}}=B(\Maps_\CA(\uno_\CA,A)^\times),$$
where:

\begin{itemize}

\item $\Maps_\CA(\uno_\CA,A)$ is viewed as a monoid in $\Spc$;

\medskip

\item $(-)^\times$ denotes the group of invertible points in a given monoid;

\medskip

\item $B(-)$ denotes the classifying space of a given group.

\end{itemize}

Note also that the functor $\ind^{\CA_2}_{\CA_1}$ changes the underlying groupoid of the underlying category,
which is one of the reasons to consider flaggings.

\sssec{}

That said, there is a canonical fully faithful embedding
\begin{equation} \label{e:section flagging}
\Cat^\CA\to \Cat^\CA(\Spc),
\end{equation}
whose essential image consists of those pairs $(\CS,\CC)$ for which the map \eqref{e:flagging to pts}
is an isomorphism.

\medskip

In other words, the functor \eqref{e:section flagging} sends $\CC\in \Cat^\CA$ to its canonical flagging by
$(\oblv_{\on{Enr}}(\CC))^{\on{grpd}}$.

\medskip

Thus, in particular, we have a well-defined functor
$$\Hom^\CA_{\CC}: \CC^{\on{grpd}} \times \CC^{\on{grpd}} \to \CA, \quad (c_1,c_2)\mapsto \Hom^{\CA}_\CC(c_1,c_2)\in \CA.$$

\sssec{}

The above procedure defines $\Cat^\CA$ as an $(\infty,1)$-category, in particular, for
$\CC_1,\CC_2\in \Cat^\CA$ we obtain the \emph{space}
$$\Maps_{\Cat^\CA}(\CC_1,\CC_2)$$
of $\CA$-enriched functors $\CC_1\to \CC_2$.

\medskip

Now, there is also a naturally defined notion of \emph{natural transformation} between $\CA$-enriched functors,
which upgrades $\Maps_{\Cat^\CA}(\CC_1,\CC_2)$ to an  $(\infty,1)$-category
$$\Funct^\CA(\CC_1,\CC_2), \quad \Maps_{\Cat^\CA}(\CC_1,\CC_2)\simeq \left(\Funct^\CA(\CC_1,\CC_2)\right)^{\on{grpd}}.$$

Furthermore, the assignment
$$\CC_1,\CC_2\mapsto \Funct^\CA(\CC_1,\CC_2)$$
extends to a structure of $(\infty,2)$-category on $\Cat^\CA$, in the sense of \cite[Chapter 10, Sect. 2]{GR1}.

\medskip

In addition, for $F:\CA_1\to \CA_2$, the functor $\ind^{\CA_2}_{\CA_1}$ extends to a functor of $(\infty,2)$-categories
$$\Cat^{\CA_1}\to \Cat^{\CA_2}.$$

\ssec{A set-theoretic digression}

In this subsection, we explain the approach adopted in this paper for dealing
with set-theoretic issues.

\sssec{}

Given a cardinal $\mu$, we shall say that a category is of size $<\mu$ if in Joyal's model
of quasi-categories, it can be represented by a simplicial set of cardinality $<\mu$, see \cite[Sect. 5.4.1]{HTT}.

\medskip

We let $\Cat_{<\mu}$ the category of categories of size $<\mu$.

\sssec{}

We fix a sequence of inaccessible cardinals
$$\lambda_0 < \lambda_1 < \lambda_2.$$

\medskip

In particular, for $i=0,1$, categories generated under $\lambda_i$-small colimits by subcategories of size $<\lambda_i$ are of
size $<\lambda_{i+1}$.

\sssec{}

By the definition of inaccessibility, we have:

\medskip

\begin{itemize}

\item The category $\Sch$ of schemes of finite type over the ground field $k$ is of size $<\lambda_0$;

\medskip

\item For $X\in \Sch$, the category $\Shv(X)^{\on{constr}}$
(of \emph{constructible} $\ell$-adic sheaves on $X$) is of size $<\lambda_0$.

\end{itemize}

We will refer to categories of size $<\lambda_0$ (resp., $<\lambda_1$, $<\lambda_2$) as
\emph{small} (resp., \emph{large}, \emph{huge}).

\medskip

Note that the category $\Cat_{\on{small}}$ of small categories is itself large, while the
category $\Cat_{\on{large}}$ of large categories is huge.

\sssec{} \label{sss:pres 1}

Recall, following \cite[Sect. 5.5]{HTT} (see also \cite[Sect. 5.3.2]{HA}), that given a regular cardinal $\kappa<\lambda_0$, a category is said to be $\kappa$-presentable
if it contains small colimits, and is generated under small $\kappa$-filtered colimits by
a \emph{small} subcategory of $\kappa$-compact objects.

\medskip

Note that a $\kappa$-presentable category is automatically (not more than) large.

\sssec{} \label{sss:pres 1.5}

We let $\Cat^{\on{pres}}_\kappa$ denote the category, whose objets are $\kappa$-presentable categories,
and whose morphisms are functors that commute with all colimits and preserve $\kappa$-compact
objects.

\medskip

Note that $\Cat^{\on{pres}}_\kappa$ itself is $\kappa$-presentable; in particular,
it is (not more than) large.

\medskip

We will regard $\Cat^{\on{pres}}_\kappa$ as a symmetric monoidal category with respect to the Lurie tensor product.

\sssec{}

Note that for $\kappa<\kappa'$, the inclusion
$$\Cat^{\on{pres}}_\kappa\to \Cat^{\on{pres}}_{\kappa'}$$
is symmetric monoidal and preserves all small colimits and limits of size $<\kappa$.

\sssec{} \label{sss:pres 2}

We let $\Cat^{\on{pres}}$ denote the category, whose objects are $\kappa$-presentable categories
for some $\kappa<\lambda_0$, and whose morphisms are functors that preserve $\lambda_0$-small colimits.

\medskip

We have
$$\Cat^{\on{pres}}\simeq \underset{\kappa<\lambda_0}{\on{colim}}\, \Cat^{\on{pres}}_\kappa,$$
where the colimit is taken in $\Cat_{\on{large}}$.

\medskip

We will regard $\Cat^{\on{pres}}$ as a symmetric monoidal category with respect to the Lurie tensor product.

%\sssec{} \label{sss:huge}

%Replacing $\lambda_0\rightsquigarrow \lambda_1$, we obtain the category that we denote by
%$\Cat^{\on{pres}}_{\on{huge}}$.

\sssec{} \label{sss:DG Cat kappa} \label{sss:DG Cat}

Repeating the contents of Sects. \ref{sss:pres 1}-\ref{sss:pres 2}, we obtain:

\medskip

\begin{itemize}

\item The notion of $\kappa$-presentable DG category;

\medskip

\item The category $\DGCat_{\kappa}$ of $\kappa$-presentable DG categories, equipped
with a symmetric monoidal structure;

\medskip

\item The large category $$\DGCat\simeq \underset{\kappa<\lambda_0}{\on{colim}}\, \DGCat_{\kappa},$$
where the colimit is taken in $\Cat_{\on{large}}$.

\end{itemize}

\sssec{}

The category $\DGCat$ is the natural category of DG categories with which one works.

\ssec{Module categories--a recap}

\sssec{} \label{sss:free cat}

For $\CS \in \on{Spc}$ and $\bC\in \Cat^{\on{pres}}$, set
$$\CS \otimes \bC:= \underset{\CS}{\on{colim}}\, \bC.$$

For a point $s\in S$, we have a natural insertion functor
$$\on{ins}_s:\bC\to \CS \otimes \bC,$$
which admits a colimit-preserving right adjoint, to be denoted $\on{ev}_s$.

\medskip

The functors $\on{ev}_s$ assemble to a colimit-preserving functor
\begin{equation} \label{e:limit colimit}
\CS \otimes \bC \to \underset{\CS^{\on{op}}}{\on{lim}}\, \bC\simeq \underset{\CS}{\on{lim}}\, \bC\simeq \on{Funct}(\CS,\bC).
\end{equation}

The following is a particular case of \cite[Chapter 1, Proposition 2.5.7]{GR1}:

\begin{lem} \label{l:limit and colimit}
The functor \eqref{e:limit colimit} is an equivalence.
\end{lem}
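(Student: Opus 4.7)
The plan is to reduce the statement to a general principle about $\Cat^{\on{pres}}$: for a diagram $F: I \to \Cat^{\on{pres}}$ of colimit-preserving functors, the colimit $\underset{I}{\on{colim}}\, F(i)$ is computed as $\underset{I^{\on{op}}}{\on{lim}}\, F(i)$, where the limit is formed with respect to the \emph{right adjoints} of the transition functors (i.e., the equivalence $\on{Pr}^L \simeq (\on{Pr}^R)^{\on{op}}$, cf.\ \cite[Sect. 5.5.3]{HTT}). I would apply this principle to the constant diagram $\CS \to \Cat^{\on{pres}}$ with value $\bC$ and identity transition maps.

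For this constant diagram, every transition functor is the identity, so each right adjoint is again the identity. Thus the colimit–limit equivalence identifies
$$\CS \otimes \bC = \underset{\CS}{\on{colim}}\, \bC \;\simeq\; \underset{\CS^{\on{op}}}{\on{lim}}\, \bC \;\simeq\; \on{Funct}(\CS^{\on{op}}, \bC).$$
Since $\CS$ is a space, the canonical equivalence $\CS^{\on{op}} \simeq \CS$ gives $\on{Funct}(\CS^{\on{op}}, \bC) \simeq \on{Funct}(\CS, \bC)$, matching the right-hand side of \eqref{e:limit colimit}.

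To complete the proof it remains to identify this canonical equivalence with the specific assembly map~\eqref{e:limit colimit} from the statement. Both maps are characterized by the same universal property: composing with each evaluation functor $\on{ev}_s: \on{Funct}(\CS, \bC) \to \bC$ at a point $s \in \CS$ recovers the right adjoint $\on{ev}_s$ of the insertion $\on{ins}_s: \bC \to \underset{\CS}{\on{colim}}\, \bC$. Since the target $\on{Funct}(\CS, \bC) = \underset{\CS}{\on{lim}}\, \bC$ is the limit parametrized by these evaluations, the two maps must coincide.

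The main obstacle is the colimit–limit principle in $\Cat^{\on{pres}}$ itself, which is invoked in the paper via the citation to \cite[Chapter 1, Proposition 2.5.7]{GR1}. Once that general fact is available, everything above is formal: the only subtlety is the use of $\CS \simeq \CS^{\on{op}}$ (valid precisely because $\CS$ is a space), and the observation that identity transition maps are self-dual under the passage from $\on{Pr}^L$ to $\on{Pr}^R$.
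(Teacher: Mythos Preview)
Your proposal is correct and is essentially the same as the paper's approach: the paper simply states that the lemma is a particular case of \cite[Chapter 1, Proposition 2.5.7]{GR1}, which is precisely the colimit-as-limit-via-right-adjoints principle in $\Cat^{\on{pres}}$ that you invoke. You have spelled out the reduction (constant diagram, identity right adjoints, matching the specific map) in more detail than the paper does, but the underlying argument is identical.
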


\begin{cor} \label{c:limit and colimit}
For an object $\Phi \in \CS \otimes \bC$, we have a canonical isomorphism
$$\Phi\simeq \underset{s\in \CS}{\on{colim}}\, \on{ins}_s\circ \on{ev}_s(\Phi).$$
\end{cor}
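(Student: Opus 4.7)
The plan is to derive the corollary from Lemma \ref{l:limit and colimit} by transporting a standard density formula across that equivalence. First observe that, by construction, the equivalence $\CS\otimes\bC\simeq\on{Funct}(\CS,\bC)$ of Lemma \ref{l:limit and colimit} intertwines the evaluation functors $\on{ev}_s$ on the two sides; passing to left adjoints, it also identifies the two families $\on{ins}_s$. Consequently, the corollary is equivalent to the analogous assertion
$$F\;\simeq\;\underset{s\in\CS}{\on{colim}}\;\on{ins}_s(F(s))$$
for an arbitrary $F\in\on{Funct}(\CS,\bC)$, where now $\on{ins}_s$ is the left Kan extension along $\{s\}\hookrightarrow\CS$.

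To prove this, I would assemble the counits of the adjunctions $\on{ins}_s\dashv\on{ev}_s$ into a canonical map
$$\underset{s\in\CS}{\on{colim}}\;\on{ins}_s(F(s))\;\longrightarrow\; F,$$
and check that it is an equivalence pointwise. Since each $\on{ev}_{s_0}$ preserves colimits, and since as a Kan extension one has $\on{ev}_{s_0}\circ\on{ins}_s(c)\simeq\Maps_\CS(s,s_0)\otimes c$, pointwise evaluation reduces the claim to the identity
$$F(s_0)\;\simeq\;\underset{s\in\CS}{\on{colim}}\;\Maps_\CS(s,s_0)\otimes F(s),$$
which is the co-Yoneda (density) formula for a functor out of a space, valid for any presentable target.

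I do not expect a serious obstacle: modulo Lemma \ref{l:limit and colimit}, the argument is elementary adjunction bookkeeping plus a standard density statement for space-indexed colimits. The only mildly delicate point is the matching of the two families of $\on{ins}_s$ under the equivalence, but this is forced by the uniqueness of left adjoints once one has the intertwining of the $\on{ev}_s$. Alternatively, one can bypass pointwise verification entirely by noting that the natural transformation $\on{colim}_s\,\on{ins}_s\circ\on{ev}_s\to\on{id}$ is a map of colimit-preserving endofunctors of $\CS\otimes\bC$, and so it suffices to test it on the generators $\on{ins}_{s_0}(c)$, where the computation is immediate.
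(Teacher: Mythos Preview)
Your argument is correct. The paper does not supply a proof of this corollary; it is stated as an immediate consequence of Lemma~\ref{l:limit and colimit} and left to the reader. Your write-up is exactly the kind of unpacking one would expect: transport across the equivalence of Lemma~\ref{l:limit and colimit} (which by construction intertwines the $\on{ev}_s$, hence by adjunction the $\on{ins}_s$), followed by the standard density computation in $\on{Funct}(\CS,\bC)$. The pointwise verification via $\on{ev}_{s_0}\circ\on{ins}_s(c)\simeq \Maps_\CS(s,s_0)\otimes c$ and co-Yoneda is clean; your alternative endpoint (test on generators $\on{ins}_{s_0}(c)$) is equally valid and perhaps slightly quicker. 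There is nothing to correct.
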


\sssec{}  \label{sss:pres monoidal}

Let $\CA$ be an algebra object in $\Cat^{\on{pres}}$
(see \secref{sss:pres 2}). We let $\CA\mmod$ denote the category of $\CA$-modules in $\Cat^{\on{pres}}$.

\medskip

Given $\CM, \CN \in \CA\mathbf{\mbox{-}mod}$, we denote by
$$ \on{Funct}_{\CA}(\CM, \CN) $$
the category of $\CA$-linear functors.

\sssec{} \label{sss:easy duality}

Applying Sect. \ref{sss:free cat} to $\bC=\CA$, we obtain the object
$$\CS \otimes \CA\in \CA\mmod,$$
and an isomorphism
\begin{equation} \label{e:limit colimit V}
\CS \otimes \CA \simeq \on{Funct}(\CS,\CA)
\end{equation}
in $\CA\mmod$.

\sssec{}

For $s\in \CS$ denote $\delta_s\in \CS\otimes \CA\simeq \on{Funct}(S,\CA)$ the object
$$\on{ins}_s(\uno_\CA).$$

From Corollary \ref{c:limit and colimit}, we obtain a canonical isomorphism
\begin{equation} \label{e:object as a colimit of delta}
\Phi\simeq \underset{s\in S}{\on{colim}}\, \Phi(s)\otimes \delta_s, \quad \Phi\in \on{Funct}(S,\CA),
\end{equation}
where $(-)\otimes (-)$ stands for the action of $\CA$ on $\on{Funct}(S,\CA)$.

%\sssec{}
%
%Assume now that $\CA$ is \emph{symmetric} monoidal, i.e., that it is a \emph{commutative} algebra object in
%$\widehat{\Cat}^{\on{cocompl}}$.
%
%\medskip
%
%In this case, the category $\CA\mmod$ acquires a symmetric monoidal structure given by  the relative Lurie tensor product,
%to be denoted
%$$\CM,\CN\mapsto \CM\underset{\CA}\otimes \CN.$$

\sssec{} \label{sss:free self-dual}

Recall that given an object $\CM\in \CA\mmod$, its dual (if it exists) is an object $\CM^\vee\in \CA^{\on{rev}}\mmod$ such that
$$\on{Funct}_\CA(\CM,\CN) \simeq \CM^\vee\underset{\CA}\otimes \CN, \quad \CN\in \CA\mmod.$$
In this case we shall say that $\CM$ is dualizable\footnote{Here and elsewhere, for an algebra $A$ in a symmetric monoidal category,
$A^{\on{rev}}$ denotes the algebra obtained by reversing the multiplication.}.

\medskip

We claim that the object $\CS \otimes \CA\in \CA\mmod$ is dualizable with dual $\CS\otimes \CA^{\on{rev}}$.
Indeed, for any
$\CN\in \CA\mmod$ we have
\begin{multline*}
\on{Funct}_{\CA}(\CS \otimes \CA,\CN)=
\on{Funct}_{\CA}(\underset{\CS}{\on{colim}}\, \CA,\CN) \simeq
\underset{\CS}{\on{lim}}\, \on{Funct}_{\CA}(\CA,\CN) \simeq \\
\simeq \underset{\CS}{\on{lim}}\, \CN \overset{\text{Lemma \ref{l:limit and colimit}}}\simeq
\underset{\CS}{\on{colim}}\, \CN \simeq \underset{\CS}{\on{colim}}\,  (\CA\underset{\CA}\otimes \CN) \simeq
(\underset{\CS}{\on{colim}}\, \CA) \underset{\CA}\otimes \CN =
(\CS\otimes \CA^{\on{rev}}) \underset{\CA}\otimes \CN,
\end{multline*}
as required.

\medskip

In particular, we obtain that for $\CS_1,\CS_2$ we have a canonical equivalence of categories.
\begin{equation} \label{e:duality funct}
\on{Funct}_{\CA}(\CS_1 \otimes \CA,\CS_2 \otimes \CA)\simeq
\on{Funct}_{\CA^{\on{rev}}}(\CS_2 \otimes \CA^{\on{rev}},\CS_1 \otimes \CA^{\on{rev}}).
\end{equation}

\begin{rem}

Note, however, that when $\CS_1=\CS_2=\CS$, both sides of \eqref{e:duality funct} have natural
monoidal structures, while \eqref{e:duality funct} reverses them.

\end{rem}

\ssec{Enriched vs tensored} \label{ss:enr vs ten}

\sssec{}

In this subsection we let $\CA$ be as in \secref{sss:pres monoidal}. We will distinguish two classes
of $\CA$-enriched categories.

\medskip

We will keep the notation $\Cat^\CA$ for the category of $\CA$-enriched categories that can be flagged
by a \emph{small} space. We will denote by $\Cat_{\on{large}}^\CA$ the category of $\CA$-enriched categories
that can be flagged by a large space.

\medskip

We have the obvious inclusion
$$\Cat^\CA\subset \Cat_{\on{large}}^\CA.$$

\sssec{}

Since $\CA$ was assumed presentable, for $\CM\in \CA\mmod$ and a pair of objects $m_1,m_2\in \CM$, we have a
well-defined \emph{internal-relative to} $\CA$ Hom
$$\uHom_{\CM,\CA}(m_1,m_2)\in \CA, \quad \Maps_\CA(v,\uHom_{\CM,\CA}(m_1,m_2)):=\Maps_\CM(v\otimes m_1,m_2).$$

This allows us to view $\CM$ as \emph{enriched} over $\CA$:
$$\Hom^\CA_\CM(m_1,m_2):=\uHom_{\CM,\CA}(m_1,m_2),$$
see \cite[Corollary 7.4.9]{GepHaug}.

\medskip

This way we obtain a functor
\begin{equation} \label{e:tensored to enriched}
\ModtoEnr:\CA\mmod\to \Cat^\CA_{\on{large}}.
\end{equation}

\sssec{}

In particular, for $\CC\in \Cat^\CA_{\on{large}}$ and $\CM\in \CA\mmod$,
it makes sense to talk about the category
$$\on{Funct}^\CA(\CC,\CM):=\on{Funct}^\CA(\CC,\ModtoEnr(\CM))$$
of $\CA$-functors.

\medskip

By definition, the datum of an object $\Phi\in \on{Funct}^\CA(\CC,\CM)$ consist of a functor
$$\Phi:\oblv_{\CA\on{-Enr}}(\CC)\to \CM,$$
equipped with a compatible family of morphisms
$$\Hom^{\CA}_\CC(c_1,c_2)\otimes \Phi(c_1)\to \Phi(c_2),$$
where $(-)\otimes (-)$ refers to  the action of $\CA$ on $\CM$.

\begin{rem}
In \secref{ss:quiv} we will explain a different point of view on $\on{Funct}^\CA(\CC,\CM)$,
developed in \cite{Hinich-Yoneda,Hinich-universal}.
\end{rem}

\ssec{Enriched presheaves}

\sssec{}

We now claim that the functor $\ModtoEnr$ admits a (partially defined) left adjoint, defined on $\Cat^\CA\subset \Cat^\CA_{\on{large}}$,
to be denoted
\begin{equation} \label{e:enriched presheaves}
\CC\mapsto \bP^\CA(\CC).
\end{equation}

\medskip

Explicitly,
$$\bP^\CA(\CC)=\on{Funct}^{\CA^{\on{rev}}}(\CC^{\on{op}},\CA^{\on{rev}}),$$
where:

\begin{itemize}

\item $\CA^{\on{rev}}$ denotes the monoidal category obtained from $\CA$ by reversing the monoidal operation;

\medskip

\item $\CC^{\on{op}}$ is regarded as enriched over $\CA^{\on{rev}}$;

\medskip

\item $\on{Funct}^{\CA^{\on{rev}}}(\CC^{\on{op}},\CA^{\on{rev}})$ is regarded as an $\CA$-module category,
via the $\CA$-action on $\CA^{\on{rev}}$ ``on the right" (which commutes with the natural left action of
$\CA^{\on{rev}}$ on itself).

\end{itemize}

\begin{rem}

The assumption that $\CC\in \Cat^\CA$ is needed in order to make $\bP^\CA(\CC)$
a large (as opposed to huge) category.

\end{rem}

%\begin{rem} \label{r:huge mod}
%
%Note that by adding large colimits, we can regard $\CA$ as an algebra in $\Cat^{\on{pres}}_{\on{huge}}$, and let
%$\CA\mmod_{\on{huge}}$ denote the corresponding category of modules.
%
%\medskip
%
%Consider also the corresponding category $\Cat^\CA_{\on{huge}}$, so that $\ModtoEnr$
%is defined as a functor
%$$\CA\mod_{\on{huge}}\to \Cat^\CA_{\on{huge}}.$$
%
%\medskip
%
%By a similar token, the operation \eqref{e:enriched presheaves} is defined as a functor
%$$\Cat^\CA_{\on{large}}\to \CA\mmod_{\on{huge}}.$$
%
%\end{rem}

\sssec{}

Concretely, an object of $\bP^\CA(\CC)$ is a functor
$$\Phi:\CC^{\on{op}}\to \CA,$$
equipped with a compatible data of maps
$$\Phi(c_2)\otimes \Hom^{\CA}_\CC(c_1,c_2)\to \Phi(c_1), \quad c_1,c_2\in \CC.$$

\sssec{}

The unit of the $(\bP^\CA(-),\ModtoEnr)$-adjunction is the $\CA$-functor
\begin{equation} \label{e:enriched Yoneda}
\CC\to \on{Funct}^{\CA^{\on{rev}}}(\CC^{\on{op}},\CA^{\on{rev}}),
\end{equation}
that sends
$$c\mapsto \Hom^{\CA}_\CC(-,c).$$

We will refer to \eqref{e:enriched Yoneda} as the \emph{$\CA$-enriched Yoneda functor}, and denote it by
$$\on{Yon}^\CA_\CC\in \on{Funct}^\CA(\CC,\bP^\CA(\CC)).$$

\sssec{}

In Sect. \ref{ss:quiv} we will give a slightly different description of the category $\bP^\CA(\CC)$, from
which we will deduce (see Sect. \ref{sss:proof prshv}):

\begin{prop} \label{p:env enh}
For $\CC\in \Cat^\CA$ and $\CM \in \CA\mmod$, precomposition with $\on{Yon}^\CA_\CC$ gives rise to an equivalence
$$\on{Funct}_{\CA}(\bP^\CA(\CC), \CM) \to \on{Funct}^{\CA}(\CC, \CM).$$
\end{prop}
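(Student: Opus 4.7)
The plan is to establish the universal property in two phases: first obtain an equivalence of mapping spaces from the $(\bP^\CA, \ModtoEnr)$-adjunction, then upgrade to an equivalence of categories by checking compatibility with $\Cat$-cotensors on both sides.

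First I would unwind the definitions. By construction $\bP^\CA(\CC)=\on{Funct}^{\CA^{\on{rev}}}(\CC^{\on{op}},\CA^{\on{rev}})$ carries the residual action of $\CA$ ``on the right,'' making it an object of $\CA\mmod$. The adjunction $(\bP^\CA,\ModtoEnr)$ introduced around \eqref{e:enriched presheaves} yields, for every $\CM\in\CA\mmod$, a natural equivalence of groupoids
$$\on{Maps}_{\CA\mmod}(\bP^\CA(\CC),\CM)\simeq \on{Maps}_{\Cat^\CA_{\on{large}}}(\CC,\ModtoEnr(\CM))=\bigl(\on{Funct}^\CA(\CC,\CM)\bigr)^{\on{grpd}},$$
and by the construction of the unit, the forward map is precomposition with $\on{Yon}^\CA_\CC$. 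So the proposition reduces to promoting this equivalence of underlying groupoids to an equivalence of $(\infty,1)$-categories.

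Next I would compare the two $\Cat$-enrichments corepresentably. The category $\CA\mmod$ is cotensored over $\Cat$: for $K\in\Cat$ and $\CM\in\CA\mmod$ there is a cotensor $\CM^K\in\CA\mmod$ with $\on{Maps}_{\CA\mmod}(\CN,\CM^K)\simeq \on{Maps}_{\Cat}(K,\on{Funct}_\CA(\CN,\CM))$. Similarly, $\Cat^\CA$ is cotensored over $\Cat$, with $\on{Maps}_{\Cat^\CA}(\CC,\CD^K)\simeq \on{Maps}_{\Cat}(K,\on{Funct}^\CA(\CC,\CD))$. The key compatibility is that $\ModtoEnr$ preserves these $\Cat$-cotensors: this follows because $\uHom_{\CM^K,\CA}(m_1,m_2)$ is computed as the limit over $K$ of the internal homs in $\CM$, exactly matching $(\ModtoEnr(\CM))^K$. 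Given this, applying $\on{Maps}_{\Cat}(K,-)$ to both sides of the desired equivalence and using the adjunction at $\CM^K$ produces naturally isomorphic functors $\Cat^{\on{op}}\to\Spc$, whence the two $\Cat$-enriched objects are equivalent.

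The hard part will be verifying that the abstract $(\bP^\CA,\ModtoEnr)$-adjunction is implemented concretely by precomposition with $\on{Yon}^\CA_\CC$, and that the cotensor compatibility really holds in the flagged/localized model of $\Cat^\CA$ used in \secref{ss:flagged}. For both points I would rely on the ``quiver'' reformulation of $\bP^\CA(\CC)$ promised in \secref{ss:quiv} (and on the Yoneda lemma \corref{c:enriched Yoneda}): in that framework $\bP^\CA(\CC)$ is by definition characterized as the free $\CA$-module cocompletion of $\CC$ with unit $\on{Yon}^\CA_\CC$, and the required cotensor compatibility is transparent because both sides of the equivalence reduce to the same space of ``$\CA$-quiver maps'' $\CC\to \ModtoEnr(\CM)$. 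Once that identification is in hand, the proposition becomes the enriched analogue of the standard fact that $\on{Funct}(\CC^{\on{op}},\Spc)$ is the free cocompletion of $\CC\in\Cat$, with the same proof.
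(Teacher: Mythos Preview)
Your first step is circular. Immediately after stating the proposition the paper remarks that ``the assertion of Proposition~\ref{p:env enh} is equivalent to the statement that $\on{Yon}^\CA_\CC$ indeed makes $\bP^\CA(-)$ a left adjoint of $\ModtoEnr$ on $\Cat^\CA$.'' At this point in the text, $\bP^\CA(\CC)$ has only been \emph{defined} by the explicit formula $\on{Funct}^{\CA^{\on{rev}}}(\CC^{\on{op}},\CA^{\on{rev}})$, and $\on{Yon}^\CA_\CC$ has been written down as a \emph{candidate} unit; no adjunction has been established. So when you write ``the adjunction $(\bP^\CA,\ModtoEnr)$ \ldots\ yields \ldots\ a natural equivalence of groupoids,'' you are invoking exactly the (groupoid level of the) statement you are trying to prove. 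Your last paragraph concedes this by deferring to the quiver model, but then also mischaracterizes that model: in \secref{ss:quiv}, $\bP^\CA(\CC)$ is not ``by definition characterized as the free $\CA$-module cocompletion''---it is identified concretely as $R_\CC^{\on{rev}}\mod(\CS\otimes\CA)$, and the free-cocompletion property is what one then has to extract.

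The paper's proof (\secref{sss:proof prshv}) does this in one step and skips your cotensor upgrade entirely. Via the quiver identifications \eqref{e:funct as mod} and \eqref{e:P via quiv}, precomposition with $\on{Yon}^\CA_\CC$ becomes precomposition with the induction functor $\ind_{R_\CC^{\on{rev}}}:\CS\otimes\CA\to R_\CC^{\on{rev}}\mod(\CS\otimes\CA)$, and the map in question reads
$$\on{Funct}_{\CA}\bigl(R_\CC^{\on{rev}}\mod(\CS\otimes \CA),\CM\bigr)\longrightarrow R_\CC\mod\bigl(\on{Funct}_{\CA}(\CS\otimes \CA,\CM)\bigr).$$
Both sides are monadic over $\on{Funct}_{\CA}(\CS\otimes \CA,\CM)$ and the functor induces an isomorphism of monads, so it is an equivalence of \emph{categories} outright. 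Your two-phase plan (groupoids first, then bootstrap via $\Cat$-cotensors) is not wrong in spirit, but once you are in the quiver model there is no need for it: the monadicity argument delivers the full categorical equivalence directly, and is shorter than verifying the cotensor compatibilities you would need.
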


\medskip

Note that the assertion of Proposition \ref{p:env enh}
is equivalent to the statement that $\on{Yon}^\CA_\CC$ indeed makes $\bP^\CA(-)$ a left adjoint
of $\ModtoEnr$ on $\Cat^\CA$.

\sssec{}\label{sss:change of enrichment tensor}

Let $\CA \to \CA'$ be a monoidal functor. From Proposition \ref{p:env enh} we obtain a
canonical identification:
$$\mathbf{P}^{\CA'}(\ind_{\CA}^{\CA'} (\CC)) \simeq \CA'\underset{\CA}\otimes \mathbf{P}^{\CA}(\CC).$$

\sssec{}

Recall that a functor $\Phi:\CM\to \CN$ between $\infty$-categories is said to be \emph{1-full} if for every
$m_1,m_2\in \CC$, the map
\begin{equation} \label{e:enriched functor morphs}
\Maps_\CM(m_1,m_2)\to \Maps_{\CN}(\Phi(m_1),\Phi(m_2))
\end{equation}
is fully faithful (i.e., is an embedding of a union of connected components).

\medskip

From Proposition \ref{p:env enh} we deduce the following fact, which we will use later on:

\begin{prop}\label{p:1-full functors}
Let $\CC$ be a  $\CA$-enriched category, and let $\CM \to \CN$ be a 1-full functor of $\CA$-module categories.  Then:

\medskip

\noindent{\em(a)} The induced functor
$$\on{Funct}^{\CA}(\CC, \CM) \to \on{Funct}^{\CA}(\CC, \CN) $$
is 1-full and the essential image consists of objects $\Phi \in \on{Funct}^{\CA}(\CC, \CN)$ such that:

\smallskip

\noindent{\em(i)} $\Phi(x) \in \CM$ for all $x\in \CC$;

\smallskip

\noindent{\em(ii)} For every $c_1, c_2 \in \CC$, the morphism
$$\Hom^{\CA}_\CC(c_1,c_2) \otimes \Phi(c_1) \to \Phi(c_2) $$
given by \eqref{e:enriched functor morphs} lies in $\CM$.

\medskip

\medskip

\noindent{\em(b)} A map $\Phi' \to \Phi''$ between two such functors lies in $\on{Funct}^{\CA}(\CC, \CM)$ iff $\Phi'(c) \to \Phi''(c)$
lies in $\CM$ for all $c\in \CC$.
\end{prop}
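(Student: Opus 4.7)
The plan is to reduce to Proposition~\ref{p:env enh}, which reinterprets both $\on{Funct}^{\CA}(\CC, \CM)$ and $\on{Funct}^{\CA}(\CC, \CN)$ as categories of $\CA$-linear colimit-preserving functors out of $\bP^{\CA}(\CC)$; under this equivalence, the functor in question becomes postcomposition with $\CM \to \CN$. A key input is that $\bP^{\CA}(\CC)$ is generated under $\CA$-linear colimits by the essential image of the enriched Yoneda embedding $\on{Yon}^{\CA}_{\CC}: \CC \to \bP^{\CA}(\CC)$. Consequently, an $\CA$-linear functor out of $\bP^{\CA}(\CC)$ is completely controlled by its restriction to representables, together with the $\CA$-enrichment compatibility data. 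Since $\CM \to \CN$ preserves colimits and the $\CA$-action, its essential image in $\CN$ is closed under both operations, so whether an $\CA$-linear functor from $\bP^{\CA}(\CC)$ to $\CN$ factors through $\CM$ can be detected on representables.

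For part (a), 1-fullness of the induced functor follows from the observation that a natural transformation between two $\CA$-linear functors from $\bP^{\CA}(\CC)$ into $\CM$ (respectively $\CN$) is determined by its components on representables, which inject as a union of connected components of the corresponding space for $\CN$ by 1-fullness of $\CM \to \CN$. For the essential image description, let $\Phi \in \on{Funct}^{\CA}(\CC, \CN)$ satisfy (i) and (ii). Condition (i) produces, for each $c \in \CC$, an essentially unique lift $\tilde\Phi(c) \in \CM$ of $\Phi(c)$ (uniqueness via 1-fullness). Condition (ii) then lifts each structure morphism $\Hom^{\CA}_{\CC}(c_1, c_2) \otimes \tilde\Phi(c_1) \to \tilde\Phi(c_2)$ uniquely to $\CM$. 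Higher coherences lift automatically by iterated application of 1-fullness, yielding the desired $\tilde\Phi \in \on{Funct}^{\CA}(\CC, \CM)$.

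For part (b), a morphism $\Phi' \to \Phi''$ in $\on{Funct}^{\CA}(\CC, \CN)$ is determined by its pointwise components $\Phi'(c) \to \Phi''(c)$ in $\CN$ together with higher coherences expressing compatibility with the enriched structure. If each component lies in $\CM$, the remaining coherence data is forced by 1-fullness to lift uniquely, yielding a morphism in $\on{Funct}^{\CA}(\CC, \CM)$; the converse is immediate.

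The main obstacle is the careful bookkeeping of $(\infty,1)$-categorical coherences: while each individual lift at the level of objects and structure maps is essentially unique by 1-fullness, organizing these into a lift of the entire enriched functor (or natural transformation) with all higher coherences requires some care. The substantive input is Proposition~\ref{p:env enh} combined with colimit generation of $\bP^{\CA}(\CC)$ by representables and the stability of 1-fullness under cotensors; once these are in place, conditions (i) and (ii) on representables exhaust the requirements for a lift.
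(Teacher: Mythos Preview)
Your proposal is correct and follows the same route the paper indicates: the paper does not give a detailed proof but simply states that the proposition is deduced from Proposition~\ref{p:env enh}, and your argument is precisely the natural unwinding of that deduction via the identification $\on{Funct}^{\CA}(\CC,-) \simeq \on{Funct}_{\CA}(\bP^{\CA}(\CC),-)$ together with the fact that $\bP^{\CA}(\CC)$ is generated under $\CA$-weighted colimits by representables.
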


\ssec{The enriched Yoneda lemma}\label{ss:enriched Yoneda}

Given $\CC\in \Cat^\CA$, we continue to study the $\CA$-functor
$$\on{Yon}^\CA_\CC:\CC\to \bP^\CA(\CC).$$

\sssec{}

Note that for any $c,c'\in \CC$, we have a tautoloigical identification
\begin{equation} \label{e:precomposition with Yon}
\Hom^{\CA}_\CC(c',c) \simeq \on{Yon}^\CA_\CC(c)(c'),
\end{equation}
where:

\begin{itemize}

\item $\on{Yon}^\CA_\CC(c)\in  \bP^\CA(\CC)$ is viewed as an $\CA$-functor $\CC^{\on{op}}\to \CA$;

\smallskip

\item $\on{Yon}^\CA_\CC(c)(c')$ denotes the value of the above functor on $c'$.

\end{itemize}

In particular, we obtain a canonical map
\begin{equation} \label{e:precomposition with Yon id}
\uno_\CA \to \on{Yon}^\CA_\CC(c)(c),
\end{equation}

\sssec{}

Note also that for any $c\in \CC$, evaluation on $c$ gives rise to an $\CA$-linear functor
$$\bP^\CA(\CC):=\on{Funct}^{\CA^{\on{rev}}}(\CC^{\on{op}},\CA^{\on{rev}}) \to \CA,$$
to be denoted $\on{ev}_c$.

\medskip

As with the usual Yoneda embedding, the functor $\on{Yon}^\CA_\CC$ has the following property:

\begin{lem} \label{l:enriched Yoneda}
For $c\in \CC$ and $\Phi\in \bP^\CA(\CC):=\on{Funct}^{\CA^{\on{rev}}}(\CC^{\on{op}},\CA^{\on{rev}})$, the map
$$\uHom_{\bP^\CA(\CC),\CA}(\on{Yon}^\CA_\CC(c),\Phi) \overset{\on{ev}_c}\longrightarrow
\uHom_\CA(\on{Yon}^\CA_\CC(c)(c),\Phi(c)) \overset{\text{\eqref{e:precomposition with Yon id}}}\longrightarrow \Phi(c)$$
is an isomorphism.
\end{lem}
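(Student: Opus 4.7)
The plan is to reduce the claim to an equivalence of mapping spaces in $\CA$ by the classical Yoneda lemma, and then construct an explicit inverse using the enriched-functor structure of $\Phi$.

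First, by the defining property of the internal-relative-to-$\CA$ Hom, for every $a \in \CA$ we have a natural equivalence
$$\Maps_{\CA}\bigl(a, \uHom_{\bP^\CA(\CC),\CA}(\on{Yon}^\CA_\CC(c), \Phi)\bigr) \simeq \Maps_{\bP^\CA(\CC)}(a \otimes \on{Yon}^\CA_\CC(c), \Phi),$$
so by Yoneda in $\CA$ it suffices to produce, naturally in $a \in \CA$, an equivalence of spaces
$$\Maps_{\bP^\CA(\CC)}(a \otimes \on{Yon}^\CA_\CC(c), \Phi) \simeq \Maps_{\CA}(a, \Phi(c))$$
whose direct direction is $\on{ev}_c$ followed by restriction along the unit map $\uno_\CA \to \on{Yon}^\CA_\CC(c)(c) = \Hom^\CA_\CC(c, c)$ of \eqref{e:precomposition with Yon id}.

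For the inverse, given a morphism $f: a \to \Phi(c)$ in $\CA$, I would build a morphism in $\bP^\CA(\CC) = \on{Funct}^{\CA^{\on{rev}}}(\CC^{\on{op}}, \CA^{\on{rev}})$ whose component at $c' \in \CC$ is the composite
$$a \otimes \Hom^\CA_\CC(c', c) \xrightarrow{f \otimes \on{id}} \Phi(c) \otimes \Hom^\CA_\CC(c', c) \longrightarrow \Phi(c'),$$
where the second arrow is the structure map of $\Phi$ as an $\CA^{\on{rev}}$-enriched functor $\CC^{\on{op}} \to \CA^{\on{rev}}$. The two roundtrips should then reduce to the unit and associativity axioms of $\Phi$: starting from $f$, the direct direction recovers $f$ via the unit axiom at $c$; starting from an enriched natural transformation $\eta$, the associativity of $\eta$ against $\Hom^\CA_\CC(-, c)$ combined with the unit axiom recovers each component $\eta_{c'}$ from $\eta_c$.

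The main obstacle is that this construction is only specified pointwise, whereas a morphism in $\bP^\CA(\CC)$ is a homotopy-coherent natural transformation of $\CA^{\on{rev}}$-enriched functors, requiring coherent compatibilities over all tuples of objects of $\CC$; a direct combinatorial verification is impractical in the $\infty$-categorical setting. Instead, I would invoke the enriched Yoneda lemma in the form proved in \cite{Hinich-Yoneda} (see also \cite{Heine}), where the appropriate Segal-style formulation of enriched $\infty$-categories is set up precisely to make such constructions homotopy-coherent by design; the present statement is then a direct translation into the language used in this paper.
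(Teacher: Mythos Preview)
Your proposal is correct and mirrors the paper's own argument: the paper's proof is a single sentence, ``The proof is the same as for the usual Yoneda lemma: one constructs explicitly the inverse map,'' and your inverse (built from the structure maps $\Phi(c)\otimes \Hom^\CA_\CC(c',c)\to \Phi(c')$) is exactly that construction. You are more explicit than the paper about the homotopy-coherence issue and about deferring to \cite{Hinich-Yoneda} and \cite{Heine} for the rigorous $\infty$-categorical implementation, but the underlying idea is identical.
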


The proof is the same as for the usual Yoneda lemma: one constructs explicitly the inverse map.

\sssec{}

As a particular case of Lemma \ref{l:enriched Yoneda}, we obtain:

\begin{cor} \label{c:enriched Yoneda}
For $c,c'\in \CC$, the map
\begin{multline*}
\uHom_{\bP^\CA(\CC),\CA}(\on{Yon}^\CA_\CC(c),\on{Yon}^\CA_\CC(c')) \overset{\on{ev}_c}\longrightarrow \\
\to \uHom_\CA(\on{Yon}^\CA_\CC(c)(c),\on{Yon}^\CA_\CC(c')(c)) \overset{\text{\eqref{e:precomposition with Yon id}}}
\longrightarrow \on{Yon}^\CA_\CC(c')(c)\simeq \Hom^{\CA}_\CC(c,c')
\end{multline*}
is an isomorphism.
\end{cor}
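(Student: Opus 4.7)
The statement is labeled as a corollary because it is genuinely a direct specialization of the enriched Yoneda Lemma \ref{l:enriched Yoneda}.

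The plan is to apply Lemma \ref{l:enriched Yoneda} to the object $\Phi := \on{Yon}^\CA_\CC(c') \in \bP^\CA(\CC)$. The conclusion of the lemma then reads: the composite
\[
\uHom_{\bP^\CA(\CC),\CA}(\on{Yon}^\CA_\CC(c),\on{Yon}^\CA_\CC(c')) \overset{\on{ev}_c}\longrightarrow
\uHom_\CA(\on{Yon}^\CA_\CC(c)(c),\on{Yon}^\CA_\CC(c')(c)) \longrightarrow \on{Yon}^\CA_\CC(c')(c)
\]
is an isomorphism, where the second map is precomposition with the unit \eqref{e:precomposition with Yon id}. To recover the assertion of the corollary, one then uses the tautological identification
\[
\on{Yon}^\CA_\CC(c')(c) \simeq \Hom^{\CA}_\CC(c,c')
\]
from \eqref{e:precomposition with Yon}.

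In other words, there is essentially no work beyond unwinding definitions: the corollary is the special case $\Phi = \on{Yon}^\CA_\CC(c')$ of the lemma. The only real content lies in Lemma \ref{l:enriched Yoneda} itself, whose proof (as noted in the text) proceeds by writing down an explicit inverse map, mirroring the classical unenriched Yoneda argument. Since we are permitted to assume Lemma \ref{l:enriched Yoneda}, no substantive obstacle remains; the proof is a one-line specialization followed by invoking the identification \eqref{e:precomposition with Yon}.
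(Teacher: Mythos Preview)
Your proposal is correct and matches the paper's approach exactly: the paper introduces the corollary with the phrase ``As a particular case of Lemma~\ref{l:enriched Yoneda}, we obtain'' and gives no further argument, so the intended proof is precisely the specialization $\Phi = \on{Yon}^\CA_\CC(c')$ followed by the identification \eqref{e:precomposition with Yon}.
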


\ssec{More on the \texorpdfstring{$(\bP^\CA(-),\ModtoEnr)$}{adj}-adjunction}

This subsection is optional in that its contents are not used elsewhere in the paper.

\sssec{} \label{sss:counit presheaves}

Fix a regular cardinal $\kappa<\lambda_0$, set
$$\CA\mmod_{\kappa}:=\CA\mmod\underset{\Cat^{\on{pres}}}\times \Cat^{\on{pres}}_\kappa.$$

We define the functor
$$\ModtoEnr_\kappa:\CA\mmod_{\kappa}\to \Cat^\CA$$
by sending $\CM\in \CA\mmod_{\kappa}$ to the (automatically \emph{small}) category of $\kappa$-compact
objects $\CM^c_\kappa\subset \CM$, equipped with the $\CA$-enrichment inherited from that on $\ModtoEnr(\CM)$.

\sssec{}

Note that the functor $\bP^\CA(-)$ sends $\Cat^\CA$ to $\CA\mmod_{\kappa}$ for any $\kappa$ such that
$\CA\in \Cat^{\on{pres}}_\kappa$.

\medskip

We claim that we have an adjunction
$$\bP^\CA(-):\Cat^\CA\rightleftarrows \CA\mmod_{\kappa}:\ModtoEnr_\kappa.$$

\sssec{}

The unit of the adjunction is given by the enriched Yoneda functor
\begin{equation} \label{e:unit kappa}
\CC\to \bP^\CA(\CC),
\end{equation}
whose image is easily seen to lie in the subcategory of $\kappa$-compact objects.

\medskip

Let us describe the counit of the adjunction, which is by definition a map
$$\bP^\CA(\ModtoEnr(\CM)_\kappa)\to \CM.$$

\sssec{}

For a given $\CM$, we start with the enriched Yoneda functor
$$\on{Yon}^\CA_{\CM^c_\kappa}:\CM_\kappa^c \to \bP^\CA(\ModtoEnr(\CM)_\kappa),$$
and we let
$$'\!\on{Yon}^\CA_{\CM}:\CM\to \bP^\CA(\ModtoEnr(\CM)_\kappa)$$
be its extension to a colimit-preserving functor.

\medskip

In Sect. \ref{sss:left strict} we will show that the functor $'\on{Yon}^\CA_\CM$ admits a \emph{left} adjoint:
$$({}'\!\on{Yon}^\CA_\CM)^L:\bP^\CA(\ModtoEnr(\CM)_\kappa)\to \CM.$$

The functor $'\!\on{Yon}^\CA_\CM$ is naturally right-lax compatible with the action of $\CA$. Hence,
the functor $({}'\!\on{Yon}^\CA_\CM)^L$ acquires a structure of left-lax compatibility with the action of $\CA$.
However, in Sect. \ref{sss:left strict}, we will see that this left-lax compatibility is actually
strict, i.e., $({}'\!\on{Yon}^\CA_\CM)^L$ is a morphism in $\CA\mmod_\kappa$.

\sssec{}

We claim the above natural transformations indeed provide a unit and counit of the adjunction. Moreover,
for $\CC\in \Cat^\CA$ and $\CM\in \CA\mmod_\kappa$, the resulting isomorphism
$$\on{Funct}^\CA(\CC,\ModtoEnr(\CM)_\kappa) \simeq \on{Funct}_{\CA\mmod_\kappa}(\bP^\CA(\CC),\CM)$$
fits into the commutative diagram
\begin{equation} \label{e:kappa enr}
\CD
\on{Funct}_{\CA\mmod_\kappa}(\bP^\CA(\CC),\CM)  @>{\sim}>> \on{Funct}^\CA(\CC,\ModtoEnr(\CM)_\kappa)  \\
@VVV @VVV \\
\on{Funct}_{\CA}(\bP^\CA(\CC),\CM) @>{\text{\propref{p:env enh}}}>{\sim}> \on{Funct}^\CA(\CC,\ModtoEnr(\CM)).
\endCD
\end{equation}

\sssec{}

First, we claim that the composition
\begin{multline*}
\on{Funct}_{\CA\mmod_\kappa}(\bP^\CA(\CC),\CM)
\overset{(-)\circ \text{\eqref{e:unit kappa}}}\longrightarrow \on{Funct}^\CA(\CC,\ModtoEnr(\CM)_\kappa) \hookrightarrow \\
\to \on{Funct}^\CA(\CC,\ModtoEnr(\CM))
\end{multline*}
 equals
 $$\on{Funct}_{\CA\mmod_\kappa}(\bP^\CA(\CC),\CM)  \hookrightarrow \on{Funct}_{\CA}(\bP^\CA(\CC),\CM)
 \overset{\text{\propref{p:env enh}}}\longrightarrow \on{Funct}^\CA(\CC,\ModtoEnr(\CM)).$$

To check this, it is sufficient to consider the universal case, i.e., when we take $\CM:=\bP^\CA(\CC)$, and we start
with the identity functor. However, in this case, the assertion becomes tautological.

\sssec{}

It remains to show that for $\CC\in \Cat^\CA$ and an $\CA$-functor $\CC\to \CM^c_\kappa$, i.e.,
an $\CA$-enriched functor $\Phi:\CC\to \ModtoEnr(\CM)_\kappa$, the composition
$$\bP^\CA(\CC)\overset{\bP^\CA(\Phi)}\to \bP^\CA(\ModtoEnr(\CM)_\kappa)\overset{({}'\!\on{Yon}^\CA_\CM)^L}\to \CM$$
corresponds under the isomorphism of \propref{p:env enh} to the initial functor $\Phi$, followed by the embedding $\CM^c_\kappa\hookrightarrow \CM$.

\medskip

I.e., we have to show that the composite
$$\CC\overset{\on{Yon}^\CA_\CC}\to \bP^\CA(\CC)\overset{\bP^\CA(\Phi)}\to \bP^\CA(\ModtoEnr(\CM)_\kappa)\overset{({}'\!\on{Yon}^\CA_\CM)^L}\to \CM,$$
viewed as an $\CA$-functor, is isomorphic to $\Phi$, followed by the embedding $\CM^c_\kappa\hookrightarrow \CM$.

\medskip

By the functoriality of $\on{Yon}^\CA_{-}$, the above composition identifies with
$$\CC\overset{\Phi}\to \ModtoEnr(\CM)_\kappa
\overset{\on{Yon}^\CA_{\CM^c_\kappa}}\to \bP^\CA(\ModtoEnr(\CM)_\kappa)\overset{({}'\!\on{Yon}^\CA_\CM)^L}\to \CM.$$

Thus, it suffices to show that the composition
$$\CM^c_\kappa=\ModtoEnr(\CM)_\kappa \overset{\on{Yon}^\CA_{\CM^c_\kappa}}\to \bP^\CA(\ModtoEnr(\CM)_\kappa)\overset{({}'\!\on{Yon}^\CA_\CM)^L}\to \CM,$$
viewed as an $\CA$-functor, is isomorphic to the embedding $\CM^c_\kappa\hookrightarrow \CM$.

\medskip

However, this follows from the fully faithfulness of $\on{Yon}^\CA_\CM$, see Corollary \ref{c:enriched Yoneda}.

\ssec{Enriched categories via quivers} \label{ss:quiv}

In this subsection we will outline an approach to enriched categories developed in \cite{Hinich-Yoneda,Hinich-universal}.
It gives a complementary point of view on many notions discussed earlier in this section.

\medskip

A comparison between the two approaches can be found in \cite{Heine}.

\sssec{}

Let $\CA$ be an associative algebra in $\Cat^{\on{pres}}$. Let $\CS$ be an object of
$\Spc$. Consider the object
$$\CS\otimes \CA\in \CA\mmod$$
and the monoidal category
$$\Quiv_\CS(\CA):=\on{Funct}_{\CA}(\CS\otimes \CA,\CS\otimes \CA)^{\on{rev}}.$$

\medskip

By Sect. \ref{sss:free self-dual}, we can identify $\Quiv_\CS(\CA)$ as a plain category with
$$\on{Funct}(\CS\times \CS,\CA).$$

In terms of this description, the monoidal structure on $\Quiv_\CS(\CA)$ corresponds to the
\emph{convolution} monoidal structure on $\Quiv_\CS(\CA)$, i.e., the monoidal operation is
pull-push along
$$(\CS\times \CS)\times (\CS\times \CS) \leftarrow \CS\times \CS\times \CS \to \CS\times \CS,$$
where ``push" means left Kan extension.

\medskip

We will regard $\CS\otimes \CA$ as a \emph{right} module over $\Quiv_\CS(\CA)$. This (right) action
of $\Quiv_\CS(\CA)$ on $\CS\otimes \CA$ commutes with the natural (left) action
of $\CA$ on $\CS\otimes \CA$.

\sssec{}

Note that there is a canonical isomorphism
$$(\Quiv_\CS(\CA))^{\on{rev}} \overset{\tau}\simeq \Quiv_\CS(\CA^{\on{rev}}),$$
namely
\begin{multline*}
\on{Funct}_{\CA}(\CS\otimes \CA,\CS\otimes \CA)^{\on{rev}} \simeq
\on{Funct}_{\CA^{\on{rev}}}((\CS\otimes \CA)^\vee,(\CS\otimes \CA)^\vee)\overset{\text{\eqref{e:duality funct}}}\simeq \\
\simeq \on{Funct}_{\CA^{\on{rev}}}(\CS\otimes \CA^{\on{rev}},\CS\otimes \CA^{\on{rev}}).
\end{multline*}

In terms of the identifications
$$\Quiv_\CS(\CA) \simeq (\on{Funct}(\CS\times \CS,\CA))^{\on{rev}} \text{ and } \Quiv_\CS(\CA^{\on{rev}}) \simeq \on{Funct}(\CS\times \CS,\CA^{\on{rev}})^{\on{rev}},$$
it corresponds to the swap of the factors in $\CS\times \CS$, followed by the identity functor
$$\on{Funct}(\CS\times \CS,\CA) \simeq \on{Funct}(\CS\times \CS,\CA^{\on{rev}}).$$

\medskip

For an algebra object $R\in \Quiv_\CS(\CA)$, we let $\tau(R^{\on{rev}})$ denote the corresponding algebra
object in $\Quiv_\CS(\CA^{\on{rev}})$.

\sssec{}

Let $\CC$ be an $\CS$-flagged category enriched over $\CA$.
%, and let $\CC^{\on{grpd}}$ be the groupoid underlying $\oblv_{\CA\on{-Enr}}$.

\medskip

To this data we associate an algebra object
$$R_\CC\in \Quiv_\CS(\CA)$$
by letting the value of the corresponding functor $\CS\times \CS\to \CA$
on $(s_1,s_2)\in \CS\times\CS$ be $$\Hom^\CA_{\CS,\CC}(s_1,s_2)\in \CA.$$

\sssec{}

Note that for the opposite category
$$\CC^{\on{op}}\in \Cat^{\CA^{\on{rev}}},$$
we have
$$R_{\CC^{\on{op}}}\simeq \tau(R_\CC^{\on{rev}})\in \on{Alg}(\Quiv_\CS(\CA^{\on{rev}})).$$

\sssec{}

Let $\CM$ be an $\CA$-module category. Unwinding the definitions, we obtain that the category
$\on{Funct}^\CA(\CC,\CM)$ identifies with
\begin{equation} \label{e:funct as mod}
R_\CC\mod(\on{Funct}_\CA(\CS\otimes \CA,\CM)),
\end{equation}
where we consider $\on{Funct}_\CA(\CS\otimes \CA,\CM)$ as acted on the left by
$\Quiv_\CS(\CA)$ via its right action on the source.

\sssec{}

We obtain that
$$\bP^\CA(\CC)=\tau(R_\CC^{\on{rev}})\mod(\on{Funct}_{\CA^{\on{rev}}}(\CS\otimes \CA^{\on{rev}},\CA^{\on{rev}})).$$

By \eqref{e:duality funct}, we identify
$$\on{Funct}_{\CA^{\on{rev}}}(\CS\otimes \CA^{\on{rev}},\CA^{\on{rev}})\simeq \CS\otimes \CA.$$

Under this identification, the left action of $\Quiv_\CS(\CA^{\on{rev}})$ on
$\on{Funct}_{\CA^{\on{rev}}}(\CS\otimes \CA^{\on{rev}},\CA^{\on{rev}})$ corresponds to the
right action of $\Quiv_\CS(\CA)$ on $\CS\otimes \CA$ via $\tau$.

\medskip

In particular, we obtain an identification
\begin{equation} \label{e:P via quiv}
\bP^\CA(\CC) \simeq R_\CC^{\on{rev}}\mod(\CS\otimes \CA).
\end{equation}

\sssec{}

Unwinding, we obtain that in terms of the identification \eqref{e:P via quiv}, the Yoneda functor
$$\on{Yon}^\CA_\CC:\CC\to \bP^\CA(\CC)$$
corresponds to the tautological object in
$$R_\CC\mod(\on{Funct}(\CS\otimes \CA,R_\CC^{\on{rev}}\mod(\CS\otimes \CA)),$$
whose underlying object of $\on{Funct}(\CS\otimes \CA,R_\CC^{\on{rev}}\mod(\CS\otimes \CA)$
is the induction functor
$$\ind_{R_\CC^{\on{rev}}}:\CS\otimes \CA\to R_\CC^{\on{rev}}\mod(\CS\otimes \CA).$$

\sssec{} \label{sss:proof prshv}

The latter observation makes the assertion of Proposition \ref{p:env enh} manifest:  for any $\CM\in \CA\mmod$, the map
$$\on{Funct}_{\CA}(R_\CC^{\on{rev}}\mod(\CS\otimes \CA),\CM)\to
R_\CC\mod(\on{Funct}_{\CA}(\CS\otimes \CA,\CM))$$
given by precomposition with $\ind_{R_\CC^{\on{rev}}}$ is an isomorphism: indeed, both sides are monadic
over $\on{Funct}_{\CA}(\CS\otimes \CA,\CM)$, and the above functor induces an isomorphism of monads.

\sssec{} \label{sss:prshv and ten}

Note that the interpretation of $\on{Funct}^\CA(\CC,\CM)$ given by \eqref{e:funct as mod} implies that the canonical
functor
\begin{equation} \label{e:prshv and ten}
\on{Funct}^\CA(\CC,\CA)\underset{\CA}\otimes \CM\to \on{Funct}^\CA(\CC,\CM)
\end{equation}
is an equivalence.

\medskip

Indeed, the above functor identifies with
\begin{multline*}
R_\CC\mod(\on{Funct}_\CA(\CS\otimes \CA,\CA))\underset{\CA}\otimes \CM\to
R_\CC\mod(\on{Funct}_\CA(\CS\otimes \CA,\CA)\underset{\CA}\otimes \CM)\to  \\
\to R_\CC\mod(\on{Funct}_\CA(\CS\otimes \CA,\CM)),
\end{multline*}
and both above arrows are equivalences.

\sssec{}

In the sequel, we will make use of the following fact.

\begin{prop}\label{p:duality of enriched presheaves}
The $\CA$-module category $\mathbf{P}^{\CA}(\CC)$ is dualizable (as an $\CA$-module category) with dual given by
$\mathbf{P}^{\CA^{\on{rev}}}(\CC^{\on{op}})$.
\end{prop}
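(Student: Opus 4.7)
The plan is to verify the criterion of \Cref{sss:free self-dual} by exhibiting, for every $\CN \in \CA\mmod$, a natural equivalence
$$\mathbf{P}^{\CA^{\on{rev}}}(\CC^{\on{op}}) \underset{\CA}\otimes \CN \simeq \on{Funct}_{\CA}(\mathbf{P}^{\CA}(\CC), \CN),$$
functorial in $\CN$. First, I would unwind the definitions to note that, since $(\CA^{\on{rev}})^{\on{rev}} = \CA$ and $(\CC^{\on{op}})^{\on{op}} = \CC$, there is a tautological identification
$$\mathbf{P}^{\CA^{\on{rev}}}(\CC^{\on{op}}) = \on{Funct}^{\CA}(\CC, \CA),$$
where the right-hand side carries the $\CA$-action induced by the right action of $\CA$ on itself (i.e., a left $\CA^{\on{rev}}$-action, as is expected for a dual of an object of $\CA\mmod$).

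Next, I would assemble the desired equivalence as the composition
\begin{align*}
\mathbf{P}^{\CA^{\on{rev}}}(\CC^{\on{op}}) \underset{\CA}\otimes \CN
&\simeq \on{Funct}^{\CA}(\CC, \CA) \underset{\CA}\otimes \CN \\
&\overset{\text{\eqref{e:prshv and ten}}}\simeq \on{Funct}^{\CA}(\CC, \CN) \\
&\overset{\text{\propref{p:env enh}}}\simeq \on{Funct}_{\CA}(\mathbf{P}^{\CA}(\CC), \CN),
\end{align*}
where the first step is the instance of \eqref{e:prshv and ten} (with $\CM := \CN$) established in \Cref{sss:prshv and ten}, and the second is the universal property of the enriched-presheaf construction from \propref{p:env enh}.

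The point that requires genuine care is ensuring that this chain is natural in $\CN$ as $\CA$-linear functors, not merely as an equivalence of groupoids for each fixed $\CN$; this is what upgrades pointwise representability to an actual duality datum. Both inputs supply the necessary naturality: the equivalence \eqref{e:prshv and ten} is proved in \Cref{sss:prshv and ten} by identifying both sides as monadic over $\on{Funct}_{\CA}(\CS \otimes \CA, \CN)$ via a single monad (manifestly functorial in $\CN$), and \propref{p:env enh} gives an equivalence of 1-categories of functors rather than merely of spaces. A slightly more structural alternative route would be to use the quiver description \eqref{e:P via quiv} to write $\mathbf{P}^{\CA}(\CC) \simeq R_\CC^{\on{rev}}\mod(\CS \otimes \CA)$, observe that $\CS \otimes \CA$ is dualizable with dual $\CS \otimes \CA^{\on{rev}}$ as recalled in \Cref{sss:free self-dual}, and invoke the general principle that modules over an algebra acting on a dualizable ambient module are themselves dualizable, with dual obtained by dualizing the ambient module together with the algebra (here the dual algebra is $\tau(R_\CC^{\on{rev}})^{\on{rev}} = R_{\CC^{\on{op}}}^{\on{rev}}$, which under \eqref{e:P via quiv} recovers $\mathbf{P}^{\CA^{\on{rev}}}(\CC^{\on{op}})$).
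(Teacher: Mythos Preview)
Your proof is correct and essentially identical to the paper's: the paper also composes \propref{p:env enh} with \eqref{e:prshv and ten} and the tautological identification $\bP^{\CA^{\on{rev}}}(\CC^{\on{op}}) \simeq \on{Funct}^{\CA}(\CC,\CA)$, merely writing the chain in the opposite order. Your added remarks on naturality and the alternative quiver argument are reasonable elaborations, but the core argument matches the paper's exactly.
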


\begin{proof}

It suffices to establish an equivalence
$$\on{Funct}_{\CA}(\bP^{\CA}(\CC), \CM)\simeq \bP^{\CA^{\on{rev}}}(\CC^{\on{op}})\underset{\CA}\otimes \CM, \quad \CM\in \CA\mmod.$$

We have
$$\on{Funct}_{\CA}(\bP^{\CA}(\CC), \CM)\simeq
\on{Funct}^{\CA}(\CC,\CM) \overset{\text{\eqref{e:prshv and ten}}} \simeq \on{Funct}^{\CA}(\CC,\CA) \underset{\CA}\otimes \CM \simeq
\bP^{\CA^{\on{rev}}}(\CC^{\on{op}}) \underset{\CA}\otimes \CM,$$
as required.

\end{proof}

\ssec{Bousfield-Kan formula for enriched presheaves} \label{ss:Bousfield-Kan}

In this subsection we continue to explore the point of view on enriched categories via quivers.
We use it to derive information about the category $\bP^\CA(\CC)$.

\sssec{} \label{sss:enr monadic}

Our point of departure is formula \eqref{e:P via quiv}. Hence, we obtain a monadic adjunction of $\CA$-module categories
\begin{equation} \label{e:adj for enriched}
\on{Funct}(\CS,\CA)\simeq \CS\otimes \CA \rightleftarrows
R_\CC^{\on{rev}}\mod(\CS\otimes \CA) \simeq \bP^\CA(\CC).
\end{equation}

The left adjoint in \eqref{e:adj for enriched} is the $\CA$-linear extension of the enriched
Yoneda embedding $\on{Yon}^\CA_\CC$. For the duration of this subsection, we denote it by $\bi$.

\medskip

In terms of the identification
$$\CS\otimes \CA\simeq \on{Funct}(\CS,\CA),$$
the right adjoint in \eqref{e:adj for enriched} is given by restriction along $\on{Yon}^\CA_\CC$.
For the duration of this subsection, we denote it by $\be$.

\sssec{}

From \eqref{e:object as a colimit of delta} we obtain that for $\Phi\in \on{Funct}(\CS,\CA)$,
\begin{equation} \label{e:object as a colimit of delta enriched}
\bi(\Phi)\simeq \underset{s\in \CS}{\on{colim}}\, \Phi(s)\otimes \on{Yon}^\CA_\CC(s).
\end{equation}

\sssec{} \label{sss:BK}

Let now $\Psi$ be an object $\bP^\CA(\CC)$. From \eqref{e:adj for enriched}, we obtain that
$\Psi$ can be \emph{canonically} written as a geometric realization of a simplicial object
$\on{BK}_\bullet(\Psi)$ where
\begin{equation} \label{e:BK}
\on{BK}_n(\Psi)=\bi((R_\CC^{\on{rev}})^{\otimes n}\otimes \be(\Psi)).
\end{equation}

\medskip

Combining with \eqref{e:object as a colimit of delta enriched}, we obtain:

\begin{cor} \label{c:colimit pres of enr pshv}
Every object $\Psi \in \mathbf{P}^{\CA}(\CC)$ is canonically a colimit of objects of the form
$$ a \otimes \on{Yon}^\CA_\CC(c) \in \mathbf{P}^{\CA}(\CC) ,$$
for $a\in \CA$ and $c\in \CC$.
\end{cor}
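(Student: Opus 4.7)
The plan is to combine the Bousfield-Kan resolution of Sect.~\ref{sss:BK} with the explicit formula for $\bi$ given in \eqref{e:object as a colimit of delta enriched}.

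First, I would start with an arbitrary $\Psi\in \bP^\CA(\CC)$ and invoke the monadic adjunction \eqref{e:adj for enriched}
\[
\bi:\on{Funct}(\CS,\CA)\rightleftarrows \bP^\CA(\CC):\be.
\]
Since this adjunction is monadic, $\Psi$ can be written canonically as the geometric realization of the Bousfield-Kan simplicial object $\on{BK}_\bullet(\Psi)$ whose $n$-simplices are given by \eqref{e:BK}:
\[
\on{BK}_n(\Psi)=\bi\bigl((R_\CC^{\on{rev}})^{\otimes n}\otimes \be(\Psi)\bigr)\in \bP^\CA(\CC).
\]
Thus $\Psi$ is canonically a colimit (indexed by $\Delta^{\on{op}}$) of objects of the form $\bi(\Phi)$ for $\Phi\in \on{Funct}(\CS,\CA)$.

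Next, for each such $\Phi$, I would apply \eqref{e:object as a colimit of delta enriched}, which expresses $\bi(\Phi)$ as
\[
\bi(\Phi)\simeq \underset{s\in \CS}{\on{colim}}\, \Phi(s)\otimes \on{Yon}^\CA_\CC(s),
\]
i.e.\ as a colimit (indexed by $\CS$) of objects of the form $a\otimes \on{Yon}^\CA_\CC(c)$ for $a\in \CA$ and $c$ in the underlying groupoid (via the map $\CS\to \CC^{\on{grpd}}$ from the flagging). Substituting this into the BK presentation expresses $\Psi$ as an iterated colimit of objects of the required form; collapsing the two colimits to a single one over the total diagram yields the claim.

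There is no real obstacle here: the only delicate point is that both constructions---the BK resolution and the formula for $\bi$---are genuinely functorial and canonical in $\Psi$, so the resulting iterated colimit indexing category depends only on $\Psi$ (through $\be(\Psi)$) and inherits the canonicity. The fact that the colimit lives in $\bP^\CA(\CC)$ (and not just in the underlying category) is automatic because $\bi$ is $\CA$-linear and colimit-preserving, and \eqref{e:object as a colimit of delta enriched} is an identity in $\bP^\CA(\CC)$ by construction.
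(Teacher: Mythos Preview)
Your proposal is correct and follows essentially the same approach as the paper: the paper's proof is the single sentence ``Combining with \eqref{e:object as a colimit of delta enriched}, we obtain,'' placed immediately after the Bousfield--Kan resolution \eqref{e:BK}, which is exactly the two-step argument you spell out.
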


\begin{rem} \label{r:BK}

The formula for $\on{BK}_n(-)$ can be made even more explicit. Namely, for any $\Phi\in \on{Funct}(\CS,\CA)\simeq \CS\otimes \CA$
we have
$$(R_\CC^{\on{rev}})^{\otimes n}\otimes \Phi \simeq
\underset{(s_0,...,s_n)\in (\CS)^{n+1}}{\on{colim}}\, \Hom^\CA_\CC(s_0,s_1)\otimes...\otimes
\Hom^\CA_\CC(s_{n-1},s_n)\otimes \Phi(s_n)\otimes \delta_{s_0}.$$

Hence,
$$\on{BK}_n(\Psi)\simeq
\underset{(s_0,...,s_n)\in (\CS)^{n+1}}{\on{colim}}\, \Hom^\CA_\CC(s_0,s_1)\otimes...\otimes
\Hom^\CA_\CC(s_{n-1},s_n)\otimes \Psi(s_n)\otimes \on{Yon}^\CA_\CC(s_0).$$

\end{rem}

\sssec{} \label{sss:left strict}

We now return to the left adjoint of the functor $'\on{Yon}^\CA_\CM$ from Sect. \ref{sss:counit presheaves}. We will now show
that the left-lax compatibility structure $({}'\on{Yon}^\CA_\CM)^L$ with the action of $\CA$ is actually strict.

\medskip

By Corollary \ref{c:colimit pres of enr pshv}, in order to show both facts, it suffices to show
that $({}'\on{Yon}^\CA_\CM)^L$ is defined and commutes with the action of $\CA$ on objects of the form $a\otimes \on{Yon}^\CA_\CM(m)$, $a\in \CA$, $m\in \CM$.

\medskip

However, Corollary \ref{c:enriched Yoneda} implies that
$$({}'\on{Yon}^\CA_\CM)^L(a\otimes \on{Yon}^\CA_\CM(m))\simeq a\otimes m.$$

\ssec{Compatibility with the symmetric monoidal structure(s)}

In this subsection we let $\CA$ be a \emph{symmetric} monoidal category.

\sssec{} \label{sss:enr sym mon}

When $\CA$ is \emph{symmetric monoidal} (which is the case of interest in the present paper), the operation of Cartesian product of
the underlying plain categories extends to a symmetric monoidal structure on $\Cat^{\CA}$:

\medskip

For $\CC',\CC''\in \Cat^{\CA}$, $c'_1,c'_2\in \CC'$ and $c''_1,c''_2\in \CC''$, we have
$$\Hom^\CA_{\CC'\times \CC''}((c'_1,c''_1),(c'_2,c''_2)):=
\Hom^\CA_{\CC'}(c'_1,c'_2)\otimes \Hom^\CA_{\CC''}(c''_1,c''_2).$$

Note, however, that this symmetric monoidal structure on $\Cat^{\CA}$ is \emph{not} Cartesian.

\medskip

Thus, we can talk about (symmetric) monoidal $\CA$-enriched categories.

\sssec{} \label{sss:enh sym}

let $F:\CA_1\to \CA_2$ be a symmetric monoidal functor. Note that the functor $\ind^{\CA_2}_{\CA_1}$
is (strictly) symmetric monoidal.

\medskip

In particular, it sends (symmetric) monoidal $\CA_1$-enriched categories to
(symmetric) monoidal $\CA_2$-enriched categories.

\medskip

Explicitly, if we lift $\CC_1\in \Cat^{\CA_1}$ to a (commutative) algebra object in
$(\CS_1,\CC_1)\in \Cat^{\CA_1}(\Spc)$, the corresponding (commutative) algebra object
$(\CS_2,\CC_2):=\ind^{\CA_2}_{\CA_1}(\CS_1,\CC_1)$ has the property that
$\CS_2=\CS_1=\CS$ with the structure of (commutative) monoidal
coming from the (commutative) algebra structure on $(\CS_1,\CC_1)$.

\sssec{}

Assume now that $\CA$ is presentable. Note that in this case
the functor $\ModtoEnr$ has a natural right-lax symmetric monoidal structure.

\medskip

By adjunction, we obtain that the functor
$$\CC\mapsto \bP^\CA(\CC)$$
acquires a left-lax symmetric monoidal structure.

\medskip

However, from \eqref{e:P via quiv}, we obtain that the above left-lax symmetric monoidal structure
is strict.

\sssec{}

Concretely, this means that for $\CC_1,\CC_2$ there is a naturally defined $\CA$-bilinear functor
$$\bP^\CA(\CC_1)\times \bP^\CA(\CC_2)\to \bP^\CA(\CC_1\times \CC_2),$$
such that the induced morphism in $\CA\mmod$:
$$\bP^\CA(\CC_1)\underset{\CA}\otimes \bP^\CA(\CC_2)\to \bP^\CA(\CC_1\times \CC_2)$$
is an equivalence.

\sssec{} \label{sss:sym mon preshv}

In particular, we obtain that if $\CC$ is a symmetric monoidal $\CA$-enriched category, then
$\mathbf{P}^{\CA}(\CC)$ acquires a natural symmetric monoidal structure as an object of $\CA\mmod$.

\medskip

The Yoneda functor, viewed as a natural transformation
$$\on{Id}\to \ModtoEnr\circ \bP^\CA(-)$$
between right-lax symmetric monoidal functors, itself has a natural symmetric monoidal structure.

\sssec{}

In particular, if $\CC$ is a symmetric monoidal $\CA$-enriched category, the Yoneda functor
$$\on{Yon}^\CA_\CC:\CC\to  \bP^\CA(\CC)$$
has a natural symmetric monoidal structure (as a functor between symmetric monoidal $\CA$-enriched categories).

\medskip

In particular, the plain functor underlying $\on{Yon}^\CA_\CC$ has a natural symmetric monoidal structure.

\section{Weighted limits and colimits}\label{s:weighted limits}

In this section, we let
$$\CA,\,\, \Cat^\CA,\,\, \CA\mmod$$
be as in \secref{ss:enr vs ten}.

\ssec{Weighted limits} \label{ss:wt limits}

The usual notion of limits and colimits in not sufficient in the setting of enriched categories.
For this, we need the notion of weighted limits and colimits.  We consider several variants of these.

\sssec{}

For $\CC\in \Cat^\CA$ let
$$\CW: \CC \to \CA $$
be an $\CA$-functor.  Given $\CM\in \CA\mmod$ and an $\CA$-functor
$$\Phi: \CC \to \CM ,$$
we define the weighted limit of $F$ with weight $\CW$
$$\underset{\CC}{\on{lim}}^\CW\,  \Phi \in \CM $$
as follows.

\sssec{}

Using the functor $\CW$, we define the functor
$$\CW \otimes (-): \CM \to \on{Funct}^{\CA}(\CC, \CM) $$
informally given by
$$ m \mapsto (c \mapsto \CW(c) \otimes m).$$
Formally, $\CW \otimes (-)$ is defined as the composite
\begin{equation}\label{e:weight for limit}
\CM \overset{\CW \otimes \on{id}}{\longrightarrow} \on{Funct}^{\CA}(\CC, \CA) \underset{\CA}\otimes \CM \overset{\sim}\to \on{Funct}^{\CA}(\CC, \CM) .
\end{equation}

\medskip

Now given $\Phi \in \on{Funct}^{\CA}(\CC, \CM)$, the weighted limit $\underset{\CC}{\on{lim}}^\CW$
is defined as the value of the right adjoint to \Cref{e:weight for limit}, which exists by the Adjoint Functor
Theorem (\cite[Corollary 5.5.2.9]{HTT}) (indeed, \eqref{e:weight for limit} is a colimit-preserving functor between
presentable categories).

\sssec{}

Explicitly, for $m\in \CM$
\begin{equation}\label{e:presheaf weighted limit}
\Maps_{\CM}(m, \underset{\CC}{\on{lim}}^\CW\, \Phi) \simeq \Maps_{\on{Funct}^{\CA}(\CC, \CM)}(\CW(-)\otimes m, \Phi).
\end{equation}

\sssec{}

It is clear that the formation of weighted limits is functorial in $F$.  Moreover,
from \Cref{e:presheaf weighted limit}, we obtain that the formation of weighted colimits is contravariantly functorial in $\CW$.
Moreover, we have:

\begin{lem} \label{l:lim weight colim}
Let
$$\CW_I: I \to \on{Funct}^{\CA}(\CC, \CA), \quad i\mapsto \CW_i$$
be a (small) diagram of weights, and let $\CW= \underset{i\in I}{\on{colim}}\  \CW_i$.
Then for an $\CA$-functor $\Phi: \CC \to \CM$ there is a canonical isomorphism
$$\underset{\CC}{\on{lim}}^\CW\, \Phi \simeq \underset{i\in I^{\on{op}}}{\on{lim}}\, \underset{\CC}{\on{lim}}^{\CW_i}\, \Phi.$$
\end{lem}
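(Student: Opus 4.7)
The plan is to derive the identification from the defining universal property of weighted limits given in \eqref{e:presheaf weighted limit}, together with the standard fact that colimits of weights correspond to limits of weighted limits (by virtue of the tensor construction being colimit-preserving in the weight variable).

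First, I would test the claimed isomorphism against an arbitrary object $m \in \CM$ by computing $\Maps_\CM(m, -)$ of both sides. On the right-hand side, using that $\Maps_\CM(m,-)$ carries limits to limits and then applying \eqref{e:presheaf weighted limit} for each $\CW_i$, one obtains
\begin{equation*}
\Maps_\CM\bigl(m,\ \underset{i\in I^{\on{op}}}{\on{lim}}\, \underset{\CC}{\on{lim}}^{\CW_i}\, \Phi\bigr)
\simeq \underset{i\in I^{\on{op}}}{\on{lim}}\, \Maps_{\on{Funct}^\CA(\CC,\CM)}(\CW_i \otimes m,\ \Phi).
\end{equation*}
On the left-hand side, applying \eqref{e:presheaf weighted limit} directly with weight $\CW$ gives $\Maps_{\on{Funct}^\CA(\CC,\CM)}(\CW \otimes m,\Phi)$. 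Thus it suffices to establish
\begin{equation*}
\Maps_{\on{Funct}^\CA(\CC,\CM)}\bigl(\CW \otimes m,\ \Phi\bigr) \simeq \underset{i\in I^{\on{op}}}{\on{lim}}\, \Maps_{\on{Funct}^\CA(\CC,\CM)}(\CW_i \otimes m,\ \Phi),
\end{equation*}
which amounts to showing that the assignment $\CW \mapsto \CW \otimes m$, viewed as a functor
\begin{equation*}
\on{Funct}^{\CA}(\CC, \CA) \to \on{Funct}^{\CA}(\CC, \CM),
\end{equation*}
preserves colimits, so that $\CW \otimes m \simeq \underset{i\in I}{\on{colim}}\, (\CW_i \otimes m)$ and mapping out of it converts the colimit to a limit.

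This colimit-preservation is the only non-formal input, and it is essentially built into the construction: by its definition in \eqref{e:weight for limit}, the functor $\CW \otimes (-)$ is the composite of $\CW \otimes \on{id}_\CM : \CM \to \on{Funct}^\CA(\CC,\CA) \underset{\CA}\otimes \CM$ with the equivalence \eqref{e:prshv and ten}. Viewing the first arrow as the action of $m$ under the $\CA$-module structure on $\on{Funct}^\CA(\CC,\CA)$ (after swapping the order of the factors via symmetric monoidality of $\CA$), colimit-preservation in $\CW$ follows from the fact that, for objects of $\CA\mmod \subset \Cat^{\on{pres}}$, the relative tensor product preserves colimits in each variable. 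The equivalence \eqref{e:prshv and ten} preserves colimits as well, so the composite is colimit-preserving in $\CW$, as required.

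I expect the only real subtlety to be checking that the formally-defined functor in \eqref{e:weight for limit} genuinely commutes with colimits in the $\CW$-variable; once that is granted, the rest is a formal Yoneda-style manipulation and the isomorphism follows by the Yoneda lemma applied in $\CM$.
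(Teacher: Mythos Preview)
Your argument is correct and is exactly the elaboration the paper has in mind: the lemma is stated immediately after \eqref{e:presheaf weighted limit} with the remark that contravariant functoriality in $\CW$ follows from that formula, and no further proof is given. Your Yoneda-style verification via $\Maps_\CM(m,-)$ is the natural way to unpack that sentence.

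One small point: in this appendix $\CA$ is only assumed to be an associative algebra in $\Cat^{\on{pres}}$, not symmetric monoidal, so the parenthetical about ``swapping the order of the factors via symmetric monoidality of $\CA$'' is misplaced. It is also unnecessary: the reason you actually need, and which you state in the same sentence, is that the relative tensor product $\on{Funct}^\CA(\CC,\CA)\underset{\CA}\otimes \CM$ preserves colimits in each variable (this is a general feature of the Lurie tensor product in $\Cat^{\on{pres}}$). That alone gives $\CW\otimes m \simeq \underset{i}{\on{colim}}\,(\CW_i\otimes m)$, and the rest of your argument goes through unchanged.
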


\sssec{}\label{sss:cotensor as limit}
Suppose that $\CC = *$ (i.e. it only has one object and the object of morphisms is $\uno_{\CA}$).  In this case, $\CW$ is given
by an object $a \in \CA$ and $\Phi$ is given by an object $m\in \CM$.  In this case, the weighted limit
$$ \underset{\CC}{\on{lim}}^\CW\, \Phi \simeq m^a $$
is the \emph{cotensor} of $m$ by $a$; i.e.
$$\Maps_{\CM}(m', m^a) \simeq \Maps(a \otimes m', m).$$
In particular, if $\CM = \CA$ and $m=a'$, we have
$$ \underset{\CC}{\on{lim}}^\CW\, \Phi \simeq \uHom_\CA(a,a') $$
is the internal hom object.

\sssec{}

As in Lemma \ref{l:lim weight colim} we have
\begin{equation} \label{e:weighted lim tensor weight}
(\underset{\CC}{\on{lim}}^\CW\, \Phi)^a\simeq \underset{\CC}{\on{lim}}^{a\otimes \CW}\, \Phi, \quad a\in \CA,
\end{equation}
where $a\otimes \CW$ is obtained from $\CW$ be tensoring with $a$ on the target.

\sssec{}
Another key example of a weighted limit is when the weight is representable.  Namely, let
$$\CW \in \on{Funct}^{\CA}(\CC, \CA)$$
be of the form $\on{Yon}^{\CA^{\on{rev}}}_{\CC^{\on{op}}}(c)$ for $c\in \CC$.

\begin{prop}\label{p:ev weighted lim}
In the above situation,
$$\underset{\CC}{\on{lim}}^\CW\, \Phi \simeq \Phi(c)$$
for any $\Phi \in \on{Funct}^{\CA}(\CC, \CM)$.
\end{prop}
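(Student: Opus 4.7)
My plan is to recognize the proposition as an instance of the enriched Yoneda lemma \lemref{l:enriched Yoneda}, by reformulating it as an adjunction and reducing to the case $\CM = \CA$.

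First, I would unpack \eqref{e:presheaf weighted limit}: the assertion that $\underset{\CC}{\on{lim}}^\CW \Phi \simeq \Phi(c)$ for the representable weight $\CW = \on{Yon}^{\CA^{\on{rev}}}_{\CC^{\on{op}}}(c)$, naturally in $\Phi \in \on{Funct}^\CA(\CC, \CM)$, is equivalent to the statement that the $\CA$-linear evaluation functor
$$\on{ev}_c \colon \on{Funct}^\CA(\CC, \CM) \to \CM, \quad \Phi \mapsto \Phi(c)$$
is right adjoint to the tensoring functor $\CW \otimes (-) \colon \CM \to \on{Funct}^\CA(\CC, \CM)$ from \eqref{e:weight for limit}. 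The $\CA$-linearity of $\on{ev}_c$ follows from the fact that the $\CA$-module structure on $\on{Funct}^\CA(\CC, \CM)$ is computed pointwise in the target.

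Next, I would reduce to the case $\CM = \CA$. Under the equivalence \eqref{e:prshv and ten}, which identifies
$$\on{Funct}^\CA(\CC, \CM) \simeq \bP^{\CA^{\on{rev}}}(\CC^{\on{op}}) \underset{\CA}\otimes \CM,$$
the functor $\CW \otimes (-)$ becomes $m \mapsto \CW \otimes m$, and the functor $\on{ev}_c$ becomes $\on{ev}_c \otimes \on{id}_\CM$. Hence an adjunction $\CW \otimes (-) \dashv \on{ev}_c$ for a general $\CM \in \CA\mmod$ follows by tensoring the unit and counit of the special case $\CM = \CA$ with $\CM$ over $\CA$.

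In the case $\CM = \CA$, the required adjunction is precisely the content of the enriched Yoneda lemma \lemref{l:enriched Yoneda} applied to the pair $(\CA^{\on{rev}}, \CC^{\on{op}})$: it asserts
$$\uHom_{\bP^{\CA^{\on{rev}}}(\CC^{\on{op}}), \CA}(\on{Yon}^{\CA^{\on{rev}}}_{\CC^{\on{op}}}(c), \Psi) \simeq \Psi(c),$$
which by definition of the internal hom relative to $\CA$ translates into the required adjunction. The main obstacle is really only bookkeeping --- keeping track of the various opposite categories and reversed monoidal structures, and confirming that the $\CA$-module structure on $\on{Funct}^\CA(\CC, \CM)$ (with respect to which the weighted-limit formalism is set up) is indeed the one transported from the $\CA^{\on{rev}}$-module structure on $\bP^{\CA^{\on{rev}}}(\CC^{\on{op}})$ used to state the enriched Yoneda lemma. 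Once this bookkeeping is in place, the argument reduces mechanically to \lemref{l:enriched Yoneda}.
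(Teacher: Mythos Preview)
Your proposal is correct and follows essentially the same route as the paper: both arguments reduce to establishing the $\CA$-linear adjunction $(-)\otimes \on{Yon}^{\CA^{\on{rev}}}_{\CC^{\on{op}}}(c) \dashv \on{ev}_c$ for $\CM=\CA$ and then tensor with a general $\CM$ using \eqref{e:prshv and ten}. The only cosmetic difference is that the paper obtains the base adjunction by invoking the monadic adjunction of \secref{sss:enr monadic} (coming from the quiver description \eqref{e:P via quiv}), whereas you derive it from the enriched Yoneda lemma \lemref{l:enriched Yoneda}; these are essentially the same statement.
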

\begin{proof}
By Sect. \ref{sss:enr monadic}, we have an adjunction of $\CA$-module categories
$$
\xymatrix{
(-) \otimes \on{Yon}^{\CA^{\on{rev}}}_{\CC^{\on{op}}}(c): \CA \ar@<0.8ex>[r] & \ar@<0.8ex>[l] \on{Funct}^{\CA}(\CC, \CA): \be_c,
}
$$
where $\be_c$ is the functor of evaluation at $c\in \CC$ functor, i.e., $\be_c(\Phi)\simeq \Phi(c)$.
The desired result is obtained by applying the functor $(-) \underset{\CA}\otimes  \CM$ to this adjunction
and using \eqref{e:prshv and ten}.
\end{proof}

\sssec{Unenriched weighted limits}

A useful class of examples of weighted limits is when the indexing category $\CC$ is of the form
$\CC = \on{Enr}_{\CA}(I)$ for $I\in \Cat$.

\medskip

In this case, we have
$$ \on{Funct}^{\CA}(\CC, \CM) \simeq \on{Funct}(I, \CM) $$
for any $\CA$-module category $\CM$.  We will sometimes refer to weighted limits with such indexing category as unenriched weighted limits.

\medskip

There are two important classes of unenriched weighted limits.  When $I=\{\on{pt}\}$, this gives the cotensor as in
Sect. \ref{sss:cotensor as limit}.

\medskip

Another example is when the weight $\CW: I \to \CA$ is the constant functor $\uno_{\CA}$.
In this case, the weighted limit of a functor $\Phi: I \to \CM$ is the ordinary limit
$$ \underset{\CC}{\on{lim}}^\CW\, \Phi \simeq \underset{I}{\on{lim}}\, \Phi $$
in the category $\CM$.

\ssec{Further properties of weighted limits}

\sssec{}

An important aspect of weighted limits is that they are preserved by right adjoints.  Namely, suppose we have
an $\CA$-module functor
$$F: \CM_1 \to \CM_2. $$
Then for any $\CA$-enriched category $\CC$, weight $\CW: \CC \to \CA$, and an $\CA$-functor
$$\Phi: \CC \to \CM_1 ,$$
from the definition, we have a natural map
\begin{equation}\label{e:funct of weighted limits}
F(\underset{\CC}{\on{lim}}^\CW\, \Phi) \to \underset{\CC}{\on{lim}}^\CW\, (F\circ \Phi)
\end{equation}

\begin{lem}
Suppose that in the above situation, the functor $F$ admits a left adjoint which is a
strict\footnote{Recall that a left adjoint of an $\CA$-module functor is automatically $\CA$ left-lax and being strict is then a condition, see \cite[Chapter 1, Lemma 3.2.4]{GR1}}
$\CA$-module functor.  Then $F$ preserves weighted limits; i.e. the map \Cref{e:funct of weighted limits} is an isomorphism.
\end{lem}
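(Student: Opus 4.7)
The plan is to verify the claim via a Yoneda calculation, reducing everything to the universal property \eqref{e:presheaf weighted limit} and the adjunction $(F^L, F)$. For any $m_2 \in \CM_2$, I would compute $\Maps_{\CM_2}(m_2, F(\underset{\CC}{\on{lim}}^\CW\, \Phi))$ through a chain of natural equivalences: first, the $(F^L, F)$-adjunction converts it to $\Maps_{\CM_1}(F^L(m_2), \underset{\CC}{\on{lim}}^\CW\, \Phi)$; then \eqref{e:presheaf weighted limit} applied in $\CM_1$ rewrites this as $\Maps_{\on{Funct}^{\CA}(\CC, \CM_1)}(\CW(-) \otimes F^L(m_2), \Phi)$.

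The crucial step uses the strict $\CA$-linearity of $F^L$ to identify $\CW(c) \otimes F^L(m_2) \simeq F^L(\CW(c) \otimes m_2)$, naturally in $c \in \CC$. A merely left-lax module structure would yield only a non-invertible natural transformation in one direction, so this is exactly where the hypothesis enters. Armed with this identification, one invokes that strict $\CA$-linearity of $F^L$ upgrades the adjunction $(F^L, F)$ to an adjunction on $\CA$-enriched functor categories (post-composition with $F^L$ on one side, with $F$ on the other, both preserving $\CA$-enrichment). This converts the space into $\Maps_{\on{Funct}^{\CA}(\CC, \CM_2)}(\CW(-) \otimes m_2, F \circ \Phi)$, and a final application of \eqref{e:presheaf weighted limit} in $\CM_2$ yields $\Maps_{\CM_2}(m_2, \underset{\CC}{\on{lim}}^\CW\,(F \circ \Phi))$. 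Unwinding the construction shows that the composite equivalence is induced by \eqref{e:funct of weighted limits}, so Yoneda concludes.

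Equivalently and more conceptually, the square of $\CA$-module categories with horizontal arrows $\CW \otimes (-)$ and vertical arrows $F$, $F \circ (-)$ commutes strictly because $F$ is $\CA$-linear. Passing to right adjoints horizontally produces \eqref{e:funct of weighted limits} as its mate. By the Beck--Chevalley calculus, this mate is an isomorphism precisely when the rotated square — with vertical arrows replaced by the left adjoints $F^L$ and $F^L \circ (-)$ — commutes strictly, and this is exactly the statement that $F^L$ is strictly $\CA$-linear.

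The only genuine subtlety is the bookkeeping that justifies the upgrade of $(F^L, F)$ to an adjunction at the level of $\CA$-enriched functor categories; this is a direct consequence of the hypothesis on $F^L$. There is no deeper obstacle: the result is a formal consequence of mate calculus once the strictness is in place.
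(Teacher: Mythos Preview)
Your proposal is correct, and your second (conceptual) paragraph is essentially the paper's own proof: the paper writes down the commutative square of $\CA$-module categories with vertical arrows $\CW\otimes(-)$ and horizontal arrows $F^L$, then passes to right adjoints to obtain the desired isomorphism. Your first paragraph is a correct explicit Yoneda unwinding of the same mate argument, which the paper does not spell out.
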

\begin{proof}
The existence of an $\CA$-module left adjoint $F^L$ gives rise to the following commutative diagram
$$
\xymatrix{
\CM_2 \ar[r]^{F^L}\ar[d]_{\CW \otimes \on{id}_{\CM_2}} & \CM_1 \ar[d]^{\CW \otimes \on{id}_{\CM_1}} \\
\on{Funct}^{\CA}(\CC, \CA) \underset{\CA}\otimes \CM_2 \ar[r]^-{\on{id}\otimes F^L} & \on{Funct}^{\CA}(\CC, \CA) \underset{\CA}\otimes \CM_1
}
$$
of $\CA$-module categories.
Evaluating the corresponding commutative diagram obtained by passing to right adjoints on
$$ \Phi \in \on{Funct}^{\CA}(\CC, \CA) \underset{\CA}\otimes \CM_1 \simeq \on{Funct}^{\CA}(\CC, \CM_1) $$
gives the desired result.
\end{proof}

\sssec{}

We have the following basic result decomposing weighted limits into simpler ones.

\begin{prop} \label{p:weighted limits decomp}
An $\CA$-module functor $F: \CM \to \CM'$ preserves weighted limits if and only if it preserves all limits and cotensors.
\end{prop}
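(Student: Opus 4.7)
The plan is to prove both implications by reducing arbitrary weighted limits to a combination of cotensors and ordinary (unenriched) limits.

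The ``only if'' direction is essentially by definition: by \Cref{sss:cotensor as limit}, cotensors $m^a$ are weighted limits indexed by $\CC=*$ with weight $a\in\CA$, and ordinary limits over an index category $I$ are weighted limits indexed by $\on{Enr}_\CA(I)$ with constant weight $\uno_\CA$. Hence a functor preserving all weighted limits preserves both classes.

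For the ``if'' direction, the key idea is that every weight admits a canonical presentation by ``elementary'' weights (tensors of objects of $\CA$ with representables). Fix an $\CA$-functor $\Phi:\CC\to\CM$ and a weight $\CW:\CC\to\CA$. Under the identification
$$\on{Funct}^{\CA}(\CC,\CA)\simeq \bP^{\CA^{\on{rev}}}(\CC^{\on{op}}),$$
\Cref{c:colimit pres of enr pshv} (applied to $\CA^{\on{rev}}$ and $\CC^{\on{op}}$) gives a canonical colimit presentation
$$\CW \simeq \underset{i\in I}{\on{colim}}\, a_i\otimes \on{Yon}^{\CA^{\on{rev}}}_{\CC^{\on{op}}}(c_i)$$
with $a_i\in\CA$, $c_i\in\CC$. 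Combining \Cref{l:lim weight colim} with \eqref{e:weighted lim tensor weight} and \Cref{p:ev weighted lim}, we obtain a chain of canonical isomorphisms
$$\underset{\CC}{\on{lim}}^{\CW}\Phi \simeq \underset{i\in I^{\on{op}}}{\on{lim}}\ \underset{\CC}{\on{lim}}^{a_i\otimes \on{Yon}^{\CA^{\on{rev}}}_{\CC^{\on{op}}}(c_i)}\Phi \simeq \underset{i\in I^{\on{op}}}{\on{lim}}\ \bigl(\underset{\CC}{\on{lim}}^{\on{Yon}^{\CA^{\on{rev}}}_{\CC^{\on{op}}}(c_i)}\Phi\bigr)^{a_i} \simeq \underset{i\in I^{\on{op}}}{\on{lim}}\ \Phi(c_i)^{a_i},$$
expressing the weighted limit as an ordinary limit of cotensors of values of $\Phi$.

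All of the identifications above are natural with respect to $\CA$-module functors out of $\CM$. Therefore if $F:\CM\to\CM'$ preserves ordinary limits and cotensors, applying $F$ to both sides of the composite isomorphism above and using naturality gives
$$F\bigl(\underset{\CC}{\on{lim}}^{\CW}\Phi\bigr) \simeq \underset{i\in I^{\on{op}}}{\on{lim}}\ F(\Phi(c_i))^{a_i} \simeq \underset{\CC}{\on{lim}}^{\CW}(F\circ\Phi),$$
and this isomorphism is the comparison map \eqref{e:funct of weighted limits}, completing the proof. No serious obstacle is expected here: the entire argument is a formal manipulation once one has the canonical colimit presentation of weights, which is the content of \Cref{c:colimit pres of enr pshv}. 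The only subtlety worth double-checking is that the colimit presentation of $\CW$ is compatible with the contravariance in $\CW$ in \Cref{l:lim weight colim}, i.e., that passing from $\on{colim}$ in the weight to $\on{lim}$ in the weighted limit is correctly indexed by $I^{\on{op}}$.
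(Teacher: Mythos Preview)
Your proposal is correct and follows essentially the same route as the paper: both decompose the weight via \Cref{c:colimit pres of enr pshv} into a colimit of objects $a\otimes\on{Yon}^{\CA^{\on{rev}}}_{\CC^{\on{op}}}(c)$, then invoke \Cref{l:lim weight colim}, \eqref{e:weighted lim tensor weight}, and \Cref{p:ev weighted lim} to reduce to ordinary limits and cotensors. Your version is slightly more explicit in writing out the resulting formula $\underset{\CC}{\on{lim}}^{\CW}\Phi \simeq \underset{i\in I^{\on{op}}}{\on{lim}}\, \Phi(c_i)^{a_i}$, but the argument is the same.
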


\begin{proof}

The ``only if" direction is obvious as limits and cotensors are special cases of weighted limits.

\medskip

Let $\CC$ be an $\CA$-enriched category, and
$$\CW \in \on{Funct}^{\CA}(\CC, \CA) \simeq \mathbf{P}^{\CA^{\on{rev}}}(\CC^{\on{op}})$$
be a weight.  By Corollary \ref{c:colimit pres of enr pshv}, $\CW$ is the colimit of objects in $\mathbf{P}^{\CA^{\on{rev}}}(\CC^{\on{op}})$ of the form
$$a \otimes \on{Yon}^{\CA^{\on{rev}}}_{\CC^{\on{op}}}(c)$$
for $a\in \CA$ and $c\in \CC$.

\medskip

Hence, by Lemma \ref{l:lim weight colim}, it suffices to consider $\CW$ of this form. Further, by \eqref{e:weighted lim tensor weight},
it suffices to consider $\CW$ of the form $\on{Yon}^{\CA^{\on{rev}}}_{\CC^{\on{op}}}(c)$.

\medskip

Now the assertion follows from Proposition \ref{p:ev weighted lim}.

\end{proof}

\sssec{}

We can make the decomposition of weighted limits into ordinary limits and cotensors explicit using Sect. \ref{sss:BK} and
and Remark \ref{r:BK}:

\medskip

Let
$$\CW \in \on{Funct}^{\CA}(\CC, \CA) \simeq \mathbf{P}^{\CA^{\on{rev}}}(\CC^{\on{op}})$$
be a weight functor.  By Sect. \ref{sss:BK}, $\CW$ is the geometric realization of the simplicial object $\on{BK}_\bullet(\CW)$,
\begin{multline*}
\on{BK}_n(\CW) \simeq \bi((\tau(R_\CC))^{\otimes n}\otimes \be(\CW))\simeq \\
\simeq \underset{(s_0,...,s_n)\in \CS^{n+1}}{\on{colim}}\, \Hom^\CA_\CC(s_0,s_1)\otimes...\otimes
\Hom^\CA_\CC(s_{n-1},s_n)\otimes \CW(s_0)\otimes \on{Yon}^{\CA^{\on{rev}}}_{\CC^{\on{op}}}(s_n).
\end{multline*}

Hence,
$$\underset{\CC}{\on{lim}}^\CW\, \Phi \simeq \on{Tot}(\underset{\CC}{\on{lim}}^{\on{BK}_\bullet(\CW)}\, \Phi),$$
where
$$\underset{\CC}{\on{lim}}^{\on{BK}_n(\CW)}\, \Phi \simeq
\underset{(s_0,...,s_n)\in \CS^{n+1}}{\on{lim}}\,
\Phi(s_n)^{\Hom^\CA_\CC(s_0,s_1)\otimes...\otimes \Hom^\CA_\CC(s_{n-1},s_n)\otimes \CW(s_0)}.$$

\ssec{Weighted colimits}  \label{ss:wt colimits}

The notion of a weighted colimit is dual to that of a weighted limit.
However, because our notion of $\CA$-module categories is built on the symmetric monoidal category of
presentable categories, there is some asymmetry in the discussion that follows.

\sssec{}

Let $\CC$ be $\CA$-enriched category and
$$\Phi: \CC \to \CM $$
be an $\CA$-enriched functor, and an $\CA^{\on{rev}}$-functor $\CW: \CC^{\on{op}} \to \CA^{\on{rev}}$, a ``weight.''
We can think of $\CW$ as an object of $\bP^\CA(\CC)$.

\medskip

By the universal property of the Yoneda embedding,
the functor $\Phi$ induces a functor of $\CA$-module categories
$$ \wt\Phi: \mathbf{P}^{\CA}(\CC) \to \CM .$$
Following \cite[Sect. 6]{Hinich-universal}, we define
$$ \underset{\CC}{\on{colim}}^\CW \, \Phi := \wt\Phi(\CW) .$$

\sssec{}

Let us describe the above construction more explicitly.  Namely, we claim:

\begin{prop} \label{p:weighted colims}
We have a canonical isomorphism
$$\Maps_\CM(\underset{\CC}{\on{colim}}^\CW \, \Phi, m) \simeq
\Maps_{\on{Funct}^{\CA^{\on{rev}}}(\CC^{\on{op}},\CA^{\on{rev}})}(\CW,\uHom_{\CM,\CA}(\Phi(-),m)).$$
\end{prop}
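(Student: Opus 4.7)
The plan is to rewrite the left-hand side using the adjunction built into the very definition of $\underset{\CC}{\on{colim}}^{\CW}\Phi$, and then identify the right adjoint by means of the enriched Yoneda lemma.

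First, recall that by \propref{p:env enh} the $\CA$-enriched functor $\Phi:\CC\to \CM$ corresponds, under the adjunction $\bP^\CA(-)\dashv \ModtoEnr$, to an $\CA$-module functor
$$\wt\Phi:\bP^\CA(\CC)\to \CM,$$
characterized by the property $\wt\Phi\circ \on{Yon}^\CA_\CC\simeq \Phi$. Since both source and target are presentable and $\wt\Phi$ preserves colimits, it admits a right adjoint
$$\wt\Phi^R:\CM\to \bP^\CA(\CC).$$
By the very definition $\underset{\CC}{\on{colim}}^\CW \Phi:=\wt\Phi(\CW)$, the adjunction yields
$$\Maps_\CM(\underset{\CC}{\on{colim}}^\CW \Phi, m)\simeq \Maps_{\bP^\CA(\CC)}(\CW,\wt\Phi^R(m)).$$
Thus it remains to identify $\wt\Phi^R(m)\in \bP^\CA(\CC)$ with the $\CA$-enriched presheaf $\uHom_{\CM,\CA}(\Phi(-),m)$.

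To do this, I will compute the value of $\wt\Phi^R(m)$ on any $c\in \CC$ by testing against $a\otimes \on{Yon}^\CA_\CC(c)$ for $a\in \CA$. Using $\CA$-linearity of $\wt\Phi$ together with $\wt\Phi(\on{Yon}^\CA_\CC(c))\simeq \Phi(c)$, one has
$$\Maps_\CA\bigl(a,\wt\Phi^R(m)(c)\bigr)\simeq \Maps_{\bP^\CA(\CC)}\bigl(a\otimes \on{Yon}^\CA_\CC(c),\wt\Phi^R(m)\bigr) \simeq \Maps_\CM\bigl(a\otimes \Phi(c),m\bigr) \simeq \Maps_\CA\bigl(a,\uHom_{\CM,\CA}(\Phi(c),m)\bigr),$$
where in the first step one uses \lemref{l:enriched Yoneda} to write mapping spaces out of $a\otimes \on{Yon}^\CA_\CC(c)$ as mapping spaces in $\CA$ out of $a$. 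By the Yoneda lemma in $\CA$, this identifies
$$\wt\Phi^R(m)(c)\simeq \uHom_{\CM,\CA}(\Phi(c),m)$$
naturally in $c$. One promotes this pointwise identification to an identification of $\CA^{\on{rev}}$-enriched functors $\CC^{\on{op}}\to \CA^{\on{rev}}$ by the same argument applied to representable weights $a\otimes \on{Yon}^\CA_\CC(c)$, using that both sides are part of the $\CA$-module functors $m\mapsto \wt\Phi^R(m)$ and $m\mapsto \uHom_{\CM,\CA}(\Phi(-),m)$.

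Substituting this identification back yields the desired formula. The one step that takes a little care is checking that the identification $\wt\Phi^R(m)(c)\simeq \uHom_{\CM,\CA}(\Phi(c),m)$ actually upgrades to an isomorphism of enriched presheaves (i.e., is compatible with the $\CA^{\on{rev}}$-module structure coming from the composition maps of $\CC$); I expect this to be the main obstacle, but it follows by the same Yoneda-type reduction, since both sides are manifestly represented by the formula $a\mapsto \Maps_\CM(a\otimes \Phi(c),m)$.
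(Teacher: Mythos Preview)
Your proposal is correct and follows essentially the same approach as the paper: reduce to identifying $\wt\Phi^R(m)$ with $\uHom_{\CM,\CA}(\Phi(-),m)$, then use the enriched Yoneda lemma together with the adjunction $(\wt\Phi,\wt\Phi^R)$ to compute it. The only minor difference is that the paper works directly at the level of the enriched hom $\uHom_{\bP^\CA(\CC),\CA}(\on{Yon}^\CA_\CC(c),-)$ rather than testing against $a\otimes \on{Yon}^\CA_\CC(c)$ via ordinary $\Maps$; since \lemref{l:enriched Yoneda} already gives $F(c)\simeq \uHom_{\bP^\CA(\CC),\CA}(\on{Yon}^\CA_\CC(c),F)$ as an $\CA$-enriched functor in $c$, and the $\CA$-linearity of $\wt\Phi$ makes the $(\wt\Phi,\wt\Phi^R)$-adjunction hold at the level of $\uHom_{-,\CA}$, this dispenses with the ``upgrading'' step you flagged at the end.
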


\begin{proof}

The statement of the proposition is equivalent to the assertion that the functor right adjoint to
$\wt\Phi$ sends $m\in \CM$ to the object
$$\uHom_{\CM,\CA}(\Phi(-),m)\in \on{Funct}^{\CA^{\on{rev}}}(\CC^{\on{op}},\CA^{\on{rev}})=\bP^\CA(\CC).$$

\medskip

By the (Yoneda) \lemref{l:enriched Yoneda},
given $F\in \bP^\CA(\CC)$, the $\CA$-functor $\CC^{\on{op}}\to \CA^{\on{rev}}$ corresponding to it is given by
$$c\mapsto \uHom_{\bP^\CA(\CC),\CA}(\on{Yon}^\CA_\CC(c),F).$$

Hence, for $F:=(\wt\Phi)^R(m)$, the corresponding $\CA$-functor $\CC^{\on{op}}\to \CA^{\on{rev}}$ is
$$\uHom_{\bP^\CA(\CC),\CA}(\on{Yon}^\CA_\CC(c),(\wt\Phi)^R(m)),$$
which by adjunction identifies with
$$\uHom_{\CM,\CA}(\wt\Phi(\on{Yon}^\CA_\CC(c)),m).$$

However, by construction
$$\wt\Phi(\on{Yon}^\CA_\CC(-))=\Phi(-),$$
and the assertion follows.

\end{proof}

\sssec{}

By definition, weighted colimits commute with colimits in the weight.  Namely, if we have a diagram
$$\CW_I: I \to \mathbf{P}^{\CA}(\CC), \quad i\mapsto \CW_i $$
of weights, for any functor $\Phi: \CC \to \CM$, we have a natural isomorphism
$$ \underset{i\in I}{\on{colim}} \,\, \underset{\CC}{\on{colim}}^{\CW_i}\, \Phi \simeq \underset{\CC}{\on{colim}}^{\CW}\, \Phi $$
where $\CW= \underset{i\in I}{\on{colim}}\,  \CW_i$.

\sssec{}

In the case that $\CC = *$ with $\CW$ given by $a\in \CA$ and $\Phi$ by $m \in \CM$, we have:
$$\underset{\CC}{\on{colim}}^\CW \, \Phi \simeq a \otimes m .$$

\medskip

Moreover, for a general $\CC$ and $\CW = \on{Yon}^\CA_\CC(c)$, $c\in \CC$, we have
$$\underset{\CC}{\on{colim}}^\CW \, \Phi \simeq \Phi(c).$$

\sssec{}
As in the case of weighted limits, we can describe weighted colimits explicitly in terms of ordinary limits and tensors
using Sect. \ref{sss:BK} as a geometric realization:
$$ \underset{\CC}{\on{colim}}^\CW \, \Phi \simeq | \underset{\CC}{\on{colim}}^{\on{BK}_\bullet(\CW)}\, \Phi |,$$
where $\underset{\CC}{\on{colim}}^{\on{BK}_\bullet(\CW)}\, \Phi$ is the simplicial object in $\CM$ with terms given by
%$$\underset{\CC}{\on{colim}^{\on{BK}_n(\CW)}}\, \Phi
%\simeq
$$\underset{(c_0,...,c_n)\in \CS^{n+1}}{\on{colim}}\, \Hom^{\CA}_\CC(c_0,c_1)\otimes...\otimes
\Hom^{\CA}_\CC(c_{n-1},c_n)\otimes \CW(c_n)\otimes \Phi(c_0).$$

\sssec{}
It follows immediately from the definition of weighted colimits that all $\CA$-module functors
$$ \CM_1 \to \CM_2 $$
preserve weighted colimits.

\ssec{Weighted colimits and enriched presheaves}\label{ss:weighted colim and presheaves}

\sssec{}\label{sss:Yoneda generates weighted}

An key feature of weighted colimits is that every object in $\mathbf{P}^{\CA}(\CC)$ is canonically a weighted colimit.
Namely, by definition, we have that for $F \in \mathbf{P}^{\CA}(\CC)$,
\begin{equation}\label{e:Yoneda weighted colim}
F \simeq \underset{\CC}{\on{colim}}^F\, \on{Yon}^\CA_\CC,
\end{equation}
where $\on{Yon}^\CA_\CC: \CC \to \mathbf{P}^{\CA}(\CC)$ is the enriched Yoneda embedding.

\sssec{}

We will say that an object $m\in \CM$ is \emph{totally $\CA$-compact} if the functor
$$\uHom_{\CM,\CA}(m,-): \CM \to \CA $$
preserves all weighted colimits.

\sssec{}

We have the following evident characterization of totally $\CA$-compact objects:

\begin{lem} \label{c:crit totally compact}
Let $\CM$ be an $\CA$-module category and $m\in \CM$.  The following are equivalent:

\smallskip

\noindent{\em(i)} $m$ is totally $\CA$-compact;

\smallskip

\noindent{\em(ii)} The functor $ (-) \otimes m: \CA \to \CM$
admits an $\CA$-module right adjoint\footnote{I.e., we need that the right adjoint as a plain functor
preserves colimits, and that the natural stricture on it of right-lax compatibility with the action of $\CA$
be strict.};

\smallskip

\noindent{\em(iii)} $\uHom_{\CM,\CA}(m,-): \CM \to \CA$
is an $\CA$-module functor.

\end{lem}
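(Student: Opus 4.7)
The plan is to show $(ii) \Leftrightarrow (iii)$ essentially tautologically, then deduce $(iii) \Rightarrow (i)$ from the general fact that all $\CA$-module functors preserve weighted colimits (recorded at the end of \secref{ss:wt colimits}), and finally reduce $(i) \Rightarrow (iii)$ to the case of weighted colimits of the simplest type (tensors by objects of $\CA$).

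For $(ii) \Leftrightarrow (iii)$, observe that by the defining property of $\uHom_{\CM,\CA}$, the functor $\uHom_{\CM,\CA}(m,-): \CM \to \CA$ is always a right adjoint (in the underlying presentable $\infty$-categories) of $(-) \otimes m$. Hence the question of whether $(-) \otimes m$ admits an $\CA$-module right adjoint is equivalent to the question of whether this specific right adjoint upgrades to an $\CA$-module functor, which is condition $(iii)$.

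For $(iii) \Rightarrow (i)$, we simply invoke the general fact (see the last paragraph of \secref{ss:wt colimits}) that every $\CA$-module functor preserves all weighted colimits. For $(i) \Rightarrow (iii)$, the point is that the functor $\uHom_{\CM,\CA}(m,-)$ is always \emph{right-lax} $\CA$-linear, as it is the right adjoint of the (strict) $\CA$-module functor $(-) \otimes m: \CA \to \CM$ (using \cite[Chapter 1, Lemma 3.2.4]{GR1}). Being strictly $\CA$-linear amounts to showing that the right-lax structure maps
$$a \otimes \uHom_{\CM,\CA}(m, n) \to \uHom_{\CM,\CA}(m, a \otimes n), \quad a \in \CA,\ n \in \CM,$$
are isomorphisms. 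However, taking the weighted colimit over $\CC = \{*\}$ with weight $a \in \CA$ applied to the single-object functor $n$ recovers the tensor $a \otimes n$ (see the first example in \secref{ss:wt colimits}), and by direct inspection the above right-lax structure map coincides with the canonical comparison map expressing whether $\uHom_{\CM,\CA}(m,-)$ commutes with this weighted colimit. Hence assumption $(i)$ forces this map to be an isomorphism.

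The main potential obstacle is the identification, in $(i) \Rightarrow (iii)$, of the right-lax structure map of $\uHom_{\CM,\CA}(m,-)$ with the weighted-colimit-comparison map. This is not difficult — both arise from the same piece of data, namely the counit of the $((-)\otimes m, \uHom_{\CM,\CA}(m,-))$-adjunction evaluated on $a \otimes \uHom_{\CM,\CA}(m,n)$ — but it requires unwinding the definition of the right-lax structure on a right adjoint and matching it against the definition of weighted colimit preservation. Beyond that bookkeeping, the argument is formal.
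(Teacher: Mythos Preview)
Your proof is correct and in fact supplies more detail than the paper, which states the lemma as ``evident'' and gives no proof. Your argument for $(ii)\Leftrightarrow(iii)$ and $(iii)\Rightarrow(i)$ is exactly the intended one, and for $(i)\Rightarrow(iii)$ your identification of the right-lax structure map with the tensor-type weighted colimit comparison is the right idea.

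One small omission: condition~(iii), read in the sense of $\CA\mmod$, requires that $\uHom_{\CM,\CA}(m,-)$ preserve colimits as well as be strictly $\CA$-linear (cf.\ the footnote to~(ii)). You handle the $\CA$-linearity via tensors but do not explicitly mention colimit preservation. This is immediate from~(i) once one observes that ordinary colimits are themselves weighted colimits (dually to the discussion in \secref{ss:wt limits} for limits, take $\CC=\on{Enr}_\CA(I)$ with the constant weight), so the gap is trivial to fill.
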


\sssec{} \label{sss:yoneda im cpct}

Note that the essential image of the enriched Yoneda functor
$$\on{Yon}^\CA_\CC: \CC \to \mathbf{P}^{\CA}(\CC) $$
consists of totally $\CA$-compact objects.

\medskip

Indeed, this follows from (i) $\Leftrightarrow$ (iii) in Lemma \ref{c:crit totally compact} and
Lemma \ref{l:enriched Yoneda}.

\sssec{}

The following assertion is useful:

\begin{prop}\label{p:fully faithful from enriched pshv}
Let $\CC$ be an $\CA$-enriched category and let
$$\wt\Phi: \mathbf{P}^{\CA}(\CC) \to \CM $$
be an $\CA$-module functor such that:

\smallskip

\noindent{\em(i)} $\wt\Phi$ is fully faithful in the enriched sense\footnote{I.e., it induces an isomorphism on $\uHom_{-,\CA}(-,-)$.}
on the essential image of the enriched Yoneda functor;

\smallskip

\noindent{\em(ii)} For every $c\in \CC$, the object $\Phi(c)\in \CM$ is totally compact, where
$\Phi:=\wt\Phi\circ \on{Yon}^\CA_\CC$.

\medskip

Then $\wt\Phi$ is fully faithful.
\end{prop}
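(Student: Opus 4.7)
The plan is to show that for arbitrary $F_1, F_2 \in \bP^{\CA}(\CC)$ the natural map
$$\uHom_{\bP^{\CA}(\CC),\CA}(F_1, F_2) \to \uHom_{\CM,\CA}(\wt\Phi(F_1), \wt\Phi(F_2))$$
is an isomorphism, reducing in two steps to the case when both $F_i$ lie in the essential image of $\on{Yon}^\CA_\CC$, where it holds by hypothesis (i).

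First, we handle the variable $F_1$. By \eqref{e:Yoneda weighted colim}, we write $F_1 \simeq \underset{\CC}{\on{colim}}^{F_1}\, \on{Yon}^\CA_\CC$. Since $\wt\Phi$ is an $\CA$-module functor, it preserves weighted colimits (see Sect.~\ref{ss:wt colimits}), so $\wt\Phi(F_1) \simeq \underset{\CC}{\on{colim}}^{F_1}\, \Phi$. On the other hand, the functor $\uHom_{\CN,\CA}(-,n): \CN^{\on{op}} \to \CA^{\on{rev}}$ (for $\CN = \bP^{\CA}(\CC)$ and $\CN=\CM$ respectively) converts weighted colimits in $\CN$ to weighted limits in $\CA^{\on{rev}}$, as follows formally from \propref{p:weighted colims} and the definition of internal Hom. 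Both sides of our map thereby become the same weighted limit (weighted by $F_1$, viewed as an object of $\bP^\CA(\CC)$) applied to the respective functor $c \mapsto \uHom(\on{Yon}^\CA_\CC(c), F_2)$ or $c \mapsto \uHom(\Phi(c), \wt\Phi(F_2))$. Thus it suffices to show that for every $c \in \CC$, the map
$$\uHom_{\bP^{\CA}(\CC),\CA}(\on{Yon}^\CA_\CC(c), F_2) \to \uHom_{\CM,\CA}(\Phi(c), \wt\Phi(F_2))$$
is an isomorphism.

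Next, we handle the variable $F_2$, which is the step that uses condition (ii). By \secref{sss:yoneda im cpct}, the object $\on{Yon}^\CA_\CC(c)$ is totally $\CA$-compact, so by \lemref{c:crit totally compact} the functor $\uHom_{\bP^{\CA}(\CC),\CA}(\on{Yon}^\CA_\CC(c),-)$ preserves weighted colimits; likewise, by assumption (ii) and \lemref{c:crit totally compact}, the functor $\uHom_{\CM,\CA}(\Phi(c),-)$ preserves weighted colimits. Writing $F_2 \simeq \underset{\CC}{\on{colim}}^{F_2}\, \on{Yon}^\CA_\CC$ and again using that $\wt\Phi$ preserves weighted colimits, both sides of the above map become weighted colimits over the weight $F_2$ of the functors $c' \mapsto \uHom_{\bP^{\CA}(\CC),\CA}(\on{Yon}^\CA_\CC(c), \on{Yon}^\CA_\CC(c'))$ and $c' \mapsto \uHom_{\CM,\CA}(\Phi(c), \Phi(c'))$ respectively. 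The comparison of these two weighted colimits is then an isomorphism by condition (i).

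I expect the only nontrivial technical obstacle to be the careful verification that $\uHom_{-,\CA}(-,n)$ indeed converts weighted colimits in the first variable to weighted limits, with the weight unchanged; this is the enriched analog of the fact that $\Hom(-,n)$ turns colimits into limits, and it follows formally from \propref{p:weighted colims} by passing to the representing object. Everything else in the argument is a bookkeeping exercise: the Yoneda-type reduction in the second step, and the total $\CA$-compactness in both of $\on{Yon}^\CA_\CC(c)$ (which is automatic, by \secref{sss:yoneda im cpct}) and $\Phi(c)$ (which is precisely hypothesis (ii)).
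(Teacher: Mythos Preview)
Your proposal is correct and follows essentially the same two-step reduction as the paper's proof: first reduce the source variable to a representable via \eqref{e:Yoneda weighted colim} and the fact that $\wt\Phi$ preserves weighted colimits, then reduce the target variable to a representable using total $\CA$-compactness of $\on{Yon}^\CA_\CC(c)$ and of $\Phi(c)$. The only cosmetic difference is that where you invoke the principle that $\uHom_{-,\CA}(-,n)$ takes weighted colimits to weighted limits, the paper simply cites \propref{p:weighted colims} directly and leaves that unpacking implicit.
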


\begin{proof}

Since $\mathbf{P}^{\CA}(\CC)$ is generated under weighted colimits by the objects $\on{Yon}^\CA_\CC(c)$
for $c\in \CC$, and $\wt\Phi$ preserves weighted colimits, by Proposition \ref{p:weighted colims},
it suffices to show that
for $c \in \CC$ and $F\in \mathbf{P}^{\CA}(\CC)$, the map
\begin{equation}\label{e:ff iso}
\uHom_{\mathbf{P}^{\CA}(\CC),\CA}(\on{Yon}^\CA_\CC(c), F) \to %\Maps_\CM(\wt\Phi\circ \on{Yon}^\CA_\CC(c), \wt\Phi(F))
\uHom_{\CM,\CA}(\wt\Phi\circ \on{Yon}^\CA_\CC(c), \wt\Phi(F))
\end{equation}
is an isomorphism.

\medskip

Now, by \Cref{e:Yoneda weighted colim}, we have
$$F \simeq \underset{c'\in \CC}{\on{colim}}^{F}\, \on{Yon}^\CA_\CC(c').$$

Since both sides in \eqref{e:ff iso} preserve weighted colimits in $F$, we are reduced to showing that
$$\uHom_{\mathbf{P}^{\CA}(\CC),\CA}(\on{Yon}^\CA_\CC(c),\on{Yon}^\CA_\CC(c')) \to
\uHom_{\CM,\CA}(\wt\Phi\circ \on{Yon}^\CA_\CC(c),\wt\Phi\circ \on{Yon}^\CA_\CC(c'))$$
is an isomorphism, which follows from the assumption.

\end{proof}

\begin{cor}\label{c:enriched pshv fully faithful}
Let $\CC_1 \to \CC_2$ be a functor of $\CA$-enriched categories that is fully faithful in
the enriched sense\footnote{I.e., it induces an isomorphism on $\Hom^\CA(-,-)$.}.
Then the corresponding functor
$$ \mathbf{P}^{\CA}(\CC_1) \to \mathbf{P}^{\CA}(\CC_2) $$
is fully faithful.
\end{cor}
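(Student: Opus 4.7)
The plan is to deduce this directly from Proposition \ref{p:fully faithful from enriched pshv}, applied to the $\CA$-module functor
$$\wt\Phi: \mathbf{P}^{\CA}(\CC_1) \to \mathbf{P}^{\CA}(\CC_2)$$
obtained by the universal property of $\mathbf{P}^{\CA}(\CC_1)$ (see \propref{p:env enh}) from the composite
$$\Phi: \CC_1 \to \CC_2 \overset{\on{Yon}^\CA_{\CC_2}}\longrightarrow \mathbf{P}^{\CA}(\CC_2).$$
By construction, $\wt\Phi \circ \on{Yon}^\CA_{\CC_1}\simeq \Phi$, so for every $c\in \CC_1$ we have $\wt\Phi(\on{Yon}^\CA_{\CC_1}(c))\simeq \on{Yon}^\CA_{\CC_2}(c)$, which is totally $\CA$-compact by \secref{sss:yoneda im cpct}. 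This verifies hypothesis (ii) of \propref{p:fully faithful from enriched pshv}.

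For hypothesis (i), I need to check that for all $c, c'\in \CC_1$ the natural map
$$\uHom_{\mathbf{P}^{\CA}(\CC_1),\CA}(\on{Yon}^\CA_{\CC_1}(c), \on{Yon}^\CA_{\CC_1}(c'))
\to \uHom_{\mathbf{P}^{\CA}(\CC_2),\CA}(\on{Yon}^\CA_{\CC_2}(c), \on{Yon}^\CA_{\CC_2}(c'))$$
is an isomorphism. By the enriched Yoneda lemma (\corref{c:enriched Yoneda}), both sides identify canonically with the morphism objects $\Hom^\CA_{\CC_1}(c,c')$ and $\Hom^\CA_{\CC_2}(c,c')$, respectively, and under these identifications the displayed map corresponds to the one induced by the functor $\CC_1\to \CC_2$. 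The latter is an isomorphism by the assumption that $\CC_1 \to \CC_2$ is fully faithful in the enriched sense.

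Applying \propref{p:fully faithful from enriched pshv} concludes the proof. There is essentially no obstacle here beyond verifying the hypotheses; the whole content is already packaged in the enriched Yoneda lemma and in the fact that representable enriched presheaves are totally compact.
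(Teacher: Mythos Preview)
Your proof is correct and is exactly the intended derivation: the paper states this corollary immediately after \propref{p:fully faithful from enriched pshv} with no proof, precisely because it follows by applying that proposition with $\CM=\mathbf{P}^{\CA}(\CC_2)$ and verifying its hypotheses via \corref{c:enriched Yoneda} and \secref{sss:yoneda im cpct}, just as you did.
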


\begin{cor}\label{c:equiv to enriched pshv}
Let $\CC$ be an $\CA$-enriched category.  An $\CA$-module functor
$$\wt\Phi: \mathbf{P}^{\CA}(\CC) \to \CM $$
is an equivalence if and only if

\smallskip

\noindent{\em(i)} $\wt\Phi$ is fully faithful in the enriched sense on essential image of the enriched Yoneda functor;

\smallskip

\noindent{\em(ii)} For every $c\in \CC$, the object $\Phi(c)\in \CM$ is totally compact, where
$\Phi:=\wt\Phi\circ \on{Yon}^\CA_\CC$;

\smallskip

\noindent{\em(iii)} The image of $\CC$ generates $\CM$ under weighted colimits.

\end{cor}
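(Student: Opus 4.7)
The proof is a direct assembly of the preceding machinery. For the ``only if'' direction, assume $\wt\Phi$ is an equivalence. Condition (i) is the enriched Yoneda lemma (\corref{c:enriched Yoneda}) transported through $\wt\Phi$. Condition (ii) holds because the Yoneda functor lands in totally $\CA$-compact objects (\secref{sss:yoneda im cpct}), and an equivalence of $\CA$-module categories preserves total $\CA$-compactness, since both $\wt\Phi$ and its inverse commute with weighted colimits. Condition (iii) follows from \eqref{e:Yoneda weighted colim}: every $F \in \bP^{\CA}(\CC)$ is canonically the weighted colimit $\underset{\CC}{\on{colim}}^F\, \on{Yon}^\CA_\CC$, so applying $\wt\Phi$ exhibits every object of $\CM$ as a weighted colimit of objects in $\Phi(\CC)$.

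For the ``if'' direction, fully faithfulness of $\wt\Phi$ is the conclusion of \propref{p:fully faithful from enriched pshv} applied to conditions (i) and (ii). To establish essential surjectivity, let $\CN \subset \CM$ denote the full subcategory spanned by the essential image of $\wt\Phi$. Since $\wt\Phi$ is fully faithful and $\bP^\CA(\CC)$ admits all weighted colimits, the subcategory $\CN$ inherits weighted colimits; moreover the inclusion $\CN \hookrightarrow \CM$ preserves them because $\wt\Phi$ does as an $\CA$-module functor (see \secref{ss:wt colimits}). Thus $\CN$ is closed under weighted colimits taken in $\CM$. Since $\CN$ contains the image of $\Phi = \wt\Phi \circ \on{Yon}^\CA_\CC$, condition (iii) forces $\CN = \CM$, as desired.

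The substantive content of the argument is entirely supplied by the two prior results: \propref{p:fully faithful from enriched pshv} packages the induction from representables to arbitrary enriched presheaves that yields fully faithfulness, while \eqref{e:Yoneda weighted colim} together with the universal property of $\bP^\CA(\CC)$ as the free $\CA$-module category on $\CC$ (compare \propref{p:env enh}) handles essential surjectivity. Consequently no genuine obstacle remains; the only bookkeeping issue is to verify that ``generates under weighted colimits'' in (iii) is correctly matched against the closure property of the essential image, which is immediate from preservation of weighted colimits.
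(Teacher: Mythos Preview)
Your proof is correct and matches the paper's intended approach: the paper states this corollary without an explicit proof, treating it as an immediate consequence of \propref{p:fully faithful from enriched pshv} together with the generation statement \eqref{e:Yoneda weighted colim}, which is exactly what you have spelled out. The only thing to note is that the paper does not separately record the ``only if'' direction, presumably regarding it as obvious; your verification of it via \corref{c:enriched Yoneda}, \secref{sss:yoneda im cpct}, and \eqref{e:Yoneda weighted colim} is the natural one.
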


\section{Adjunctions} \label{s:adj}

\ssec{Adjunctions in 2-categories}

\sssec{}
Let $\bS$ be a 2-category.  Recall that a 1-morphism $f: x \to y$ in $\bS$ admits a right (resp. left) adjoint if there exists another 1-morphism
$$ f^R: y \to x \mbox{ (resp. } f^L: y \to x \mbox{)}$$
and 2-morphisms
$$ f^L\circ f \to \on{id}_x \ \ \mbox{(resp. } f\circ f^R \to \on{id}_y \mbox{), and} $$
$$ \on{id}_y \to f \circ f^L\ \  \mbox{(resp. } \on{id}_x \to f^R \circ f \mbox{)} $$
satisfying the standard relations.

\sssec{}

Let $\on{Adj}$ denote the universal adjunction, i.e., this is a 2-category such that the \emph{space} of functors
$$ \on{Adj} \to \bS$$
for any 2-category $\bS$ is the space of adjunctions in $\bS$, i.e. the subspace of 1-morphisms of $\bS$ that admit a right adjoint.
There are natural functors
\begin{equation}\label{e:incl of adjoint}
\rho, \lambda: [1] \to \on{Adj}
\end{equation}
selecting the right and left adjoint, repectively.

\medskip
For a 2-category $\bS$, we will be interested in a description of the 2-category of functors
$$ \on{Funct}(\on{Adj}, \bS) .$$

\sssec{}\label{sss:BC conditions}
Recall the following notion.  Suppose that
\begin{equation}\label{e:square for BC}
\xymatrix{
x \ar[r]^f\ar[d]_p & y \ar[d]^q \\
z \ar[r]^g & w
}
\end{equation}
is a commutative square in $\bS$.  We say that \Cref{e:square for BC} satisfies the right (resp. left) Beck-Chevalley condition if the 1-morphisms $f$ and $g$ admit right (resp. left) adjoints and the corresponding 2-morphism
$$g^L \circ q \to p \circ f^L \mbox{ (resp. } p \circ f^R \to g^R \circ q \mbox{)} $$
is an isomorphism.

\sssec{}
We have the following basic fact (see e.g. \cite[Lemma B.5.9]{AMR} and \cite[Theorem D]{AGH}):

\begin{prop}\label{p:basic bc}
Let $\bS$ be a 2-category and let
\begin{equation}\label{e:inclusion of adjoint}
\on{Funct}(\on{Adj}, \bS) \to \on{Funct}([1], \bS)
\end{equation}
denote the restriction along the right (resp. left) adjoint \Cref{e:incl of adjoint}.  The functor \Cref{e:inclusion of adjoint}
is 1-full:

\smallskip

\noindent{\em(a)} Its essential image consists of objects $x\to y$ that admit a right (resp. left) adjoint.

\smallskip

\noindent{\em(b)} A 1-morphism
\begin{equation}\label{e:morph in arrows}
\xymatrix{
x \ar[r]\ar[d] & y \ar[d] \\
x' \ar[r] & y'
}
\end{equation}
between two such objects $x\to y$ and $x'\to y'$ lies in the image iff the square \Cref{e:morph in arrows} satisfies the right (resp. left) Beck-Chevalley condition.
\end{prop}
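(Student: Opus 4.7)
The plan is to exploit the defining universal property of $\on{Adj}$: a functor $\on{Adj}\to\bS$ is the data of an adjunction in $\bS$, in the fully homotopy-coherent sense. This universal property --- classically due to Schanuel--Street and lifted to the $(\infty,2)$-categorical setting by Riehl--Verity and, independently, via Lurie's framework --- reduces the proposition to the mate calculus for an adjoint pair.

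For part (a), I would argue directly from the universal property. A functor $F:\on{Adj}\to\bS$ restricts along $\rho:[1]\to\on{Adj}$ to the 1-morphism $F(\rho)$ equipped with coherent data of a right adjoint, together with a unit, a counit, and the triangle identities. Conversely, any 1-morphism in $\bS$ admitting a right adjoint extends, essentially uniquely, to such a coherent package. Hence the essential image of the restriction functor is precisely the full subcategory of 1-morphisms admitting a right adjoint; the $\lambda$-case is symmetric.

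For 1-fullness and part (b) I would analyze 1-morphisms and 2-morphisms in $\on{Funct}(\on{Adj},\bS)$. Given $F,G:\on{Adj}\to\bS$, a 1-morphism $\phi:F\to G$ is determined, up to a contractible space of coherence choices, by 1-morphisms $\phi_x:F(x)\to G(x)$ and $\phi_y:F(y)\to G(y)$ together with a pair of 2-cells
$$\alpha: G(f)\circ \phi_x \to \phi_y\circ F(f), \qquad \beta: G(f^R)\circ \phi_y \to \phi_x\circ F(f^R),$$
coherently compatible with the two units and counits. Restriction along $\rho$ records only the triple $(\phi_x,\phi_y,\alpha)$. The key technical input is that, given the adjunctions $(F(f),F(f^R))$ and $(G(f),G(f^R))$, the mate construction supplies an equivalence of spaces between such $\beta$'s satisfying the required coherences and 2-cells taking the role of $\alpha$. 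Consequently, a triple $(\phi_x,\phi_y,\alpha)$ extends to a 1-morphism $F\to G$ in $\on{Funct}(\on{Adj},\bS)$ if and only if $\alpha$ is invertible as a mate, i.e., iff the corresponding square satisfies the right Beck--Chevalley condition; when it does extend, the extension is unique up to a contractible space of choices. This simultaneously yields 1-fullness of the restriction and the characterization in (b). The same mate argument applied to the pasting scheme controlling 2-morphisms in $\on{Funct}(\on{Adj},\bS)$ gives fullness on 2-cells.

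The main obstacle will be the rigorous verification that the mate correspondence really does induce an equivalence on the spaces of coherence data at the $(\infty,2)$-categorical level. This requires either a concrete presentation of $\on{Adj}$ --- for instance the Riehl--Verity simplicial computad of the free homotopy-coherent adjunction --- or Lurie's straightening/unstraightening machinery. In practice I would invoke \cite[Lemma B.5.9]{AMR} and \cite[Theorem D]{AGH}, which carry out this bookkeeping, rather than reproduce it here.
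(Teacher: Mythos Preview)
The paper does not prove this proposition at all: it is stated as a ``basic fact'' with a bare citation to \cite[Lemma B.5.9]{AMR} and \cite[Theorem D]{AGH}, and no argument is given. Your proposal therefore cannot be compared to a proof in the paper, because there is none.

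That said, your sketch is more than the paper offers, and the outline is correct in spirit: the universal property of $\on{Adj}$ handles part (a), and the mate correspondence is indeed the mechanism behind 1-fullness and the Beck--Chevalley characterization in (b). You also correctly identify the genuine difficulty --- controlling the coherence data at the $(\infty,2)$-level --- and you end up invoking exactly the same two references the paper cites. One small comment: in your description of 1-morphisms $\phi:F\to G$ in $\on{Funct}(\on{Adj},\bS)$, you say the mate construction gives an equivalence between the space of compatible $\beta$'s and the space of $\alpha$'s, and then conclude that extension exists iff $\alpha$ is invertible. This is slightly garbled: the mate of $\alpha$ is a specific 2-cell $\beta$, and the coherence conditions force $\beta$ to be the mate of $\alpha$; the Beck--Chevalley condition is that this mate be invertible, which is what is needed for $\beta$ to be part of a natural transformation (not a lax one). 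The upshot is the same, but the logic as written could be tightened. In any case, since you defer the rigorous bookkeeping to the same sources the paper does, your proposal is adequate for the paper's purposes.
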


\ssec{Adjunctions in enriched presheaves} \label{ss:presheaves}

\sssec{}\label{sss:enriched 2cat}

Let $\CA$ be a presentable monoidal category.

\medskip

We will be interested in the case when $\CA$ has the structure of 2-category (e.g. $\CA = \DGCat_\kappa$) and the resulting
2-categorical aspects of $\CA$-module categories.

\medskip

Namely, suppose that $\CA$ is itself tensored over $\Cat^{\on{pres}}_\kappa$ (for some regular cardinal $\kappa$), i.e., $\CA$ is a $\Cat^{\on{pres}}_\kappa$-algebra
in categories $\Cat^{\on{pres}}$. In particular, this endows every $\CA$-module category with the structure of
2-category.\footnote{Since $\Cat^{\on{pres}}_\kappa$ is presentable, it follows that every $\Cat^{\on{pres}}_\kappa$-module category in $\Cat^{\on{pres}}$
has $\Cat^{\on{pres}}_\kappa$-valued homs.}

% \begin{thm}
% Let $\CC$ be a $\CV$-enriched category and let $\CM$ be a $\CV$-module category.  Restriction along the right (resp. left) adjoint gives a 1-full functor
% $$ \on{Funct}(\on{Adj}, \on{Funct}^{\CV}(\CC, \CM)) \to \on{Funct}([1], \on{Funct}^{\CV}(\CC, \CM)) $$
% whose essential image consists of objects $f: a\to b$ such that:
% \begin{enumerate}
% \item
% For every $x\in \CC$, the 1-morphism
% $$ f(x): a(x) \to b(x) $$
% in $\CV$ admits a right (resp. left) adjoint.
% \item
% For all $x, y \in \CC$, the square
% $$
% \xymatrix{
% \CC(x,y) \otimes a(x) \ar[r]\ar[d] & \CC(x,y)\otimes b(x)\ar[d] \\
% a(y) \ar[r] & b(y)
% }
% $$
% satisfies the right (resp. left) Beck-Chevalley condition.
% \end{enumerate}
% A morphism between such objects $(a\to b) \to (a'\to b')$ lies in $\on{Funct}(\on{Adj}, \on{Funct}^{\CV}(\CC, \CM))$ iff for every $x\in \CC$, the corresponding square
% $$ \xymatrix{
% a(x) \ar[r]\ar[d] & b(x)\ar[d]\\
% a'(x) \ar[r] & b'(x)
% }$$
% satisfies the right (resp. left) Beck-Chevalley condition.
% \end{thm}

\begin{prop}\label{p:adjoints in V-functors}
Let $\CC$ be an $\CA$-enriched category and let $\CM$ be an $\CA$-module category.  A 1-morphism $f: \Phi_1 \to \Phi_2$ in $\on{Funct}^{\CA}(\CC, \CM)$
admits a right (resp. left) adjoint if and only if the following two conditions are satisfied:

\smallskip

\noindent{\em(i)}
For every $c\in \CC$, the 1-morphism
$$ f(c): \Phi_1(c) \to \Phi_2(c) $$
in $\CM$ admits a right (resp. left) adjoint;

\smallskip

\noindent{\em(ii)}
For all $c', c'' \in \CC$, the square
$$
\xymatrix{
\uHom_{\CC,\CA}(c',c'') \otimes \Phi_1(c') \ar[r]\ar[d] & \uHom_{\CC,\CA}(c',c'') \otimes \Phi_2(c'')\ar[d] \\
\Phi_1(c'') \ar[r] & \Phi_2(c'')
}
$$
satisfies the right (resp. left) Beck-Chevalley condition.

\end{prop}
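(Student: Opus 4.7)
The plan is to reduce the question to the description of $\on{Funct}^\CA(\CC, \CM)$ via quivers from Section~\ref{ss:quiv}. Set $\CS = \CC^{\on{grpd}}$ and let $R_\CC \in \on{Alg}(\Quiv_\CS(\CA))$ denote the associated algebra. Combining \eqref{e:funct as mod} with \lemref{l:limit and colimit}, we have a canonical identification
\[
\on{Funct}^\CA(\CC, \CM) \simeq R_\CC\mod(\on{Funct}(\CS, \CM))
\]
of 2-categories, with a conservative forgetful 2-functor $U$ to $\on{Funct}(\CS, \CM)$. Since $\CS$ is a space, $\on{Funct}(\CS, \CM)$ is an $\CS$-indexed power of $\CM$ in the 2-categorical sense, so a 1-morphism in it admits a right adjoint if and only if it does so pointwise at each $s \in \CS$. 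Hence condition~(i) is exactly the statement that $U(f)$ admits a right adjoint in $\on{Funct}(\CS, \CM)$.

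For the necessity direction, suppose $f$ has a right adjoint $g$ in $\on{Funct}^\CA(\CC, \CM)$. Evaluation at each $c \in \CC$ is a 2-functor and hence preserves adjunctions, which yields~(i). For~(ii), the $\CA$-enriched naturality of $g$ provides a strictly commuting square expressing compatibility of $g$ with the enrichment structure maps of $\Phi_1$ and $\Phi_2$; unraveling the definitions, this is precisely the assertion that the mate of the square in~(ii) is an isomorphism, i.e., the right Beck--Chevalley condition. For sufficiency, (i) supplies pointwise right adjoints $g(c) := f(c)^R$ assembling into a right adjoint of $U(f)$ in $\on{Funct}(\CS, \CM)$. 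The remaining task is to promote $g$ to an $R_\CC$-module morphism, equivalently to an $\CA$-enriched natural transformation $\Phi_2 \to \Phi_1$. Concretely, this requires specifying, for each pair $(c', c'') \in \CS^2$, a structure map
\[
\uHom_{\CC,\CA}(c', c'') \otimes g(c') \to g(c'')
\]
satisfying appropriate coherences; taking the mate of the action square on the $\Phi_i$'s furnishes a canonical such map, and condition~(ii) says exactly that this mate gives a well-behaved (strict rather than merely lax) $R_\CC$-action on $g$.

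The main obstacle will be articulating this last step rigorously in the $(\infty,2)$-setting: namely, verifying that the collection of mate morphisms assembles into a strict $R_\CC$-module structure on $g$ and that the passage from $f$ to $g$ is sufficiently homotopy-coherent. The cleanest way to package this is to apply \propref{p:basic bc} to the monadic adjunction~\eqref{e:adj for enriched}, which yields the desired characterization of adjunctions inside $R_\CC\mod(-)$ in terms of Beck--Chevalley squares against the ambient category $\on{Funct}(\CS, \CM)$; alternatively, the Bar-resolution from \remref{r:BK} can be used to construct the $R_\CC$-module structure on $g$ as an iterated limit. The higher coherences (associativity and unitality of the mate action) then follow from the corresponding coherences on the $f$-side by mate calculus.
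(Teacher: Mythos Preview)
Your overall strategy—identifying $\on{Funct}^\CA(\CC,\CM)$ with $R_\CC\mod(\on{Funct}(\CS,\CM))$ via \eqref{e:funct as mod} and reducing to a ``doctrinal adjunction'' statement for modules over a monad—is a legitimate alternative to the paper's approach, and your necessity argument is fine. However, the sufficiency direction has a genuine gap: you correctly flag that the hard part is assembling the pointwise mates into a homotopy-coherent $R_\CC$-module structure on $g$, but neither of your proposed resolutions actually closes this gap. Saying ``apply \propref{p:basic bc} to the monadic adjunction \eqref{e:adj for enriched}'' is not a proof: \propref{p:basic bc} describes $\on{Funct}(\on{Adj},\bS)$ as a 1-full subcategory of $\on{Funct}([1],\bS)$, and it is not clear what 2-category $\bS$ you intend to instantiate so that the statement you need drops out. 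Likewise, ``the higher coherences follow by mate calculus'' is exactly the kind of assertion that requires justification in the $(\infty,2)$-setting—constructing an infinite tower of coherences by hand from the Bar resolution of \remref{r:BK} is possible in principle but is not something one can simply assert.

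The paper sidesteps this entire difficulty by a currying maneuver you did not consider. Rather than working in $R_\CC\mod$, the paper views $f$ as an $\CA$-module functor $\mathbf{P}^{\CA}(\CC)\to \on{Funct}([1],\CM)$ (using \propref{p:duality of enriched presheaves}), observes that $f$ having a right adjoint is equivalent to this functor factoring through the 1-full subcategory $\on{Funct}(\on{Adj},\CM)\hookrightarrow \on{Funct}([1],\CM)$, and then uses \propref{p:env enh} to rewrite this as a condition on the $\CA$-enriched functor $\CC\to \on{Funct}([1],\CM)$. At that point \propref{p:1-full functors} gives exactly the pointwise-plus-Beck--Chevalley criterion, with all coherences handled automatically by the 1-fullness of \eqref{e:incl of adjoint}. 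This is both shorter and avoids any explicit construction of the adjoint's module structure.
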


\begin{proof}

We will prove the assertion for right adjoints (the case of left adjoints is identical).  The 1-morphism $f$ is given by a functor
$$ [1] \to \on{Funct}^{\CA}(\CC, \CM) \simeq \on{Funct}^{\CA}(\CC, \CA) \underset{\CA}\otimes  \CM .$$
By \Cref{p:duality of enriched presheaves}, such a functor is given by a functor of $\CA$-module categories
\begin{equation}\label{e:1-morph in pshv}
\mathbf{P}^{\CA}(\CC) \to \on{Funct}([1], \CM) .
\end{equation}
The 1-morphism $f$ admits a right adjoint iff the functor \Cref{e:1-morph in pshv} factors through the functor
\begin{equation}\label{e:incl of adj fun}
\on{Funct}(\on{Adj}, \CM) \to \on{Funct}([1], \CM),
\end{equation}
which is 1-full by \Cref{p:basic bc}.

\medskip

By Proposition \ref{p:env enh}, we have
$$ \on{Funct}_{\CA}(\mathbf{P}^{\CA}(\CC),\on{Funct}([1], \CM)) \simeq \on{Funct}^{\CA}(\CC, \on{Funct}([1], \CM)). $$
Now, the 1-morphism $f$ admits a right adjoint if and only if the corresponding object in the category $\on{Funct}^{\CA}(\CC, \on{Funct}([1], \CM))$ lies in
the subcategory $\on{Funct}^{\CA}(\CC, \on{Funct}(\on{Adj}, \CM))$.
By \Cref{p:1-full functors} and \Cref{p:basic bc}, the functor
$$\on{Funct}^{\CA}(\CC, \on{Funct}(\on{Adj}, \CM)) \to \on{Funct}^{\CA}(\CC, \on{Funct}([1], \CM))$$
is 1-full and the essential image is as claimed.

\end{proof}

\newpage

\end{document}